\newtheorem{theorem}{Theorem}
\newtheorem{lemma}[theorem]{Lemma}
\theoremstyle{definition}
\newtheorem{definition}{Definition}
\newtheorem{example}[definition]{Example}
\newtheorem{proposition}{Proposition}
\theoremstyle{remark}
\newtheorem{remark}{Remark}
\newcommand{\R}{\mathbb{R}}
\newcommand{\PP}{\mathbb{P}}
\newcommand{\E}{\mathbb{E}}
\newcommand{\argmin}{{\arg\hspace*{-0.5mm}\min}}
\newcommand{\Q}{\mathbb{Q}}
\newcommand{\one}{\mathbbm{1}}
\def\st{\textit{s.t.}}
\def\cS{\mathcal{S}}
\def\cB{\mathcal{B}}
\def\cC{\mathcal{C}}
\def\cI{\mathcal{I}}
\def\cT{\mathcal{T}}
\def\pr{\mathbb{P}}
\def\argmin{{\arg\hspace*{-0.5mm}\min}}
\def\var{\mathrm{var}}
\def\bx{\boldsymbol X}
\def\by{\boldsymbol Y}
\def\tby{\tilde{\boldsymbol Y}}
\def\bz{\boldsymbol Z}
\def\dx{d_X}
\def\dy{d_Y}
\def\dz{d_Z}
\def\bs{\boldsymbol}
\def\htac{\hat{T}^\textrm{AC, cond}}
\def\htaci{\hat{T}^\textrm{AC}}
\def\tac{T^\textrm{AC, cond}}
\def\taci{T^\textrm{AC}}
\def\bsy{\bs{y}}
\def\bsz{\bs{z}}
\def\bsx{\bs{x}}
\def\bsa{\bs{a}}
\def\bsb{\bs{b}}
\def\asconv{\overset{\textrm{a.s.}}{\to}}
\newcommand\independent{\protect\mathpalette{\protect\independenT}{\perp}}
\def\independenT#1#2{\mathrel{\rlap{$#1#2$}\mkern2mu{#1#2}}}
\newcommand{\vertiii}[1]{{\left\vert\kern-0.25ex\left\vert\kern-0.25ex\left\vert #1 
    \right\vert\kern-0.25ex\right\vert\kern-0.25ex\right\vert}}
\newcommand{\customlabel}[2]{%
	\protected@write\@auxout{}{%
		\string\newlabel{#1}{%
			{#2}{\thepage}{}{}{}{}%
		}%
	}%
}
\title{A multivariate extension of Azadkia-Chatterjee's rank coefficient}
\author{Wenjie Huang$^1$, Zonghan Li$^1$ and Yuhao Wang$^{1,2}$\thanks{The authors are listed in alphabetical order, correspondence should be addressed to Yuhao Wang: \href{mailto:yuhaow@tsinghua.edu.cn}{yuhaow@tsinghua.edu.cn}} \\ \\
$^1$ Institute for Interdisciplinary Information Sciences, Tsinghua University, Beijing, China\\
$^2$ Shanghai Qi Zhi Institute, Shanghai, China
}
\begin{document}

\maketitle

\begin{abstract}
The Azadkia-Chatterjee coefficient is a rank-based measure of dependence between a random variable $Y \in \R$ and a random vector $\bz \in \R^{\dz}$. This paper proposes a multivariate extension that measures the dependence between random vectors $\by \in \R^{\dy}$ and $\bz \in \R^{\dz}$, based on $n$ i.i.d. samples. The proposed coefficient converges almost surely to a limit with the following properties: i) it lies in $[0, 1]$; ii) it is equal to zero if and only if $\by$ and $\bz$ are independent; and iii) it is equal to one if and only if $\by$ is almost surely a function of $\bz$. Remarkably, the only assumption required by this convergence is that $\by$ is not almost surely a constant vector. We further prove that under the same mild condition and after a proper scaling, this coefficient converges in distribution to a standard normal random variable when $\by$ and $\bz$ are independent. This asymptotic normality result allows us to construct a Wald-type hypothesis test of independence based on this coefficient. To compute this coefficient, we propose a merge sort based algorithm that runs in $O(n (\log n)^{\dy})$. Finally, we show that it can be used to measure the conditional dependence between $\by$ and $\bz$ conditional on a third random vector $\bx$, and prove that the measure is monotonic with respect to the deviation from an independence distribution under certain model restrictions.
\end{abstract}

\section{Introduction}\label{sec:intro}







Measuring the dependence (and conditional dependence) of random variables is a fundamental task in statistics. Classical coefficients include Pearson's correlation coefficient, Spearman's $\rho$, and Kendall's $\tau$. Recently, there has been growing interest in developing new dependence measures tailored to specific applications; see~\citet{chatterjee2021new,josse2016measuring} for surveys. However, as argued by~\citet{chatterjee2021new}, most existing measures suffer from two drawbacks: they are built to test independence rather than quantify the strength of dependence, and they lack simple asymptotic null distributions, so computationally intensive procedures such as permutation or bootstrap tests are required to obtain p-values.

In light of these issues,~\citet{chatterjee2021new} recommended to propose a coefficient that is
\begin{quote}
    ``(a) as simple as the classical coefficients like Pearson’s correlation or Spearman’s correlation, and yet (b) consistently estimates some simple and interpretable measure of the degree of dependence between the variables, which is 0 if and only if the variables are independent and 1 if and only if one is a measurable function of the other, and (c) has a simple asymptotic theory under the hypothesis of independence, like the classical coefficients''
\end{quote}
These criteria, especially (b), seem very natural.  In fact, requirement similar to (b) was already discussed by \citet{renyi1959measures}: 
\begin{quote}
``It is natural to choose the range $[0, 1]$ and to make correspond the value $1$ to strict dependence and thus $0$ to independence.''
\end{quote}

In the same paper, \citet{chatterjee2021new} further showed that at least in the univariate case, a coefficient satisfying these criteria is possible. More specifically, they introduced a coefficient that converges almost surely to a measure of dependence between two random variables $Y, Z \in \R$ -- a measure previously examined by \citet{dette2013copula,gamboa2018sensitivity}. Remarkably, this measure satisfies (b), and the convergence of the coefficient in~\citet{chatterjee2021new} requires only that $Y$ is not almost surely a constant. Moreover, under the null hypothesis where $Y \independent Z$, they proved that this coefficient is asymptotically normal under the same mild condition. The proposed coefficient is a rank-based approach constructed using only the ranks of the i.i.d. samples of two random variables $Y$ and $Z$. 
This rank-based nature also confers two key advantages: robustness to outliers and contamination; and invariant under certain data transformations, such as linear transformations (see e.g.~\Cref{sec:uniac}). 

As an extension,~\citet{azadkia2021simple} relaxed the dimensionality restriction of $Z$, allowing it to be a random vector $\bz \in \R^{\dz}$. Moreover, they showed that the asymptotic limit of the proposed coefficient, denoted as $\taci(Y, \bz)$, has the property that, given a third vector $\bx \in \R^{\dx}$, $\taci(Y, (\bz, \bx))$ and $\taci(Y, \bx)$ satisfy
\begin{equation}\label{eq:thirdproperty}
\taci(Y, (\bz, \bx)) \ge \taci(Y, \bx) \;\&\; \taci(Y, (\bz, \bx)) = \taci(Y, \bx) \;\textrm{if.f.}\; Y \independent \bz \mid \bx.
\end{equation}
This allows us to use $\taci$ to measure not only dependence, but also \emph{conditional} dependence of $Y$ and $\bz$ given a third vector $\bx \in \R^{\dx}$. This new coefficient is now widely called the ``Azadkia–Chatterjee rank coefficient'' (AC coefficient). 
In a follow up study,~\citet{shi2024azadkia} proved the asymptotic normality of the new coefficient when $Y$ and $\bz$ are independent, under the additional assumption that $Y$ is continuous and $\bz$ is absolutely continuous. In another follow up work~\citep{lin2022limit}, the authors further analyzed the asymptotic normality of the AC coefficient when $Y$ and $\bz$ are not independent. \citet{ansari2022simple} further extended AC coefficient to the case where $\by \in \R^{\dy}$, subject to that each component of $\by$ is not almost surely a constant, as well as some additional regularity conditions of $\by$. While it represents an important extension, it remains unclear whether the estimation bias of their proposed coefficient is of order $o(1 / \sqrt{n})$ under the null hypothesis of independence. Consequently, their coefficient cannot be used directly to construct a valid Wald-type hypothesis test for independence, unlike classical correlation coefficients.

Alongside extensions of AC coefficients, another line of research has pursued an alternative strategy for developing \emph{rank-based} multivariate dependence coefficients: leveraging optimal transport techniques. This strategy has been the subject of intensive research over the past decade.
The common approach in this line of work first employs optimal transport-based techniques to construct a so-called ``multivariate rank'' (or ``Monge-Kantorovich rank'')~\citep{Chernozhukov2017,hallin2021distribution}, and then uses this rank to replace the traditional rank for coefficient construction. Famous applications of this idea include~\citet{deb2023multivariate,deb2020measuring,deb2024distribution,shi2022universally,mordant2022measuring}; see e.g.~\citet{han2021extensions,chatterjee2024survey} for recent surveys. Under the null hypothesis that $\by$ and $\bz$ are independent, these rank-based coefficients are usually guaranteed to converge to an asymptotic distribution (not necessarily normal), so that one can construct asymptotically valid tests for testing independence based on these coefficients.

While this progress is encouraging, the asymptotic null distribution of these rank-based approaches are all derived under the extra assumption that both $\by$ and $\bz$ are \emph{absolutely continuous} with respect to the Lebesgue measure. Moreover, when $\by$ and $\bz$ are dependent, these coefficients either require extra assumptions on the joint distribution of $\by$ and $\bz$ in order to ensure its asymptotic convergence, or cannot satisfy all the desired properties in criterion (b). 
Finally, to the best of our knowledge, none of these coefficients satisfy~\eqref{eq:thirdproperty}, and thus cannot be easily extended to measure conditional dependence.

In light of these works, a natural question is: \emph{can we construct a rank-based multivariate dependence coefficient that satisfies all the criteria (a)--(c) advocated by~\citet{chatterjee2021new}, as well as~\eqref{eq:thirdproperty}, without invoking any distributional assumption?}
In this paper, we propose a new multivariate extension of Azadkia-Chatterjee's rank coefficient to overcome these barriers. We prove that it converges almost surely to a measure of dependence between $\by$ and $\bz$ as long as $\by$ is not almost surely a constant vector. Moreover, this measure inherits the criterion (b) advocated by~\citet{chatterjee2021new}, as well as~\eqref{eq:thirdproperty}, so that it can be used to measure the conditional dependence of $\by$ and $\bz$, given a third random vector $\bx$. We also derive the asymptotic null distribution of this coefficient under the hypothesis of independence. Remarkably, our asymptotic null analysis requires \emph{only} that $\by$ is not almost surely a constant, allowing us to yield a valid test based on this coefficient even when $\by$ and $\bz$ are \emph{not} absolutely continuous. This sheds light on the asymptotic analysis of~\citet{shi2024azadkia} even in the univariate $Y$ case. Finally, we further provide a merge sort based algorithm~\citep{knuth1997art} which can compute this coefficient in time complexity $O(n (\log n)^{\dy})$, and analyze the monotonicity properties of this dependence measure. 

The rest of this article is organized as follows. In~\Cref{sec:uniac}, we review previous results of Azadkia-Chatterjee coefficient in the univariate $Y$ case. In~\Cref{sec:multiac}, we provide a multivariate extension, and analyze its almost sure convergence property. In~\Cref{sec:normal}, we discuss its asymptotically normality under independence. In~\Cref{sec:algorithm}, we discuss how to compute this coefficient, and discuss the properties of this new coefficient. In~\Cref{sec:oracle,sec:converganaly}, we prove the key theoretical findings.

{\bf Notations.} Given two vectors $\bsa, \bsb \in \R^d$, we write $\bsa \wedge \bsb$ as their entrywise minimum $(\bsa_1 \wedge \bsb_1, \ldots, \bsa_d \wedge \bsb_d)$, and say that $\bsa \le \bsb$ if $\bsa_i \le \bsb_i$ holds for all $i \in [d]$. Write $\|\cdot\|$ as the $\ell_2$-norm. Given a sequence of random variables $A_n$ and a random variable $A$, we write $A_n \asconv A$ if $A_n$ converges to $A$ almost surely, and $A_n \overset{d}{\to} A$ if it converges in distribution.

\section{Azadkia-Chatterjee coefficient: preliminaries and property discussions}\label{sec:uniac}

Consider a random variable $Y \in \R$ and a random vector $\bz \in \R^{\dz}$, both defined on the same probability space, \citet{azadkia2021simple} proposed to measure the dependence between $Y$ and $\bz$ via:
\begin{equation}\label{eq:acmea}
    \taci(Y, \bz) := \frac{\int \var(\pr(Y \ge y \mid \bz)) d \mu_Y (y)}{\int \var(\one\{Y \ge y\}) d \mu_Y (y)},
\end{equation}
where $\mu_Y$ denotes the measure of the marginal distribution of $Y$. In the rest of this paper, we will also simplify it as $\taci$ when the context is clear. \citet{azadkia2021simple} claimed that, as long as $Y$ is not almost surely a constant, $\taci$ satisfies the following two properties:
\begin{enumerate}
\item[(P1)] $0 \le \taci(Y, \bz) \le 1$; moreover, $\taci(Y, \bz)$ is equal to $0$ if and only if $Y \independent \bz$; and is equal to $1$ if and only if $Y$ is almost surely a function of $\bz$;
\customlabel{p1}{(P1)}
    \item[(P2)] Consider in addition a random vector $\bx \in \R^{\dx}$ on the same probability space as $(Y, \bz)$, then $\taci(Y, (\bx, \bz)) \ge \taci(Y, \bx)$, where the equality holds if and only if $Y \independent \bz \mid \bx$.
  \customlabel{p2}{(P2)}
\end{enumerate}
\ref{p1} matches the criterion (b) mentioned in~\citet{chatterjee2021new}; \ref{p2} allows practitioners to use $\taci$ to construct a measure of conditional dependence of $Y$ and $\bz$, given a third random vector $\bx$. Moreover, since $\taci$ is constructed via the conditional CDF of $Y$, such structure further reveals the following property:
\begin{enumerate}
    \item[(P3)] For any strictly increasing function $f(\cdot)$ and any invertible $h(\cdot)$, $\taci(Y, \bz) = \taci(f(Y), h(\bz))$.
  \customlabel{p3}{(P3)}
\end{enumerate}
\ref{p3} implies,  for example, that $\taci(Y, \bz) = \taci(\alpha Y + \beta, \bz)$ for any $\alpha > 0, \beta \in \R$. Another classical example for strictly monotone function is the logarithmic transformation widely used in genetics studies. In light of~\ref{p2}, \citet{azadkia2021simple} proposed to measure the conditional dependence between $Y$ and $\bz$, conditional on $\bx$, via:
\begin{equation}
    \tac(Y, \bz \mid \bx) := \frac{\taci(Y, (\bz, \bx)) - \taci(Y, \bx)}{1 - \taci(Y, \bx)} \equiv \frac{\int \E(\var(\PP(Y \ge y \mid \bz, \bx) \mid \bx)) d \mu_Y(y)}{\int \E(\var(\one\{\by \ge \bsy\} \mid \bx)) d \mu_Y(y)}.
\end{equation}
Here the denominator $1 - \taci(Y, \bx)$ is used for normalization. Likewise, for simplicity we may write it as $\tac$ when the context is clear. Just like~\ref{p1}, provided $Y$ is not almost surely a function of $\bx$, we have $\tac \in [0, 1]$; $\tac$ is equal to $0$ if and only if $Y \independent \bz \mid \bx$, and is equal to $1$ if and only if $Y$ is almost surely a function of $(\bx, \bz)$.

Equipped with the new measures, the next question is to estimate them from finite samples. Let $\{Y_i, \bx_i, \bz_i\}_{i = 1}^n$ be i.i.d. copies of the triple $(Y, \bx, \bz)$, \citet{azadkia2021simple} proposed to estimate $\taci$ and $\tac$ via the following two coefficients:
\[
\htaci := \frac{\sum_{i = 1}^n (n \min\{R_i, R_{M_{\bz}(i)}\} - L_i^2)}{\sum_{i = 1}^n L_i (n - L_i)} \;\&\; \htac := \frac{\sum_{i=1}^n (\min\{R_i, R_{M_{(\bx, \bz)}(i)}\} - \min\{R_i, R_{M_{\bx}(i)}\})}{\sum_{i=1}^n (R_i - \min\{R_i, R_{M_{\bx}(i)}\})}.
\]
Here $M_{(\bx, \bz)}(i)$ stands for the index of the nearest neighbour of $(\bx_i, \bz_i)$ in the dataset $\{(\bx_j, \bz_j)\}_{j \neq i}$ in Euclidean distance, with random tie-breaking, and $M_{\bx}, M_{\bz}$ are defined analogously. $R_i$ is the number of $Y_j$'s whose value is no greater than $Y_i$, and $L_i$ is the number of $Y_j$'s whose value is no smaller than $Y_i$.

\citet{azadkia2021simple} also proved that they converge to $\taci$ and $\tac$ almost surely, under the mild condition that $Y$ is not almost surely a constant (for convergence of $\taci$) and $Y$ is not almost surely a function of $\bx$ (for $\tac$). In a follow up study, \citet{shi2024azadkia} further proved the asymptotic normality of $\taci$ when $Y$ and $\bz$ are independent and are absolutely continuous with respect to the Lebesgue measure.

\section{A multivariate extension}\label{sec:multiac}

In this section, we present an extension to the multivariate $\by$. To motivate this extension, first consider the following reformulation of the AC coefficient in the univariate case: 
\begin{equation}\label{eq:acnewcoeff}
\htaci := \frac{\sum_{i = 1}^n (n R(Y_i \wedge Y_{M_{\bz}(i)}) - L_i^2)}{\sum_{i=1}^n L_i (n - L_i)}
\end{equation}
where given a deterministic quantity $y \in \R$, we write $
R(y) := \sum_{i = 1}^n \one\{Y_i \le y\}$.
Informally then, $R(y)$ denotes the rank of $y$ in the dataset $\{Y_i\}_{i = 1}^n$. 
Motivated by such reformulation, it is straightforward to define $\htaci$ for a general $\dy$ via replacing the $R(\cdot)$ and $L_i$ used in~\eqref{eq:acnewcoeff} as
\begin{equation}\label{eq:rfunc}
R(\bs{y}) := \sum_{i=1}^n \one\{\by_i \le \bs{y}\} \quad\&\quad L_i := \sum_{j=1}^n \one\{\by_j \ge \by_i\},
\end{equation}
where recall that $\one\{\by_i \le \bs{y}\}$ is equal to $1$ when $\by_i$ is entry-wise no greater than $\bs{y}$.
The following result shows that such simple reformulation allows us to ensure the almost sure convergence of $\htaci$ to some dependence measure when $\dy > 1$:
\begin{proposition}\label{prop:multiac}
	Consider the $\htaci$ as in~\eqref{eq:acnewcoeff} with $R(\cdot)$ and $L_i$ as in~\eqref{eq:rfunc}. We have that $\htaci$ converges almost surely to the coefficient 
	\begin{equation}\label{eq:tac1}
	\frac{\int \var(\PP(\by \ge \bsy \mid \bz)) d \mu_{\by}(\bsy)}{\int \var(\one\{\by \ge \bsy\}) d \mu_{\by}(\bsy)}
	\end{equation}
	whenever $\int \var(\one\{\by \ge \bsy\}) d \mu_{\by}(\bsy) > 0$.
\end{proposition}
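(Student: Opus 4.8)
The plan is to rescale numerator and denominator by $n^3$ and identify their almost sure limits separately. Writing $\widehat F_n(\bsy) := \frac1n\sum_{j=1}^n\one\{\by_j\le\bsy\}$ and $\widehat G_n(\bsy):=\frac1n\sum_{j=1}^n\one\{\by_j\ge\bsy\}$ for the empirical lower- and upper-orthant measures, we have $\frac1n R(\bsy)=\widehat F_n(\bsy)$ and $\frac1n L_i=\widehat G_n(\by_i)$, so that
\[
\htaci=\frac{\frac1n\sum_{i=1}^n\widehat F_n(\by_i\wedge\by_{M_{\bz}(i)})-\frac1n\sum_{i=1}^n\widehat G_n(\by_i)^2}{\frac1n\sum_{i=1}^n\widehat G_n(\by_i)\bigl(1-\widehat G_n(\by_i)\bigr)}=:\frac{N_n}{D_n}.
\]
Set $F(\bsy):=\PP(\by\le\bsy)$, $G(\bsy):=\PP(\by\ge\bsy)$ and $H(\bsy\mid\bz):=\PP(\by\ge\bsy\mid\bz)$, so $G(\bsy)=\E[H(\bsy\mid\bz)]$. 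The workhorse on the $\by$-side is the multivariate Glivenko--Cantelli theorem: the orthants $\{(-\infty,\bsy]\}$ and $\{[\bsy,\infty)\}$ form VC classes in $\R^{\dy}$, hence $\sup_{\bsy}|\widehat F_n(\bsy)-F(\bsy)|\asconv0$ and $\sup_{\bsy}|\widehat G_n(\bsy)-G(\bsy)|\asconv0$, with no assumption on the law of $\by$.

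For the denominator, $1$-Lipschitzness of $x\mapsto x(1-x)$ on $[0,1]$ together with the above gives $|D_n-\frac1n\sum_i G(\by_i)(1-G(\by_i))|\le\sup_{\bsy}|\widehat G_n-G|\asconv0$, and the ordinary strong law for the i.i.d.\ bounded variables $G(\by_i)(1-G(\by_i))$ then yields $D_n\asconv\E[G(\by)(1-G(\by))]=\int\var(\one\{\by\ge\bsy\})\,d\mu_{\by}(\bsy)$, which is positive by hypothesis; thus $\htaci$ is eventually well defined. The same two steps give $\frac1n\sum_i\widehat G_n(\by_i)^2\asconv\E[G(\by)^2]=\int(\E[H(\bsy\mid\bz)])^2\,d\mu_{\by}(\bsy)$.

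The only genuinely new term is $\frac1n\sum_i\widehat F_n(\by_i\wedge\by_{M_{\bz}(i)})$, which I would handle in two stages. First, Glivenko--Cantelli replaces $\widehat F_n$ by $F$ uniformly, so this quantity differs from $\frac1n\sum_i F(\by_i\wedge\by_{M_{\bz}(i)})$ by at most $\sup_{\bsy}|\widehat F_n-F|\asconv0$. Second, I would apply the nearest-neighbour strong law of \citet{azadkia2021simple} to the bounded measurable function $g\bigl((\by,\bz),(\by',\bz')\bigr):=F(\by\wedge\by')$ (which conveniently does not involve $\bz,\bz'$), obtaining $\frac1n\sum_i F(\by_i\wedge\by_{M_{\bz}(i)})\asconv\E[F(\by_1\wedge\by_1')]$, where, conditionally on $\bz_1$, the vectors $\by_1,\by_1'$ are i.i.d.\ with law $P_{\by\mid\bz_1}$. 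Conditioning on $\bz_1$ and introducing an independent $\by_2\sim\mu_{\by}$ rewrites this limit as $\E_{\bz_1}\E_{\by_2}[H(\by_2\mid\bz_1)^2]=\int\E_{\bz}[H(\bsy\mid\bz)^2]\,d\mu_{\by}(\bsy)$, using $\by_2\independent\bz_1$. Subtracting,
\[
N_n\asconv\int\Bigl(\E_{\bz}[H(\bsy\mid\bz)^2]-\bigl(\E_{\bz}[H(\bsy\mid\bz)]\bigr)^2\Bigr)d\mu_{\by}(\bsy)=\int\var\bigl(\PP(\by\ge\bsy\mid\bz)\bigr)d\mu_{\by}(\bsy),
\]
and dividing by the limit of $D_n$ produces \eqref{eq:tac1}.

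The main obstacle is the nearest-neighbour strong law invoked above; everything else is bookkeeping. That result rests on two facts: the nearest-neighbour distance in $\bz$-space tends to $0$, so $\by_{M_{\bz}(i)}$ asymptotically behaves like a fresh draw from $P_{\by\mid\bz_i}$ and the expectation of the nearest-neighbour average converges; and the nearest-neighbour graph in $\R^{\dz}$ has uniformly bounded in-degree, so the average concentrates about its mean via a bounded-difference/Efron--Stein estimate together with Borel--Cantelli. Crucially this geometry lives entirely in $\bz$-space and the vectors $\by_i$ enter only as bounded ``marks'', so the argument of \citet{chatterjee2021new,azadkia2021simple} transfers to arbitrary $\dy$ essentially verbatim; the only adaptation for multivariate $\by$ is to replace the univariate empirical CDF by its $\R^{\dy}$ analogue, which is legitimate for any $\by$ by the Glivenko--Cantelli argument above and needs nothing beyond the stated positivity condition.
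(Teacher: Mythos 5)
Your proposal is correct and follows essentially the same route as the paper: rescale numerator and denominator by $n^3$, use uniform (Glivenko--Cantelli) convergence of the empirical orthant CDFs to replace $\widehat F_n,\widehat G_n$ by $F,G$, and then invoke the Azadkia--Chatterjee nearest-neighbour strong law (their Lemmas 11.8--11.9, i.e.\ concentration via bounded in-degree plus convergence of the mean via $\bz_{M(i)}\to\bz_i$) for the term $\frac1n\sum_i F(\by_i\wedge\by_{M_{\bz}(i)})$. The paper proves this proposition as a corollary of \Cref{thm:multiacest} with the permuted empirical CDF $\tilde F_n$ replaced by the ordinary $F_n$, and your identification of the limit $\E[F(\by_1\wedge\by_1')]=\int\E[\PP(\by\ge\bsy\mid\bz)^2]\,d\mu_{\by}(\bsy)$ matches its computation exactly.
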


Just like the univariate case,~\eqref{eq:tac1} is equal to zero when $\by \independent \bz$. However, as we show in~\Cref{ex:naiveac}, such a relation is not ``if and only if''. 

\begin{figure}[t!]
    \centering
\begin{tikzpicture}[scale=4]

\draw[->] (-0.2,0) -- (1.2,0) node[right] {$y_1$};
\draw[->] (0,-0.2) -- (0,1.2) node[above] {$y_2$};

\fill[yellow] (0,1) -- (1,0) -- (1,1) -- cycle;

\draw[red, thick] (1,0) -- (0,1);

\node[below left] at (0,0) {0};
\node[below] at (1,0) {1};
\node[left] at (0,1) {1};

\end{tikzpicture}
    \caption{Density of random vector $\by$ in~\Cref{ex:naiveac}. The red area corresponds to the area where the distribution of $\by$ varies between $\bz = 1$ and $\bz = 0$, the yellow area corresponds to the area where distribution of $\by$ stays the same. The red and yellow area composites the support of $\by$. Apparently, for any $\bsy$ in the support of $\by$ (even around the boundary), we have $\pr(\by \ge \bsy \mid \bz = 0) = \pr(\by \ge \bsy \mid \bz = 1)$.}
    \label{fig:example}
\end{figure}
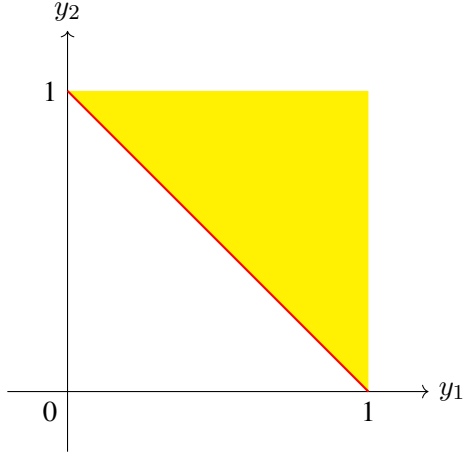

\begin{example}\label{ex:naiveac}
    Consider a $\by, \bz$, where $\bz \in \{0, 1\}$ is a uniform binary variable, $\by$ is a two-dimensional random vector whose support conditional on both $\bz = 1$ and $\bz = 0$ is equal to the triangle $\{(y_1, y_2): 0 \le y_1, y_2 \le 1 \;\&\; y_1 + y_2 \ge 1\}$. Moreover, we assume that for any set $\mathcal{B} \subseteq \{(y_1, y_2): 0 \le y_1, y_2 \le 1 \;\&\; y_1 + y_2 > 1\}$, 
    \[
    \pr(\by \in \mathcal{B} \mid \bz = 1) = \pr(\by \in \mathcal{B} \mid \bz = 0).
    \]
    At the same time, we assume that
    \[
    \forall \bsy \in \mathcal{L} := \{(y_1, y_2): 0 \le y_1, y_2 \le 1 \;\&\; y_1 + y_2 = 1\}, \pr(\by = \bsy \mid \bz = 1) =  \pr(\by = \bsy \mid \bz = 0) = 0,
    \]
    i.e., there is no point mass on the line segment $\mathcal{L}$, and there exists at least one $\mathcal{A} \subseteq \mathcal{L}$ such that $\pr(\by \in \mathcal{A} \mid \bz = 1) \neq \pr(\by \in \mathcal{A} \mid \bz = 0)$. See~\Cref{fig:example} for a illustration.

    Under these requirements, apparently $\by$ and $\bz$ are dependent. However, for any $\bsy$ in the support of $\mu_{\by}$, we can always have
    \[
    \pr(\by \ge \bsy \mid \bz = 1) = \pr(\by \ge \bsy \mid \bz = 0).
    \]
    Consequently,~\eqref{eq:tac1} is equal to zero. \hfill$\blacksquare$
\end{example}

This motivates us to consider a new coefficient that satisfies all the desirable properties in~\ref{p1}. As shown in~\Cref{ex:naiveac}, the reason the original coefficient fails to satisfy the requirements in~\ref{p1} is that its integration in the numerator is only over the support of $\mu_{\by}$. In~\Cref{ex:naiveac}, however, the two conditional CDFs $\pr(\by \ge \bsy \mid \bz = 1)$ and $\pr(\by \ge \bsy \mid \bz = 0)$ differ for some $\bsy$ outside the support of $\mu_{\by}$. This encourages us to define a coefficient that replaces $\mu_{\by}$ with a different measure on $\R^{\dy}$.

In this paper, we replace it by $\tilde{\mu}_{\by}$, a probability measure on $\R^{\dy}$ under which each component has the same marginal distribution as under $\mu_{\by}$, but all components are independent. More specifically, we redefine $\taci$ as
\begin{equation}\label{eq:multitaci}
\taci(\by, \bz) := \frac{\int \var(\PP(\by \ge \bsy \mid \bz)) d \tilde{\mu}_{\by}(\bsy)}{\int \var(\one\{\by \ge \bsy\}) d \tilde{\mu}_{\by}(\bsy)}.
\end{equation}
The following result shows that this new definition overcomes the limitations of~\eqref{eq:tac1}:
\begin{theorem}\label{thm:multiaccoef}
Suppose that $\by$ is not almost surely a constant, then the $\taci$ in~\eqref{eq:multitaci} satisfies: i) $\taci \in [0, 1]$; ii) $\taci = 0$ if and only if $\by \independent \bz$; and iii) $\taci = 1$ if and only if $\by$ is almost surely a function of $\bz$.
\end{theorem}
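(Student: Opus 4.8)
The plan is to analyze the numerator
\[
N := \int \var\big(\PP(\by \ge \bsy \mid \bz)\big)\, d\tilde\mu_{\by}(\bsy)
\]
and the denominator
\[
D := \int \var\big(\one\{\by \ge \bsy\}\big)\, d\tilde\mu_{\by}(\bsy),
\]
separately. For part i), the pointwise bound $0 \le \var(\PP(\by\ge\bsy\mid\bz)) \le \var(\one\{\by\ge\bsy\})$ follows from the law of total variance (the left term is the ``explained'' variance component), so $0 \le N \le D$ and hence $\taci \in [0,1]$, provided $D > 0$. The condition that $\by$ is not almost surely a constant must be used precisely to guarantee $D > 0$: if $\by$ is not a.s. constant, then some component $\by_j$ is not a.s. constant, so there is a value $y_j$ in the interior of the range of $\by_j$ with $0 < \PP(\by_j \ge y_j) < 1$; since $\tilde\mu_{\by}$ has the same marginals as $\mu_{\by}$ and positive mass near such a $y_j$ in each coordinate, one can exhibit a set of $\bsy$'s of positive $\tilde\mu_{\by}$-measure on which $0 < \PP(\by \ge \bsy) < 1$, forcing $\var(\one\{\by\ge\bsy\}) > 0$ there and thus $D > 0$.

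For part ii), the ``if'' direction is immediate: if $\by \independent \bz$ then $\PP(\by\ge\bsy\mid\bz) = \PP(\by\ge\bsy)$ is a.s. constant in $\bz$, so the numerator integrand vanishes identically and $\taci = 0$. For the ``only if'' direction, $\taci = 0$ forces $\var(\PP(\by\ge\bsy\mid\bz)) = 0$ for $\tilde\mu_{\by}$-almost every $\bsy$, i.e. $\PP(\by\ge\bsy\mid\bz) = \PP(\by\ge\bsy)$ a.s. The key point — and this is where the switch from $\mu_{\by}$ to $\tilde\mu_{\by}$ matters — is that $\tilde\mu_{\by}$ has full support on the product of the (closed) supports of the marginals, which contains a rich enough family of orthants. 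I would argue that the collection of upper-orthant events $\{\by \ge \bsy\}$ ranging over a $\tilde\mu_{\by}$-co-null set of $\bsy$ is still a determining class for the conditional law of $\by$ given $\bz$; more carefully, take a regular conditional distribution, and show that for a.e. $\bz$ the (random) survival function agrees with the unconditional one on a dense set of continuity points, hence everywhere by right-continuity, hence the conditional law equals the marginal law, giving $\by \independent \bz$. The measure-theoretic care needed to handle atoms and the boundary of the support (as illustrated by \Cref{ex:naiveac}) is exactly what makes $\tilde\mu_{\by}$, rather than $\mu_{\by}$, the right choice.

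For part iii), the ``if'' direction: if $\by = g(\bz)$ a.s. then $\PP(\by\ge\bsy\mid\bz) = \one\{g(\bz)\ge\bsy\}$ is $\{0,1\}$-valued, so its conditional variance matches the unconditional indicator variance pointwise, and $N = D$, giving $\taci = 1$. For the ``only if'' direction, $\taci = 1$ with $0 \le N \le D$ and the pointwise inequality $\var(\PP(\by\ge\bsy\mid\bz)) \le \var(\one\{\by\ge\bsy\})$ forces equality pointwise for $\tilde\mu_{\by}$-a.e. $\bsy$; by the law of total variance this means $\E[\var(\one\{\by\ge\bsy\}\mid\bz)] = 0$, i.e. $\one\{\by\ge\bsy\}$ is a.s. a function of $\bz$ for $\tilde\mu_{\by}$-a.e. $\bsy$. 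I would then upgrade ``each orthant membership is determined by $\bz$'' to ``$\by$ itself is determined by $\bz$'': pick a countable dense set of such $\bsy$'s (possible since the bad set is $\tilde\mu_{\by}$-null and $\tilde\mu_{\by}$ has full support on the product of marginal supports), conclude that on a single a.s. event the value of $\one\{\by\ge\bsy\}$ is a measurable function of $\bz$ simultaneously for all these $\bsy$, and reconstruct $\by$ from these orthant indicators by right-continuity, yielding $\by = g(\bz)$ a.s. for a measurable $g$.

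The main obstacle is the ``only if'' halves of ii) and iii): translating the statement ``the conditional and unconditional upper-orthant probabilities agree for $\tilde\mu_{\by}$-a.e.\ $\bsy$'' into a statement about the full conditional law of $\by$. The subtlety is that the exceptional $\tilde\mu_{\by}$-null set could, a priori, be chosen adversarially to include all ``important'' thresholds (e.g.\ atoms of $\by$, or the relevant piece of the support's boundary as in \Cref{ex:naiveac}); the resolution is that $\tilde\mu_{\by}$, having independent coordinates with the correct marginals, assigns positive mass to every nondegenerate box meeting the product of the marginal supports, so a $\tilde\mu_{\by}$-co-null set is automatically dense there and one can always find enough ``good'' thresholds — in particular enough continuity points of the relevant survival functions — to pin down the law by a monotone-class / right-continuity argument. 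Making this density-plus-right-continuity argument rigorous, and checking measurability of the reconstructed function $g$ in part iii), is the crux; the rest is the law of total variance and bookkeeping.
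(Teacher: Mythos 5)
Your architecture matches the paper's: law of total variance for i) and the two inequalities, a determining-class argument for ii), and reconstruction of $\by$ from orthant indicators for iii); and you have correctly located the crux in upgrading ``the conditional survival functions satisfy the required constraint for $\tilde\mu_{\by}$-a.e.\ $\bsy$'' to ``for all $\bsy$'' (or at least for a determining class). The gap is that this upgrade is precisely what the paper has to prove as a standalone result (\Cref{lem:closedset}), and the mechanism you propose for it --- the $\tilde\mu_{\by}$-co-null set is dense in the product of marginal supports, then conclude by one-sided continuity --- does not work as stated. The failure mode is the mixed-atom case: if $\bsy$ has some coordinates at atoms of the corresponding marginals and others not, then $\tilde\mu_{\by}(\{\bsy\})=0$, so $\bsy$ may be excluded from the good set $A$, and no sequence from $A$ approaching $\bsy$ ``generically'' (or from strictly above, or from strictly below) recovers $G_{\bs V}(\bsy)=\pr(\by\ge\bsy\mid\bs V)$: approaching from below misses points whenever a marginal has a gap or an atom just below $\bsy_j$, while approaching from above loses the atoms' mass. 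The paper's resolution is to \emph{freeze} the atom coordinates and perturb only the non-atom coordinates upward (the sets $H$ and $A_{\bsy,\bsy'}$ in the proof of \Cref{lem:closedset}), using that the non-atom coordinates contribute zero conditional mass at the boundary almost surely; a second, separate case handles $\bsy$ lying in a gap of some marginal by jumping to the far end of the gap. None of this is implied by density plus continuity, so as written the ``only if'' halves of ii) and iii) are not proved. (A minor related point: the survival function $\bsy\mapsto\one\{\by\ge\bsy\}$ is continuous from below, not from the right, so even the direction of your continuity argument needs care.)

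The same gap affects part i): your direct argument that non-constancy of $\by$ forces $D>0$ (``one can exhibit a set of $\bsy$'s of positive $\tilde\mu_{\by}$-measure on which $0<\pr(\by\ge\bsy)<1$'') is asserted rather than proved, and the joint event $\{\by\ge\bsy\}$ can have probability $0$ or $1$ on large portions of the product of the marginal supports when the coordinates of $\by$ are strongly dependent. The paper instead derives $D>0$ from the ``only if'' direction of \Cref{lem:iff1}(i) with a constant conditioning variable, i.e.\ through the very same extension lemma. So the proposal is a correct and well-organized plan, but the central technical lemma that makes the multivariate case (and the choice of $\tilde\mu_{\by}$) actually work is missing.
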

%
%
Informally, \Cref{thm:multiaccoef} shows that the new $\taci$ satisfies the properties in~\ref{p1} for the multivariate $\by$.  
Using~\eqref{eq:multitaci}, we can analogously define the conditional coefficient $\tac$ as
\begin{equation}\label{eq:multitaccond}
\tac := \frac{\int \E(\var(\PP(\by \ge \bsy \mid \bz, \bx) \mid \bx)) d \tilde{\mu}_{\by}(\bsy)}{\int \E(\var(\one\{\by \ge \bsy\} \mid \bx)) d \tilde{\mu}_{\by}(\bsy)}.
\end{equation}
In~\Cref{thm:multiaccondcoef}, we show that the $\taci$ in~\eqref{eq:multitaci} satisfies~\ref{p2}, so that we can use this new $\tac$ to measure conditional dependence:
\begin{theorem}\label{thm:multiaccondcoef}
    Suppose that $\by$ is not almost surely a function of $\bx$, then the $\tac$ in~\eqref{eq:multitaccond} satisfies: i) $\tac \in [0, 1]$; ii) $\tac = 0$ if and only if $\by \independent \bz \mid \bx$; and iii) $\tac = 1$ if and only if $\by$ is almost surely a function of $(\bx, \bz)$.
\end{theorem}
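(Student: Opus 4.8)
The plan is to reduce everything to \Cref{thm:multiaccoef} via the algebraic identity
\[
\tac \;=\; \frac{\taci(\by,(\bz,\bx)) - \taci(\by,\bx)}{1 - \taci(\by,\bx)},
\]
which mirrors the univariate definition. To obtain it, apply the law of total variance in each $\bsy$: writing $W_{\bsy} := \PP(\by \ge \bsy \mid \bz, \bx)$ and noting $\E[W_{\bsy}\mid\bx] = \PP(\by\ge\bsy\mid\bx)$, we get $\E\var(W_{\bsy}\mid\bx) = \var(W_{\bsy}) - \var(\PP(\by\ge\bsy\mid\bx))$, and likewise $\E\var(\one\{\by\ge\bsy\}\mid\bx) = \var(\one\{\by\ge\bsy\}) - \var(\PP(\by\ge\bsy\mid\bx))$; integrating against $\tilde\mu_{\by}$ and dividing numerator and denominator by $\int\var(\one\{\by\ge\bsy\})\,d\tilde\mu_{\by}(\bsy)$ gives the identity. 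Since $\by$ is not almost surely a function of $\bx$, it is in particular not almost surely constant, so $\int\var(\one\{\by\ge\bsy\})\,d\tilde\mu_{\by}(\bsy) > 0$, and by \Cref{thm:multiaccoef}(iii), $\taci(\by,\bx) < 1$; hence $\taci(\by,(\bz,\bx))$, $\taci(\by,\bx)$ and $\tac$ are all well defined.

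Parts (i) and (iii) then follow quickly. ``Conditioning reduces variance'' applied to $W_{\bsy}$ gives $\var(\PP(\by\ge\bsy\mid\bx)) \le \var(\PP(\by\ge\bsy\mid\bz,\bx))$ for every $\bsy$, so integrating yields $\taci(\by,\bx) \le \taci(\by,(\bz,\bx))$ and hence the numerator of $\tac$ is nonnegative; combined with $\taci(\by,(\bz,\bx)) \le 1$ from \Cref{thm:multiaccoef}(i), the numerator is at most $1 - \taci(\by,\bx)$, so $\tac \in [0,1]$. For (iii), the identity and $\taci(\by,\bx) < 1$ give $\tac = 1 \iff \taci(\by,(\bz,\bx)) = 1$, which by \Cref{thm:multiaccoef}(iii) holds iff $\by$ is almost surely a function of $(\bz,\bx)$.

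This leaves (ii), where only one implication is substantial. From the identity, $\tac = 0$ iff $\taci(\by,(\bz,\bx)) = \taci(\by,\bx)$, i.e.\ iff $\int[\var(\PP(\by\ge\bsy\mid\bz,\bx)) - \var(\PP(\by\ge\bsy\mid\bx))]\,d\tilde\mu_{\by}(\bsy) = 0$. If $\by\independent\bz\mid\bx$ then $\PP(\by\ge\bsy\mid\bz,\bx) = \PP(\by\ge\bsy\mid\bx)$ for every $\bsy$ and the integral vanishes, so $\tac = 0$. Conversely, since the integrand is nonnegative, $\tac = 0$ forces equality in the law of total variance for $\tilde\mu_{\by}$-a.e.\ $\bsy$, i.e.\ $\PP(\by\ge\bsy\mid\bz,\bx) = \PP(\by\ge\bsy\mid\bx)$ almost surely, for $\tilde\mu_{\by}$-a.e.\ $\bsy$. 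To upgrade this to conditional independence, fix regular conditional distributions $\nu_{\bz,\bx}(\cdot) = \PP(\by\in\cdot\mid\bz,\bx)$ and $\rho_{\bx}(\cdot) = \PP(\by\in\cdot\mid\bx)$; by Fubini on the product of $\tilde\mu_{\by}$ with the underlying probability space (using joint measurability of these kernels and of orthant probabilities) the two quantifiers may be exchanged, so almost surely $\nu_{\bz,\bx}$ and $\rho_{\bx}$ assign equal mass to $\{\bs{w}:\bs{w}\ge\bsy\}$ for $\tilde\mu_{\by}$-a.e.\ $\bsy$. Both $\nu_{\bz,\bx}$ and $\rho_{\bx}$ are (almost surely) supported on $K := \mathrm{supp}(\tilde\mu_{\by})$ --- the product of the one-dimensional marginal supports of $\by$, which contains $\mathrm{supp}(\mu_{\by})$ --- because $\mu_{\by}$ is the $(\bz,\bx)$-mixture of the $\nu_{\bz,\bx}$'s and the $\bx$-mixture of the $\rho_{\bx}$'s; moreover $\tilde\mu_{\by}$-a.e.\ point is dense in $K$. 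Using one-sided continuity of orthant probabilities to pass from a dense subset of $K$ to all of $K$, and the fact that the upper orthants form a $\pi$-system generating the Borel $\sigma$-algebra, we obtain $\nu_{\bz,\bx} = \rho_{\bx}$ almost surely, which is exactly $\by\independent\bz\mid\bx$.

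The main obstacle is this last upgrade. The delicate points are: (a) the $\tilde\mu_{\by}$-null exceptional set of $\bsy$ is allowed to depend on the realization of $(\bz,\bx)$, so the quantifier exchange must go through Fubini on a product space rather than a naive union bound; (b) $\tilde\mu_{\by}$ need not dominate $\mu_{\by}$ --- when the coordinates of $\by$ are functionally related, $\mu_{\by}$ and $\tilde\mu_{\by}$ can be mutually singular --- so one cannot argue by absolute continuity and must instead exploit that the conditional laws all live on $K$ while $\tilde\mu_{\by}$ has full support there; and (c) one must choose the correct one-sided limits (e.g.\ comparing $\{\by > \bsy\}$ against $\{\by \ge \bsy\}$) so that agreement of survival functions on a dense subset of $K$ propagates to equality of the two measures. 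This measure-theoretic core is the same one needed for the $(\Rightarrow)$ implications of \Cref{thm:multiaccoef}, so in the write-up I would isolate it as a lemma and invoke it in both places.
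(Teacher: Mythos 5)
Your reduction to the identity $\tac = \frac{\taci(\by,(\bz,\bx))-\taci(\by,\bx)}{1-\taci(\by,\bx)}$ via the law of total variance, and your proofs of (i), of (iii), and of the ``if'' half of (ii), are correct and essentially the paper's route (the paper runs the same total-variance decomposition through Lemmas~\ref{lem:mea} and~\ref{lem:iff1} rather than quoting \Cref{thm:multiaccoef}, but the content is the same). The substantive divergence, and the gap, is in the ``only if'' half of (ii). The paper's \Cref{lem:closedset} never exchanges the quantifiers ``for all $\bsy\in A$'' and ``almost surely'': it fixes an arbitrary $\bsy\in\R^{\dy}$, approximates it by points of $A$, and controls the error by almost-sure statements \emph{about that fixed $\bsy$} (e.g.\ $\E\,\mu_{\by_j\mid\bs{V}_i}(\{\bsy_j\})=\mu_{\by_j}(\{\bsy_j\})=0$, or $\E\,\mu_{\by_j\mid\bs{V}_i}([\bsy_j,\check\bsy_j))=0$ across a gap in the support), with a case analysis over atomic coordinates and gaps. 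You instead exchange quantifiers by Fubini first and then argue pathwise that two measures $\nu_{\bz,\bx}$ and $\rho_{\bx}$ supported on $K$ whose upper-orthant probabilities agree on a $\tilde\mu_{\by}$-full (hence dense-in-$K$) set must coincide.

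That pathwise claim is false as stated. Take $\dy=1$ with $\tilde\mu_{\by}=\mu_{\by}=\tfrac12\delta_0+\tfrac12\,(\text{uniform on }[1,2])$, so $K=\{0\}\cup[1,2]$; let $A=\{0\}\cup\bigl((1,2]\setminus N\bigr)$ for any Lebesgue-null $N$, which is $\tilde\mu_{\by}$-full and dense in $K$. The measures $\nu=\tfrac12\delta_0+\tfrac12\delta_1$ and $\rho=\delta_0$ are both supported on $K$ and satisfy $\nu([y,\infty))=\rho([y,\infty))$ for every $y\in A$ (both equal $1$ at $y=0$ and $0$ for $y>1$), yet $\nu\neq\rho$: the value at the gap endpoint $y=1$ is determined neither by one-sided limits through $A$ nor by the support constraint. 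What rules this configuration out for conditional laws is not topology but the probabilistic fact that, for the \emph{fixed} point $1$, $\E[\nu_{\bz,\bx}(\{1\})]=\mu_{\by}(\{1\})=0$, hence $\nu_{\bz,\bx}(\{1\})=0$ a.s.; one must collect such statements over the countably many atoms and gap endpoints of each marginal support \emph{before} passing to a pathwise argument (or avoid the quantifier exchange altogether, as the paper does). Your points (a)--(c) flag the right cluster of difficulties, but the ``density plus one-sided continuity plus $\pi$-system'' step does not close them, and this is precisely where the content of \Cref{lem:closedset} lives.
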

Note that since $\tac$ defined in~\eqref{eq:multitaccond} can be equivalently expressed as $\frac{\taci(\by, (\bz, \bx)) - \taci(\by, \bx)}{1 - \taci(\by, \bx)}$,~\Cref{thm:multiaccondcoef} directly implies that $\taci$ satisfies~\ref{p2}. In addition, since $\taci$ is a CDF based measure, one can easily verify that it still satisfies~\ref{p3} with a slight modification:
\begin{enumerate}
    \item[(P3')] For any vector-valued function $f: \R^{\dy} \to \R^{\dy}$ of the form $f(\bs{a}) = (f_1(\bs{a}_1), \ldots, f_{\dy}(\bs{a}_{\dy}))$, where each $f_i : \R \to \R$ is a strictly increasing function, and any invertible $h(\cdot)$, $\taci(\by, \bz) = \taci(f(\by), h(\bz))$.
  \customlabel{p3'}{(P3')}
\end{enumerate}
A direct consequence of \ref{p3'} is that $\taci(\by, \bz) = \taci(\alpha \by + \bs{\beta}, \bz)$ for any scalar $\alpha > 0$ and $\bs{\beta} \in \R^{\dy}$, just like the univariate case.

To consistently estimate $\taci$ and $\tac$, we propose the following estimators, which, with slight abuse of notations, are still denoted by $\htaci, \htac$, respectively:
\begin{equation}\label{eq:multiacest}
\begin{aligned}
\htaci & := \frac{\sum_{i=1}^n (n \tilde{R}(\by_i \wedge \by_{M_{\bz}(i)}) - \check{L}_i^2)}{\sum_{i=1}^n (n - \check{L}_i) \check{L}_i} \\
\htac & := \frac{\sum_{i=1}^n (\tilde{R}(\by_i \wedge \by_{M_{(\bx, \bz)}(i)}) - \tilde{R}(\by_i \wedge \by_{M_{\bx}(i)}))}{\sum_{i=1}^n (\tilde{R}(\by_i) - \tilde{R}(\by_i \wedge \by_{M_{\bx}(i)}))},
\end{aligned}
\end{equation}
where provided $\dy$ permutations $\pi_1, \ldots, \pi_{\dy}: [n] \to [n]$ satisfying that for any $i$ and any $1 \le d_1 < d_2 \le \dy$, $\pi_{d_1}(i) \neq \pi_{d_2}(i)$, we write $\tilde{\by}_i := (Y_{\pi_1(i), 1},, \ldots, Y_{\pi_{\dy}(i), \dy})$ as a permuted vector and write $\tilde{R}(\cdot), \check{L}_i$ as:
\[
\tilde{R}(\bsy) := \sum_{i=1}^{n} \one\{\tilde{\by}_i \le \bsy\} \quad\&\quad \check{L}_i := \sum_{\ell = 1}^n \one \{\by_{\ell} \ge \tilde{\by}_i\}.
\]
In other words, we propose to first perform an entrywise permutation of the random vectors $\by_i$ to get $\{\tilde{\by}_i\}_{i=1}^n$, and then construct estimators based on the comparisons between $\{\by_i\}_{i = 1}^n$ and the new data $\{\tilde{\by}_i\}_{i = 1}^n$. The following theorem shows the almost sure convergence of the two estimators.

\begin{theorem}\label{thm:multiacest}
Consider the $\taci, \tac, \htaci, \htac$ defined in~\eqref{eq:multitaci}--\eqref{eq:multiacest}. 
Suppose that $\by$ is not almost surely a constant, then $\htaci\asconv \taci$; moreover, suppose that $\by$ is not almost surely a function of $\bx$, then $\htac \asconv \tac$.
\end{theorem}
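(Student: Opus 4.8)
The plan is to establish the almost sure convergence of $\htaci$ (the argument for $\htac$ being analogous, since $\htac$ is a continuous function of three quantities of the same type as the numerator/denominator of $\htaci$, each handled by the same device). The key idea is to condition on the permuted sample $\{\tilde{\by}_i\}_{i=1}^n$ and to treat the estimator as a sum of ``nearest-neighbour'' statistics of the form studied in the univariate Azadkia--Chatterjee theory, but now indexed by the auxiliary points $\tilde{\by}_i$ which, by construction, behave like an i.i.d. sample from the product measure $\tilde\mu_{\by}$. Concretely, write $\tilde{R}(\bsy)/n$ as an empirical estimate of the (random) function $\bsy \mapsto \PP(\by \le \bsy)$ evaluated along the grid of permuted points, and $\check L_i/n$ similarly; the first step is to show, via a Glivenko--Cantelli argument over the class of orthants $\{\bsy' : \bsy' \le \bsy\}$ in $\R^{\dy}$ (a VC class), that these empirical quantities converge uniformly a.s.\ to their population counterparts. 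This reduces $\htaci$ to its ``oracle'' version in which $\tilde R, \check L$ are replaced by the true CDF quantities, up to an a.s.\ $o(1)$ error.

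The second step is to identify the limit of the oracle statistic. Here I would invoke the nearest-neighbour consistency machinery already used by \citet{azadkia2021simple}: conditionally on $\by_i$, the nearest neighbour $\by_{M_{\bz}(i)}$ in $\bz$-space has $\bz_{M_{\bz}(i)} \to \bz_i$ in an appropriate sense, so that $\by_i \wedge \by_{M_{\bz}(i)}$, after averaging, produces the term $\int \PP(\by \ge \bsy \mid \bz)^2$ integrated against $\tilde\mu_{\by}$; more precisely one shows
\[
\frac1n \sum_{i=1}^n \frac{\tilde R(\by_i \wedge \by_{M_{\bz}(i)})}{n} \;\asconv\; \int \E\!\big[\PP(\by \ge \bsy \mid \bz)^2\big]\, d\tilde\mu_{\by}(\bsy),
\]
and similarly $\frac1{n^2}\sum_i \check L_i^2 \asconv \int \big(\E[\one\{\by \ge \bsy\}]\big)^2 d\tilde\mu_{\by}(\bsy)$ and $\frac1{n^2}\sum_i \check L_i(n-\check L_i) \asconv \int \var(\one\{\by\ge\bsy\})\,d\tilde\mu_{\by}(\bsy)$. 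Subtracting, the numerator of $\htaci$ converges to $\int \var(\PP(\by\ge\bsy\mid\bz))\,d\tilde\mu_{\by}(\bsy)$, and dividing by the denominator gives exactly $\taci$ as in~\eqref{eq:multitaci}. The condition that $\by$ is not a.s.\ constant guarantees the denominator limit is strictly positive (each marginal of $\tilde\mu_{\by}$ is non-degenerate, so $\var(\one\{\by \ge \bsy\})>0$ on a set of positive $\tilde\mu_{\by}$-measure), so the ratio is well defined in the limit and the continuous mapping theorem applies.

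The main obstacle I anticipate is the interaction between the permuted points $\tilde{\by}_i$ and the nearest-neighbour structure in $\bz$: the $\tilde{\by}_i$ are \emph{not} independent of the original $\by_i$ (they are built from the same coordinates under different permutations), so one cannot naively treat $\{(\tilde{\by}_i, \bz_{M_{\bz}(i)})\}$ as an i.i.d.\ sample. The fix is to condition on the full $\by$-sample (equivalently on the order statistics of each coordinate), under which $\tilde{\by}_i$ becomes a deterministic reshuffling while the nearest-neighbour map $M_{\bz}$ depends only on $\bz$'s, and then to use exchangeability plus a conditional strong law / concentration bound (Hoeffding- or Azuma-type, exploiting the bounded influence of a single sample on $M_{\bz}$) to push the averages to their conditional means, finally averaging out. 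A secondary technical point is handling ties in the $\by$-coordinates (so that $\one\{\by_i \le \bsy\}$ and the permutation construction are well defined); this is dealt with exactly as in \citet{azadkia2021simple} by noting ties occur with probability zero along each coordinate when the marginal is continuous, and in general by a separate bookkeeping argument that does not affect the limit. I would organize the detailed proof in \Cref{sec:converganaly}, first proving the Glivenko--Cantelli reduction, then the oracle limit, then assembling the ratio.
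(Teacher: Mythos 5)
Your proposal is correct and follows essentially the same route as the paper: a uniform (Glivenko--Cantelli-type) convergence of the permuted empirical CDF $\tilde F_n$ to $\tilde F$ — which, as you anticipate, must be proved by bounded-difference/concentration plus a grid argument rather than by off-the-shelf i.i.d.\ VC theory, since the $\tilde{\by}_i$ share coordinates — followed by a reduction to an oracle statistic whose limit is computed by conditioning on the $\bz$-sample and invoking the nearest-neighbour consistency lemmas of \citet{azadkia2021simple}, with positivity of the denominator supplied by the non-degeneracy of $\by$. The paper organizes the same ingredients as ``$Q_n - \E Q_n \asconv 0$'' plus ``$\E Q_n \to Q$,'' but the decomposition and key lemmas match yours.
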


\begin{remark}[rate of convergence]
    Following the proof of~\citet[Theorem~4.1]{azadkia2021simple}, it is easy to prove that the rate of convergence of $\htaci$ to $\taci$ is of order $n^{-\frac{1}{\max\{\dz, 2\}}}$, up to logarithmic factors. The dependence on $\dz$ is substantial as it relies on a nearest neighbor matching of $\bz_i$. This is faster than the $n^{-\frac{1}{\max\{\dz + \dy - 1, 2\}}}$ convergence rate for the estimators in~\citet{ansari2022simple} when $\dy \ge 2$.
\end{remark}

\subsection{Some intuitions for the proofs of~\Cref{thm:multiaccoef,thm:multiaccondcoef}}

In this subsection, we discuss some intuitions for the proofs of~\Cref{thm:multiaccoef,thm:multiaccondcoef}, more specifically, their ii) and iii). The ``if'' is straightforward, therefore, the ``only if'' constitutes the main challenge. The proofs rely repeatedly on the following lemma for the regular conditional probability\footnote{For details of regular conditional probability, see the beginning of~\Cref{sec:oracle}} of $\by$ given the other random vectors.
\begin{lemma}\label{lem:closedset}
Let $\bs{V}_1,\ldots,\bs{V}_m$ be any random vectors defined on the same probability space as $\by$. Let $B \subseteq \R^m$ be any closed set containing $\bs{0} \in \R^m$. Suppose that there exists a set $A \subseteq \R^{\dy}$ with $\tilde{\mu}_{\by}(A)=1$ such that for all $\bsy \in A$,
\[
\pr((\pr(\by \ge \bsy \mid \bs{V}_1), \ldots, \pr(\by \ge \bsy \mid \bs{V}_m)) \in B) = 1.
\]
Then, the above relation holds for all $\bsy \in \R^{\dy}$.
\end{lemma}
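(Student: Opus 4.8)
\emph{Setup and tools.} The plan is to fix, for each $j$, a regular conditional probability $\nu_j(\cdot,\cdot)$ for $\pr(\cdot \mid \bs{V}_j)$ and to set $g_j(\bsy,\cdot) := \nu_j(\{\bsz \in \R^{\dy} : \bsz \ge \bsy\},\cdot)$, a version of $\pr(\by \ge \bsy \mid \bs{V}_j)$; write $G(\bsy) := (g_1(\bsy,\cdot),\ldots,g_m(\bsy,\cdot))$. The goal is then $\pr(G(\bsy) \in B) = 1$ for every $\bsy \in \R^{\dy}$, which is assumed for $\bsy \in A$. The argument rests on two elementary facts. First, for any $\bsy,\bsy' \in \R^{\dy}$ the event $\{\by \ge \bsy\} \triangle \{\by \ge \bsy'\}$ is contained in $\bigcup_{d=1}^{\dy}\{\by_d \in I_d(\bsy_d,\bsy'_d)\}$, where $I_d(a,b)$ is the interval with endpoints $a,b$ closed at $\min(a,b)$ and open at $\max(a,b)$; using $|\pr(E\mid\bs V_j)-\pr(F\mid\bs V_j)| \le \pr(E \triangle F \mid \bs V_j)$ and taking expectations,
\[
\E\,\bigl|g_j(\bsy)-g_j(\bsy')\bigr| \;\le\; \pr\bigl(\{\by \ge \bsy\}\triangle\{\by\ge\bsy'\}\bigr) \;\le\; \sum_{d=1}^{\dy}\mu_d\bigl(I_d(\bsy_d,\bsy'_d)\bigr).
\]
Second, if $\{\bsy^{(k)}\}_k$ satisfies $\pr(G(\bsy^{(k)})\in B)=1$ for all $k$ and $\E|g_j(\bsy^{(k)})-g_j(\bsy)|\to 0$ for each $j$, then $\pr(G(\bsy)\in B)=1$: one extracts a single subsequence along which $g_j(\bsy^{(k)})\to g_j(\bsy)$ a.s.\ for all $j$ simultaneously (countably many $k$, finitely many $j$) and uses that $B$ is closed. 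Combining the two, it suffices to approximate every $\bsy$ by points of $A$ in the sense $\sum_d\mu_d(I_d(\bsy_d^{(k)},\bsy_d))\to 0$ — except when $G(\bsy)=\bs 0$ a.s., which is already covered by $\bs 0 \in B$.

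\emph{Reduction to ``regular'' points.} Next I would reduce an arbitrary $\bsy$ to one for which such a sequence in $A$ exists, processing coordinates one at a time. If $\bsy_d$ lies in an open $\mu_d$-null interval, let $\beta_d \le +\infty$ be the right endpoint of the maximal such interval: if $\beta_d=+\infty$ then $\pr(\by_d\ge\bsy_d)=0$, hence $G(\bsy)=\bs 0$ a.s.\ and we are done; if $\beta_d<\infty$ then $\{\by_d\ge\bsy_d\}$ and $\{\by_d\ge\beta_d\}$ differ by a $\mu_d$-null set, so replacing $\bsy_d$ by $\beta_d$ leaves every $g_j(\bsy)$ unchanged almost surely. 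After this single pass one obtains $\bsy^\ast$ with $G(\bsy^\ast)=G(\bsy)$ a.s.\ and every $\bsy_d^\ast \in \mathrm{supp}\,\mu_d$; a short check then shows each such coordinate is of one of two types — \emph{(L)} $\mu_d((\bsy_d^\ast-\varepsilon,\bsy_d^\ast])>0$ for all $\varepsilon>0$ (this includes every atom), or \emph{(R)} $\mu_d(\{\bsy_d^\ast\})=0$ and $\mu_d((\bsy_d^\ast,\bsy_d^\ast+\varepsilon))>0$ for all $\varepsilon>0$.

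\emph{Conclusion.} For such a $\bsy^\ast$ I would set $Q_\varepsilon := \prod_{d:(L)}(\bsy_d^\ast-\varepsilon,\bsy_d^\ast]\times\prod_{d:(R)}(\bsy_d^\ast,\bsy_d^\ast+\varepsilon)$. Since $\tilde\mu_{\by}=\bigotimes_d\mu_d$ is a product measure and each factor is positive by the type condition, $\tilde\mu_{\by}(Q_\varepsilon)>0$, so $A\cap Q_\varepsilon\ne\emptyset$. Choosing $\bsy^{(k)}\in A\cap Q_{1/k}$ gives $\bsy_d^{(k)}\to\bsy_d^\ast$ from the appropriate side and $\mu_d(I_d(\bsy_d^{(k)},\bsy_d^\ast))\to 0$ for every $d$ (for (R)-coordinates this is exactly where $\mu_d(\{\bsy_d^\ast\})=0$ is needed). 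The second fact, applied with $\bsy^\ast$ in place of $\bsy$, yields $\pr(G(\bsy^\ast)\in B)=1$, and $G(\bsy)=G(\bsy^\ast)$ a.s.\ finishes the proof.

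\emph{Main obstacle.} The delicate step is the coordinatewise reduction: one must verify that the passage $\bsy_d\mapsto\beta_d$ is valid for every arrangement of atoms and gaps of $\mu_d$, that it terminates and always lands on a type-(L) or type-(R) value (or triggers the $\bs 0\in B$ escape), and that nothing is spoiled by the interplay between coordinates — which is precisely where the product structure of $\tilde\mu_{\by}$ is essential. The remaining pieces (the symmetric-difference estimate, the closed-set limiting argument, and $\tilde\mu_{\by}(Q_\varepsilon)>0$) are routine.
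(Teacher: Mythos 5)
Your proposal is correct, and its overall architecture matches the paper's: first dispose of coordinates with a $\mu_d$-null gap immediately to the right (either by shifting to the right endpoint of the gap or by falling back on $G(\bsy)=\bs 0\in B$ when the gap is unbounded), then approximate the resulting point by points of $A$ drawn from boxes of positive $\tilde\mu_{\by}$-measure, and finally pass to the limit using the closedness of $B$. The execution differs in two respects. First, the approximation direction: the paper fixes atomic coordinates exactly (the approximating set requires $\tilde\bsy_{\cS}=\bsy_{\cS}$ on the atoms) and approaches non-atomic coordinates strictly from above, whereas you classify each support coordinate as left- or right-approachable and use the half-open product box $\prod_{(L)}(\bsy_d^*-\varepsilon,\bsy_d^*]\times\prod_{(R)}(\bsy_d^*,\bsy_d^*+\varepsilon)$; this also means some points the paper routes through the $G=\bs 0$ escape (e.g.\ the right endpoint of the support of a continuous marginal) are handled by your limiting argument instead, which is harmless. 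Second, the convergence argument: the paper establishes almost sure convergence $G_{\bs V_i}(\bs r_n)\to G_{\bs V_i}(\bsy)$ along the whole sequence by bounding the gap with the conditional marginals $\mu_{\by_j\mid\bs V_i}([\bsy_j,\bs r_{n,j}))$ and using that these vanish at non-atoms a.s., whereas you bound $\E|g_j(\bsy^{(k)})-g_j(\bsy^*)|$ by the unconditional symmetric-difference probability $\sum_d\mu_d(I_d(\bsy_d^{(k)},\bsy_d^*))$ and extract an a.s.-convergent subsequence. Your $L^1$-plus-subsequence route is slightly more robust in that it never manipulates the conditional measures directly, at the cost of an extra diagonal extraction; both are valid, and the steps you flag as delicate (the coordinatewise reduction and the (L)/(R) classification) do go through exactly as you describe.
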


To see how~\Cref{lem:closedset} can be used to prove the theoretical claims, we take the ``only if'' part of~\Cref{thm:multiaccondcoef} ii) as an example. Based on some standard calculations, we can directly derive that $\tac = 0$ implies 
\[
\int \E[(\pr(\by \ge \bsy \mid \bx) - \pr(\by \ge \bsy \mid \bx, \bz))^2] d \tilde{\mu}_{\by}(\bsy) = 0,
\]
which means that there exists a set $A \subseteq \R^{\dy}$ with $\tilde{\mu}_{\by}(A) = 1$ such that for all $\bsy \in A$,
\[
\pr(\pr(\by \ge \bsy \mid \bx) - \pr(\by \ge \bsy \mid \bx, \bz) = 0) = 1.
\]
This allows us to apply~\Cref{lem:closedset} with $m = 2, \bs{V}_1 = \bx, \bs{V}_2 = (\bx, \bz)$ and $B = \{(p_1, p_2): p_1 - p_2 = 0\}$ to get that the above relation holds for all $\bsy \in \R^{\dy}$, which further implies the conditional independence relation.

\section{Asymptotic normality under independence}\label{sec:normal}

In this section, we discuss the asymptotic normality of $\htaci$ when $\by \independent \bz$. We first show that our new $\htaci$ is asymptotically normal. In order to formally describe this result, it will be convenient to define the following quantities:
\[
\Gamma_1 := \var(\tilde{F}(\by_1 \wedge \by_2)) - 2 \Gamma_2, \quad\&\quad \Gamma_2 := \E\left[\tilde{F}(\by_1 \wedge \by_2) \tilde{F}(\by_1 \wedge \by_3)\right] - \left(\E\left[\tilde{F}(\by_1 \wedge \by_2)\right]\right)^2,
\]
where for any $\bsy \in \R^{\dy}$, $\tilde{F}(\bsy) := \int \one\{\bsy' \le \bsy\} d \tilde{\mu}(\bsy')$. Given these quantities, we define the asymptotic variance of $\sqrt{n} \htaci$ as
\begin{equation}\label{eq:acvar}
\sigma_n^2 := \frac{ \Gamma_1 (1 + (n - 1) \pr(M_{\bz}(1) = 2, M_{\bz}(2) = 1)) + \Gamma_2 (n - 1) \pr(M_{\bz}(1) = M_{\bz}(2))}{\left(\int \var(\one\{\by \ge \bsy\}) d \tilde{\mu}_{\by}(\bsy)\right)^2}.
\end{equation}

Armed with the above definitions, we have
\begin{theorem}\label{thm:tacnormal}
Suppose that $\by$ is not almost surely a constant, then $\sigma_n > 0$ for all $n \ge 1$, and $0 < \liminf \sigma_n^2 \le \limsup \sigma_n^2 < \infty$. Suppose further that $\by$ and $\bz$ are independent, then 
\[
\frac{\sqrt{n} \htaci}{\sigma_n} \overset{d}{\to} \mathcal{N}(0, 1).
\]
\end{theorem}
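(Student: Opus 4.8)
The plan is to reduce the statement to a central limit theorem for the numerator of $\htaci$, which I then prove by a Hoeffding-type decomposition carried out conditionally on the $\bz$-nearest-neighbour graph, followed by a dependency-graph CLT. Throughout write $\htaci = N_n/E_n$ with $N_n := \sum_{i=1}^{n}(n\tilde R(\by_i \wedge \by_{M_{\bz}(i)}) - \check L_i^{2})$, $E_n := \sum_{i=1}^{n}(n - \check L_i)\check L_i$, and set $D := \int \var(\one\{\by \ge \bsy\})\,d\tilde\mu_{\by}(\bsy)$. The assertions about $\sigma_n$ are the easy part: the kernel $\tilde F(\by_1 \wedge \by_2)$ is bounded, so $\Gamma_1, \Gamma_2$ are finite; both are nonnegative, being the variances of the degree-two and degree-one Hoeffding components of this symmetric kernel, and $\Gamma_1 + 2\Gamma_2 = \var(\tilde F(\by_1 \wedge \by_2)) > 0$ because $\by$ is not a.s.\ constant; for the same reason $D > 0$ (no marginal of $\tilde\mu_{\by}$ is degenerate); and a standard covering argument in $\R^{\dz}$ gives $\pr(M_{\bz}(1) = 2,\, M_{\bz}(2) = 1) = \Theta(1/n)$ and $\pr(M_{\bz}(1) = M_{\bz}(2)) = \Theta(1/n)$. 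Feeding these into \eqref{eq:acvar} yields $\sigma_n > 0$ for all $n$, together with $0 < \liminf \sigma_n^{2} \le \limsup \sigma_n^{2} < \infty$.

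By the law-of-large-numbers arguments behind \Cref{thm:multiacest}, $E_n/n^{3} \asconv D > 0$, so $\sqrt n\,\htaci/\sigma_n = (N_n/(n^{5/2}\sigma_n))/(E_n/n^{3})$, and by Slutsky it suffices to prove $N_n/(n^{5/2}\sigma_n) \overset{d}{\to} \mathcal N(0, D^{2})$; since $\sigma_n^{2}$ is bounded and bounded away from $0$, a subsequence argument reduces this to showing $N_n/n^{5/2} \overset{d}{\to} \mathcal N(0, D^{2}\sigma^{2})$ along every subsequence on which $\sigma_n^{2} \to \sigma^{2}$. I would obtain the latter from (i) $\E[N_n] = o(n^{5/2})$ (in fact $O(n^{2})$); (ii) $\var(N_n) = n^{5} D^{2} \sigma_n^{2}(1 + o(1))$; and (iii) $(N_n - \E[N_n])/\sqrt{\var(N_n)} \overset{d}{\to} \mathcal N(0, 1)$. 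Item (i) is the bias bound that makes the Wald-type test valid, and it is where the entrywise permutation earns its keep: expanding $\tilde R$ and $\check L_i$, the leading $\Theta(n^{3})$ parts of $\E[\sum_i n\tilde R(\by_i \wedge \by_{M_{\bz}(i)})]$ and $\E[\sum_i \check L_i^{2}]$ are $n^{3}\E[\tilde F(\by_1 \wedge \by_2)]$ and $n^{3}\E[\pr(\by \ge \tilde\by_1 \mid \tilde\by_1)^{2}]$ respectively, and these two expectations coincide by a one-line Fubini computation using that $\tilde\by_1 \sim \tilde\mu_{\by}$ exactly; crucially this needs no continuity of $\by$ or $\bz$, in contrast with the density-based arguments of \citet{shi2024azadkia}.

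For (ii) and (iii) I would condition on $\bz_1, \dots, \bz_n$ and on the permutations $\pi_1, \dots, \pi_{\dy}$; under $\by \independent \bz$ this fixes the nearest-neighbour graph $G_n$ while leaving $\by_1, \dots, \by_n$ i.i.d.\ and independent of $G_n$. Expanding $\tilde R$ and $\check L_i$ writes $N_n$ as a degree-three multilinear statistic in the sample-comparison indicators, to which I apply a Hoeffding decomposition. The decisive combinatorial input is the geometric fact that in $\R^{\dz}$ each point is the nearest neighbour of at most a constant $c_{\dz}$ of the others, so $G_n$ has in-degree at most $c_{\dz}$; this bounds the variances of all higher-order components of the decomposition by $o(n^{5})$, so that they, together with the diagonal terms coming from coincidences among sample indices, can be discarded. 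The surviving centred sum has a covariance structure carried by the edges and the length-two paths (``cherries'') of $G_n$; combining the symmetry of $\tilde F(\by_1 \wedge \by_2)$ (whose degree-one and degree-two Hoeffding components have variances $\Gamma_2$ and $\Gamma_1$, with $\Gamma_2 = \cov(\tilde F(\by_1 \wedge \by_2), \tilde F(\by_1 \wedge \by_3))$) with the edge- and cherry-counts $n(n-1)\pr(M_{\bz}(1) = 2,\, M_{\bz}(2) = 1)$ and $n(n-1)\pr(M_{\bz}(1) = M_{\bz}(2))$, and noting that the summand $1$ multiplying $\Gamma_1$ in \eqref{eq:acvar} comes from the $i = j$ diagonal contribution, one recovers exactly the numerator of $n^{5} D^{2} \sigma_n^{2}$, which is (ii). Finally, the surviving sum is a sum of bounded random variables whose dependency graph has bounded degree (two terms interact only when they share a vertex of the bounded-in-degree graph), so a dependency-graph CLT obtained from Stein's method (e.g.\ a Chen--Shao-type bound) gives (iii), the Lindeberg condition being immediate from the boundedness of $\tilde F$ and of the in-degrees.

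The hardest part is the decomposition in the third step: among the $\Theta(n^{3})$ terms of the Hoeffding expansion one must isolate exactly those that contribute at the $n^{5/2}$ scale, prove that the remainder is negligible in variance (this is precisely where the bounded-in-degree property of the nearest-neighbour graph is indispensable, and where the exchangeable structure under $\by \independent \bz$ must be exploited carefully), and then carry out the bookkeeping matching the retained covariances to $\Gamma_1, \Gamma_2$ and the graph-collision probabilities in \eqref{eq:acvar}. Doing this while allowing atoms in $\by$ and $\bz$ is what forces the whole argument to run through the permuted reference sample $\tilde\by_i$ rather than through densities; once the bounded dependency structure has been isolated, the CLT in step (iii) is comparatively routine.
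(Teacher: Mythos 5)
Your overall architecture matches the paper's (decompose the numerator, reduce to a centred nearest-neighbour sum, apply a Stein-type dependency-graph CLT conditionally on the $\bz$-graph, and match the surviving covariances to $\Gamma_1,\Gamma_2$ and the edge/cherry collision probabilities), but two of your load-bearing claims fail in the generality the theorem demands. First, the in-degree of the nearest-neighbour graph is \emph{not} bounded by a dimensional constant $c_{\dz}$ when $\bz$ has atoms: the geometric constant only controls neighbours at distinct locations, while tied points (resolved by random tie-breaking) give a particular index an in-degree with a $\mathrm{Binom}(N,1/N)$ component whose maximum is unbounded. Since the whole point of the theorem is to drop continuity assumptions on $\bz$, your "remainder is negligible because the graph has bounded in-degree" step and your Lindeberg/degree bound in the Chen--Shao application both break exactly in the intended regime. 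The paper replaces this with \Cref{lem:maxdegree}: $d_{\max}(M)\le(\log n)^2$ except on an event of probability $O(e^{-(\log n)^2/2})$, hence $\E[d_{\max}(M)^K]=o(n^{\delta})$ for every $\delta>0$, and the Berry--Esseen bound is taken in expectation over the random graph.

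Second, your positivity argument for $\sigma_n$ only yields $\Gamma_1+2\Gamma_2=\var(\tilde F(\by_1\wedge\by_2))>0$, which is not enough: since $(n-1)b_n$ can vanish (e.g.\ $b_2=0$, so $\sigma_2^2\propto 2\Gamma_1$) and is in general only shown to be $O(1)$, both "$\sigma_n>0$ for all $n$" and "$\liminf\sigma_n^2>0$" require $\Gamma_1>0$ specifically, i.e.\ non-degeneracy of the second-order Hoeffding component. This is the content of \Cref{thm:gamma} via \Cref{lem:gamma}, which in the multivariate case is a genuinely delicate argument (showing that $\tilde F(\by\wedge\bar\by)+\tilde F(\by'\wedge\bar\by')=\tilde F(\by\wedge\by')+\tilde F(\bar\by\wedge\bar\by')$ a.s.\ forces $\by$ to be constant, handled by an induction over the atomic structure of the marginals); it cannot be dismissed as the nonnegativity of Hoeffding variances. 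A smaller but real omission: after your conditional CLT you normalize by the deterministic $\sigma_n$, which requires the conditional variance $\var(S_n\mid M)$ --- a function of the empirical quantities $\hat a_n,\hat b_n$ of the realized graph --- to concentrate around $\sigma_n^2$'s numerator; the paper proves this with a generalized Efron--Stein inequality plus Borel--Cantelli, again powered by the moment control on $d_{\max}(M)$.
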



When $\by$ is a scalar,~\citet{shi2024azadkia} proved asymptotic normality of $\htaci$ by requiring the joint distribution of $\by$ and $\bz$ to be absolutely continuous with respect to the Lebesgue measure. Our work extends this result by allowing their joint distribution to follow arbitrary distribution, thereby shedding light on the asymptotic normality of $\htaci$ even in the univariate case. Note that $\sigma_n^2$ may not have a limit, since $(n - 1) \pr(M_{\bz}(1) = 2, M_{\bz}(2) = 1)$ and $(n - 1) \pr(M_{\bz}(1) = M_{\bz}(2))$ are quantities depending on $n$. Instead, we can only prove that the two terms are of order $O(1)$, i.e., that $\htaci$'s convergence rate is of order $O(1 / \sqrt{n})$. When $\bz$ is absolutely continuous with respect to the Lebesgue measure on $\R^{\dz}$, $\sigma_n^2$ do have a finite positive limit, which is given by
\begin{equation}\label{eq:contsigma}
\sigma^2 := \frac{\Gamma_1 (1 + A_{\dz}) + \Gamma_2 B_{\dz}}{\left(\int \var(\one\{\by \ge \bsy\}) d \tilde{\mu}_{\by}(\bsy)\right)^2},
\end{equation}
where by writing $\lambda(\cdot)$ as the Lebesgue measure and $B(\bs{w}, r)$ as a Euclidean ball centered at $\bs{w}$ with radius $r$,
\[
\begin{aligned}
    A_d & := \left(2 - \frac{\int_0^{3/4} t^{(d-1)/2} (1-t)^{-1/2} d t}{\int_0^1 t^{(d-1)/2} (1-t)^{-1/2} dt}\right)^{-1},\\
    B_d &:= \iint_{\bs{w}_1, \bs{w}_2 \in \R^d: \max(\Vert \bs{w}_1 \Vert, \Vert \bs{w}_2 \Vert) \leq \Vert \bs{w}_1 - \bs{w}_2 \Vert} \exp(-\lambda(B(\bs{w}_1, \Vert \bs{w}_1 \Vert) \cup B (\bs{w}_2, \Vert \bs{w}_2 \Vert))) d \bs{w}_1 d \bs{w}_2.
\end{aligned}
\]
For more details, we refer the readers to ~\citet[Theorem~3.6 and~3.7] {shi2024azadkia}. In~\Cref{thm:limit}, we further prove that, when $\bz$ is a mix of absolutely continuous and discrete distributions, $\sigma_n^2$ is still convergent.
\begin{theorem}\label{thm:limit}
	Suppose that $\by$ is not almost surely a constant, and $\mu_{\bz}$, the distribution measure of $\bz$, follows the decomposition $\mu_{\bz} = (1 - \eta) \mu_{\bz, a} + \eta \mu_{\bz, d}$, where $\eta \in [0, 1]$, $\mu_{\bz, a}$ is absolutely continuous with respect to the Lebesgue measure on $\R^{\dz}$, and $\mu_{\bz, d}$ is a discrete measure, then
	\[
	\lim_{n \to \infty} \sigma_n^2 = \frac{\Gamma_1 (1 + (1 - \eta) A_{\dz}) + \Gamma_2 (\eta + (1 - \eta) B_{\dz})}{\left(\int \var(\one\{\by \ge \bsy\}) d \tilde{\mu}_{\by}(\bsy)\right)^2}.
	\]
\end{theorem}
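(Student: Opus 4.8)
The plan is to reduce everything to two nearest-neighbour probability asymptotics. Since $\Gamma_1,\Gamma_2$ and $\int\var(\one\{\by\ge\bsy\})\,d\tilde\mu_{\by}(\bsy)$ depend only on the distribution of $\by$ (through $\tilde\mu_{\by}$) and not on $\mu_{\bz}$, and since the first assertion of \Cref{thm:tacnormal} (valid because $\by$ is not a.s.\ constant) makes the denominator a fixed positive constant, it suffices to prove
\[
q_n := (n-1)\,\pr(M_{\bz}(1)=2,\ M_{\bz}(2)=1)\ \longrightarrow\ (1-\eta)A_{\dz},
\qquad
r_n := (n-1)\,\pr(M_{\bz}(1)=M_{\bz}(2))\ \longrightarrow\ \eta+(1-\eta)B_{\dz}.
\]
Both $q_n,r_n$ are functionals of the nearest-neighbour graph of $n$ i.i.d.\ draws from $\mu_{\bz}$; the case $\eta=0$ is \citet{shi2024azadkia} and $\eta=1$ retains only the atomic part, so the content is the interpolation. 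I would couple the sample by independent ``type'' labels: $\bz_i$ is \emph{continuous} with probability $1-\eta$ (then $\bz_i\sim\mu_{\bz,a}$), or of \emph{atom type $k$} with probability $\eta p_k$ (then $\bz_i=a_k$), where $p_k:=\mu_{\bz,d}(\{a_k\})$. Writing $C$ for the continuous indices, $N_c:=|C|\sim\mathrm{Bin}(n,1-\eta)$, and $N_k$ for the number of indices of atom type $k$, I would split each probability according to the types of $\bz_1,\bz_2$: (a) both continuous; (b) both of the same atom type $k$; (c) distinct atom types; (d) one continuous, one atomic.

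For (b): conditionally on $N_k=m\ge 2$, the nearest neighbour of each of $\bz_1=\bz_2=a_k$ is a.s.\ a uniformly random one of the $m-1$ co-located points, with the two tie-breaks independent, so $\pr(M_{\bz}(1)=2,M_{\bz}(2)=1\mid N_k=m)=(m-1)^{-2}$ and $\pr(M_{\bz}(1)=M_{\bz}(2)\mid N_k=m)=(m-2)(m-1)^{-2}$ (equal to $0$ at $m=2$). Since $N_k-2\sim\mathrm{Bin}(n-2,\eta p_k)$, the closed form $\E[(N_k-1)^{-1}]=\bigl(1-(1-\eta p_k)^{\,n-1}\bigr)/((n-1)\eta p_k)\le((n-1)\eta p_k)^{-1}$ both dominates the $k$-th summand by $\eta p_k$ (summable in $k$ to $\eta$) and, together with $N_k\to\infty$ a.s.\ and $\E[(N_k-1)^{-2}]=O((n\eta p_k)^{-2})$, pins the pointwise limits $(n-1)(\eta p_k)^2\E[(N_k-1)^{-2}]\to 0$ and $(n-1)(\eta p_k)^2\E[(N_k-2)(N_k-1)^{-2}]\to\eta p_k$. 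Dominated convergence in $k$ then makes configuration (b) contribute $0$ to $\lim q_n$ and $\sum_k\eta p_k=\eta$ to $\lim r_n$. For (a): on $\{1,2\in C\}$ and given $N_c=m$, the continuous points are $m$ i.i.d.\ draws from the absolutely continuous $\mu_{\bz,a}$, so Theorems~3.6--3.7 of \citet{shi2024azadkia} give $(m-1)\pr(\text{mutual})\to A_{\dz}$ and $(m-1)\pr(\text{shared})\to B_{\dz}$; combined with $\pr(1,2\in C)=(1-\eta)^2$, $N_c\to\infty$ and $N_c/(n(1-\eta))\to 1$, this configuration contributes $(1-\eta)A_{\dz}$ and $(1-\eta)B_{\dz}$ respectively. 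Adding (a) and (b) gives exactly the stated limit, so it remains to show (c) and (d) are negligible, i.e.\ contribute $o(1/n)$ before scaling.

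I expect the handling of (c)--(d), and more precisely of all \emph{boundary configurations} in which a nearest-neighbour link crosses between the atomic and continuous parts, to be the main obstacle: such links are absent in the pure absolutely continuous regime of \citet{shi2024azadkia}, so even the reduction in (a) is not literal — the shared/mutual nearest neighbour of a continuous pair might itself sit at an atom and must be excluded. The plan to control these is to exploit that nearest-neighbour distances in an $n$-sample are of order $n^{-1/\dz}$: for an atom $a_k$ to be the nearest neighbour of a continuous point $\bz$, the ball $B(\bz,\|\bz-a_k\|)$ must avoid all $\approx n(1-\eta)$ continuous points, hence have $\mu_{\bz,a}$-mass $O(1/n)$; integrating the resulting void probability against $\mu_{\bz,a}(d\bz)$ and then multiplying by the $(N_k)^{-1}$ factor from the ensuing uniform tie-break among the $\approx n\eta p_k$ points at $a_k$ makes each per-atom contribution $o(1/n)$, and summing over $k$ (using $\sum_k p_k=1$ and, if needed, a mild regularity of $\mu_{\bz,a}$ near the atoms to keep the per-atom bounds uniformly summable) removes the whole boundary contribution. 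Configurations forcing a near-empty atom are dispatched immediately, since $\pr(N_k=1)=(1-\eta p_k)^{n-1}$ is exponentially small.
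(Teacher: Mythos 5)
Your overall architecture matches the paper's: reduce to $(n-1)a_n\to(1-\eta)A_{\dz}$ and $(n-1)b_n\to\eta+(1-\eta)B_{\dz}$, split by atomic versus continuous status, extract $(1-\eta)A_{\dz}$ and $(1-\eta)B_{\dz}$ from the purely continuous configuration by conditioning on $N_c$ and invoking the absolutely continuous case, and extract $\eta$ from the same-atom configuration via the uniform tie-break (your binomial computations $\E[(N_k-1)^{-1}]$, the per-atom limits, and the domination by $\eta p_k$ for the interchange of limit and sum over $k$ are all correct, and agree with the paper's $\E[|M^{-1}(1)|^2\one\{\bz_1\in S_D\}]\to2\eta$ computation).

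The genuine gap is exactly where you flag it: the boundary configurations, i.e.\ showing $\pr(\bz_1\in S_D^c,\ \bz_{M(1)}\in S_D)\to0$ (equivalently, that $M$ and the continuous-only map $M_a$ agree on continuous points with probability $\to1$), which is also needed to make your step (a) literal. Your proposed void-probability union bound over atoms does not close without extra hypotheses: for a single atom $a_k$ the bound $\min(1,n\eta p_k)\cdot\E_{\bz\sim\mu_{\bz,a}}\bigl[e^{-cn\mu_{\bz,a}(B(\bz,\|\bz-a_k\|))}\bigr]$ is only $O(p_k\log n)$ in general (the cone/radial-CDF argument gives $\mu_{\bz,a}(\{\bz:\mu_{\bz,a}(B(\bz,\|\bz-a_k\|))\le s\})\le C_{\dz}s$ and nothing better without doubling-type conditions), so with infinitely many atoms of slowly decaying mass the sum need not vanish, and your fallback of assuming ``mild regularity of $\mu_{\bz,a}$ near the atoms'' would weaken the theorem, which assumes only absolute continuity of $\mu_{\bz,a}$ and discreteness of $\mu_{\bz,d}$. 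The paper avoids all geometry here by a counting identity: by exchangeability, $\pr(\bz_{M(1)}\in S_D)=\E\bigl[|M^{-1}(2)|\,\one\{\bz_2\in S_D\}\bigr]\to\eta$ (Lemma~\ref{lem:nngmm}, whose only geometric input is the Azadkia--Chatterjee bound $|\{j:M(j)=1,\bz_j\neq\bz_1\}|\le C(\dz)$ plus the $\mathrm{Binom}(N,1/N)$ tie-break at the atom), while $\pr(\bz_1\in S_D,\bz_{M(1)}\in S_D)\to\eta$ since an atom's nearest neighbour is eventually co-located; subtracting gives the cross probability $\to0$ with no assumptions. Replacing your geometric control of configurations (c)--(d) and of the $M$-versus-$M_a$ discrepancy in (a) by this conservation-of-mass argument would complete your proof.
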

Equipped with \Cref{thm:limit}, one might ask if $\sigma_n^2$ converges without any assumption on $\bz$. \Cref{thm:nolimit} provides a counterexample, showing that $\sigma_n^2$ can fail to converge.
\begin{theorem}\label{thm:nolimit}
Suppose that $\bz$ follows a Cantor distribution on $\R$ and $\by$ is not a constant almost surely, then $\sigma_n^2$ is not convergent as $n \to \infty$.
\end{theorem}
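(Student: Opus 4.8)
The plan is to track how $\sigma_n^2$ depends on $n$ and then exploit the exact self-similarity of the Cantor measure. Since a Cantor random variable is atomless, ties in the nearest-neighbour maps $M_{\bz}(\cdot)$ occur with probability zero, so tie-breaking is irrelevant and the only $n$-dependent quantities in $\sigma_n^2$ are $p_n := (n-1)\pr(M_{\bz}(1)=2,\ M_{\bz}(2)=1)$ and $q_n := (n-1)\pr(M_{\bz}(1)=M_{\bz}(2))$. The denominator of $\sigma_n^2$ and the constants $\Gamma_1,\Gamma_2$ do not depend on $n$; moreover conditioning on $\by_1$ (under which $\by_2,\by_3$ are i.i.d.) gives $\Gamma_2=\var(\E[\tilde F(\by_1\wedge\by_2)\mid\by_1])>0$ since $\by$ is not almost surely constant, so $(\Gamma_1,\Gamma_2)\ne(0,0)$. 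Hence it suffices to show $\Gamma_1 p_n+\Gamma_2 q_n$ does not converge. Writing $M_n$ for the number of mutual-nearest-neighbour pairs and $K_n$ for the number of sample points that are the nearest neighbour of exactly two others, one has $p_n=\tfrac2n\E[M_n]$ and $q_n=\tfrac2n\E[K_n]$ (in dimension one, a.s.\ no point is the nearest neighbour of three distinct points, and mutual-nearest-neighbour pairs form a matching).

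The engine is the identity $\mu_{\bz}=\tfrac12\mu_{\bz}\circ S_0^{-1}+\tfrac12\mu_{\bz}\circ S_1^{-1}$ with $S_0(t)=t/3,\ S_1(t)=t/3+2/3$, combined with the elementary fact that two Cantor points in the same ``third'' are at distance $<1/3$ while points in different thirds are at distance $>1/3$. Conditioning the $n$ samples on which third each lands in shows that for $n\ge3$ neither a mutual-nearest-neighbour pair nor, up to a controlled boundary correction, an in-degree-$2$ event crosses between the two thirds, so that $\E[M_n]=2\,\E_{N\sim\mathrm{Bin}(n,1/2)}[\E[M_N]]$ exactly and $\E[K_n]=2\,\E_{N\sim\mathrm{Bin}(n,1/2)}[\E[K_N]]+\epsilon_n$ with $\epsilon_n\ge0$, $\epsilon_0=\epsilon_1=\epsilon_2=0$, $\epsilon_3=3/4$, $\epsilon_n=O(n2^{-n})$ --- the correction arising only from the rare events that exactly one of the $n$ samples lands in one of the thirds.

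I would then Poissonize: with $f(x):=\E_{N\sim\mathrm{Poi}(x)}[\Gamma_1 M_N+\Gamma_2 K_N]$, both recursions collapse to $f(x)=2f(x/2)+g(x)$ with source $g(x)=\tfrac{\Gamma_1}4 x^2e^{-x}+\Gamma_2\tilde\epsilon(x)$, where $\tilde\epsilon(x):=\sum_{n\ge3}\epsilon_n e^{-x}x^n/n!=\tfrac18 x^3e^{-x}+O(x^4e^{-x})$, and iteration gives $f(x)=\sum_{j\ge0}2^{\,j}g(x2^{-j})$. Since $g(x)/x\to0$ both as $x\to0$ and as $x\to\infty$, $f(x)/x$ converges as $x\to\infty$ to a continuous $(\ln2)$-periodic function of $\log x$ whose $m$-th Fourier coefficient equals $\tfrac1{\ln2}\mathcal M[g](-1-\tfrac{2\pi i m}{\ln2})$, with $\mathcal M[g](s)=\int_0^\infty g(u)u^{s-1}\,du=\tfrac{\Gamma_1}4\Gamma(s+2)+\Gamma_2\mathcal M[\tilde\epsilon](s)$. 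Because $\tilde\epsilon(x)=O(x^3)$ near the origin, $\mathcal M[\tilde\epsilon]$ is holomorphic on $\{\Re s>-3\}$, whereas $\Gamma(s+2)$ already has a simple pole at $s=-2$, and this is the structural feature that prevents the two oscillatory contributions from being proportional. When $\Gamma_1\ne0$ this is already visible from $M_n$ alone: its limiting oscillation has $m$-th Fourier coefficient proportional to $\Gamma(1-\tfrac{2\pi im}{\ln2})$, which never vanishes as the Gamma function has no zeros, so $p_n$ itself does not converge. It remains to show that the $\Gamma_2$-weighted oscillation of $K_n$ cannot exactly cancel the $\Gamma_1$-weighted one, i.e.\ that $\mathcal M[g]$ does not vanish at every $-1-\tfrac{2\pi im}{\ln2}$, $m\ne0$; for this I would study the entire function $\Phi(s):=\mathcal M[g](s)/\Gamma(s+2)=\tfrac{\Gamma_1}4+\Gamma_2\sum_{n\ge3}\tfrac{\epsilon_n}{n!}\prod_{k=2}^{n-1}(s+k)$, showing via the tail asymptotics of $\epsilon_n$ that $\Phi$ stays bounded along $s=-1-\tau$ as $\tau\to+\infty$ but grows like $2^{\,\tau}$ along $s=-1+\tau$ (here $\Gamma_2\ne0$ is used), so that its exponential type in the imaginary direction is too large on one side for it to vanish on the full lattice while being non-trivial; a Carlson/Phragm\'en--Lindel\"of-type argument, using also the omitted point $s=-1$, then closes the borderline case.

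Finally, to pass from the Poissonized quantity back to the sequence I would use standard analytic de-Poissonization: as $0\le M_n\le n/2$, $0\le K_n\le n$, these expectations vary slowly, and $g$ (hence $f$) decays in a cone about the positive real axis, one gets $\Gamma_1 p_n+\Gamma_2 q_n=\tfrac2n f(n)+o(1)$, so this sequence --- and thus $\sigma_n^2$ --- inherits the non-convergence. The hard part is the last step of the third paragraph, i.e.\ ruling out an accidental exact cancellation between the oscillations of $\E[M_n]$ and $\E[K_n]$ for the particular $(\Gamma_1,\Gamma_2)$ coming from an arbitrary non-constant $\by$; checking the exact self-similar recursions (especially the precise term $\epsilon_n$ for $K_n$) and the de-Poissonization hypotheses are the remaining, more routine, points.
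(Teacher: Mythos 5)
Your reduction of the problem is sound and matches the paper's: ties occur with probability zero, the only $n$-dependent pieces of $\sigma_n^2$ are $(n-1)\pr(M_{\bz}(1)=2,M_{\bz}(2)=1)$ and $(n-1)\pr(M_{\bz}(1)=M_{\bz}(2))$, and since $\Gamma_1>0$ it suffices to show the $\Gamma_1,\Gamma_2$-weighted combination does not converge. Your route to the two log-periodic limit profiles (exact self-similar recursions for $\E[M_n]$, $\E[K_n]$ from $\mu_{\bz}=\tfrac12\mu_{\bz}\circ S_0^{-1}+\tfrac12\mu_{\bz}\circ S_1^{-1}$, then Poissonization and Mellin analysis of the source terms) is genuinely different from the paper's, which computes $\mu_Z$ of the relevant intervals directly via ternary expansions and obtains the explicit log-periodic series $A(x)=\sum_k 2^{-k}xe^{-2^{-k}x}$ and $B(x)=\sum_k 2^{-2k}x^2/(e^{2^{-k}x}(e^{2^{-k}x}-1))$; your Fourier coefficient $\tfrac{\Gamma_1}{4}\Gamma(1-\tfrac{2\pi im}{\ln 2})$ for the $M_n$ part agrees with the paper's $\hat A_m=\Gamma(1+\tfrac{2\pi im}{\ln 2})$ up to conjugation, and the incidental claim $\Gamma_2>0$ is unnecessary (the paper only proves $\Gamma_2\ge 0$, which is all either argument needs).

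The genuine gap is exactly where you flag it: ruling out cancellation between the two oscillations. Your proposed closure via the entire function $\Phi(s)=\mathcal{M}[g](s)/\Gamma(s+2)$ and a ``Carlson/Phragm\'en--Lindel\"of-type'' argument does not work as stated, for two reasons. First, the growth estimates are wrong: with $\epsilon_n=O(n2^{-n})$ the series $\tfrac{\Gamma_1}{4}+\Gamma_2\sum_{n\ge3}\tfrac{\epsilon_n}{n!}\prod_{k=2}^{n-1}(s+k)$ grows like $e^{|s|/2}/|s|^2$ as $\Re s\to-\infty$, not boundedly; and on the other side $\Phi(-1+\tau)\sim 2^{\tau}/\tau$ means exponential type $2\pi$ in the variable $w=(s+1)\ln 2/(2\pi i)$, which exceeds the Carlson threshold $\pi$. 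Second, and more fundamentally, large type does not \emph{prevent} an entire function from vanishing on an arithmetic progression ($\sin(2\pi w)$ vanishes on $\tfrac12\mathbb{Z}$ and has type $2\pi$); Carlson-type theorems only run in the opposite direction (small type plus vanishing implies triviality). No soft growth argument can close this step, because the obstruction is arithmetic: writing out $\mathcal{M}$ of the $K$-source explicitly produces $(\zeta(2+\tfrac{2\pi im}{\ln 2})-1)\Gamma(2+\tfrac{2\pi im}{\ln 2})$, and the paper's contradiction hinges on the $3^{-s}$ term of $\zeta$ together with the irrationality of $\log_2 3$ (equidistribution of $m\log_2 3 \bmod 1$) to show $|\zeta(2+\tfrac{2\pi im}{\ln 2})-1|$ stays bounded away from zero along a subsequence while the putative cancellation would force it to decay like $1/m$. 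To complete your proof you would need to compute $\mathcal{M}[\tilde\epsilon]$ (equivalently the $\epsilon_n$) in closed form and import this equidistribution input; the complex-analytic machinery you propose does not substitute for it.
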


We now discuss the estimation of $\sigma_n^2$. We define 
$$\begin{aligned}
        \hat{\Gamma}_1&:= \frac{1}{n^2(n-1)}\sum_{i = 1}^{n - 1}\left(\tilde{R}(\by_i \wedge \by_{i + 1})\right)^2-\left(\frac{1}{n(n-1)}\sum_{i = 1}^{n - 1}\tilde{R}(\by_i \wedge \by_{i + 1})\right)^2-2\hat{\Gamma}_2;\\
        \hat{\Gamma}_2&:=\frac{1}{n^2(n-2)}\sum_{i = 1}^{n - 2}\tilde{R}(\by_i \wedge \by_{i + 1})\tilde{R}(\by_i \wedge \by_{i + 2})- \left(\frac{1}{n(n-1)}\sum_{i = 1}^{n - 1}\tilde{R}(\by_i \wedge \by_{i + 1})\right)^2.
\end{aligned}$$
These are empirical estimates of $\Gamma_1$ and $\Gamma_2$, respectively. Equipped with those estimates, we propose to estimate $\sigma_n^2$ via
\[
\hat{\sigma}_n^2 := \frac{\hat{\Gamma}_1 \left(1 + \frac{1}{n} \sum_{i = 1}^n \one\{M_{\bz}(M_{\bz}(i)) = i\}\right) + \frac{\hat{\Gamma}_2}{n} \sum_{i = 1}^n |M_{\bz}^{-1}(i)| (|M_{\bz}^{-1}(i)| - 1)}{\left(\frac{1}{n^3} \sum_{i=1}^n (n - \check{L}_i) \check{L}_i\right)^2},
\]
where $M_{\bz}^{-1}(i) := \{j: M_{\bz}(j) =i\}$, i.e., the set of $j$ whose nearest neighbour used in the coefficient construction is $i$.
Apparently, it is constructed via estimating each component of $\sigma_n^2$ separately. The following result shows that it is a consistent estimator of $\sigma_n^2$.
\begin{proposition}\label{prop:estvar}
    Suppose that $\by$ is not almost surely a constant, then
    \[
    \hat{\sigma}_n^2 / \sigma_n^2 \asconv 1.
    \]
\end{proposition}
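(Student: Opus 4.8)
The plan is to prove $\hat{\sigma}_n^2 - \sigma_n^2 \asconv 0$; since \Cref{thm:tacnormal} already guarantees $\liminf_n \sigma_n^2 > 0$, this gives $\hat{\sigma}_n^2/\sigma_n^2 = 1 + (\hat{\sigma}_n^2-\sigma_n^2)/\sigma_n^2 \asconv 1$. Write $D := \int \var(\one\{\by \ge \bsy\})\,d\tilde{\mu}_{\by}(\bsy) > 0$, let $\hat{D}_n := \tfrac{1}{n^3}\sum_i (n-\check{L}_i)\check{L}_i$ be the plug-in denominator, put $p_n := \pr(M_{\bz}(1)=2,M_{\bz}(2)=1)$, $q_n := \pr(M_{\bz}(1)=M_{\bz}(2))$, so that $\sigma_n^2 = N_n/D^2$ with $N_n := \Gamma_1(1+(n-1)p_n) + \Gamma_2(n-1)q_n$, and let $\hat{N}_n := \hat{\Gamma}_1(1+\hat{p}_n) + \hat{\Gamma}_2\hat{q}_n$ be the plug-in numerator, with $\hat{p}_n := \tfrac1n\sum_i \one\{M_{\bz}(M_{\bz}(i))=i\}$ and $\hat{q}_n := \tfrac1n\sum_i |M_{\bz}^{-1}(i)|(|M_{\bz}^{-1}(i)|-1)$. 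Since $\hat{D}_n$ equals $n^{-3}$ times the denominator of $\htaci$, its a.s.\ convergence $\hat{D}_n \asconv D$ is part of the proof of \Cref{thm:multiacest}; and $N_n$ is bounded in $n$, because $(n-1)p_n = \pr(M_{\bz}(M_{\bz}(1))=1)\le 1$ and $(n-1)q_n = \E[|M_{\bz}^{-1}(M_{\bz}(1))|]-1 = O(1)$ (the latter is established in the proof of \Cref{thm:tacnormal}, where it is needed for $\limsup_n\sigma_n^2<\infty$). From the identity $\hat{\sigma}_n^2 - \sigma_n^2 = \big(D^2(\hat{N}_n - N_n) + N_n(D^2 - \hat{D}_n^2)\big)/(\hat{D}_n^2 D^2)$, whose denominator converges to $D^4>0$, it therefore suffices to prove $\hat{N}_n - N_n \asconv 0$; via the decomposition $\hat{N}_n - N_n = (\hat{\Gamma}_1-\Gamma_1)(1+\hat{p}_n) + \Gamma_1(\hat{p}_n-(n-1)p_n) + (\hat{\Gamma}_2-\Gamma_2)\hat{q}_n + \Gamma_2(\hat{q}_n-(n-1)q_n)$ and the boundedness of $(n-1)p_n,(n-1)q_n$ (hence, via the next two paragraphs, of $\hat{p}_n,\hat{q}_n$), this reduces to $\hat{\Gamma}_j \asconv \Gamma_j$ ($j=1,2$), $\hat{p}_n - (n-1)p_n \asconv 0$, and $\hat{q}_n - (n-1)q_n \asconv 0$.

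For the $\Gamma$-part, note that $\hat{\Gamma}_1 + 2\hat{\Gamma}_2 = \tfrac1{n-1}\sum_{i=1}^{n-1}(\tilde{R}(\by_i\wedge\by_{i+1})/n)^2 - \big(\tfrac1{n-1}\sum_{i=1}^{n-1}\tilde{R}(\by_i\wedge\by_{i+1})/n\big)^2$ is a sample variance, so it is enough to show $\hat{\Gamma}_2 \asconv \Gamma_2$ and $\hat{\Gamma}_1+2\hat{\Gamma}_2 \asconv \var(\tilde{F}(\by_1\wedge\by_2)) = \Gamma_1+2\Gamma_2$. Both reduce to two ingredients: (i) the uniform bound $\sup_{\bsy}|\tilde{R}(\bsy)/n - \tilde{F}(\bsy)| \asconv 0$, established in the proof of \Cref{thm:multiacest} (lower-left orthants form a VC class, and the empirical distribution of the entrywise-permuted sample $\{\tilde{\by}_i\}$ approaches $\tilde{\mu}_{\by}$); and (ii) the strong law of large numbers for stationary $m$-dependent sequences, applied to $\{\tilde{F}(\by_i\wedge\by_{i+1})\tilde{F}(\by_i\wedge\by_{i+2})\}_i$ (which is $2$-dependent since $\by_1,\by_2,\dots$ are i.i.d.), to $\{\tilde{F}(\by_i\wedge\by_{i+1})^2\}_i$, and to $\{\tilde{F}(\by_i\wedge\by_{i+1})\}_i$. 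Passing from $\tilde{R}(\cdot)/n$ to $\tilde{F}(\cdot)$ inside these averages costs at most $\sup_{\bsy}|\tilde{R}(\bsy)/n - \tilde{F}(\bsy)|$ times the Lipschitz constant of $(a,b)\mapsto ab$ on $[0,1]^2$, and hence vanishes by (i); identifying the limits of the $m$-dependent averages then matches the definitions of $\Gamma_1,\Gamma_2$.

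For the nearest-neighbour part, a short exchangeability computation gives unbiasedness: $\E[\hat{p}_n] = (n-1)p_n$ and $\E[\hat{q}_n] = (n-1)q_n$ (for the latter, $\sum_i |M_{\bz}^{-1}(i)|(|M_{\bz}^{-1}(i)|-1) = \sum_{j\ne k}\one\{M_{\bz}(j)=M_{\bz}(k)\}$). It remains to control the fluctuations $\hat{p}_n - \E[\hat{p}_n]$ and $\hat{q}_n - \E[\hat{q}_n]$. When the in-degrees of the nearest-neighbour graph are uniformly bounded -- e.g.\ when $\bz$ is atomless, so that each point is the nearest neighbour of at most a dimension-dependent constant number of others -- $\hat{p}_n$ and $\hat{q}_n$ are functions of $(\bz_1,\dots,\bz_n)$ and the tie-breaking variables enjoying the bounded-differences property with constant $O(1/n)$, so McDiarmid's inequality yields $\pr(|\hat{p}_n-\E\hat{p}_n|>t)\le 2\exp(-cnt^2)$ and similarly for $\hat{q}_n$; these bounds are summable in $n$, and Borel--Cantelli gives the claim. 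The general case, where atoms of $\bz$ produce clusters of coincident points and the in-degrees (hence the naive bounded-differences constants) are no longer deterministically bounded, is handled by the same concentration machinery used in the proof of \Cref{thm:tacnormal} -- for instance by conditioning on the coincidence pattern of $(\bz_1,\dots,\bz_n)$ and treating the within-cluster random tie-breaking separately, together with the a.s.\ bound $\sum_i |M_{\bz}^{-1}(i)|^2 = O(n)$.

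Assembling: $\hat{\Gamma}_j \asconv \Gamma_j$, $\hat{p}_n-(n-1)p_n\asconv 0$ and $\hat{q}_n-(n-1)q_n\asconv 0$ (so $\hat{p}_n,\hat{q}_n$ are eventually bounded, since $(n-1)p_n,(n-1)q_n$ are), hence $\hat{N}_n - N_n \asconv 0$; combined with $\hat{D}_n \asconv D>0$, the boundedness of $N_n$, and $\liminf_n\sigma_n^2>0$, we obtain $\hat{\sigma}_n^2/\sigma_n^2\asconv 1$. I expect the main obstacle to be ingredients (i)--(ii) of the $\Gamma$-part -- simultaneously controlling the random empirical orthant process $\tilde{R}(\cdot)/n$ uniformly while invoking the $m$-dependent strong law for the resulting plug-in averages -- and, on the nearest-neighbour side, making the concentration argument fully rigorous in the presence of atoms and random tie-breaking, where the bounded-differences constants become data-dependent; everything else is routine once the cited parts of \Cref{thm:multiacest} and \Cref{thm:tacnormal} are in hand.
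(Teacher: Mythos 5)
Your proposal is correct and follows essentially the same route as the paper: convergence of the plug-in denominator from the proof of \Cref{thm:multiacest}, convergence of $\hat{\Gamma}_1,\hat{\Gamma}_2$ via the uniform bound $\sup_{\bsy}|\tilde{R}(\bsy)/n-\tilde{F}(\bsy)|\asconv 0$ (\Cref{lem:permcdf}) plus a law of large numbers for the $2$-dependent averages, and the identification of $\hat{p}_n,\hat{q}_n$ with $(n-1)\hat{a}_n,(n-1)\hat{b}_n$ whose a.s.\ convergence to $(n-1)a_n,(n-1)b_n$ is exactly~\eqref{eq:anconv}--\eqref{eq:bnconv} from the proof of \Cref{thm:tacnormal}. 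The only cosmetic differences are that the paper handles the $m$-dependent averages by bounded differences plus Borel--Cantelli rather than citing an $m$-dependent SLLN, and simply cites the Efron--Stein argument for the nearest-neighbour statistics rather than re-deriving it.
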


\begin{table}[!t]
\centering
\caption{(a): Averaged percentage of rejections under different simulation setups at nominal levels of $\alpha = 5\%$ and $10\%$. Percentage signs ($\%$) are omitted. (b): Same as (a), but with the oracle $\sigma^2$ in place of the estimate $\hat{\sigma}_n^2$ when building the test. Because the terms $\Gamma_1, \Gamma_2$ and the denominator in~\eqref{eq:contsigma} cannot be expressed in closed form, we compute them numerically using a sample of $n = 10000$ i.i.d. observations.}
\label{tab:both}
\begin{subtable}[t]{\linewidth}
\centering
\caption{Results with $\hat{\sigma}_n^2$}
    \begin{tabular}{ccc|cc|cc|cc}
\hline\hline
\multirow{2}{*}{$\dy$} & \multirow{2}{*}{$\dy$} & \multirow{2}{*}{distribution type} & \multicolumn{2}{c|}{$n=50$} & \multicolumn{2}{c|}{$n=200$} & \multicolumn{2}{c}{$n=1000$} \\
\cline{4-9}
 & & & 5\% & 10\% & 5\% & 10\% & 5\% & 10\% \\
 \hline
2 & 2 & Gaussian & 15.68 & 20.81 & 7.34 & 12.45 & 5.14 & 10.51 \\
2 & 2 & $t_2$ & 16.01 & 21.51 & 7.55 & 12.91 & 5.21 & 10.44 \\
2 & 2 & $t_4$ & 15.64 & 20.87 & 7.64 & 12.90 & 5.49 & 10.41 \\
2 & 5 & Gaussian & 13.04 & 18.10 & 6.45 & 11.58 & 5.47 & 10.05 \\
2 & 5 & $t_2$ & 12.92 & 18.09 & 6.68 & 12.39 & 5.19 & 10.07 \\
2 & 5 & $t_4$ & 12.64 & 17.95 & 6.37 & 11.75 & 4.95 & 10.26 \\
5 & 2 & Gaussian & 19.38 & 24.13 & 10.18 & 15.30 & 6.55 & 11.73 \\
5 & 2 & $t_2$ & 18.46 & 24.01 & 10.21 & 16.78 & 6.33 & 11.67 \\
5 & 2 & $t_4$ & 19.01 & 23.99 & 9.85 & 15.93 & 6.15 & 11.30 \\
5 & 5 & Gaussian & 17.82 & 23.67 & 9.33 & 14.88 & 5.94 & 11.49 \\
5 & 5 & $t_2$ & 17.81 & 22.24 & 9.20 & 15.53 & 6.34 & 11.83 \\
5 & 5 & $t_4$ & 17.89 & 23.06 & 9.21 & 15.63 & 6.26 & 11.32 \\
\hline\hline
\end{tabular}
\label{tab:normal}
\end{subtable}
\hfill
%

\begin{subtable}[t]{\linewidth}
\centering
\caption{Results with $\sigma^2$}
    \begin{tabular}{ccc|cc|cc|cc}
\hline\hline
\multirow{2}{*}{$\dy$} & \multirow{2}{*}{$\dz$} & \multirow{2}{*}{Distribution type} & \multicolumn{2}{c|}{$n=50$} & \multicolumn{2}{c|}{$n=200$} & \multicolumn{2}{c}{$n=1000$} \\
\cline{4-9}
 & & & 5\% & 10\% & 5\% & 10\% & 5\% & 10\% \\
 \hline
2 & 2 & Gaussian & 5.09 & 10.50 & 4.88 & 9.61 & 4.83 & 10.07 \\

2 & 2 & $t_2$ & 4.77 & 9.84 & 4.52 & 9.29 & 4.50 & 9.34 \\
2 & 2 & $t_4$ & 5.47 & 10.99 & 5.44 & 10.57 & 5.15 & 10.15 \\
2 & 5 & Gaussian & 6.18 & 11.93 & 6.09 & 11.52 & 5.66 & 11.02 \\
2 & 5 & $t_2$ & 5.67 & 11.22 & 5.33 & 10.57 & 5.62 & 10.94 \\
2 & 5 & $t_4$ & 5.91 & 11.56 & 5.40 & 10.31 & 5.11 & 10.39 \\
5 & 2 & Gaussian & 13.37 & 20.49 & 7.36 & 12.85 & 5.60 & 10.78 \\
5 & 2 & $t_2$ & 13.39 & 20.66 & 7.20 & 12.78 & 5.82 & 10.81 \\
5 & 2 & $t_4$ & 13.32 & 20.51 & 7.04 & 12.37 & 5.51 & 10.32 \\
5 & 5 & Gaussian & 12.88 & 20.08 & 7.01 & 12.53 & 4.99 & 9.85 \\
5 & 5 & $t_2$ & 12.85 & 19.43 & 6.91 & 12.01 & 5.30 & 10.32 \\
5 & 5 & $t_4$ & 13.42 & 20.21 & 7.14 & 12.62 & 5.44 & 10.46 \\
\hline\hline
\end{tabular}
\label{tab:normal_oracle}
\end{subtable}
\end{table}

To empirically understand the validity of testing the null hypothesis using $\htaci$ and $\hat{\sigma}_n$, we perform a numerical experiment. We vary $(\dy, \dz) = (2, 2), (5, 2), (2, 5), (5, 5)$. For each choice of $\dy, \dz$, we generate $\{(\by_i, \bz_i)\}_{i = 1}^n$ according to the model $\by = B_{\by} \by'$, $\bz = B_{\bz} \bz'$ where
\begin{itemize}
    \item  $\by' \in \R^{\dy}, \bz' \in \R^{\dz}$ are generated with i.i.d. entries from $\mathcal{N}(0, 1)$, $t_2$ and $t_4$ distributions;
    \item $B_{\by} \in \R^{\dy \times \dy}, B_{\bz} \in \R^{\dz \times \dz}$ are generated by first generating each entry according to i.i.d. $\mathcal{N}(0, 1)$, then normalizing each row such that its $\ell_2$-norm is equal to $1$.
\end{itemize}
For each configuration of $\dy$ and $\dz$, we randomly generate 20 matrices $B_{\by}$ and $B_{\bz}$, respectively. For each combination of $B_{\by}$, $B_{\bz}$, distribution type for $\by'$ and $\bz'$, and sample size $n$ (50, 200, or 1000), we perform 1000 Monte Carlo replications, each with $n$ i.i.d. samples, and then use these 1000 Monte Carlo replications to calculate the test's rejection rate with nominal levels $\alpha = 5\%, 10\%$ under this combination. Since each specific simulation setting (defined by $\dy$, $\dz$, distribution type, and $n$) is associated with 20 distinct $(B_{\by}, B_{\bz})$ pairs, we take the average of the 20 corresponding rejection rates and report them in~\Cref{tab:normal}. To better understand the extent to which miscoverage is due to the convergence of $\htaci$ versus the estimation error from $\hat{\sigma}_n^2$, we report in \Cref{tab:normal_oracle} the average rejection rates obtained using the oracle $\sigma^2$ in~\eqref{eq:contsigma} instead of its estimate $\hat{\sigma}_n^2$.

\Cref{tab:normal_oracle} shows that, when the test is constructed by the oracle $\sigma^2$, for every setting, the rejection rate converges to the nominal level as $n$ becomes sufficiently large, confirming the asymptotic validity of $\htaci$. For smaller $n$ (e.g., $n = 50$), the closeness of $\htaci$'s distribution to Gaussianity can be sensitive to the dimensions, and in particular to $\dy$. Finally, holding both dimension and sample size fixed, the rejection rates are remarkably similar across Gaussian, $t_2$ and $t_4$ distributions, indicating that $\htaci$ is strongly robust to the tail heaviness of the underlying distribution.

\Cref{tab:normal}, on the other hand, reflects a practical scenario where the test is constructed by the estimated $\hat{\sigma}_n^2$. The rejection rates are consistently worse than the results in \Cref{tab:normal_oracle}, especially for small sample sizes ($n = 50, 200$). When the sample size reaches $n=1000$, the test achieves relatively good performance, confirming the asymptotic validity of the proposed test. For $n=50$, the rejection rates are severely inflated across all settings, and in particular the settings with large $\dy$. Finally, the test maintains its robustness to the distribution type of $\by', \bz'$.

\section{Algorithmic implementation and additional discussions}\label{sec:algorithm}

In this section, we discuss the computation of $\htaci, \htac$ and $\hat{\sigma}_n^2$. Taking $\htaci$ as an example, the main bottleneck is to calculate $\sum_{i = 1}^n \tilde{R}(\by_i \wedge \by_{M_{\bz}(i)})$ and $\sum_{i = 1}^n \check{L}_i^2$. By simply using pairwise comparisons, they can be easily computed in $O(n^2)$, which means that $\htaci, \htac$ and $\hat{\sigma}_n$ can be computed with time complexity no greater than $O(n^2)$. We now develop a faster algorithm which can reduce the time complexity to $O(n (\log n)^{\dy})$; this amounts to solving the following computational problem: 
\begin{itemize}
    \item Suppose we are given two data sets $\{\bsa_i\}_{i=1}^{n_a}$ and $\{\bsb_i\}_{i=1}^{n_b}$, where $\bsa_i, \bsb_i \in \R^d$, how to calculate $c_1 := \sum_{i=1}^{n_a} \one\{\bsa_i \le \bsb_1\}, \ldots, c_{n_b} := \sum_{i=1}^{n_a} \one\{\bsa_i \le \bsb_{n_b}\}$.
\end{itemize}

\begin{algorithm}[!t]
	\DontPrintSemicolon
	\caption{\label{alg:2drank} $2$-dimensional multivariate rank construction}
	\KwIn{Two sets of $2$-dimensional vectors $\{\bsa\}_{i=1}^{n_a}, \{\bsb\}_{i=1}^{n_b}$.}
    \KwOut{Multivariate ranks $c_1 := \sum_{i=1}^{n_a} \one\{\bsa_i \le \bsb_1\}, \ldots, c_{n_b} := \sum_{i=1}^{n_a} \one\{\bsa_i \le \bsb_{n_b}\}$.}
	
	Sort $\{\bsa_i\}_{i=1}^{n_a}$ according to its first entry, such that $\bsa_{1, 1} \le \cdots \le \bsa_{n_a, 1}$\;
    
    For each $\bsb_i$, associate it with a $k_i$, which is the largest $i'$ satisfying $\bsa_{i', 1} \le \bsb_{i, 1}$; also, set $c_i$ as $1$ if $\bsa_{k_i} \le \bsb_i$, and $0$ otherwise. If $\bsb_{i,1}$ is smaller than all $\bsa_{i', 1}$, set $k_i = 0$ and $c_i = 0$\;

	\For{$j = 1, \ldots, J := \lceil \log_2 n_a \rceil$}{
    Set $J'_j := \lceil n_a / 2^{j - 1}\rceil$; split second coordinates of $\bsa_i$ into sequences $\bs{e}_1^j, \ldots, \bs{e}_{J'_j}^j$ and sort each $\bs{e}_{j'}^j$; then $\bs{e}_{j'}^j$ becomes a sorted sequence containing the second coordinates of vectors 
    \[
    \bsa_{2^{j - 1} \cdot (j' - 1) + 1}, \ldots, \bsa_{\min\{2^{j - 1} \cdot j', \; n_a\}}\footnotemark\;
    \]

        \For{$i = 1, \ldots, n_b$}{
        If $k_i \le 1$, skip. Otherwise, if there exists an even integer $\ell$ such that $$2^{j - 1} \cdot (\ell - 1) < k_i \le 2^{j - 1} \cdot \ell,$$ update $c_i$ by adding $c_i' := \sum_{x \in \bs{e}_{\ell - 1}^j}\one\{x \le \bsb_{i, 2}\}$, which is obtained by computing the rank of $\bsb_{i, 2}$ in the sorted sequence $\bs{e}_{\ell - 1}^j$; otherwise, leave $c_i$ unchanged\;
        }
	}
    \vspace{-0.05cm}
\end{algorithm}

\footnotetext{Note that when $j > 1$, the sequences $\bs{e}_1^j, \ldots, \bs{e}_{J'_j}^j$ can be constructed by applying a merge sort algorithm to the sequences from the $(j - 1)$-th iteration.}

When $d = 1$, this problem can be solved by a standard ranking algorithm. In the higher dimensional regime, calculating $c_i$'s becomes more challenging. In this paper, we propose Algorithms~\ref{alg:2drank} and~\ref{alg:drank} to solve this problem under the regimes $d = 2$ and $d \ge 3$, respectively. Algorithm~\ref{alg:2drank} begins by sorting the set $\{\bsa_i\}_{i=1}^{n_a}$ by its first coordinate (Step 1), so that $\bsa_{1, 1} \le \cdots \le \bsa_{n_a, 1}$. Next, it computes the rank $k_i$ of each scalar $\bsb_{i, 1}$ within the sorted list of first coordinates $\bsa_{i', 1}$ (Step 2). Given these ranks $k_i$, a naive approach to compute $c_i$ would be to sort the set $\{\bsa_{i', 2}, i' \le k_i\}$ and then find the rank of $\bsb_{i, 2}$ within them. 
This naive approach, however, has a time complexity of $O(n^2 \log n)$, which is even worse than a simple pairwise comparison. To improve efficiency, in Steps 3-8, we propose a new method that simultaneously computes all $c_i$ values by embedding this computation within a \emph{merge sort}~\citep{knuth1997art}
of the second coordinates of $\bsa_i$. We prove in~\Cref{thm:complexity} that this algorithm achieves a time complexity of $O(n (\log n)^2)$. Algorithm~\ref{alg:drank} is a divide-and-conquer procedure that computes the values $c_i$ by recursively reducing the dimension of the dataset until it becomes two-dimensional; and handles the final step by Algorithm~\ref{alg:2drank}. The following proposition shows the complexity and consistency of the two algorithms.

\begin{algorithm}[!t]
    \DontPrintSemicolon
	\caption{\label{alg:drank} $d$-dimensional cumulative rank construction}
    \KwIn{Two sets of $d$-dimensional vectors $\{\bsa_i\}_{i=1}^{n_a}, \{\bsb_i\}_{i=1}^{n_b}$ for some $d \ge 3$.}
    \KwOut{Multivariate ranks $c_1 := \sum_{i=1}^{n_a} \one\{\bsa_i \le \bsb_1\}, \ldots, c_{n_b} := \sum_{i=1}^{n_a} \one\{\bsa_i \le \bsb_{n_b}\}$.}
    
    Sort $\{\bsa_i\}_{i=1}^{n_a}$ according to its first entry, such that $\bsa_{1, 1} \le \ldots, \bsa_{n_a, 1}$\;
    
    For each $\bsb_i$, associate it with a $k_i$, which is the largest $i'$ satisfying $\bsa_{i', 1} \le \bsb_{i, 1}$; also, set $c_i$ as $1$ if $\bsa_{k_i} \le \bsb_i$, and $0$ otherwise. If $\bsb_{i,1}$ is smaller than all $\bsa_{i', 1}$, set $k_i = 0$ and $c_i = 0$\;

    \For{$j = 1, \ldots, J := \lceil \log_2 n_a \rceil$}{
		Set $J_j' := \lceil n_a / 2^{j - 1}\rceil$; split subvectors $\{\bsa_{i, \{2, \ldots, d\}}\}_{i = 1}^{n_a}$ into sets of vectors $\mathcal{A}_1^j, \ldots, \mathcal{A}_{J_j'}^j$; here 
        \[
        \mathcal{A}_{j'}^j := \left\{\bsa_{2^{j - 1} \cdot (j' - 1) + 1, \{2, \ldots, d\}}, \ldots, \bsa_{\min\{2^{j - 1} \cdot j',\; n_a\},\; \{2, \ldots, d\}}\right\}
        \]

        For each $\mathcal{A}_{j'}^j$, associate it with two empty sets $\mathcal{B}_{j'}^j$, $\cC_{j'}^j$\;

        \For{$i = 1, \ldots, n_b$}{
        If $k_i \le 1$, skip. Otherwise, if there exists some even number $\ell$ such that $$2^{j - 1} \cdot (\ell - 1) < k_i \le 2^{j - 1} \cdot \ell,$$ add $\bsb_{i, \{2, \ldots, d\}}$ to $\cB_{\ell - 1}^j$ and add $i$ to $\cC_{\ell - 1}^j$\;
        }
        For each pair $(\mathcal{A}_{j'}^j, \mathcal{B}_{j'}^j)$, if $d = 3$, apply Algorithm~\ref{alg:2drank} to this pair; otherwise, apply Algorithm~\ref{alg:drank}. This creates a set of ranks $\{c_i': i \in \cC_{j'}^j\}$; for each $i \in \cC_{j'}^j$, add $c_i$ by $c_i'$\;
    }
\end{algorithm}

\begin{theorem}\label{thm:complexity}
Let $n = \max\{n_a, n_b\}$. Algorithm~\ref{alg:2drank} has time complexity $O(n (\log n)^2)$; and Algorithm~\ref{alg:drank} has time complexity $O(n (\log n)^d)$. Moreover, the two algorithms are correct, namely the values of $c_i$ obtained by these algorithms are equal to $\sum_{i' = 1}^{n_a} \one\{\bsa_{i'} \le \bsb_i\}$.
\end{theorem}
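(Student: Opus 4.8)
The plan is to prove correctness and the running-time bounds separately, in each case treating \Cref{alg:2drank} directly and then bootstrapping to \Cref{alg:drank} by induction on the dimension $d$, with \Cref{alg:2drank} ($d=2$) as the base case. For the correctness of \Cref{alg:2drank} the engine is a loop invariant. After Step~1 the points are sorted so that $\bsa_{1,1}\le\cdots\le\bsa_{n_a,1}$, hence $\bsa_{i'}\le\bsb_i$ holds iff $i'\le k_i$ and $\bsa_{i',2}\le\bsb_{i,2}$, where $k_i$ is the rank computed in Step~2. I would prove by induction on $j$ that after the $j$-th pass of the outer loop, for every $i$,
\[
c_i=\bigl|\{\,i':\ \lfloor (k_i-1)/2^{j}\rfloor\,2^{j}<i'\le k_i,\ \bsa_{i',2}\le\bsb_{i,2}\,\}\bigr|
\]
(for $k_i\le 1$ this set does not depend on $j$, matching the skip in the algorithm). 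The case $j=0$ is exactly the initialization in Step~2. For the inductive step, set $q:=\lfloor (k_i-1)/2^{j-1}\rfloor$ and $\ell:=q+1$, so that $k_i$ lies in the $\ell$-th block at scale $2^{j-1}$, whose sorted second coordinates are $\bs{e}_{\ell}^{j}$; one checks that the dyadic lower endpoint is unchanged from pass $j-1$ precisely when $q$ is even (equivalently $\ell$ odd, the case in which \Cref{alg:2drank} leaves $c_i$ alone), while when $q$ is odd the indices newly entering the range, namely $\{(q-1)2^{j-1}+1,\ldots,q\,2^{j-1}\}$, form exactly the $(\ell-1)$-th block, so that adding to $c_i$ the rank of $\bsb_{i,2}$ in $\bs{e}_{\ell-1}^{j}$ restores the invariant. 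At $j=J=\lceil\log_2 n_a\rceil$ the lower endpoint equals $0$, since $2^{J}\ge n_a\ge k_i$, so $c_i=|\{i'\le k_i:\bsa_{i',2}\le\bsb_{i,2}\}|=\sum_{i'}\one\{\bsa_{i'}\le\bsb_i\}$, which is the claimed output.

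For \Cref{alg:drank} the same invariant holds with ``$\bsa_{i',2}\le\bsb_{i,2}$'' replaced throughout by ``$\bsa_{i',\{2,\ldots,d\}}\le\bsb_{i,\{2,\ldots,d\}}$'', and the induction on $j$ goes through verbatim once we know the recursive step is correct: by the induction hypothesis on $d$ (with \Cref{alg:2drank} as base case), applying the algorithm to $(\mathcal{A}_{j'}^{j},\mathcal{B}_{j'}^{j})$ returns, for each $i\in\mathcal{C}_{j'}^{j}$, the count $|\{\bsa'\in\mathcal{A}_{j'}^{j}:\bsa'\le\bsb_{i,\{2,\ldots,d\}}\}|$, which is precisely the contribution of the $(\ell-1)$-th block $\mathcal{A}_{\ell-1}^{j}$; here one uses that every index in that block is $<k_i$, so no further filtering by the first coordinate is needed inside the recursion. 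Concluding at $j=J$ gives $c_i=\sum_{i'}\one\{\bsa_{i'}\le\bsb_i\}$ exactly as before.

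For the complexity of \Cref{alg:2drank}: Step~1 costs $O(n_a\log n_a)$ and the binary searches for the $k_i$ in Step~2 cost $O(n_b\log n_a)$; across the outer loop the sorted sequences $\bs{e}_{j'}^{j}$ are produced by a single bottom-up merge sort of the second coordinates ($O(n_a)$ merging per level, $O(n_a\log n_a)$ in total), while the inner loop performs at most one binary search per $\bsb_i$ per level, each costing $O(\log n_a)$, i.e. $O(n_b(\log n_a)^2)$ over the $J=O(\log n_a)$ levels; summing gives $O(n(\log n)^2)$ with $n=\max\{n_a,n_b\}$. For \Cref{alg:drank} I would argue by induction on $d$ with $T_2(n)=O(n(\log n)^2)$ as base case: Steps~1--2 cost $O(n\log n)$, and at level $j$ the partitioning of the $\bsa$'s and the bucketing of the $\bsb_i$'s into the sets $\mathcal{B}_{j'}^{j},\mathcal{C}_{j'}^{j}$ cost $O(n)$, while the recursive calls cost $\sum_{j'}T_{d-1}(m_{j'})$ with $m_{j'}:=|\mathcal{A}_{j'}^{j}|+|\mathcal{B}_{j'}^{j}|$; since the $\mathcal{A}_{j'}^{j}$ partition the $\bsa$'s and each $\bsb_i$ joins at most one $\mathcal{B}_{j'}^{j}$ per level, $\sum_{j'}m_{j'}\le n_a+n_b\le 2n$, so using $T_{d-1}(m)=O(m(\log m)^{d-1})$ and $\log m_{j'}=O(\log n)$ each level costs $O(n(\log n)^{d-1})$, and all $J=O(\log n)$ levels cost $O(n(\log n)^{d})$, which closes the induction.

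I expect the crux to be the inductive step for the correctness of \Cref{alg:2drank}: the arithmetic claim that the ``$\ell$ even'' test used by the algorithm is equivalent to the dyadic lower endpoint $\lfloor (k_i-1)/2^{j}\rfloor\,2^{j}$ strictly dropping below $\lfloor (k_i-1)/2^{j-1}\rfloor\,2^{j-1}$, together with the identification of the indices it sweeps past as exactly the $(\ell-1)$-th block. This is a statement about the binary expansion of $k_i$ that is easy to state but somewhat fiddly to verify in all boundary cases ($k_i$ small, $n_a$ not a power of two, the possibly truncated last block); everything else --- the merge-sort accounting, the bucketing bounds, and the two inductions on $d$ --- is routine.
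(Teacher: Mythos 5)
Your proof is correct and follows essentially the same route as the paper: the complexity accounting is identical, and your loop invariant is just a dynamic restatement of the paper's key combinatorial fact (its Lemma~\ref{lem:jl}), namely that the blocks $\bs{e}_{\ell-1}^j$ contributed across the levels $j$ where $k_i$ lands in an even block partition $\{1,\ldots,k_i-1\}$ exactly once. The "fiddly" arithmetic you flag as the crux is precisely what the paper isolates and verifies via the binary expansion of $k_i-1$, so no gap remains.
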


Recall that the construction of nearest neighbor functions has time complexity $O(n \log n)$~\citep{friedman1977algorithm}. Then \Cref{thm:complexity} means that $\htaci, \htac$ and $\hat{\sigma}_n$ can be computed with complexity $O(n (\log n)^{\dy})$. When $\dy = 1$, it has been shown in~\citet{azadkia2021simple} that the complexity of $\htaci$ and $\htac$ are of order $O(n \log n)$, which is consistent with our results. 

We next discuss some properties of $\taci$. As we have shown in~\Cref{sec:multiac}, in the multivariate case $\taci$ still satisfies~\ref{p1}--\ref{p3'}. In the following two propositions, we show that $\taci$ has some additional monotonicity properties that were not discussed in previous literature. It should be noted that after the public release of this manuscript, we discovered that~\Cref{prop:additive} was also proposed independently in a contemporaneous work~\citep[Theorem~1.8]{ansari2025ordering}.
\begin{proposition}[Monotonicity under mixture models]\label{prop:mixture}
    Consider two distributions $\pr_0$ and $\pr_1$ of $(\by, \bz)$ such that the marginal distribution of $\by$ and $\bz$ are the same for $\pr_0$ and $\pr_1$; and moreover, $\by$ is almost surely a function of $\bz$ under $\pr_1$, but is independent of $\bz$ under $\pr_0$. Then if $(\by, \bz)$ follows the distribution $(1 - \eta) \pr_0 + \eta \pr_1$, for some $\eta \in [0, 1]$, we have $\taci = \eta^2$.
\end{proposition}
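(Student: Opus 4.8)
The plan is to represent the mixture by a latent selection variable. Introduce $G\sim\mathrm{Bern}(\eta)$ on an enlarged probability space so that, conditionally on $G=0$, $(\by,\bz)\sim\pr_0$, and conditionally on $G=1$, $(\by,\bz)\sim\pr_1$; then the law of $(\by,\bz)$ after marginalizing out $G$ is exactly $(1-\eta)\pr_0+\eta\pr_1$. The first key observation is that $G$ is independent of $\bz$: since $\pr_0$ and $\pr_1$ have the same $\bz$-marginal, the conditional law of $\bz$ given $G$ does not depend on $G$, hence $\PP(G=1\mid\bz)=\eta$ almost surely. Note also that the mixture preserves the $\by$-marginal and the $\bz$-marginal, both of which remain the common marginals $\mu_{\by}$ and $\mu_{\bz}$; in particular the measure $\tilde{\mu}_{\by}$ appearing in \eqref{eq:multitaci} is unchanged.

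Next I would compute the conditional survival function. Write $F_0(\bsy):=\pr_0(\by\ge\bsy)$, which under $\pr_0$ does not depend on $\bz$ because $\by\independent\bz$ there, and let $g$ be a measurable map with $\pr_1(\by=g(\bz))=1$, so that $\pr_1(\by\ge\bsy\mid\bz)=\one\{g(\bz)\ge\bsy\}$ almost surely. Conditioning on $\bz$ and applying the tower property over $G$ (using $\PP(G=1\mid\bz)=\eta$) gives
\[
\PP(\by\ge\bsy\mid\bz)=(1-\eta)F_0(\bsy)+\eta\,\one\{g(\bz)\ge\bsy\}\qquad\text{a.s.}
\]
The first summand is a deterministic function of $\bsy$ alone, so taking variance over $\bz$ annihilates it:
\[
\var\bigl(\PP(\by\ge\bsy\mid\bz)\bigr)=\eta^2\,\var\bigl(\one\{g(\bz)\ge\bsy\}\bigr).
\]

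Finally I would identify the law of $g(\bz)$. Under the mixture $\bz$ has law $\mu_{\bz}$, so $g(\bz)$ has law $\mu_{\bz}\circ g^{-1}$; but this is precisely the $\pr_1$-marginal of $\by$, which by hypothesis equals $\mu_{\by}$, the common $\by$-marginal (and the $\by$-marginal under the mixture). Hence $\var(\one\{g(\bz)\ge\bsy\})=\var(\one\{\by\ge\bsy\})$ for every $\bsy$. Substituting into \eqref{eq:multitaci} shows that the integrand of the numerator equals $\eta^2$ times the integrand of the denominator pointwise in $\bsy$, so integrating against $\tilde{\mu}_{\by}$ and dividing yields $\taci=\eta^2$, where the division is legitimate because $\by$ is not almost surely constant and hence the denominator is strictly positive.

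I do not anticipate a serious obstacle. The two points needing care are (i) the rigorous justification of $G\independent\bz$, which however is immediate from the shared $\bz$-marginal, and (ii) bookkeeping of which probability measure each marginal and variance is evaluated under, since the argument repeatedly exploits that the mixture leaves both the $\by$- and $\bz$-marginals intact. If $\by$ were almost surely constant the coefficient would be of the form $0/0$; as throughout the paper we implicitly exclude this degenerate case.
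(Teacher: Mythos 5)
Your proposal is correct and follows essentially the same route as the paper's proof: both represent the mixture via a latent Bernoulli selection variable, use the independence of $\by$ and $\bz$ under $\pr_0$ so that the $\pr_0$ contribution to $\PP(\by\ge\bsy\mid\bz)$ is deterministic and vanishes under the variance, and then use the functional dependence under $\pr_1$ together with the matching $\by$-marginal to identify the remaining variance with the denominator's integrand. The only cosmetic difference is that the paper concludes by invoking its Lemma on the case $\taci=1$ for the $\pr_1$-component, whereas you argue the pointwise equality $\var(\one\{g(\bz)\ge\bsy\})=\var(\one\{\by\ge\bsy\})$ directly from the equality of laws; both are valid.
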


\begin{proposition}[Monotonicity under additive models]\label{prop:additive}
    Consider $Y = \eta h(\bz) + \varepsilon$, where $\varepsilon, h(\bz) \in \R$ are independent, continuous random variables, and $\eta \ge 0$. We have that $\taci$ is monotonically increasing with $\eta$. 
\end{proposition}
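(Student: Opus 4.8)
The plan is to rewrite $\taci$, viewed as a function of $\eta$, in the closed form $6\,\PP(R\le\eta S)-2$, where $R$ and $S$ are \emph{independent} real random variables with $S$ symmetric about $0$, and then read off monotonicity from this representation. (Here $\dy=1$, so the measure $\tilde\mu_{\by}$ appearing in~\eqref{eq:multitaci} is just $\mu_Y$, and throughout I use $\varepsilon\independent\bz$, so that $\PP(Y\ge y\mid\bz)=G(y-\eta h(\bz))$ with $G$ the survival function of $\varepsilon$.)

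First I would handle the denominator of $\taci$. Since $\varepsilon$ is continuous and independent of $\bz$, the variable $Y=\eta h(\bz)+\varepsilon$ is continuous for every $\eta\ge 0$, so (writing $F_Y$ for its c.d.f.) $F_Y(Y)\sim\mathrm{Unif}[0,1]$ and $\int\var(\one\{Y\ge y\})\,d\mu_Y(y)=\E[F_Y(Y)(1-F_Y(Y))]=\tfrac16$, independently of $\eta$. For the numerator, let $(Y,\tilde Y)$ be conditionally i.i.d.\ given $\bz$ and let $Y^{\ast}$ be an independent copy of $Y$ with $Y^{\ast}\independent(\bz,Y,\tilde Y)$. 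Using $\var(\PP(Y\ge y\mid\bz))=\PP(Y\ge y,\tilde Y\ge y)-\PP(Y\ge y)^2$, integrating against $\mu_Y$, and invoking $F_Y(Y^{\ast})\sim\mathrm{Unif}[0,1]$ once more, one obtains $\int\var(\PP(Y\ge y\mid\bz))\,d\mu_Y(y)=\PP(Y^{\ast}\le\min(Y,\tilde Y))-\tfrac13$, so that
\[
\taci=6\,\PP\big(Y^{\ast}\le\min(Y,\tilde Y)\big)-2 .
\]

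Next I would insert the additive structure. Because $\varepsilon\independent\bz$, the variables above can be realized on a common space as: $W,W^{\ast}$ i.i.d.\ with the law of $h(\bz)$; $\varepsilon,\tilde\varepsilon,\varepsilon^{\ast}$ i.i.d.\ with the law of $\varepsilon$; all five mutually independent; and $Y=\eta W+\varepsilon$, $\tilde Y=\eta W+\tilde\varepsilon$, $Y^{\ast}=\eta W^{\ast}+\varepsilon^{\ast}$. With $G(t):=\PP(\varepsilon\ge t)$ and $S:=W-W^{\ast}$, conditioning on $(S,\varepsilon^{\ast})$ gives
\[
\PP\big(Y^{\ast}\le\min(Y,\tilde Y)\big)=\E\big[G(\varepsilon^{\ast}-\eta S)^2\big]=\PP\big(\min(\varepsilon,\tilde\varepsilon)\ge\varepsilon^{\ast}-\eta S\big)=\PP(R\le\eta S),
\]
where $R:=\varepsilon^{\ast}-\min(\varepsilon,\tilde\varepsilon)$; note $R\independent S$ and $S$ is symmetric about $0$, being the difference of two i.i.d.\ variables. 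So it remains to show $\eta\mapsto\PP(R\le\eta S)$ is non-decreasing on $[0,\infty)$. Symmetry of $S$ means $\mathrm{sign}(S)$ is uniform on $\{\pm1\}$ and independent of $|S|$; conditioning on it, $\PP(R\le\eta S)=\E[g(\eta|S|)]$ with $g(t):=\tfrac12\big(\PP(R\le t)+\PP(R\le -t)\big)$. Since $\eta\mapsto\eta|S|$ is non-decreasing for $\eta\ge 0$, it suffices to prove $g$ is non-decreasing on $[0,\infty)$, i.e.\ $\PP(R\in(t,t'])\ge\PP(R\in(-t',-t])$ whenever $0\le t\le t'$.

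The hard part is this last claim — that $R=\varepsilon^{\ast}-\min(\varepsilon,\tilde\varepsilon)$ is skewed to the right in the strong (density-domination) sense $f_R(t)\ge f_R(-t)$ for $t\ge 0$. Assuming $\varepsilon$ has a density $f$ (the atomless case without a density then follows by mollifying $\varepsilon\rightsquigarrow\varepsilon+\delta\,\mathrm{Unif}(-1,1)$ and letting $\delta\downarrow 0$, under which $\taci$ is continuous), $\min(\varepsilon,\tilde\varepsilon)$ has density $m\mapsto 2f(m)G(m)$, so $f_R(r)=2\int f(m+r)\,f(m)\,G(m)\,dm$; substituting $m\mapsto m+t$ in $f_R(-t)$ and subtracting,
\[
f_R(t)-f_R(-t)=2\int f(m+t)\,f(m)\,\big(G(m)-G(m+t)\big)\,dm\ \ge\ 0\qquad(t\ge 0),
\]
because $G$ is non-increasing. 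Hence $g'(t)=\tfrac12\big(f_R(t)-f_R(-t)\big)\ge 0$, which is exactly what was needed; strict monotonicity of $\taci$ follows along the same lines under mild support conditions on $\varepsilon$ (so that the integral above is strictly positive on a set of $t$ hit by $\eta|S|$). I expect this final step to be the main obstacle: for a generic random variable the skewness inequality is false, and what rescues it here is precisely the $\min$ — an artifact of comparing two conditionally i.i.d.\ copies $Y,\tilde Y$ — which biases $R$ toward the positive half-line.
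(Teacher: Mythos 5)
Your proof is correct and follows essentially the same route as the paper's: both reduce $\taci$ to $6\,p(\eta)-2$ with $p(\eta)=\PP(\varepsilon^{\ast}-\varepsilon_2\wedge\varepsilon_3\le\eta(h(\bz_1)-h(\bz)))$, and both derive monotonicity from the independence and symmetry of the $h(\bz)$-difference together with the right-skewness of $\varepsilon^{\ast}-\min(\varepsilon,\tilde\varepsilon)$. The only divergence is in proving that skewness inequality: the paper uses a set-inclusion/exchangeability argument (the event $\{\varepsilon_2\wedge\varepsilon_3-\varepsilon_1\in S,\ \varepsilon_2\le\varepsilon_3\}$ is contained in $\{\varepsilon_2-\varepsilon_1\wedge\varepsilon_3\in S,\ \varepsilon_1\le\varepsilon_3\}$ for $S\subseteq(0,\infty)$) that works directly for any atomless $\varepsilon$, whereas your density computation plus mollification is also valid but slightly less economical.
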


Noteworthy, \Cref{prop:additive} shows that, when $\dy = 1$, $\taci$ is monotone with an additive model. When $\dy > 1$, the analysis becomes significantly more challenging. In the univariate case, the denominator is a universal constant, meaning we only need to analyze how the numerator changes with $\beta$. When $\dy \ge 2$, both the numerator and denominator of $\taci$ depend on $\beta$, making $\taci$ very difficult to analyze theoretically. To address this theoretical gap, we demonstrate the monotonicity of $\taci$ through a numerical analysis. We conduct an experiment following similar simulation setup as the one in~\Cref{sec:normal}. The result indicates that $\taci$ is still monotonically increasing with $\eta$. For more details, we refer the readers to the Supplementary Material.


\section{Oracle analysis}\label{sec:oracle}

This section is devoted to studying the properties of $\tac, \taci, \Gamma_1$ and $\Gamma_2$, all of which are defined under the oracle scenario in which the underlying distribution is known. More specifically, our goal is to prove~\Cref{thm:multiaccoef,thm:multiaccondcoef}, and the following proposition.

\begin{proposition}\label{thm:gamma}
    Consider the $\Gamma_1, \Gamma_2$ defined in~\Cref{thm:tacnormal}. We have that $\Gamma_1, \Gamma_2 \ge 0$. Moreover, if $\by$ is not almost surely a constant, then $\Gamma_1 > 0$.
\end{proposition}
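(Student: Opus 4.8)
The plan is to recast $\Gamma_1,\Gamma_2$ as variances of functionals of the componentwise probability–integral transforms of $\by$, read off the two nonnegativity claims from a Hoeffding-type decomposition, and then deduce $\Gamma_1>0$ from a Hilbert-space representation of the degenerate part together with an explicit construction. First I would pass to $\bs U_i:=(F_1(\by_{i,1}),\dots,F_{\dy}(\by_{i,\dy}))$, where $F_k$ is the marginal CDF of $\by_k$, so that $\bs U_1,\bs U_2,\bs U_3$ are i.i.d.\ in $[0,1]^{\dy}$. Since $\tilde\mu_{\by}$ is the product of the marginals of $\by$, one has $\tilde F(\bsy)=\prod_k F_k(y_k)$, hence
\[
W:=\tilde F(\by_1\wedge\by_2)=\prod_{k=1}^{\dy}\min\!\bigl(F_k(\by_{1,k}),F_k(\by_{2,k})\bigr)=g(\bs U_1,\bs U_2),\qquad g(\bs u,\bs v):=\prod_{k=1}^{\dy}\min(u_k,v_k),
\]
a symmetric kernel with values in $[0,1]$. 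Setting $\psi(\bs u):=\E[g(\bs u,\bs U_2)]$ and $\theta:=\E[g(\bs U_1,\bs U_2)]$, conditioning on $\bs U_1$ gives $\E[g(\bs U_1,\bs U_2)g(\bs U_1,\bs U_3)]=\E[\psi(\bs U_1)^2]$, hence $\Gamma_2=\E[\psi(\bs U_1)^2]-\theta^2=\var(\psi(\bs U_1))\ge 0$. A short computation (using $\E[g(\bs U_1,\bs U_2)\psi(\bs U_1)]=\E[\psi(\bs U_1)^2]$, etc.) then gives
\[
\Gamma_1=\var(\tilde F(\by_1\wedge\by_2))-2\Gamma_2=\E\!\left[g_2(\bs U_1,\bs U_2)^2\right],\qquad g_2(\bs u,\bs v):=g(\bs u,\bs v)-\psi(\bs u)-\psi(\bs v)+\theta,
\]
i.e.\ $\Gamma_1$ is the variance of the completely degenerate part $g_2$ of the Hoeffding decomposition of $g$; in particular $\Gamma_1\ge 0$. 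This settles the nonnegativity assertions.

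For the strict positivity I would represent $g_2$ in $L^2$. Writing $\lambda$ for Lebesgue measure on $[0,1]^{\dy}$ and $S(\bs t):=\pr(\bs U\ge\bs t)$, the identity $g(\bs u,\bs v)=\int\one\{\bs t\le\bs u\}\,\one\{\bs t\le\bs v\}\,d\lambda(\bs t)$ gives $g_2(\bs u,\bs v)=\langle\phi_{\bs u},\phi_{\bs v}\rangle_{L^2(\lambda)}$ with $\phi_{\bs u}(\bs t):=\one\{\bs t\le\bs u\}-S(\bs t)$; since $\E[\phi_{\bs U}]=0$ in $L^2(\lambda)$, it follows that $\Gamma_1=\E[\langle\phi_{\bs U_1},\phi_{\bs U_2}\rangle^2]=\|C\|_{\mathrm{HS}}^2$, where $C$ is the (positive semidefinite, Hilbert–Schmidt) covariance operator of $\phi_{\bs U}$. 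Consequently $\Gamma_1=0$ iff $C=0$, equivalently $\mathrm{tr}(C)=\E\|\phi_{\bs U}\|^2=\int_{[0,1]^{\dy}}S(\bs t)\bigl(1-S(\bs t)\bigr)\,d\lambda(\bs t)=0$, equivalently $S(\bs t)\in\{0,1\}$ for $\lambda$-a.e.\ $\bs t$.

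It remains to show the latter fails whenever $\by$ is not a.s.\ constant. In that case some coordinate $\by_{k_0}$ is non-degenerate, and a level-set argument for monotone CDFs shows $U_{k_0}=F_{k_0}(\by_{k_0})$ is non-degenerate; let $a:=\mathrm{ess\,inf}\,U_{k_0}\in[0,1)$. I would exhibit a box $B:=(a,a+\varepsilon)\times\prod_{k\ne k_0}(0,\delta_k)\subset[0,1]^{\dy}$ of positive $\lambda$-measure on which $0<S<1$. For $\bs t\in B$: since $t_{k_0}>a$ we have $\pr(U_{k_0}<t_{k_0})>0$, hence $S(\bs t)<1$; and $\{\bs U\ge\bs t\}\supseteq\{U_{k_0}\ge a+\varepsilon\}\cap\bigcap_{k\ne k_0}\{U_k\ge\delta_k\}$, whose probability increases, as $\varepsilon,\delta_k\downarrow 0$, to $\pr(U_{k_0}>a,\ U_k>0\ \forall k\ne k_0)=\pr(U_{k_0}>a)>0$, using that $\pr(U_k>0)=1$ for every $k$ (the event $F_k(\by_k)=0$ would put $\by_k$ strictly below its own essential infimum). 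Thus for $\varepsilon,\delta_k$ small enough $S>0$ throughout $B$, and since $\lambda(B)>0$ this contradicts $S\in\{0,1\}$ $\lambda$-a.e.; hence $\Gamma_1>0$.

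The kernel manipulations, the Hoeffding bookkeeping, and the probability–integral-transform facts are routine. The genuine obstacle is the strict positivity: one is tempted to pick an interior support point of $\bs U$ with all coordinates bounded away from their essential infima, but the joint law of $\bs U$ can live on a lower-dimensional set (a diagonal, a union of coordinate faces, etc.), so no such point need exist. The construction above sidesteps this by perturbing only the $k_0$-th coordinate above its essential infimum while driving the remaining coordinates to $0$ — which costs nothing because $\pr(U_k>0)=1$ always.
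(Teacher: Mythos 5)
Your proof is correct, and while the nonnegativity part coincides with the paper's (both extract $\Gamma_2=\var(\psi(\bs U_1))$ and $\Gamma_1=\E[g_2^2]$ from the Hoeffding decomposition of the kernel $\tilde F(\by_1\wedge\by_2)$), your treatment of the strict positivity is genuinely different from, and substantially shorter than, the paper's. The paper shows $\Gamma_1=0$ forces the four-point identity $\tilde F(\by\wedge\bar\by)+\tilde F(\by'\wedge\bar\by')=\tilde F(\by\wedge\by')+\tilde F(\bar\by\wedge\bar\by')$ a.s.\ and then spends a long lemma (an induction over the sets $H_{d'}$ of points with $d'$ atomic coordinates, combined with support-point and continuity arguments for $\tilde F$) to conclude that $\by$ must be constant. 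You instead exploit the integral representation $g(\bs u,\bs v)=\int_{[0,1]^{\dy}}\one\{\bs t\le\bs u\}\one\{\bs t\le\bs v\}\,d\lambda(\bs t)$ available after the probability-integral transform, identify $\Gamma_1$ with the squared Hilbert--Schmidt norm of the covariance operator of $\phi_{\bs U}$, and reduce $\Gamma_1=0$ to the transparent condition $\pr(\bs U\ge\bs t)\in\{0,1\}$ for $\lambda$-a.e.\ $\bs t$, which is refuted by an explicit box of positive Lebesgue measure. The trace step ($\|C\|_{\mathrm{HS}}=0\Leftrightarrow\mathrm{tr}(C)=0$ for a positive semidefinite trace-class operator) is exactly what is needed to pass from ``the kernel vanishes a.e.\ off the diagonal'' to ``its diagonal $S(\bs t)(1-S(\bs t))$ vanishes a.e.,'' and your box construction correctly handles the degenerate-support issue (lower-dimensional or atomic laws of $\bs U$) by perturbing only one coordinate above its essential infimum and using $\pr(U_k>0)=1$ for the rest. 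What your approach buys is a clean equivalent characterization of $\Gamma_1=0$ and the complete avoidance of the paper's Lemma on the four-point identity; what the paper's approach buys is that it works directly with $\tilde F$ on $\R^{\dy}$ without introducing the $L^2$ machinery, at the cost of a much more delicate case analysis.
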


The proofs of~\Cref{thm:multiaccoef,thm:multiaccondcoef} are direct consequences of~\Cref{lem:mea,lem:iff0,lem:iff1} in~\Cref{sec:oraclelem}. The proof of~\Cref{thm:gamma} is in~\Cref{sec:pfgamma}.

\noindent{\bf Notations.} Here we provide some notations that will be used \emph{only} in this section. Given a random vector $\bs{V} \in \R^d$, we write $\bs{V}_{d'}$ as the $d'$-th coordinate of $\bs{V}$. Given a set of indices $\cS \subseteq [d]$, we write $\bs{V}_{\cS}$ as the random subvector taking only the coordinates in $\cS$. Analogously, given a fixed $\bs{v} \in \R^d$, we write $\bs{v}_{d'}$ and $\bs{v}_{\cS}$ as its $d'$-th coordindate or its subvector. Write $\mu_{\by_j}$ as the marginal distribution measure of random variable $\by_j$.
We write $\mu_{\by\mid\bx,\bz}$ as the regular conditional distribution of $\by$ given $(\bx,\bz)$. We write $\mu_{\by\mid\bx}$ and $\mu_{\by\mid\bz}$ as the corresponding marginals of $\mu_{\by\mid\bx,\bz}$. Given an index $j \in [\dy]$, we write $\mu_{\by_j\mid\bx,\bz}$, $\mu_{\by_j\mid\bx}$ and $\mu_{\by_j\mid\bz}$ as the $j$-th marginal of $\mu_{\by\mid\bx,\bz}$, $\mu_{\by\mid\bx}$ and $\mu_{\by\mid\bz}$, respectively. For the existence and the properties of regular conditional distributions, see the beginning of~\citet[Supplementary Material]{chatterjee2021new}. Note that the regular conditional distribution $\mu_{\by\mid\bx,\bz}$ is a measure only almost surely, not for every realization. This, however, does not affect the correctness of the proofs in this section, as all the arguments therein hold up to a zero-measure set. Finally, provided $\mu_{\by\mid\bx,\bz}$, we define $\bs{W}:=(\bx,\bz)$, and then define $G(\bsy):=\pr(\by \geq \bsy)$, $G_{\bz}(\bsy) := \pr(\by \geq \bsy \mid \bz)$, $G_{\bx}(\bsy) := \pr(\by \geq \bsy \mid \bx)$ and $G_{\bs{W}}(\bsy) := \pr(\by \geq \bsy \mid \bx,\bz)$.


\subsection{Preliminary lemmas}\label{sec:oraclelem}



We start by proving~\Cref{lem:closedset}, which will be repeatedly used in this section.

\begin{proof}[Proof of~\Cref{lem:closedset}]
We start by introducing some notations. We write $\mu_{\by\mid\bs{V}_1,\ldots,\bs{V}_m}$ as the regular conditional distribution of $\by$ given $\bs{V}_1,\ldots,\bs{V}_m$, write $\mu_{\by\mid\bs{V}_i}$ as its corresponding $i$-th marginal, and define $G_{\bs{V}_i}(\bsy) := \pr(\by \geq \bsy \mid \bs{V}_i)$, with the understanding that this conditional probability is computed using the measure $\mu_{\by\mid\bs{V}_i}$. Armed with these notations, then the problem becomes proving that if for all $\bsy \in A$,
\[
\pr((G_{\bs{V}_1}(\bsy), \ldots, G_{\bs{V}_m}(\bsy)) \in B) = 1,
\]
then the above relation holds for all $\bsy \in \R^{\dy}$.

Consider arbitrary $\bsy \in \R^{\dy}$. If $\tilde{\mu}_{\by}(\{\bsy\}) > 0$, then obviously $\bsy \in A$, so that this relation holds. Otherwise, we consider two cases.

In the first case, for any $\bsy' > \bsy$, i.e., $\bsy'$ is entry-wise strictly larger than $\bsy$, $\tilde{\mu}_{\by}(\{\tilde{\bsy}: \bsy \leq \tilde{\bsy} < \bsy'\})>0$. Define $\cS := \{j \in [\dy]: \mu_{\by_j}(\{\bsy_j\})>0\}$ and $H := \{\bsy': \bsy_{\cS}'=\bsy_{\cS}, \bsy_{\cS^c}' > \bsy_{\cS^c}\}$. Then, for each $\bsy' \in H$, by defining $A_{\bsy,\bsy'} := \{\tilde{\bsy}: \tilde{\bsy}_{\cS} = \bsy_{\cS}, \bsy_{\cS^c} < \tilde{\bsy}_{\cS^c} < \bsy_{\cS^c}'\}$, we have $\tilde{\mu}_{\by}(A_{\bsy,\bsy'})>0$. This means that $A_{\bsy,\bsy'}$ must intersect with $A$. Now, we choose a sequence $\bsy_n' \in H$ approaching $\bsy$, and pick $\bs{r}_n \in A_{\bsy,\bsy_n'} \cap A$ for each $n$. This gives a sequence $\bs{r}_n \in H \cap A$ such that $\bs{r}_n \to \bsy$. Then, for each $i \in [m]$, we must have almost surely,
\[
0
\leq G_{\bs{V}_i}(\bsy) - G_{\bs{V}_i}(\bs{r}_n)
\leq \sum_{j \in \cS^c} \mu_{\by_j\mid\bs{V}_i}([\bsy_j,\bs{r}_{n,j}))
\to \sum_{j \in \cS^c} \mu_{\by_j\mid\bs{V}_i}(\{\bsy_j\})
= 0,
\]
where the last step is because for each $j \in \cS^c$ we have $\E \mu_{\by_j\mid\bs{V}_i}(\{\bsy_j\}) = \mu_{\by_j}(\{\bsy_j\}) = 0$, which gives $\mu_{\by_j\mid\bs{V}_i}(\{\bsy_j\}) = 0$ almost surely. Now, the formula above gives for each $i \in [m]$, $G_{\bs{V}_i}(\bs{r}_n)$ converges to $G_{\bs{V}_i}(\bsy)$ a.s. Since we also have $\bs{r}_n \in A$, this means for each $\bs{r}_n$, almost surely $(G_{\bs{V}_1}(\bs{r}_n), \ldots, G_{\bs{V}_m}(\bs{r}_n)) \in B$. By the closedness of $B$, we obtain that $(G_{\bs{V}_1}(\bsy), \ldots, G_{\bs{V}_m}(\bsy)) \in B$ almost surely.

In the second case, given that all the $\by_j$'s are independent under the measure $\tilde{\mu}_{\by}$, we must have that there exists at least one $j$ such that for $\check{\bsy}_j > \bsy_j$ sufficiently close, $\mu_{\by_j}([\bsy_j, \check{\bsy}_j)) = 0$. We now redefine $\cS$ as the collection of such indices. Then for each $j \in \mathcal{S}$, we seek for the largest $\check{\bsy}_j$ such that $\mu_{\by_j}([\bsy_j, \check{\bsy}_j)) = 0$. If one of such $\check{\bsy}_j = \infty$, then we can prove that $G(\bsy) = 0$, such that for each $i \in [m]$, $\E G_{\bs{V}_i}(\bsy) = G(\bsy) = 0$. This further means that $G_{\bs{V}_i}(\bsy) = 0$ almost surely, implying that $(G_{\bs{V}_1}(\bsy), \ldots, G_{\bs{V}_m}(\bsy))= \bs{0} \in B$ almost surely. 

Otherwise, let $\check{\bsy}$ be such that for any $j \in \mathcal{S}^c$, $\check{\bsy}_j$ is equal to $\bsy_j$; for any $j \in \mathcal{S}$, $\check{\bsy}_j$ is equal to the largest $q$ such that $\mu_{\by_j}([\bsy_j, q)) = 0$. Then $\check{\bsy}$ satisfies the requirement of $\bsy$ in Case 1 and we know from there that $(G_{\bs{V}_1}(\check{\bsy}), \ldots, G_{\bs{V}_m}(\check{\bsy})) \in B$ almost surely. Now it remains to prove that almost surely, for each $i \in [m]$, $G_{\bs{V}_i}(\bsy) = G_{\bs{V}_i}(\check{\bsy})$. To prove this, for each $i \in [m]$ and $j \in \cS$, we have $\E \mu_{\by_j\mid\bs{V}_i}([\bsy_j,\check{\bsy}_j)) = \mu_{\by_j}([\bsy_j,\check{\bsy}_j)) = 0$, so almost surely $\mu_{\by_j\mid\bs{V}_i}([\bsy_j,\check{\bsy}_j)) = 0$, which gives almost surely,
\[
0
\leq G_{\bs{V}_i}(\bsy) - G_{\bs{V}_i}(\check{\bsy})
\leq \sum_{j \in \cS} \mu_{\by_j\mid\bs{V}_i}([\bsy_j,\check{\bsy}_j))
= 0.
\]

In light of our control of all cases, we prove the desired result.
\end{proof}

\begin{lemma}\label{lem:iff0}
	We have that 
    \begin{itemize}
        \item[(i)] $\int \var(\pr(\by \ge \bsy \mid \bz)) d \tilde{\mu}_{\by} (\bsy) = 0$ if and only if $\by$ and $\bz$ are independent;
        \item[(ii)] $\int \E(\var(\pr(\by \ge \bsy \mid \bx, \bz) \mid \bx)) d \tilde{\mu}_{\by} (\bsy) = 0$ if and only if $\by$ and $\bz$ are conditionally independent given $\bx$.
    \end{itemize}
\end{lemma}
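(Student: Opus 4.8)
The plan is to prove part (ii) first and obtain part (i) as the special case in which $\bx$ is a degenerate (almost surely constant) random vector: in that case $\pr(\by \ge \bsy \mid \bx, \bz)$ reduces to $\pr(\by \ge \bsy \mid \bz)$, $\pr(\by \ge \bsy \mid \bx)$ reduces to $\pr(\by \ge \bsy)$, the outer expectation of a conditional variance given $\bx$ reduces to an ordinary variance, and $\by \independent \bz \mid \bx$ becomes $\by \independent \bz$. The ``if'' direction of (ii) (and hence of (i)) is immediate: if $\by \independent \bz \mid \bx$ then $\pr(\by \ge \bsy \mid \bx, \bz) = \pr(\by \ge \bsy \mid \bx)$ almost surely, so $\var(\pr(\by \ge \bsy \mid \bx, \bz) \mid \bx) = 0$ almost surely for every $\bsy$, and the integral vanishes.

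For the ``only if'' direction I would first use the tower property to put the integrand in a more workable form. Since $\E[\pr(\by \ge \bsy \mid \bx, \bz) \mid \bx] = \pr(\by \ge \bsy \mid \bx)$, one has $\E\big(\var(\pr(\by \ge \bsy \mid \bx, \bz) \mid \bx)\big) = \E\big[(\pr(\by \ge \bsy \mid \bx, \bz) - \pr(\by \ge \bsy \mid \bx))^2\big]$. The hypothesis that this nonnegative quantity integrates to zero against $\tilde{\mu}_{\by}$ therefore yields a set $A \subseteq \R^{\dy}$ with $\tilde{\mu}_{\by}(A) = 1$ such that, for every $\bsy \in A$, $\pr(\by \ge \bsy \mid \bx, \bz) = \pr(\by \ge \bsy \mid \bx)$ almost surely. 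I would then invoke \Cref{lem:closedset} with $m = 2$, $\bs{V}_1 = (\bx, \bz)$, $\bs{V}_2 = \bx$, and $B = \{(p_1, p_2) \in \R^2 : p_1 = p_2\}$, which is closed and contains $\bs{0}$; the lemma upgrades this equality from $\tilde{\mu}_{\by}$-almost every $\bsy$ to \emph{every} $\bsy \in \R^{\dy}$. (For part (i) one instead takes $\bs{V}_1 = \bz$ and $\bs{V}_2$ a constant random vector, so that $\pr(\by \ge \bsy \mid \bs{V}_2) = \pr(\by \ge \bsy)$, with $B$ again the diagonal.)

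It remains to pass from ``$\pr(\by \ge \bsy \mid \bx, \bz) = \pr(\by \ge \bsy \mid \bx)$ a.s.\ for every $\bsy \in \R^{\dy}$'' to $\by \independent \bz \mid \bx$. I would restrict to the countable dense set $\Q^{\dy}$: intersecting the countably many full-probability events gives a single full-probability event on which the regular conditional distributions $\mu_{\by\mid\bx,\bz}$ and $\mu_{\by\mid\bx}$ assign equal mass to every upper orthant $[\bsy, \infty)$ with $\bsy \in \Q^{\dy}$. Since these orthants form a $\pi$-system generating the Borel $\sigma$-algebra on $\R^{\dy}$ and both conditional distributions are probability measures, Dynkin's $\pi$-$\lambda$ theorem forces $\mu_{\by\mid\bx,\bz} = \mu_{\by\mid\bx}$ almost surely, which is exactly $\by \independent \bz \mid \bx$. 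The bulk of the work, namely propagating an almost-sure identity from a $\tilde{\mu}_{\by}$-full set of $\bsy$'s to all $\bsy$, is precisely what \Cref{lem:closedset} delivers, so I do not anticipate a serious obstacle; the only point needing care is this last step, where an uncountable family of almost-sure equalities must be aggregated, and the countable-dense-set together with the $\pi$-$\lambda$ argument handles it.
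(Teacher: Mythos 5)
Your proposal is correct and follows essentially the same route as the paper: reduce to (ii), use the tower property to rewrite the integrand as $\E[(\pr(\by \ge \bsy \mid \bx,\bz) - \pr(\by \ge \bsy \mid \bx))^2]$, extract a $\tilde{\mu}_{\by}$-full set $A$, and apply \Cref{lem:closedset} with $B$ the diagonal to upgrade the identity to every $\bsy \in \R^{\dy}$. The only divergence is the final bookkeeping: the paper constructs an auxiliary measure under which $\by \independent \bz \mid \bx$ and verifies that the joint survival probabilities $\pr(\by \ge \bsy, \bx \ge \bsx, \bz \ge \bsz)$ coincide, whereas you intersect the countably many almost-sure events over $\bsy \in \mathbb{Q}^{\dy}$ and run a $\pi$--$\lambda$ argument to conclude $\mu_{\by\mid\bx,\bz} = \mu_{\by\mid\bx}$ almost surely; both are standard and valid ways to finish.
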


\begin{proof}

Without loss of generality, we just need to prove (ii), and (i) follows immediately from taking $\bx \equiv \bs{0}$. The ``if'' is easy, since conditional independence implies that for any $\bsy \in \R^{\dy}$, almost surely, $\pr(\by \ge \bsy \mid \bx, \bz) = \pr(\by \ge \bsy \mid \bx)$. We focus on the ``only if''. Write $\bs{W} := (\bx, \bz)$. Then we directly have that
    \[
    \int \E[(G_{\bs{W}}(\bsy) - G_{\bx}(\bsy))^2] d \tilde{\mu}_{\by}(\bsy) = 0,
    \]
    so that there exists a $A \subseteq \R^{\dy}$ with $\tilde{\mu}_{\by}(A) = 1$ such that for any $\bsy \in A$,
    \[
    \pr(G_{\bs{W}}(\bsy) - G_{\bx}(\bsy) = 0) = 1.
    \]
    Then we can apply~\Cref{lem:closedset} with $m = 2$, $\bs{V}_1 = \bs{W}, \bs{V}_2 = \bx$, and $B = \{(p_1, p_2) \in \R^2: p_1 - p_2 = 0\}$ to get that the above relation holds for all $\bsy \in \R^{\dy}$.
    
    In light of the above, and write $\Q$ as a measure satisfying $\by \independent \bz \mid \bx$ and sharing the same distribution as $\pr$ on $\by$ given $\bx$ and $\bz$ given $\bx$, we have that for each fixed $\bsx \in \R^{\dx}, \bsy \in \R^{\dy}$ and $\bsz \in \R^{\dz}$, 
    \begin{align*}
    & \pr(\by \ge \bsy, \bx \ge \bsx, \bz \ge \bsz) = \int_{\bsx' \ge \bsx, \bsz' \ge \bsz} G_{\bs{W} = (\bsx', \bsz')}(\bsy) d \mu_{\bx, \bz}((\bsx', \bsz'))\\
    & = \int_{\bsx' \ge \bsx, \bsz' \ge \bsz} G_{\bx = \bsx'}(\bsy) d \mu_{\bx, \bz}((\bsx', \bsz')) = \int_{\bsx' \ge \bsx} G_{\bx = \bsx'}(\bsy) \int_{\bsz' \ge \bsz} 1 \cdot d \mu_{\bz \mid \bx = \bsx'}(\bsz') d \mu_{\bx}(\bsx') \\
    & = \int_{\bsx' \ge \bsx} \pr(\by \ge \bsy \mid \bx = \bsx') \pr(\bz \ge \bsz \mid \bx = \bsx') d \mu_{\bx}(\bsx') = \Q(\by \ge \bsy, \bx \ge \bsx, \bz \ge \bsz),
    \end{align*}
    thereby proving the conditional independence.
\end{proof}



\begin{lemma}\label{lem:const}
If for any $\bsy \in \R^{\dy}$, $\pr(G_{\bz}(\bsy) \in \{0, 1\}) = 1$, then $\by$ is almost surely a function of $\bz$.
\end{lemma}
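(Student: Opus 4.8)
The plan is to reduce the $\dy$-dimensional hypothesis to a one-dimensional statement about the marginals of the regular conditional distribution $\mu_{\by\mid\bz}$, and then show each such marginal is a.s. a point mass, so that each coordinate of $\by$ becomes a measurable function of $\bz$. Concretely, I would first show that for every $j \in [\dy]$ and every $t \in \Q$, $\pr(\by_j \ge t \mid \bz) \in \{0,1\}$ almost surely. Fix such $j, t$ and let $\bsy^{(n)} \in \R^{\dy}$ be the vector whose $j$-th entry equals $t$ and whose other entries equal $-n$. Applying the hypothesis to the countable family $\{\bsy^{(n)}\}_{n \ge 1}$ yields an event of full probability on which $G_{\bz}(\bsy^{(n)}) \in \{0,1\}$ for every $n$. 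Since $\by \in \R^{\dy}$ is a.s. finite, $\mu_{\by\mid\bz}$ is a.s. a genuine probability measure on $\R^{\dy}$, so the orthants $\{\by' : \by' \ge \bsy^{(n)}\}$ increase to $\{\by' : \by'_j \ge t\}$ and hence $G_{\bz}(\bsy^{(n)}) \uparrow \pr(\by_j \ge t \mid \bz)$; as the limit of a $\{0,1\}$-valued sequence lies in the closed set $\{0,1\}$, the claim follows. Taking the union of the exceptional null sets over the countable index set $[\dy] \times \Q$ produces one null set off which $\pr(\by_j \ge t \mid \bz) \in \{0,1\}$ for all $j$ and all rational $t$ simultaneously.

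Next, for each $j$ I would define $g_j := \sup\{t \in \Q : \pr(\by_j \ge t \mid \bz) = 1\}$. Using monotonicity of $t \mapsto \pr(\by_j \ge t \mid \bz)$ and the fact that, off the null set above, this map is $\{0,1\}$-valued on $\Q$ and tends to $1$ (resp.\ $0$) as $t \to -\infty$ (resp.\ $+\infty$), one checks that $g_j$ is a.s.\ finite, that $\pr(\by_j \ge t \mid \bz) = 1$ for rational $t < g_j$ and $= 0$ for rational $t > g_j$, and (by continuity of $\mu_{\by\mid\bz}$ from below and above) that $\pr(\by_j \ge g_j \mid \bz) = 1$ while $\pr(\by_j > g_j \mid \bz) = 0$, so $\pr(\by_j = g_j \mid \bz) = 1$ almost surely. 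Since $\{g_j > s\} = \bigcup_{t \in \Q,\, t > s} \{\pr(\by_j \ge t \mid \bz) = 1\}$ is $\sigma(\bz)$-measurable for every $s$, we have $g_j = g_j(\bz)$ for a measurable function $g_j : \R^{\dz} \to \R$. Taking expectations in $\pr(\by_j = g_j(\bz) \mid \bz) = 1$ gives $\pr(\by_j = g_j(\bz)) = 1$, i.e.\ $\by_j = g_j(\bz)$ a.s.; intersecting these $\dy$ almost-sure events shows $\by = g(\bz)$ a.s.\ with $g := (g_1, \ldots, g_{\dy})$, which is the assertion.

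\textbf{Main obstacle.} The limiting steps — monotone sequences of orthants, closedness of $\{0,1\}$, continuity of measures — are routine. The point requiring care is the null-set bookkeeping: upgrading the family of pointwise-in-$\bsy$ almost-sure statements supplied by the hypothesis into one holding \emph{simultaneously} for all arguments in a countable dense set, and then verifying that the threshold $g_j$ is genuinely $\sigma(\bz)$-measurable so it defines an honest function of $\bz$. I note that one could alternatively feed a $\tilde\mu_{\by}$-a.e.\ version of the hypothesis through \Cref{lem:closedset} with $m = 1$, $\bs{V}_1 = \bz$, and $B = \{0,1\}$ (closed and containing $\bs{0}$) to recover the hypothesis for all $\bsy$; but since \Cref{lem:const} already assumes the statement for every $\bsy$, this extra step is unnecessary within the lemma itself.
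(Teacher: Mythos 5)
Your proof is correct and follows essentially the same route as the paper's: both reduce to the coordinatewise statement $\pr(\by_j \ge t \mid \bz) \in \{0,1\}$ by taking monotone limits along a countable family of orthant corners with the off-coordinates sent to $-\infty$, after first consolidating the exceptional null sets over a countable dense index set. The only difference is that the paper delegates the final step (constructing the measurable threshold $g_j$ and concluding $\by_j = g_j(\bz)$ a.s.) to the argument of Theorem~9.2 of \citet{azadkia2021simple}, whereas you carry it out explicitly — correctly.
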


\begin{proof}
Using the countability of $\mathbb{Q}^{\dy}$, we easily have from the condition that
\[
\pr(\forall \bsy \in \mathbb{Q}^{\dy}, G_{\bz}(\bsy) \in \{0, 1\}) = 1.
\]
This, together with the standard definition of regular conditional probability, means that there exists a $B \subseteq \R^{\dz}$ with $\mu_{\bz}(B) = 1$ such that for any $\bsz \in B$ and $\bsy \in \Q^{\dy}$, $G_{\bz = \bsz}(\bsy) \in \{0, 1\}$; moreover, $\mu_{\by \mid \bz = \bsz}$, the measure upon which $G_{\bz = \bsz}(\bsy)$ is constructed, is a valid distribution measure. Now for each $j \in [\dy]$, we construct a sequence $\bsy_1', \bsy_2', \ldots \in \mathbb{Q}^{\dy}$ such that $\bsy_{n, j}' = q \in \mathbb{Q}$ and for any $k \neq j$, $\bsy_{n, k}' \downarrow - \infty$. Then apparently for any $\bsz \in B$,
\[
\pr(\by_j \ge q \mid \bz = \bsz) = \lim_{n \to \infty} G_{\bz = \bsz}(\bsy_n') \in \{0, 1\}.
\]
Now for any $b \in \R$, we construct $q_1, q_2, \ldots \in \mathbb{Q}$ such that $q_n \uparrow b$. Then for any $\bsz \in B$,
\[
\pr(\by_j \ge b \mid \bz = \bsz) = \lim_{n \to \infty} \pr(\by_j \ge q_n \mid \bz = \bsz) \in \{0, 1\}.
\]

In light of this, it follows from the same analysis as the proof of~\citet[Theorem~9.2]{azadkia2021simple} that there exists a function $f_j(\cdot)$ such that $\by_j = f_j(\bz)$ almost surely. This proves the desired result.
\end{proof}

\begin{lemma}\label{lem:iff1}
We have that
\begin{itemize}
    \item[(i)] $\int \var(\pr(\by \ge \bsy \mid \bz)) d \tilde{\mu}_{\by} (\bsy) \le \int \var(\one\{\by \ge \bsy \}) d \tilde{\mu}_{\by} (\bsy)$, where the equality holds if and only if $\by$ is almost surely a function of $\bz$;
    \item[(ii)] $\int \E(\var(\pr(\by \ge \bsy \mid \bx, \bz) \mid \bx)) d \tilde{\mu}_{\by} (\bsy) \le \int \E(\var(\one\{\by \ge \bsy\} \mid \bx)) d \tilde{\mu}_{\by} (\bsy)$, where the equality holds if and only if $\by$ is almost surely a function of $(\bx, \bz)$.
\end{itemize}
\end{lemma}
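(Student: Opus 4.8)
The plan is to reduce both inequalities to the (conditional) law of total variance applied pointwise in $\bsy$, and then upgrade the equality cases to the ``function of'' conclusions by chaining \Cref{lem:closedset} with \Cref{lem:const}. For part (i), I would first fix $\bsy \in \R^{\dy}$ and write $\pr(\by \ge \bsy \mid \bz) = \E[\one\{\by \ge \bsy\} \mid \bz] = G_{\bz}(\bsy)$, so that the law of total variance gives $\var(\one\{\by \ge \bsy\}) = \var(G_{\bz}(\bsy)) + \E[\var(\one\{\by \ge \bsy\}\mid \bz)]$, with the second summand nonnegative; integrating this identity against $\tilde{\mu}_{\by}$ yields the claimed inequality. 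Equality in the integrated inequality then forces, for $\tilde{\mu}_{\by}$-almost every $\bsy$, that $\E[\var(\one\{\by \ge \bsy\}\mid\bz)] = 0$, equivalently $\var(\one\{\by \ge \bsy\}\mid\bz) = 0$ almost surely; since $\one\{\by \ge \bsy\}$ is $\{0,1\}$-valued, this is exactly $\pr(G_{\bz}(\bsy) \in \{0,1\}) = 1$. I would then apply \Cref{lem:closedset} with $m = 1$, $\bs V_1 = \bz$, and the closed set $B = \{0,1\} \subseteq \R$ (which contains $0$), obtaining $\pr(G_{\bz}(\bsy)\in\{0,1\})=1$ for \emph{all} $\bsy \in \R^{\dy}$, and finally invoke \Cref{lem:const} to conclude that $\by$ is almost surely a function of $\bz$. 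The converse is immediate: if $\by = f(\bz)$ almost surely, then $\one\{\by \ge \bsy\}$ is almost surely $\bz$-measurable, hence $G_{\bz}(\bsy) = \one\{\by \ge \bsy\}$ almost surely, so the two integrands coincide for every $\bsy$ and the integrals are equal.

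For part (ii), I would run the same argument one level down, conditionally on $\bx$. Fixing $\bsy$ and writing $\bs W := (\bx, \bz)$ and $\pr(\by \ge \bsy \mid \bx, \bz) = G_{\bs W}(\bsy)$, the conditional law of total variance given $\bx$ reads $\var(\one\{\by\ge\bsy\}\mid\bx) = \var(G_{\bs W}(\bsy)\mid\bx) + \E[\var(\one\{\by\ge\bsy\}\mid \bx,\bz)\mid\bx]$; taking $\E$ over $\bx$ and integrating against $\tilde{\mu}_{\by}$ gives the inequality, because $\E[\var(\one\{\by\ge\bsy\}\mid\bx,\bz)] \ge 0$. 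Equality forces $\var(\one\{\by\ge\bsy\}\mid\bx,\bz) = 0$ almost surely for $\tilde{\mu}_{\by}$-a.e.\ $\bsy$, i.e.\ $\pr(G_{\bs W}(\bsy)\in\{0,1\}) = 1$; \Cref{lem:closedset} (now with $\bs V_1 = \bs W$ and $B = \{0,1\}$) extends this to all $\bsy$, and \Cref{lem:const} applied with $\bz$ replaced by the vector $(\bx,\bz)$ yields that $\by$ is almost surely a function of $(\bx,\bz)$. Conversely, if $\by$ is almost surely a function of $(\bx,\bz)$, then $G_{\bs W}(\bsy) = \one\{\by\ge\bsy\}$ almost surely, so $\var(G_{\bs W}(\bsy)\mid\bx) = \var(\one\{\by\ge\bsy\}\mid\bx)$ almost surely, giving equality.

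I do not anticipate a deep obstacle here; the proof is essentially bookkeeping. The one point requiring care is the logical passage from ``equality of the two integrals over $\tilde{\mu}_{\by}$'' to ``pointwise equality for $\tilde{\mu}_{\by}$-almost every $\bsy$'' — which is legitimate precisely because the difference of the two integrands is pointwise nonnegative — and then the use of \Cref{lem:closedset} to promote this from a statement holding $\tilde{\mu}_{\by}$-almost everywhere to one holding for \emph{every} $\bsy$, which is exactly the hypothesis needed before \Cref{lem:const} can be applied. It is also worth noting explicitly that \Cref{lem:const}, although stated for conditioning on $\bz$, applies verbatim with $\bz$ replaced by $(\bx,\bz)$, since its proof only uses that the conditioning object is a random vector.
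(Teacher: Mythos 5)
Your proposal is correct and follows essentially the same route as the paper's proof: the total variance decomposition gives the inequality, equality forces $G_{\bz}(\bsy)\in\{0,1\}$ almost surely for $\tilde{\mu}_{\by}$-a.e.\ $\bsy$, and the conclusion follows by applying \Cref{lem:closedset} with $B=\{0,1\}$ and then \Cref{lem:const}, with part (ii) reduced to part (i) by replacing $\bz$ with $(\bx,\bz)$. No gaps.
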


\begin{proof}
We start by proving (i). The first result directly follows from the total variance decomposition
\begin{equation}\label{eq:totvar}
\var(\one\{\by \ge \bsy \}) = \E[\var(\one\{\by \ge \bsy \} \mid \bz)] + \var(\pr(\by \ge \bsy \mid \bz)).
\end{equation}
For the second result, ``if'' is easy, since by given $\bz$, $\one\{\by \ge \bsy\}$ becomes deterministic, which gives
\[
\var(\pr(\by \ge \bsy \mid \bz)) = \var(\E[\one\{\by \ge \bsy\} \mid \bz]) = \var(\one\{\by \ge \bsy\}).
\]
We now consider the ``only if''. From the total variance decomposition, the equality holds directly implies that
\begin{equation}\label{eq:remain}
\int \E[\var(\one\{\by \ge \bsy\} \mid \bz)] d \tilde{\mu}_{\by} (\bsy) = \int \E[G_{\bz}(\bsy) (1 - G_{\bz}(\bsy))] d \tilde{\mu}_{\by} (\bsy) = 0.
\end{equation}
This further means there exists a $A \subseteq \R^{\dy}$ with $\tilde{\mu}_{\by}(A) = 1$ such that for any fixed $\bsy \in A$, almost surely, $G_{\bz}(\bsy) \in \{0, 1\}$. Applying~\Cref{lem:closedset} with $m = 1, \bs{V}_1 = \bz$ and $B = \{0, 1\}$, we have for all $\bsy \in \R^{\dy}$, almost surely, $G_{\bz}(\bsy) \in \{0, 1\}$. This, together with~\Cref{lem:const}, implies that almost surely, $\by$ is deterministic provided $\bz$, thereby proving the desired result.
	
	We now consider (ii). Again, ``if'' is straightforward. For ``only if'', following again the total variance decomposition, we have that the equality directly implies that
    \[
    \int \E(\E[\var(\one\{\by \ge \bsy\} \mid \bx, \bz) \mid \bx]) d \tilde{\mu}_{\by}(\bsy) = \int \E(\var(\one\{\by \ge \bsy\} \mid \bx, \bz)) d \tilde{\mu}_{\by}(\bsy) = 0,
    \]
    and the proof follows from (i), except that we have replaced $\bz$ by $\bs{W} := (\bx, \bz)$.
\end{proof}

\begin{lemma}\label{lem:mea}
We have that
\begin{itemize}
    \item[(i)] 
If $\by$ is not almost surely a constant, then $\int \var(\one\{\by \ge \bsy\}) d \tilde{\mu}_{\by}(\bsy) > 0$;
\item[(ii)] If $\by$ is not almost surely a function of $\bx$, then $\int \E[\var(\one\{\by \ge \bsy\} \mid \bx)] d \tilde{\mu}_{\by}(\bsy) > 0$.
\end{itemize}
\end{lemma}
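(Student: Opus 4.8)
The plan is to derive \Cref{lem:mea} as a short corollary of \Cref{lem:closedset} and \Cref{lem:const}, in the same spirit as the equality case in the proof of \Cref{lem:iff1}. First I observe that part (i) is just the special case of part (ii) in which $\bx$ is taken to be an arbitrary constant vector (say $\bx \equiv \bs{0}$): conditioning on a constant is vacuous, so the left-hand side of (ii) reduces to $\int \var(\one\{\by \ge \bsy\}) \, d\tilde{\mu}_{\by}(\bsy)$, and ``$\by$ is almost surely a function of $\bx$'' becomes ``$\by$ is almost surely constant''. Hence it suffices to prove (ii), which I would do by contraposition: assuming $\int \E[\var(\one\{\by \ge \bsy\} \mid \bx)] \, d\tilde{\mu}_{\by}(\bsy) = 0$, I will show $\by$ is almost surely a function of $\bx$.

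The first step is a pointwise nonnegativity argument. Since $G_{\bx}(\bsy) := \pr(\by \ge \bsy \mid \bx) \in [0,1]$, for every fixed $\bsy$ we have $\E[\var(\one\{\by \ge \bsy\} \mid \bx)] = \E[G_{\bx}(\bsy)(1 - G_{\bx}(\bsy))] \ge 0$; so if the integral over $\tilde{\mu}_{\by}$ vanishes, the integrand vanishes for $\tilde{\mu}_{\by}$-a.e.\ $\bsy$, and for each such $\bsy$ the nonnegative random variable $G_{\bx}(\bsy)(1 - G_{\bx}(\bsy))$ has zero mean and hence equals $0$ almost surely, i.e.\ $G_{\bx}(\bsy) \in \{0,1\}$ almost surely. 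In other words, there is a set $A$ with $\tilde{\mu}_{\by}(A) = 1$ such that $\pr(G_{\bx}(\bsy) \in \{0,1\}) = 1$ for all $\bsy \in A$.

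The substantive step is to upgrade this from $\tilde{\mu}_{\by}$-a.e.\ $\bsy$ to all $\bsy \in \R^{\dy}$, and this is precisely what \Cref{lem:closedset} is built for: applying it with $m = 1$, $\bs{V}_1 = \bx$, and $B = \{0,1\}$ (a closed subset of $\R$ containing $0$) yields $\pr(G_{\bx}(\bsy) \in \{0,1\}) = 1$ for \emph{every} $\bsy \in \R^{\dy}$. Then the argument of \Cref{lem:const} applies verbatim with $\bx$ in place of $\bz$ (its proof uses no special property of the conditioning vector) and gives that $\by$ is almost surely a function of $\bx$, completing the contrapositive. I do not expect a genuine obstacle: once \Cref{lem:closedset,lem:const} are in hand the proof is routine, and the only points needing care are the correct instantiation of \Cref{lem:closedset} so that the a.e.-to-everywhere propagation of $\{0,1\}$-valuedness goes through, and the observation that \Cref{lem:const} is insensitive to which random vector occupies the conditioning slot — which is exactly what lets me conclude both (ii) and its constant-conditioning specialization (i).
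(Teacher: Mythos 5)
Your proof is correct and is in substance the same as the paper's: the paper reduces (ii) to Lemma~\ref{lem:iff1}(i) via the total variance decomposition, and the ``only if'' direction of that lemma is proved by exactly the chain you write out — vanishing of $\int \E[G_{\bx}(\bsy)(1-G_{\bx}(\bsy))]\, d\tilde{\mu}_{\by}(\bsy)$, then \Cref{lem:closedset} with $m=1$ and $B=\{0,1\}$, then \Cref{lem:const}. You have simply inlined that argument (with $\bx$ in place of $\bz$) rather than citing Lemma~\ref{lem:iff1}(i), and your observations that (i) is the constant-$\bx$ case and that \Cref{lem:const} is insensitive to the conditioning vector both match the paper.
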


\begin{proof}
    Without loss of generality we just need to prove (ii), and (i) follows by taking $\bx$ as a fixed constant. Using total variance decomposition (see e.g.~\eqref{eq:totvar}), it is straightforward that $\int \E[\var(\one\{\by \ge \bsy\} \mid \bx)] d \tilde{\mu}_{\by}(\bsy) = 0$ if and only if $\int \var(\pr(\by \ge \bsy \mid \bx)) d \tilde{\mu}_{\by} (\bsy) = \int \var(\one\{\by \ge \bsy \}) d \tilde{\mu}_{\by} (\bsy)$. Therefore the desired result directly follows from Lemma~\ref{lem:iff1}(i).
\end{proof}

\begin{lemma}[{\citet[Chapter~2, Section~2]{par05}, see also \citet[Section~7.4]{cohn13}}]\label{lem:suppdef}
    Consider a random vector $\bs{U} \in \R^d$ with measure $\mu$. Then there exists a unique closed set $C_\mu$ satisfying (i) $\mu(C_\mu) = 1$; (ii) if $D$ is any closed set such that $\mu(D) = 1$, then $C_\mu \subseteq D$; (iii) $\bs{u} \in C_\mu$ if and only if for each open set $U$ containing $\bs{u}$, $\mu(U) > 0$.
    We call $C_\mu$ as the support of $\mu$.
\end{lemma}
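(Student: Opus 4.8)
The plan is to construct $C_\mu$ explicitly and then verify the three listed properties together with uniqueness. I would set $V := \bigcup\{U \subseteq \R^d : U \text{ open and } \mu(U) = 0\}$ and define $C_\mu := \R^d \setminus V$. As a union of open sets, $V$ is open, so $C_\mu$ is closed, which disposes of the closedness claim immediately. Property (iii) is then just an unwinding of the definition: $\bs u \in C_\mu$ precisely when $\bs u$ lies in no open $\mu$-null set, equivalently when every open set containing $\bs u$ has positive $\mu$-measure.

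The only genuinely non-trivial point is $\mu(C_\mu) = 1$, i.e. $\mu(V) = 0$, since $V$ is in general an \emph{uncountable} union of null sets, so countable subadditivity does not apply directly. Here I would use that $\R^d$ is second countable: fix a countable base $\{B_k\}_{k \ge 1}$ of the Euclidean topology, and let $W := \bigcup\{B_k : \mu(B_k) = 0\}$. On one hand $W \subseteq V$ since each such $B_k$ is itself an open null set. On the other hand, if $\bs u \in V$ then $\bs u \in U$ for some open $U$ with $\mu(U) = 0$, and basicness gives a $B_k$ with $\bs u \in B_k \subseteq U$, whence $\mu(B_k) = 0$ and $\bs u \in W$; thus $V = W$. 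Being a countable union of $\mu$-null sets, $W$ is $\mu$-null, so $\mu(V) = 0$ and $\mu(C_\mu) = 1$. (Equivalently, one may invoke the Lindelöf property of $\R^d$ to extract a countable subcover of $V$ from its cover by open null sets.)

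For property (ii), if $D$ is closed with $\mu(D) = 1$ then $\R^d \setminus D$ is open with $\mu(\R^d \setminus D) = 0$, hence $\R^d \setminus D \subseteq V$ by the definition of $V$, which is exactly $C_\mu \subseteq D$. Uniqueness is then immediate from (i) and (ii): if $C$ and $C'$ are two closed sets each satisfying (i) and (ii), applying (ii) for $C$ with $D = C'$ yields $C \subseteq C'$, and the symmetric argument yields $C' \subseteq C$, so $C = C'$. I expect the reduction of the uncountable union $V$ to the countable union $W$ via second countability to be the only step requiring care; everything else is routine bookkeeping.
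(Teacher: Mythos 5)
Your proof is correct and is the standard argument (via second countability of $\R^d$ to reduce the union of all open null sets to a countable union), which is exactly the approach in the references the paper cites for this lemma; the paper itself offers no proof, treating it as a quoted textbook fact. Nothing further is needed.
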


\begin{lemma}\label{lem:gamma}
    Consider $\by', \bar{\by}, \bar{\by}'$ as i.i.d. replications of $\by$, then
    \[
    \tilde{F}(\by \wedge \bar{\by}) + \tilde{F}(\by' \wedge \bar{\by}') = \tilde{F}(\by \wedge \by') + \tilde{F}(\bar{\by} \wedge \bar{\by}')
    \]
    almost surely implies that $\by$ is almost surely a constant vector.
\end{lemma}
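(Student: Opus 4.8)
The plan is to convert the stated almost-sure identity into a second-moment bound, extract from it that a certain covariance kernel vanishes $\tilde{\mu}_{\by}$-almost everywhere, translate this into a statement about the survival function of $\by$, and finish with \Cref{lem:closedset} and \Cref{lem:const}.

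\emph{Steps 1--2 (rewriting the identity and its second moment).} Write $\by_1:=\by$, $\by_2:=\bar{\by}$, $\by_3:=\by'$, $\by_4:=\bar{\by}'$ for the four i.i.d.\ copies, and for each $i$ set $\chi_i(\bs{t}):=\one\{\by_i\ge\bs{t}\}$, an element of $L^2(\tilde{\mu}_{\by})$. Using $\tilde F(\bs{c}\wedge\bs{d})=\int\one\{\bs{t}\le\bs{c}\}\one\{\bs{t}\le\bs{d}\}\,d\tilde{\mu}_{\by}(\bs{t})$, the quantity $R:=\tilde F(\by_1\wedge\by_2)+\tilde F(\by_3\wedge\by_4)-\tilde F(\by_1\wedge\by_3)-\tilde F(\by_2\wedge\by_4)$ equals $\int(\chi_1(\bs{t})-\chi_4(\bs{t}))(\chi_2(\bs{t})-\chi_3(\bs{t}))\,d\tilde{\mu}_{\by}(\bs{t})$, so the hypothesis says $R=0$ almost surely, hence $\E[R^2]=0$. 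Expanding $R^2$ as a double integral against $\tilde{\mu}_{\by}\otimes\tilde{\mu}_{\by}$, centering each $\chi_i$, and using that $\by_1,\by_2,\by_3,\by_4$ are independent (so every cross term carries a centered factor $\chi_k-\E\chi_k$ that appears alone and therefore integrates to $0$), one obtains $\E[R^2]=4\iint K(\bs{t},\bs{u})^2\,d\tilde{\mu}_{\by}(\bs{t})\,d\tilde{\mu}_{\by}(\bs{u})$, where $K(\bs{t},\bs{u}):=\cov(\one\{\by\ge\bs{t}\},\one\{\by\ge\bs{u}\})=G(\bs{t}\vee\bs{u})-G(\bs{t})G(\bs{u})$, $G(\bsy):=\pr(\by\ge\bsy)$, and $\bs{t}\vee\bs{u}$ is the entrywise maximum. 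Hence $K=0$ $\tilde{\mu}_{\by}\otimes\tilde{\mu}_{\by}$-almost everywhere.

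\emph{Step 3 (back to $\by$, then the two lemmas).} For any $\bsa$, integrating $K$ over $\{\bs{t}\le\bsa\}\times\{\bs{u}\le\bsa\}$ gives $\var(\tilde F(\bsa\wedge\by))=\iint_{\bs{t},\bs{u}\le\bsa}K\,d\tilde{\mu}_{\by}\,d\tilde{\mu}_{\by}=0$, i.e.\ $\tilde F(\bsa\wedge\by)=\E[\tilde F(\bsa\wedge\by)]$ a.s. Thus the random finite measure $S\mapsto\int_S\one\{\by\ge\bs{t}\}\,d\tilde{\mu}_{\by}(\bs{t})$ and the deterministic measure $S\mapsto\int_S G(\bs{t})\,d\tilde{\mu}_{\by}(\bs{t})$ agree, almost surely, on every lower orthant $\{\bs{t}\le\bsa\}$ with $\bsa\in\mathbb{Q}^{\dy}$; these orthants form a $\pi$-system that generates $\mathcal{B}(\R^{\dy})$ and contains an exhausting sequence, so by the uniqueness theorem for measures the two measures coincide almost surely. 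Consequently, almost surely, $\one\{\by\ge\bs{t}\}=G(\bs{t})$ for $\tilde{\mu}_{\by}$-a.e.\ $\bs{t}$; by Fubini, for $\tilde{\mu}_{\by}$-a.e.\ $\bs{t}$ the constant $G(\bs{t})$ equals a $\{0,1\}$-valued random variable almost surely, so $G(\bs{t})\in\{0,1\}$ for $\tilde{\mu}_{\by}$-a.e.\ $\bs{t}$. Finally, \Cref{lem:closedset} applied with $m=1$, a constant $\bs{V}_1$ (so that $\pr(\by\ge\bsy\mid\bs{V}_1)=G(\bsy)$), and $B=\{0,1\}$ upgrades this to $G(\bsy)\in\{0,1\}$ for \emph{every} $\bsy\in\R^{\dy}$, and then \Cref{lem:const} with $\bz$ taken to be a constant shows that $\by$ is almost surely a (constant) function of $\bz$, i.e.\ almost surely a constant vector.

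\emph{Main obstacle.} Steps 1--2 are routine algebra (the one point to get right is that, after centering, every cross term in $\E[R^2]$ contains a lone centered factor and hence vanishes). The delicate part is Step 3: one must extract the right consequence of $K\equiv 0$ a.e.\ --- namely the vanishing of $\var(\tilde F(\bsa\wedge\by))$ for \emph{all} $\bsa$, not merely $K=0$ off the diagonal --- run the measure-uniqueness argument to pass from orthants to all Borel sets, and then move from the ``$\tilde{\mu}_{\by}$-a.e.\ $\bsy$'' statement to the genuinely global ``every $\bsy$'' statement required by \Cref{lem:const}, which is precisely the role of \Cref{lem:closedset}.
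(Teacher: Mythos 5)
Your proof is correct, but it takes a genuinely different route from the paper's. The paper argues directly from the pointwise identity: using the support characterization (\Cref{lem:suppdef}) it runs an induction over the sets $H_{d'}$ of points with at least $d'$ atomic coordinates, showing $\mu(H_{d'})=1$ for every $d'$, concluding that $\by$ is discrete, and then extracting a contradiction from two distinct atoms by evaluating the identity at carefully chosen continuity points of $\tilde F$ near the support. You instead square the identity and take expectations: the factorization $R=\int(\chi_1-\chi_4)(\chi_2-\chi_3)\,d\tilde{\mu}_{\by}$ together with independence gives $\E[R^2]=4\iint K^2\,d\tilde{\mu}_{\by}\,d\tilde{\mu}_{\by}$ (which, incidentally, is $4\Gamma_1$ in the paper's notation), so the hypothesis forces the covariance kernel $K$ to vanish a.e.; integrating $K$ over lower orthants then kills $\var(\tilde F(\bsa\wedge\by))$ for every $\bsa$ — correctly sidestepping the trap that $K=0$ off the diagonal alone says nothing about $\var(\one\{\by\ge\bs{t}\})$ — and a measure-uniqueness argument plus Fubini yields $G(\bsy)\in\{0,1\}$ for $\tilde{\mu}_{\by}$-a.e.\ $\bsy$. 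Your endgame (\Cref{lem:closedset} with $m=1$, constant $\bs{V}_1$, $B=\{0,1\}$, followed by \Cref{lem:const} with constant $\bz$) exactly reuses the machinery the paper deploys for \Cref{lem:iff1}, specialized to a constant conditioning variable. The trade-off: your argument is shorter and more conceptual, reducing the lemma to positive semidefiniteness of a quadratic form plus the already-proved ``a.e.\ to everywhere'' upgrade, whereas the paper's argument is self-contained at this point in the development (it does not route back through the conditional-probability lemmas) and gives some structural information along the way (that any counterexample measure would have to be purely atomic). Both are valid; yours is arguably the cleaner proof given that \Cref{lem:closedset} and \Cref{lem:const} are available.
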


\begin{proof}
    When $\dy =1$, the argument has already been discussed in ~\citet[Lemma~10.4]{chatterjee2021new}. We now consider this on $\dy \ge 2$. For simplicity we write $\dy, \mu_{\by}, \tilde{\mu}_{\by}, \mu_{\by_i}$ as $d, \mu, \tilde{\mu}, \mu_i$, and write the CDF of $\mu_i$ as $F_i$. Also, we define $R(\bsy, \varepsilon)$ as the set of points whose coordinate-wise distance to $\bsy$ is smaller than $\varepsilon$.
    
    Define $H_0 := \R^d$, and for each $1 \le d' \le d$,
\[
H_{d'} := \{\bsy \in \R^d: \exists \text{ distinct } i_1 \ldots, i_{d'} \in [d] \text{ s.t. } \mu_{i_1}(\{\bsy_{i_1}\})>0,\ldots,\mu_{i_{d'}}(\{\bsy_{i_{d'}}\})>0\}.
\]
Note that following this definition, $H_d$ is a countable set, and
\[
H_d \subseteq H_{d-1} \subseteq \cdots \subseteq H_1 \subseteq H_0 = \R^d.
\]
We now want to prove our main result in three steps. First, we show that $\mu(H_1)=1$. Second, we apply an analogous argument to show that for each $2 \leq d' \leq d$, $\mu(H_{d'-1})=1$ implies $\mu(H_{d'})=1$. Finally, we show that $\by$ is almost surely a constant vector.

First, we show that $\mu(H_1)=1$. From the definition, $H_1$ is a (finite or infinite) countable union of $(d-1)$-dimensional hyperspaces. Suppose in contradiction that $\mu(H_1)<1$, then, none of the $\by_i$ is discrete random variable (they may have discrete fractions). Moreover, by setting $\bar{F}_1(x) := \mu(\{\bsy \in \R^d: \bsy_1 \le x\} \setminus H_1)$, we have that $\bar{F}_1(x)$ is continuous and non-decreasing, and $\lim_{x \to -\infty}\bar{F}_1(x) = 0$, $\lim_{x \to \infty}\bar{F}_1(x) = \mu(H_0 \setminus H_1) > 0$. Using the continuity of $\bar{F}_1(x)$, we have that there must exist $a < b$, such that $0 < \bar{F}_1(a) < \bar{F}_1(b) < \lim_{x \to \infty}\bar{F}_1(x)$.

Define $S := \{\bsy: \exists i\;\mathrm{s.t.}\; F_i(\bsy_i) = 0\}$; then apparently $\mu(S) = 0$. 
We now consider two sets $A :=\{\bsy \in \R^d: \bsy_1 < a\} \setminus (H_1 \cup S)$, $B :=\{\bsy \in \R^d: \bsy_1 > b\} \setminus (H_1 \cup S)$. Apparently 
\[
\mu(A) = \bar{F}_1(a) > 0 \quad\&\quad \mu(B) = \lim_{x \to \infty} \bar{F}_1(x) - \bar{F}_1(b) > 0,
\]
i.e., both $A$ and $B$ have non-zero measures. In light of this and~\Cref{lem:suppdef}, we have that there exist $\bsy_A \in A, \bsy_B \in B$ such that for any $\varepsilon > 0$, $\mu(R(\bsy_A, \varepsilon)), \mu(R(\bsy_B, \varepsilon)) > 0$. Moreover, since $\bsy_A, \bsy_B \not\in S$, we have
\begin{equation}\label{eq:fabnonzero}
    \forall i \in [d], \min\{F_i(\bsy_{A,i}), F_i(\bsy_{B,i})\} > 0.
\end{equation}

Now consider a positive sequence $\varepsilon_n \to 0$, then for each $\varepsilon_n$ in the sequence, the event $\by, \bar{\by} \in R(\bsy_A, \varepsilon_n), \by', \bar{\by}' \in R(\bsy_B, \varepsilon_n)$ occurs with non-zero probability, so that there must exist a $\bsy_n, \bar{\bsy}_n \in R(\bsy_A, \varepsilon_n)$, $\bsy_n', \bar{\bsy}_n' \in R(\bsy_B, \varepsilon_n)$ such that
\[
\tilde{F}(\bsy_n \wedge \bar\bsy_n) + \tilde{F}(\bsy_n' \wedge \bar\bsy_n') = \tilde{F}(\bsy_n \wedge \bsy_n') + \tilde{F}(\bar\bsy_n \wedge \bar\bsy_n').
\]

Since $\bsy_A, \bsy_B \not\in H_1$, $\tilde{F}$ is coordinate-wise continuous at $\bsy_A$ and $\bsy_B$. Using further decomposability of $\tilde{F}$, we have that $\tilde{F}$ is continuous at $\bsy_A, \bsy_B$ and $\bsy_A \wedge \bsy_B$. So, as $n \to \infty$, the left hand side and the right hand side of the above equation converge to
\[
\tilde{F}(\bsy_A) + \tilde{F}(\bsy_B) \quad\&\quad 2 \tilde{F}(\bsy_A \wedge \bsy_B)
\]
respectively. This raises a contradiction since 
\begin{align*}
    \tilde{F}(\bsy_A) + \tilde{F}(\bsy_B) - 2 \tilde{F}(\bsy_A \wedge \bsy_B) & \ge \tilde{F}(\bsy_B) - \tilde{F}(\bsy_A \wedge \bsy_B) \\
& \ge (F_1(\bsy_{B, 1}) - F_1(\bsy_{A, 1})) \prod_{i=2}^d \min\{F_i(\bsy_{A, i}), F_i(\bsy_{B, i})\} > 0,
\end{align*}
where for the last inequality we apply 
\[
F_1(\bsy_{B, 1}) - F_1(\bsy_{A, 1}) \ge \bar{F}_1(\bsy_{B, 1}) - \bar{F}_1(\bsy_{A, 1}) \ge \bar{F}_1(b) - \bar{F}_1(a) > 0
\]
and~\eqref{eq:fabnonzero}.


We now prove the second claim, that is, for each $2 \leq d' \leq d$, $\mu(H_{d'-1})=1$ implies $\mu(H_{d'})=1$. We prove this via an argument analogous to the proof of $\mu(H_1) = 1$. From the definition, $H_{d'-1}$ can be expressed as a (finite or infinite) countable union of $(d-d'+1)$-dimensional hyperspaces $W_1, W_2, \ldots$. Suppose in contradiction that $\mu(H_{d'-1})=1$ but $\mu(H_{d'})<1$. Then, by the union bound, we have
\[
0 < \mu(H_{d'-1} \setminus H_{d'}) = \mu((\cup_k W_k) \setminus H_{d'}) = \mu(\cup_k (W_k \setminus H_{d'})) \leq \sum_k \mu(W_k \setminus H_{d'}).
\]
So, there must exist a $(d-d'+1)$-dimensional hyperspace $W_k$ such that $\mu(W_k \setminus H_{d'}) > 0$. For simplity we redenote $W_k$ as $W$, and write $W = \{\bsy \in \R^d: \bsy_{\cT} = \bsy_{\cT}^*\}$ for some $\cT \subseteq [d]$ with $|\cT| = d'-1$ and some $\bsy^* \in \R^d$ with $\mu_i(\{\bsy_i^*\})>0, \forall i \in \cT$. By reordering axes, we can w.l.o.g. assume $1 \not\in \cT$. Now, we apply analogous argument on space $W$ with measure $\mu_W(E) := \mu(E \cap W)$.

We reset $\bar{F}_1(x) := \mu(\{\bsy \in W: \bsy_1 \leq x\} \setminus H_{d'})$, then we have that $\bar{F}_1(x)$ is continuous and non-decreasing, $\lim_{x \to - \infty} \bar{F}_1(x)=0, \lim_{x \to \infty} \bar{F}_1(x) = \mu(W \setminus H_{d'}) > 0$, and there exists $a<b$ such that $0 < \bar{F}_1(a) < \bar{F}_1(b) < \lim_{x \to +\infty} \bar{F}_1(x)$. Recall the set $S := \{\bsy: \exists i \in [d] \text{ s.t. } F_i(\bsy_i)=0\}$ and its property $\mu(S)=0$. We consider two sets $A := \{\bsy \in W: \bsy_1 < a\} \setminus (H_{d'} \cup S), B := \{\bsy \in W: \bsy_1 > b\} \setminus (H_{d'} \cup S)$; then analogously $\mu_W(A),\mu_W(B)>0$. So, applying again~\Cref{lem:suppdef} on space $W$ and measure $\mu_W$, following exactly the same argument, we get $\bsy_A \in A, \bsy_B \in B$ such that $\mu_W(R(\bsy_A,\varepsilon)),\mu_W(R(\bsy_B,\varepsilon))>0, \forall \varepsilon > 0$, and that $\min\{F_i(\bsy_{A,i}),F_i(\bsy_{B,i})\}>0, \forall i \in [d]$. Then, analogously, for any positive sequence $\varepsilon_n \to 0$, we can find sequences $\bsy_n, \bar{\bsy}_n \in R(\bsy_A, \varepsilon_n) \cap W$, $\bsy_n', \bar{\bsy}_n' \in R(\bsy_B, \varepsilon_n) \cap W$ such that
\[
\tilde{F}(\bsy_n \wedge \bar\bsy_n) + \tilde{F}(\bsy_n' \wedge \bar\bsy_n') = \tilde{F}(\bsy_n \wedge \bsy_n') + \tilde{F}(\bar\bsy_n \wedge \bar\bsy_n').
\]

Now we argue that $\tilde{F}$ restricted on $W$ is continuous at $\bsy_A$, $\bsy_B$ and $\bsy_A \wedge \bsy_B$. Since $\bsy_A \in W$ but $\bsy_A \not\in H_{d'}$, we have that for any $i \in \cT$, $\mu_i(\{\bsy_{A,i}\})>0$, but for any distinct $i_1,\ldots,i_{d'} \in [d]$, one of $\mu_{i_1}(\{\bsy_{A,i_1}\}),\ldots,\mu_{i_{d'}}(\{\bsy_{A,i_{d'}}\})$ must be $0$. But recall that $|\cT|=d'-1$, so for any $i \not\in \cT$, we must have $\mu_i(\{\bsy_{A,i}\})=0$. This means that $F_i$ is continuous at $\bsy_{A,i}$ for any $i \not\in \cT$. Analogously, we can prove that $F_i$ is continuous at $\bsy_{B,i}$ for any $i \not\in \cT$. So, using the decomposability of $\tilde{F}$, we know that $\tilde{F}$ restricted on $W$ is continuous at $\bsy_A$, $\bsy_B$ and $\bsy_A \wedge \bsy_B$, i.e., for any sequence $\bsy_n \in W$ that converges to $\bsy_A$, $\bsy_B$ or $\bsy_A \wedge \bsy_B$, $\tilde{F}(\bsy_n)$ will converge to $\tilde{F}(\bsy_A)$, $\tilde{F}(\bsy_B)$ or $\tilde{F}(\bsy_A \wedge \bsy_B)$, respectively.

According to this continuity and the fact that $\bsy_n,\bar{\bsy}_n,\bsy_n',\bar{\bsy}_n' \in W$, we have that as $n \to \infty$, the left hand side and the right hand side of the previous equation converge to
\[
\tilde{F}(\bsy_A) + \tilde{F}(\bsy_B) \quad\&\quad 2 \tilde{F}(\bsy_A \wedge \bsy_B)
\]
respectively. This raises a contradiction since
\[
\begin{aligned}
\tilde{F}(\bsy_A) + \tilde{F}(\bsy_B) - 2 \tilde{F}(\bsy_A \wedge \bsy_B)
&\geq \tilde{F}(\bsy_B) - \tilde{F}(\bsy_A \wedge \bsy_B) \\
&\geq (F_1(\bsy_{B,1}) - F_1(\bsy_{A,1})) \prod_{i \neq 1: i \in \cT} F_i(\bsy_i^*) \prod_{i \neq 1: i \not\in \cT} F_i(\bsy_{A,i} \wedge \bsy_{B,i}) \\
&\geq (\bar{F}_1(b) - \bar{F}_1(a)) \prod_{i \neq 1: i \in \cT} \mu_i(\{\bsy_i^*\}) \prod_{i \neq 1: i \not\in \cT} \min\{F_i(\bsy_{A,i}), F_i(\bsy_{B,i})\} > 0.
\end{aligned}
\]
Note that in $d'=d$ case, the last product $\prod_{i \neq 1: i \not\in \cT} \min\{F_i(\bsy_{A,i}), F_i(\bsy_{B,i})\}$ just disappears, while other parts are not affected, so the proof is still valid.

Finally, we show that $\by$ is almost surely a constant vector. The previous arguments have already shown $\mu(H_d)=1$. By definition $H_d$ is a countable set, so $\by$ is a discrete random vector. Suppose by contradiction that $\by$ is not almost surely a constant, then there must exists $\bsy_A \neq \bsy_B$ such that $\pr(\by=\bsy_A),\pr(\by=\bsy_B)>0$. By reordering axes and exchanging $\bsy_A,\bsy_B$, we can w.l.o.g. assume $\bsy_{A,1} < \bsy_{B,1}$. Then again, since the event $\by = \bar{\by} = \bsy_A, \by' = \bar{\by}' = \bsy_B$ happens with non-zero probability, we have that
\[
\tilde{F}(\bsy_A) + \tilde{F}(\bsy_B) = 2 \tilde{F}(\bsy_A \wedge \bsy_B),
\]
which raises a contradiction since
\[
\begin{aligned}
\tilde{F}(\bsy_A) + \tilde{F}(\bsy_B) - 2 \tilde{F}(\bsy_A \wedge \bsy_B)
&\geq \tilde{F}(\bsy_B) - \tilde{F}(\bsy_A \wedge \bsy_B) \\
&\geq \mu_1(\{\bsy_{B,1}\}) \prod_{i=2}^d \min\{\mu_i(\{\bsy_{A,i}\}),\mu_i(\{\bsy_{B,i}\})\} \\
&\geq \pr(\by=\bsy_B) \prod_{i=2}^d \min\{\pr(\by=\bsy_A),\pr(\by=\bsy_B)\} > 0.
\end{aligned}
\]
Therefore, $\by$ is almost surely a constant.
\end{proof}

\subsection{Proof of~\Cref{thm:gamma}}\label{sec:pfgamma}

Write $\by'$ as an i.i.d. replication of $\by$. Define
$
h(\bsy) := \E \tilde{F}(\by \wedge \bsy) , \theta := \E \tilde{F}(\by \wedge \by') = \E[h(\by)]
$. Then apparently $\Gamma_2 = \E[h(\by)^2] - (\E[h(\by)])^2 = \var(h(\by)) \ge 0$.

For $\Gamma_1$, we have
\begin{align*}
    \Gamma_1 & = \E[(\tilde{F}(\by \wedge \by') - \theta)^2] - \E[(h(\by) - \theta)^2] - \E[(h(\by') - \theta)^2] \\
    & = \E[(\tilde{F}(\by \wedge \by') - \theta)^2] - \E[(h(\by) + h(\by') - 2\theta)^2].
\end{align*}
Since 
\begin{align*}
    & \E[(\tilde{F}(\by \wedge \by') - \theta) (h(\by) - \theta)] = \E[\E[(\tilde{F}(\by \wedge \by') - \theta) (h(\by) - \theta) \mid \by]] \\
    & = \E[\E[(\tilde{F}(\by \wedge \by') - \theta)\mid \by] \E[(h(\by) - \theta) \mid \by]] = \E[\E[(h(\by) - \theta)^2 \mid \by]] = \E[(h(\by) - \theta)^2],
\end{align*}
and we can deal with $\E[(\tilde{F}(\by \wedge \by') - \theta) (h(\by') - \theta)]$ analogously, we further have
\begin{align*}
    \Gamma_1 & = \E[(\tilde{F}(\by \wedge \by') - \theta)^2] -2 \E[(\tilde{F}(\by \wedge \by') - \theta) (h(\by) + h(\by') - 2\theta)] + \E[(h(\by) + h(\by') - 2\theta)^2] \\
    & = \E[(\tilde{F}(\by \wedge \by') - h(\by) - h(\by') + \theta)^2] \ge 0.
\end{align*}

Now our only remaining job is to prove that
\begin{equation}\label{eq:fht}
\E[(\tilde{F}(\by \wedge \by') - h(\by) - h(\by') + \theta)^2] = 0
\end{equation}
implies $\by$ is equal to a constant almost surely. Write $\bar{\by}, \bar{\by}'$ as another two i.i.d. replications of $\by$, then~\eqref{eq:fht} implies that almost surely,
\begin{align*}
    \tilde{F}(\by \wedge \bar{\by}) = h(\by) + h(\bar{\by}) - \theta, & \; \tilde{F}(\by' \wedge \bar{\by}') = h(\by') + h(\bar{\by}') - \theta; \\
    \tilde{F}(\by \wedge \by') = h(\by) + h(\by') - \theta, & \; \tilde{F}(\bar{\by} \wedge \bar{\by}') = h(\bar{\by}) + h(\bar{\by}') - \theta.
\end{align*}
This further implies that almost surely,
\[
\tilde{F}(\by \wedge \bar{\by}) + \tilde{F}(\by' \wedge \bar{\by}') = \tilde{F}(\by \wedge \by') + \tilde{F}(\bar{\by} \wedge \bar{\by}').
\]
In light of the above, we may finish the proof by applying~\Cref{lem:gamma}.

\section{Convergence analysis}
\label{sec:converganaly}

\subsection{Proofs of~\Cref{thm:multiacest} and~\Cref{prop:multiac}}

In this subsection, we prove the almost sure convergence results in~\Cref{thm:multiacest} and~\Cref{prop:multiac}. Let
	\begin{equation}
    \label{def:tildeF}
	    \tilde{F}_n(\bsy) := \frac{1}{n} \sum_{i=1}^n \one\{\tilde{\by}_i \le \bsy\} \quad\&\quad \tilde{F}(\bsy) := \int \one\{\bsy' \le \bsy\} d \tilde{\mu}_{\by}(\bsy');
	\end{equation}	
	and
    \begin{equation}
    \label{def:G_n}
        G_n(\bsy) := \frac{1}{n} \sum_{i=1}^n \one\{\by_i \ge \bsy\} \quad\&\quad G(\bsy) := \int \one\{\bsy' \ge \bsy\} d \mu_{\by}(\bsy').
    \end{equation}

\begin{lemma}\label{lem:permcdf}
	We have that
	\[
	\sup_{\bsy \in \R^{\dy}} |\tilde{F}_n(\bsy) - \tilde{F}(\bsy)| \asconv 0.
	\]
\end{lemma}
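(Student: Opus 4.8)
The plan is to regard $\tilde F_n$ as the empirical distribution function of the sequence $\tilde\by_1,\dots,\tilde\by_n$ and to run a Glivenko--Cantelli-type argument; the only genuine complication is that these vectors are neither independent of one another nor have $\tilde\mu_\by$ as their joint law. I would first record the two structural facts forced by the constraint $\pi_{d_1}(i)\neq\pi_{d_2}(i)$: writing $S_i:=\{\pi_1(i),\dots,\pi_\dy(i)\}$ for the set of sample indices feeding into $\tilde\by_i$, which has exactly $\dy$ elements, we get (a) $\tilde\by_i$ is assembled from the $d$-th coordinates of $\dy$ \emph{distinct} i.i.d.\ samples, so $\tilde\by_i\sim\tilde\mu_\by$ and hence $\E[\one\{\tilde\by_i\le\bsy\}]=\tilde F(\bsy)$ for every $i$; and (b) if $S_i\cap S_{i'}=\emptyset$, then $\one\{\tilde\by_i\le\bsy\}$ and $\one\{\tilde\by_{i'}\le\bsy\}$ are independent, being functions of disjoint blocks of the i.i.d.\ data.

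Next I would prove that $\tilde F_n(\bsy)\asconv\tilde F(\bsy)$ for each fixed $\bsy\in\R^\dy$. Form the dependency graph on $[n]$ with an edge $i\sim i'$ whenever $S_i\cap S_{i'}\neq\emptyset$; since the equation $\pi_{d'}(i')=\pi_d(i)$ has a unique solution $i'$ for each of the at most $\dy^2$ pairs $(d,d')$, this graph has maximum degree at most $\dy^2$, hence admits a proper colouring with at most $\dy^2$ colour classes. Within one colour class the sets $S_i$ are pairwise disjoint, so there the variables $\one\{\tilde\by_i\le\bsy\}$ are mutually independent Bernoulli$(\tilde F(\bsy))$ by (a) and (b). Writing $n(\tilde F_n(\bsy)-\tilde F(\bsy))$ as the sum, over the at most $\dy^2$ colour classes, of the corresponding centred partial sums and applying Hoeffding's inequality to each together with a union bound, I obtain $\pr(|\tilde F_n(\bsy)-\tilde F(\bsy)|>\varepsilon)\le 2\dy^2\exp(-2n\varepsilon^2/\dy^4)$, which is summable in $n$; Borel--Cantelli and intersection over $\varepsilon=1/k$ then yield the pointwise almost sure convergence.

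Finally I would upgrade pointwise to uniform convergence by the classical sandwiching argument. Both $\tilde F_n$ and $\tilde F=\prod_{d=1}^\dy F_d$, with $F_d$ the $d$-th marginal c.d.f.\ of $\by$, are nondecreasing in each coordinate, and $\prod_d a_d-\prod_d b_d\le\sum_d(a_d-b_d)$ whenever $0\le b_d\le a_d\le 1$; hence, given $\varepsilon>0$, one can pick coordinatewise finite grids (the relevant quantiles of each $F_d$, its finitely many atoms of mass $\ge\varepsilon/\dy$, and $\pm\infty$) so that $\tilde F$ increases by at most $\varepsilon$ between the two nearest grid points above and below any $\bsy$. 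Monotonicity of both functions then bounds $\sup_{\bsy}|\tilde F_n(\bsy)-\tilde F(\bsy)|$ by $\varepsilon$ plus the maximum of $|\tilde F_n-\tilde F|$ over the finite grid; letting $n\to\infty$ using the pointwise result at the finitely many grid points, and then $\varepsilon\downarrow 0$ along a countable sequence, gives $\sup_{\bsy}|\tilde F_n(\bsy)-\tilde F(\bsy)|\asconv 0$.

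I expect the main obstacle to be the second step: the entrywise permutations destroy independence among the $\tilde\by_i$, so a naive law of large numbers does not apply. The observation that rescues the argument is that the constraint $\pi_{d_1}(i)\neq\pi_{d_2}(i)$ not only makes each $\tilde\by_i$ distributed exactly as $\tilde\mu_\by$ but also caps the dependency graph's degree at $O(\dy^2)$, so a colouring reduces everything to finitely many genuinely i.i.d.\ Bernoulli sums on which elementary concentration suffices.
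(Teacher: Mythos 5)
Your proposal is correct and follows the same two-step architecture as the paper's proof: pointwise almost sure convergence at each fixed $\bsy$, then an upgrade to uniformity via a finite coordinatewise grid and monotone sandwiching (the paper likewise builds per-coordinate grids with increments $\le \varepsilon/\dy$ and uses $\prod_d a_d - \prod_d b_d \le \sum_d (a_d - b_d)$). The one place you genuinely diverge is the concentration step. The paper views $\tilde F_n(\bsy)$ as a function of the i.i.d.\ samples $\by_1,\dots,\by_n$ and observes that replacing a single $\by_i$ perturbs $\tilde F_n(\bsy)$ by at most $\dy/n$ (since each sample index feeds into at most $\dy$ of the permuted vectors), so McDiarmid's bounded difference inequality gives the Gaussian tail directly; you instead build the dependency graph on the $\tilde\by_i$'s, bound its degree by $\dy^2$, colour it, and apply Hoeffding within each colour class. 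Both are valid and give summable tails; the bounded-difference route is a bit shorter because it never needs the independence structure among the $\tilde\by_i$'s, only the small per-sample influence. One minor imprecision in your third step: to sandwich $\tilde F_n(\bsy)$ from above you need $\tilde F_n(\bsy_{\ell'}^-)$ and $\tilde F(\bsy_{\ell'}^-)$ at the upper grid points (the left-limit versions), not just the values of $\tilde F_n-\tilde F$ on the grid itself; the paper explicitly proves pointwise convergence of $\tilde F_n(\bsy^-)$ as well. This is the standard Glivenko--Cantelli bookkeeping and your pointwise argument extends to the left limits verbatim, so it is an omission of detail rather than a gap.
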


\begin{proof}
    Given a function $f: \R^d \to \R$, we denote its entry-wise left limit as
\[
f(\bsy^-) := \lim_{\bsy' \uparrow \bsy} f(\bsy') := \lim_{\substack{\bsy_j' \uparrow \bsy_j \\ j \in [d]}} f(\bsy').
\]
We will focus on these four functions:
\[
\begin{aligned}
\tilde{F}_n(\bsy) = \frac{1}{n} \sum_{i=1}^n \one\{\tby_i \leq \bsy\} \quad&\&\quad \tilde{F}(\bsy) = \int \one\{\bsy' \leq \bsy\} d\tilde{\mu}_{\by}(\bsy') \\
\tilde{F}_n(\bsy^-) = \frac{1}{n} \sum_{i=1}^n \one\{\tby_i < \bsy\} \quad&\&\quad \tilde{F}(\bsy^-) = \int \one\{\bsy' < \bsy\} d\tilde{\mu}_{\by}(\bsy'),
\end{aligned}
\]
where ``$\bsy' < \bsy$'' means that $\bsy'$ is entry-wise strictly smaller than $\bsy$.

We first show that for any fixed $\bsy$,
\begin{equation}\label{eq:pointconv}
\tilde{F}_n(\bsy) \asconv \tilde{F}(\bsy) \quad\&\quad \tilde{F}_n(\bsy^-) \asconv \tilde{F}(\bsy^-).
\end{equation}
To begin with, we have
\[
\begin{aligned}
\E[\tilde{F}_n(\bsy)] &= \E[\one\{\tby_1 \leq \bsy\}] = \int \one\{\bsy' \leq \bsy\} d\tilde{\mu}_{\by}(\bsy') = \tilde{F}(\bsy) \\
\E[\tilde{F}_n(\bsy^-)] &= \E[\one\{\tby_1 < \bsy\}] = \int \one\{\bsy' < \bsy\} d\tilde{\mu}_{\by}(\bsy') = \tilde{F}(\bsy^-),
\end{aligned}
\]
where the second equality of each line uses the fact that all the $\pi_d(i)$'s, $d=1,\ldots,\dy$ are nonidentical. Moreover, if we change one $\by_i$ into $\by_i'$ then both $\tilde{F}_n(\bsy)$ and $\tilde{F}_n(\bsy^-)$ would be altered by at most $\dy/n$, so the bounded difference inequality gives
\[
\tilde{F}_n(\bsy) - \E[\tilde{F}_n(\bsy)] \asconv 0
\quad\&\quad
\tilde{F}_n(\bsy^-) - \E[\tilde{F}_n(\bsy^-)] \asconv 0.
\]
Putting together yields~\eqref{eq:pointconv} for each fixed $\bsy$.

Now for any fixed $\varepsilon > 0$, we need to construct a grid $S = \{\bsy_1,\ldots,\bsy_L\} \subseteq (\R \cup \{\pm\infty\})^{\dy}$ such that:
\begin{equation}\label{eq:gridcond}
\forall \bsy \in \R^{\dy},\quad \exists \ell,\ell' \in [L] \;\st\quad \bsy_\ell \leq \bsy < \bsy_{\ell'} \quad\&\quad \tilde{F}(\bsy_{\ell'}^-) - \tilde{F}(\bsy_\ell) \leq \varepsilon.
\end{equation}

We first show how to construct such grid in $\dy=1$ case. Starting at $\bsy_1 := -\infty$, we recursively define $\bsy_{\ell+1} := \sup\{\bsy \geq \bsy_\ell: \tilde{\mu}_{\by}((\bsy_{\ell},\bsy)) < \varepsilon\}$. This definition immediately gives
\[
\tilde{\mu}_{\by}((\bsy_{\ell},\bsy_{\ell+1})) \leq \varepsilon
\quad\&\quad
(\tilde{\mu}_{\by}((\bsy_{\ell},\bsy_{\ell+1}]) \geq \varepsilon \quad\text{if}\quad \bsy_{\ell+1} < +\infty).
\]
The second property tells us that, after $L \leq \lfloor 1/\varepsilon \rfloor + 2$ steps, this procedure must end at $\bsy_L = +\infty$, and produce a grid $S$ containing $-\infty = \bsy_1 < \cdots < \bsy_L = +\infty$. And, the first property ensures that $S$ satisfies the grid condition~\eqref{eq:gridcond}.

We then extend our construction to $\dy>1$ case. For each coordinate $j \in [\dy]$, we use the previous procedure for $\dy=1$ case to construct a grid $S_j = \{\bsy_{1,j},\ldots,\bsy_{L_j,j}\} \subseteq \R\cup\{\pm\infty\}$ such that
\[
\forall a \in \R,\quad \exists \ell_j,\ell_j' \in [L_j]\;\st\quad \bsy_{\ell_j,j} \leq a < \bsy_{\ell_j',j} \quad\&\quad F_j(\bsy_{\ell_j',j}^-) - F_j(\bsy_{\ell_j,j}) \leq \varepsilon / \dy,
\]
where $F_j$ represents the marginal CDF of the $j$-th coordinate of the random vector $\by$. Now, our multi-dimensional grid is $S := S_1 \times \cdots \times S_{\dy} \subseteq (\R\cup\{\pm\infty\})^{\dy}$. For any $\bsy \in \R^{\dy}$, use the above condition to collect $\ell_j$ and $\ell_j'$ for each $j \in [\dy]$, such that the $j$-th coordinate of $\bsy$ is within the interval $[\bsy_{\ell_j, j}, \bsy_{\ell_j', j})$. Then consider $(\bsy_{\ell_1, 1}, \ldots, \bsy_{\ell_{\dy}, \dy}) \in S$ and $(\bsy_{\ell_1', 1}, \ldots, \bsy_{\ell_{\dy}', \dy}) \in S$, which we denote by $\bsy_{\ell}, \bsy_{\ell'}$, we can have that the grid condition~\eqref{eq:gridcond} holds for this $\bsy$:
\[
\bsy_\ell \leq \bsy < \bsy_{\ell'}
\;\&\;
\tilde{F}(\bsy_{\ell'}^-) - \tilde{F}(\bsy_\ell) = \prod_{j=1}^{\dy} F_j(\bsy_{\ell_j',j}^-) - \prod_{j=1}^{\dy} F_j(\bsy_{\ell_j,j})
\leq \sum_{j=1}^{\dy} (F_j(\bsy_{\ell_j',j}^-) - F_j(\bsy_{\ell_j,j}))
\leq \sum_{j=1}^{\dy} \frac{\varepsilon}{\dy}
= \varepsilon.
\]
So, $S$ is a valid grid.

With such a grid $S$ in hand, the main statement is easy to prove. For any $\bsy \in \R^{\dy}$, by~\eqref{eq:gridcond}, we have $\exists \bsy_{\ell}, \bsy_{\ell'} \in S$ such that
\[
\tilde{F}_n(\bsy_\ell) - \tilde{F}(\bsy_\ell) - \varepsilon \leq \tilde{F}_n(\bsy) - \tilde{F}(\bsy) \leq \tilde{F}_n(\bsy_{\ell'}^-) - \tilde{F}(\bsy_{\ell'}^-) + \varepsilon.
\]
So, using the point-wise convergence~\eqref{eq:pointconv} and the finiteness of $S$, we have that
\[
\sup_{\bsy \in \R^{\dy}} \left|\tilde{F}_n(\bsy) - \tilde{F}(\bsy)\right|
\leq \max_{\bsy \in S} \left|\tilde{F}_n(\bsy)-\tilde{F}(\bsy)\right|+\max_{\bsy \in S} \left|\tilde{F}_n(\bsy^-)-\tilde{F}(\bsy^-)\right|+\varepsilon
\asconv \varepsilon.
\]
Since this holds for arbitrary $\varepsilon > 0$, we prove the desired result.
\end{proof}

\begin{proof}[Proof of~\Cref{thm:multiacest}]
Without loss of generality, we just need to prove $1 / n^3$ times the numerator of $\htaci$:
\[
Q_n := \frac{1}{n^3} \sum_{i=1}^n (n \tilde{R}(Y_i \wedge Y_{M_{\bz}(i)}) - \check{L}_i^2)
\]
converges almost surely to $Q := \int \var(\pr(\by \ge \bsy \mid \bz)) d \tilde{\mu}_{\by}(\bsy)$, and the rest follows from analogous arguments. With the new notations~\eqref{def:tildeF},~\eqref{def:G_n}, we rewrite $Q_n$ as 
    \[
	Q_n := \frac{1}{n} \sum_{i=1}^n (\tilde{F}_n(\by_i \wedge \by_{M_{\bz}(i)}) - G_n(\tilde{\by}_i)^2).
	\]
	Using exactly the same argument as \citet[Lemma~11.9]{azadkia2021simple}, we have $Q_n - \E[Q_n] \asconv 0$.
	Thus, the only remaining question is to prove $\E[Q_n] \to Q$. Let
	\[
	Q_n' := \frac{1}{n} \sum_{i=1}^n (\tilde{F}(\by_i \wedge \by_{M_{\bz}(i)}) - G(\tilde{\by}_i)^2).
	\]
    Then
    \[
    |Q_n' - Q_n| \le \sup_{\bsy \in \R^{\dy}} |\tilde{F}_n(\bsy) - \tilde{F}(\bsy)| + 2 \sup_{\bsy \in \R^{\dy}} |G_n(\bsy) - G(\bsy)|.
    \]
    Applying~\Cref{lem:permcdf} and treating $G_n$ by analogous arguments, we have  $|Q_n' - Q_n| \asconv 0$.
    
	We now focus on $Q_n'$. Notice that $Q_n'$ can be equivalently written as 
	\[
	Q_n' = \int \frac{1}{n} \sum_{i=1}^n \one\{\bsy \le \by_i\} \one\{\bsy \le \by_{M_{\bz}(i)}\} d \tilde{\mu}_{\by}(\bsy) - \frac{1}{n} \sum_{i=1}^n G(\tilde{\by}_i)^2.
	\]
	The expectation of the second term is apparently equal to $\int G(\bsy)^2 d \tilde{\mu}_{\by}(\bsy)$. For the first term, let $\mathcal{F}$ be the $\sigma$-field generated by $(\bz_1, \ldots, \bz_n)$ and the random variables used for breaking ties in the selection of nearest neighbors, then we have
	\[
	\E[\one\{\bsy \le \by_1\} \one\{\bsy \le \by_{M_{\bz}(1)}\} \mid \mathcal{F}] = G_{\bz_1}(\bsy) G_{\bz_{M_{\bz}(1)}}(\bsy),
	\]
	where $G_{\bz}(\bsy) := \pr(\by \ge \bsy \mid \bz)$. In light of the above, following exactly the same proof as \citet[Lemma~11.8]{azadkia2021simple}, we have $\E[Q_n'] \to Q$. Putting together yields the desired result.
\end{proof}

\begin{proof}[Proof of~\Cref{prop:multiac}]
    This follows from exactly the same argument as the proof of \Cref{thm:multiacest}, except that we have replaced $\tilde{F}_n, \tilde{F}$ by
\[
	F_n(\bsy) := \frac{1}{n} \sum_{i=1}^n \one\{\by_i \le \bsy\} \quad\&\quad F(\bsy) := \E\left[\one\{\by_1 \le \bsy\}\right].
\]
\end{proof}

\subsection{Proof of~\Cref{thm:tacnormal}}

As a direct consequence of~\Cref{thm:gamma} and~\Cref{lem:probneigh} that will be proved later, $\sigma_n^2$ is always positive. Moreover, $0 < \liminf \sigma_n^2 \le \limsup \sigma_n^2 < \infty$. We now focus on the rest results.

Recall the definitions of $\tilde{F}_n(\cdot), \tilde{F}(\cdot), G_n(\cdot)$ introduced in~\eqref{def:tildeF} and~\eqref{def:G_n}. Also, write $h(\bsy) := \E [\tilde{F}(\by \wedge \bsy)], \theta := \E[h(\by)]$. Armed with these definitions, we may decompose $1 / n^3$ times the numerator of $\htaci$ as $(S_n + \Delta_n^{(1)} + \Delta_n^{(2)} + \Delta_n^{(3)}) / \sqrt{n}$, where
\[
\begin{aligned}
\Delta_n^{(1)} &:= \frac{1}{\sqrt{n}} \sum_{i=1}^n (\tilde{F}_n-\tilde{F})(\by_i \wedge \by_{M(i)}) - \frac{1}{\sqrt{n}(n-1)} \sum_{i \neq j} (\tilde{F}_n-\tilde{F})(\by_i \wedge \by_j) \\
\Delta_n^{(2)} &:= \frac{1}{\sqrt{n}(n-1)} \sum_{i \neq j} \tilde{F}_n(\by_i \wedge \by_j) - \frac{1}{\sqrt{n}} \sum_{i=1}^n G_n(\tilde{\by}_i)^2 \\
\Delta_n^{(3)} &:= \frac{1}{\sqrt{n}} \sum_{i=1}^n (2 h(\by_i) - \theta) - \frac{1}{\sqrt{n}(n-1)} \sum_{i \neq j} \tilde{F}(\by_i \wedge \by_j) \\
S_n &:= \frac{1}{\sqrt{n}} \sum_{i=1}^n (\tilde{F}(\by_i \wedge \by_{M(i)}) - 2 h(\by_i) + \theta).
\end{aligned}
\]
(From here and below, we abbreviate $M := M_{\bz}$ and $(\tilde{F}_n - \tilde{F})(\bsy) := \tilde{F}_n(\bsy) - \tilde{F}(\bsy)$.)

Then if we can prove that $\Delta_n^{(t)} \overset{\mathbb{P}}{\to} 0$ for $t \in \{1, 2, 3\}$ and $S_n / \tilde{\sigma}_n \overset{d}{\to} \mathcal{N}(0, 1)$, where $\tilde{\sigma}_n^2$ corresponds to the numerator of the $\sigma^2_n$ defined in~\Cref{thm:tacnormal}, the desired result follows directly from Slutsky's theorem. The rest of this section is organized as follows. First, we introduce some preliminary lemmas and provide proof of these lemmas; second, we use these lemmas to analyze the convergence of $\Delta_n^{(t)}$ and $S_n$. 

\noindent{\bf Notations.} For any vector $\bs{u} \in \R^d$, we write $\|\bs{u}\|$ as its $\ell_2$-norm. We define $\mathrm{Binom}(n, p)$ as a binomial distribution with parameters $n \ge 1$ and $p \in [0, 1]$. Finally, for simplicity of exposition, we rewrite $M_{\bz}(\cdot)$ as $M(\cdot)$, and write $M^{-1}(i) := \{j: M(j) = i\}$, write
\[
d_{\max}(M):=\underset{1\leq k\leq n}{\max}\sum_{j : j \neq k} \mathbbm{1}\{M(j)=k\}
\]

\begin{lemma}
\label{lem:maxdegree}
For any constants $\delta, K > 0$, 
\[
\E[d_{\text{max}}(M)^K] / n^\delta \to 0.
\]
\end{lemma}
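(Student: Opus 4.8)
The plan is to bound $d_{\max}(M)$ by separating, for each fixed index $k$, two kinds of contributions to its in-degree $d_k := \#\{j \neq k : M(j) = k\}$: the indices $j$ with $\bz_j \neq \bz_k$, which I will bound by a constant $m_{\dz}$ depending only on the dimension $\dz$ via a packing argument; and the indices $j$ with $\bz_j = \bz_k$, which all lie in the cluster of exact coincidences of $\bz_k$ and on which the random tie-breaking behaves like a balls-into-bins process. Thus I would first establish the deterministic inequality
\[
d_k \le m_{\dz} + B_k, \qquad B_k := \#\{j \neq k : M(j) = k,\ \bz_j = \bz_k\},
\]
valid for every realization and every $k$, and then control $\max_k B_k$ probabilistically.

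For the deterministic part, I would fix $k$ and look at $j$ with $M(j) = k$ and $\bz_j \neq \bz_k$; for such $j$ the nearest-neighbour radius $r_j := \min_{l \neq j}\|\bz_j - \bz_l\|$ is strictly positive and equals $\|\bz_j - \bz_k\|$, so the unit vectors $u_j := (\bz_j - \bz_k)/\|\bz_j - \bz_k\|$ are well defined. Covering the unit sphere of $\R^{\dz}$ by finitely many (say $m_{\dz}$) sets of angular diameter strictly less than $\pi/3$, I would argue that two distinct such indices cannot land in the same set: if $u_j, u_{j'}$ were in one set, the angle at $\bz_k$ in the triangle $\bz_j\bz_k\bz_{j'}$ would be below $\pi/3$, hence strictly smaller than one of the other two angles, so the opposite side $\|\bz_j - \bz_{j'}\|$ would be strictly shorter than $r_j$ or than $r_{j'}$, contradicting the minimality defining these radii (recall $r_j \le \|\bz_j - \bz_{j'}\|$ and $r_{j'} \le \|\bz_j - \bz_{j'}\|$). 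This yields $\#\{j \neq k : M(j)=k,\ \bz_j \neq \bz_k\} \le m_{\dz}$.

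For the probabilistic part, I would note that the indices $j$ with $M(j)=k$ and $\bz_j = \bz_k$ all belong to the coincidence cluster $S_k := \{l : \bz_l = \bz_k\}$; since $\bz_j$ coincides with at least one other sample, its nearest neighbours are exactly $S_k \setminus \{j\}$, so $M(j)$ is uniform on $S_k \setminus \{j\}$ and independent across $j \in S_k$. Hence, conditional on the partition of $[n]$ into coincidence clusters, $B_k \sim \mathrm{Binom}(|S_k|-1,\, 1/(|S_k|-1))$ when $|S_k|\ge 2$ (and $B_k = 0$ when $|S_k|=1$), which gives the uniform tail $\pr(B_k \ge t) \le \binom{|S_k|-1}{t}(|S_k|-1)^{-t} \le 1/t!$ for all $t \ge 1$, hence $\pr(\max_k B_k \ge t) \le n/t!$ by a union bound over $k \in [n]$. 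Converting this into a moment bound is routine: taking $K$ to be a positive integer without loss of generality (legitimate since $d_{\max}(M) \ge 1$), one has $\E[(\max_k B_k)^K] \le K\sum_{t\ge1} t^{K-1}\min\{1, n/t!\}$; splitting the sum at the index $T_n$ at which $T_n!$ first exceeds $n^2$ --- so that $T_n = O(\log n/\log\log n)$ --- the head is $O((\log n)^K)$ and the tail is $o(1)$ because factorials eventually dominate $n$ times any fixed polynomial. Combined with $d_{\max}(M) \le m_{\dz} + \max_k B_k$, this gives $\E[d_{\max}(M)^K] \le 2^{K-1}(m_{\dz}^K + \E[(\max_k B_k)^K]) = O((\log n)^K)$, with the constant depending only on $\dz$ and $K$; since $(\log n)^K = o(n^\delta)$ for every $\delta > 0$, the claim follows.

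I expect the main obstacle to be conceptual rather than computational. Without any regularity assumption on $\bz$ the in-degree is genuinely not bounded by a constant --- an atom of $\bz$ produces a coincidence cluster of size $\Theta(n)$ --- so the deterministic combinatorial bound alone cannot suffice, and the crux is realizing that the only mechanism creating more than $m_{\dz}$ in-edges into a given vertex is exact coincidence, on which the independent uniform tie-breaking turns the in-degree into a sharply concentrated, Poisson-like variable instead of an adversarial one. A secondary delicate point is getting the strict inequalities in the packing argument right, i.e.\ handling coincident or collinear points and making the passage from ``angle below $\pi/3$'' to ``strictly shorter opposite side'' rigorous; once those are settled, the remaining steps are standard bookkeeping.
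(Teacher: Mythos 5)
Your proposal is correct and follows essentially the same route as the paper's proof: split the in-degree of each vertex into the contribution from indices with a different $\bz$-value, bounded by a dimension-dependent constant via the cone-packing argument (which the paper simply imports as Lemma~11.4 of Azadkia--Chatterjee rather than reproving), and the contribution from exact coincidences, which conditional on the clusters is $\mathrm{Binom}(m,1/m)$ with the $1/t!$ tail, followed by a union bound over the $n$ vertices and a routine tail-to-moment conversion. The only cosmetic differences are that you integrate the tail to get $O((\log n)^K)$ directly, whereas the paper thresholds at $(\log n)^2$ and uses the crude bound $d_{\max}(M)\le n$ on the complement; both yield the claimed $o(n^\delta)$ bound.
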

\begin{proof}

First, by definition, $d_{\text{max}}(M)\leq n$. Once we further have the equation that for some constant $C > 0$,
\begin{equation}\label{eq:dmaxp}
\pr [d_{\text{max}}(M)>(\log n)^2] \le C e^{- (\log n)^2 / 2},    
\end{equation}
the desired result can be derived:
     \begin{equation*}
         \begin{aligned}
             \E d_{\text{max}}(M)^K&=\E [d_{\text{max}}(M)^K \one\{d_{\text{max}}(M)\leq (\log n)^2\}]+\E [d_{\text{max}}(M)^K \one\{d_{\text{max}}(M) > (\log n)^2\}]\\
             &\leq(\log n)^{2K} + n^K \pr[d_{\text{max}}(M)>\log^2 n] \le (\log n)^{2K} + C n^K e^{- (\log n)^2 / 2}.
         \end{aligned}
     \end{equation*}
Now our focus is to prove~\eqref{eq:dmaxp}. To prove this, first, as a direct consequence of~\citet[Lemma~11.4]{azadkia2021simple}, there exists a deterministic constant $C(\dz)$ depending only on $\dz$ such that
\begin{equation}\label{eq:maxineibor}
      \sum_{j \neq 1: \bz_j \neq \bz_1} \mathbbm{1}\{M(j)=1\}\leq C(\dz).  
\end{equation}

Second, we prove that for a random variable $X \sim \mathrm{Binom}(n,1/n),$ 
\begin{equation}\label{eq:binom}
\pr[X>k] \leq 1/k \,! ,\; \forall k > 0.
\end{equation}
To prove this, observe that $ \forall j \geq 1$,
\begin{equation*}
    \begin{aligned}
\pr(X=j)
&= \binom{n}{j} \left(\frac{1}{n}\right)^j \left(1-\frac{1}{n}\right)^{n-j}=   \frac{1}{j!}\left(1-\frac{1}{n}\right)^{n-j}\frac{n!}{(n-j)!}\left(\frac{1}{n}\right)^{j} \\
&= \frac{1}{j!} \left(1-\frac{1}{n}\right)^{n-j} \prod_{i=0}^{j-1} \left(1-\frac{i}{n}\right) \leq \frac{1}{j!} \exp\left(-\left(1-\frac{j}{n}\right)\right) \exp\left(- \sum_{i=0}^{j-1} \frac{i}{n}\right) \\
&= \frac{1}{e \cdot j!} \exp\left(\frac{\frac32 j - \frac12 j^2}{n}\right) \leq \frac{1}{j!},\\
\end{aligned}
\end{equation*}
where the first inequality comes from the basic inequality: $(1+x)^a\leq e^{ax}, \forall x > -1, a>0$.
    

Then for $j\geq 2$, we can have 
$$\pr(X=j) \leq \frac{1}{j!} \leq \frac{j-1}{j!} = \frac{1}{(j-1)!} - \frac{1}{j!}.$$
Hence $\forall k\geq 1$,
$$
\pr(X>k) = \sum_{k < j \leq n} \pr(X=j) \leq \sum_{k < j \leq n} \left(\frac{1}{(j-1)!}-\frac{1}{j!}\right) \leq \frac{1}{k!}.
$$

In light of~\eqref{eq:maxineibor} and~\eqref{eq:binom}, using a union bound,
\begin{equation*}
    \begin{aligned}
&\pr(d_{\text{max}}(M) > (\log n)^2)
= \pr\left(\bigcup_{k=1}^n \left\{\sum_{j : j\neq k} \mathbbm{1}\{M(j)=k\} > (\log n)^2 \right\} \right) \\
&\qquad \leq \sum_{k=1}^n \pr\left( \sum_{j : j\neq k} \mathbbm{1}\{M(j)=k\} > (\log n)^2 \right) = n \pr\left( \sum_{j = 2}^n \mathbbm{1}\{M(j)=1\} > (\log n)^2 \right), \\
\end{aligned}
\end{equation*}
where to get the last equality we apply that $\bz_i$'s are i.i.d. To control the right hand side of the above derivation, we split the sum $\sum_{j = 2}^n \mathbbm{1}\{M(j)=1\}$ into two parts:

$$
\sum_{j = 2}^n \mathbbm{1}\{M(j)=1\} = \sum_{j = 2}^n \mathbbm{1}\{M(j)=1\} \one\{\bz_j = \bz_1\} + \sum_{j = 2}^n \mathbbm{1}\{M(j)=1\} \one\{\bz_j \neq \bz_1\}.
$$

The second part can be directly controlled by~\eqref{eq:maxineibor}. For the first part, conditional on $\bz_1,\cdots,\bz_n$, if $\forall j \neq 1$, $\bz_j \neq \bz_1$, then the first part is equal to zero. Otherwise, write $N_n$ as the number of $\bz_j$ that are equal to $\bz_1$, then the first part follows Binomial distribution with parameters $(N_n, 1 / N_n)$. Now in light of~\eqref{eq:binom}, we can bound that, $\forall k>0$,
\begin{equation*}
    \begin{aligned}
& \pr\left(\sum_{j = 2}^n \mathbbm{1}\{M(j)=1\} \one\{\bz_j = \bz_1\} > k \right) \\
&\qquad = \E\left[  \pr \left( \sum_{j = 2}^n \mathbbm{1}\{M(j)=1\} \one\{\bz_j = \bz_1\} > k \mid \bz_1,\cdots,\bz_n\right)\right] \leq \frac{1}{k!}.
\end{aligned}
\end{equation*}
Putting together, we obtain that
$$
\pr[d_{\max}(M) > (\log n)^2]
\leq n \pr\left(\sum_{j \neq 1: \bz_j=\bz_1} \mathbbm{1}\{M(j)=1\} > (\log n)^2 - C(\dz)\right)
\leq \frac{n}{(\lfloor (\log n)^2 \rfloor - C(\dz))!}
$$
From above,~\eqref{eq:dmaxp} is a direct consequence of Stirling's approximation.
\end{proof}

\begin{lemma}
    \label{lem:probneigh}
  We have that 
  \[
  a_n := \pr(M(1)=2, M(2)=1) \leq \frac{1}{n - 1} \quad\&\quad b_n := \pr(M(1)=M(2)) \le \frac{C(\dz)}{n - 1},
  \]
    where $C(\dz)$ is a positive constant depending only on $\dz$.
\end{lemma}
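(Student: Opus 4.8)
The plan is to turn each of the two probabilities into the expectation of a combinatorial statistic on the nearest-neighbour digraph, using the exchangeability of the i.i.d.\ sample $(\bz_1,\dots,\bz_n)$ together with the relabelling-equivariance of the tie-broken map $M=M_{\bz}$, and then to control that statistic by elementary degree arguments, the bounded in-degree estimate \eqref{eq:maxineibor} borrowed from \citet[Lemma~11.4]{azadkia2021simple}, and a direct treatment of ties. For $a_n$, exchangeability gives
\[
n(n-1)\,a_n \;=\; \E\Bigl[\sum_{i\ne j}\one\{M(i)=j,\ M(j)=i\}\Bigr] \;=\; \E\Bigl[\sum_{i=1}^n \one\{M(M(i))=i\}\Bigr] \;\le\; n,
\]
because the last expression is the expectation of a sum of $n$ indicators; hence $a_n\le \tfrac1{n-1}$. (A mutual-nearest-neighbour matching argument would even give $n/2$, but this is not needed.)

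For $b_n$, write $d^{-}(k):=|M^{-1}(k)|=\sum_{j\ne k}\one\{M(j)=k\}$ for the in-degree of $k$. By exchangeability again,
\[
n(n-1)\,b_n \;=\; \E\Bigl[\sum_{i\ne j}\one\{M(i)=M(j)\}\Bigr] \;=\; \E\Bigl[\sum_{k=1}^n d^{-}(k)\bigl(d^{-}(k)-1\bigr)\Bigr].
\]
I would split $d^{-}(k)=d^{-}_{\ne}(k)+d^{-}_{=}(k)$ according to whether the contributing index $j$ has $\bz_j\ne\bz_k$ or $\bz_j=\bz_k$. Estimate \eqref{eq:maxineibor} says $d^{-}_{\ne}(k)\le C(\dz)$ for every $k$, so $d^{-}(k)(d^{-}(k)-1)\le (d^{-}_{\ne}(k)+d^{-}_{=}(k))^2\le 2C(\dz)\,d^{-}_{\ne}(k)+2\,d^{-}_{=}(k)^2$. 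Summing the first term over $k$ is harmless, since $\sum_k d^{-}_{\ne}(k)=\#\{j:\bz_{M(j)}\ne\bz_j\}\le n$ deterministically. The whole problem therefore reduces to showing $\E\bigl[\sum_k d^{-}_{=}(k)^2\bigr]=O(n)$.

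This last bound is the crux, and it is purely about ties. Partition $[n]$ into maximal groups of indices sharing a common value of $\bz$. If a group $G$ has $|G|\ge 2$, then every member of $G$ is at distance $0$ from the other members and at strictly positive distance from everyone else, so its tie-broken nearest neighbour lies in $G$ and — under the independent uniform tie-breaking rule — is uniformly distributed on $G\setminus\{\text{itself}\}$ and independent across the members of $G$; if $|G|=1$ the quantity $d^{-}_{=}(\cdot)$ vanishes on that group. Hence $d^{-}_{=}(k)$ counts only indices in $k$'s own group, $\sum_{k\in G}d^{-}_{=}(k)=|G|$, and for distinct $i,j\in G$ with $|G|=m$ we get $\pr(M(i)=M(j))=\sum_{\ell\in G\setminus\{i,j\}}\tfrac1{(m-1)^2}=\tfrac{m-2}{(m-1)^2}\le\tfrac1{m-1}$. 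Writing $d^{-}_{=}(k)^2=d^{-}_{=}(k)+d^{-}_{=}(k)(d^{-}_{=}(k)-1)$, this yields
\[
\E\Bigl[\sum_k d^{-}_{=}(k)^2\Bigr] \;\le\; \sum_k \E\bigl[d^{-}_{=}(k)\bigr]+\sum_G\sum_{\substack{i,j\in G\\ i\ne j}}\pr\bigl(M(i)=M(j)\bigr) \;\le\; n+\sum_G|G| \;=\; 2n.
\]
Combining the three estimates gives $n(n-1)b_n\le 2C(\dz)\,n+2\cdot 2n$, i.e.\ $b_n\le\frac{2C(\dz)+4}{n-1}$, and after renaming the constant the lemma follows. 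I expect this tie analysis to be the main obstacle: the claim is genuinely false for tie-breaking schemes that are correlated across points — e.g.\ resolving all ties by a single random priority, in which case a group of size $m$ contributes $\Theta(m^2)$ to $\sum_k d^{-}_{=}(k)^2$ and $b_n$ need not be $O(1/n)$ — so the proof must exploit exactly the across-point independence of the tie-break, while also handling the degenerate single-index groups correctly.
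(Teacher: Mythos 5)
Your proof is correct and follows essentially the same route as the paper's: reduce $b_n$ by exchangeability to a second (factorial) moment of the in-degree $|M^{-1}(\cdot)|$, split that in-degree into the contribution from points with $\bz_j\neq\bz_k$ (bounded by the constant $C(\dz)$ from \citet[Lemma~11.4]{azadkia2021simple}) and from exact ties (where the tie-broken neighbour is uniform over the tie group), the latter being exactly the paper's Binomial$(N,1/N)$ second-moment computation written out as pair probabilities $\tfrac{m-2}{(m-1)^2}$. The $a_n$ bound is the same one-line observation in both.
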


\begin{proof}
    For $a_n$:
    \[
    a_n \le \pr(M(1) = 2) = \frac{1}{n - 1}.
    \]
    For $b_n$, using the i.i.d. property of the $\bz_i$'s, we have
\begin{equation} \label{eq:bnbnd}
\begin{aligned}
\frac{b_{n}}{n-2}
&= \frac{1}{n-2} \pr(M(1)=M(2)) = \frac{1}{n-2} \sum_{j=3}^{n} \pr(M(1)=M(2)=j) \\
&= \pr(M(1)=M(2)=3) = \frac{1}{(n - 1)(n - 2)} \sum_{i \neq j \;\&\; i, j \neq 3} \pr(M(i)=M(j)=3),
\end{aligned}
\end{equation}
where
\[
\sum_{i \neq j \;\&\; i, j \neq 3} \pr(M(i)=M(j)=3) = \E\left[\sum_{i \neq j \;\&\; i, j \neq 3} \one\{M(i)=M(j)=3\}\right] \le \E\left[|M^{-1}(3)|^2\right].
\]
Now observe that
\[
|M^{-1}(3)|^2 \le 2 |\{j: M(j) = 3 \;\&\; \bz_j = \bz_3\}|^2 + 2 |\{j: M(j) = 3 \;\&\; \bz_j \neq \bz_3\}|^2.
\]
As a direct consequence of~\citet[Lemma~11.4]{azadkia2021simple}, the second term can be bounded by a $C(\dz)$, i.e., a constant depending only on $\dz$. For the first term, let $N := \sum_{i \neq 3} \one\{\bz_i = \bz_3\}$. Then conditioning on a $N \ge 1$, $|\{j: M(j) = 3 \;\&\; \bz_j = \bz_3\}|$ follows a binomial distribution with parameters $(N, 1 / N)$, so that
\[
\E[|\{j: M(j) = 3 \;\&\; \bz_j = \bz_3\}|^2 \mid N] = 2 - 1 / N.
\]
Putting together, we have that $\E[|M^{-1}(3)|^2]$ is bounded above by $2 C(\dz) + 4$. In light of this and~\eqref{eq:bnbnd}, we prove the desired result.
\end{proof}

\begin{lemma}
\label{lem:jointdis}
We have that
\[
 \pr(M(1) = i, M(2) = j) = \left\{\begin{aligned}
 & \frac{b_n}{n-2}, & & i = j\\
 & a_n, & & i = 2, j = 1\\
 & \frac{1 - a_n (n - 1)}{(n-1)(n - 2)}, & & i = 2, j \ge 3 \;\mathrm{or}\; j = 1, i \ge 3\\
 & \frac{1 + a_n - b_n}{(n - 2)(n - 3)} - \frac{2}{(n - 1)(n - 2)(n - 3)}, & & i \neq j \;\&\; i, j \ge 3
 \end{aligned}\right.
\]
\end{lemma}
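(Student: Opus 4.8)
The plan is to exploit the permutation symmetry of the nearest--neighbour matching $M=M_{\bz}$ and then pin down the unknown cell probabilities using two linear constraints: the value of the marginal $\pr(M(1)=k)$, and the fact that all cell probabilities sum to one.

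First I would record the structural facts. Since $M(1)$ is the index of the nearest neighbour of $\bz_1$ among $\{\bz_j\}_{j\neq 1}$, we have $M(1)\in\{2,\dots,n\}$, and similarly $M(2)\in\{1,3,\dots,n\}$; hence the pairs $(i,j)$ with $\pr(M(1)=i,M(2)=j)>0$ are exactly those covered by the four classes in the statement (the degenerate cell $i=j\in\{1,2\}$ is empty, so the stated formula there is vacuous). The key point is that the matching is equivariant under relabelling the samples: setting up the tie--breaking as an exchangeable randomisation (e.g.\ i.i.d.\ uniform marks attached to the points, ties resolved toward the smaller mark), for any $\sigma\in S_n$ the relabelled sample $(\bz_{\sigma(1)},\dots,\bz_{\sigma(n)})$ has the same law as $(\bz_1,\dots,\bz_n)$, and $M$ transforms by conjugation, so that
\[
\pr\bigl(M(1)=i,\,M(2)=j\bigr)=\pr\bigl(M(\sigma(1))=\sigma(i),\,M(\sigma(2))=\sigma(j)\bigr)\qquad\text{for all }\sigma\in S_n .
\]

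Granting this, the rest is bookkeeping. Taking $\sigma$ over permutations fixing $1$ shows $\pr(M(1)=k)$ is the same for all $k\in\{2,\dots,n\}$, hence $\pr(M(1)=k)=\tfrac1{n-1}$; symmetrically $\pr(M(2)=k)=\tfrac1{n-1}$ for $k\in\{1,3,\dots,n\}$. Taking $\sigma$ to permute $\{3,\dots,n\}$ (and, when needed, to also swap $1\leftrightarrow2$) shows that $\pr(M(1)=i,M(2)=j)$ is constant on each of the four classes. On the diagonal $i=j\in\{3,\dots,n\}$ (a class of $n-2$ cells) the sum is $\pr(M(1)=M(2))=b_n$, giving $b_n/(n-2)$ per cell; the cell $(i,j)=(2,1)$ is $a_n$ by definition; for $i=2,\ j\ge3$, decomposing the marginal,
\[
\tfrac1{n-1}=\pr(M(1)=2)=a_n+(n-2)\,\pr\bigl(M(1)=2,M(2)=3\bigr),
\]
so each such cell equals $\dfrac{1-a_n(n-1)}{(n-1)(n-2)}$, and the case $j=1,\ i\ge3$ is identical by the $1\leftrightarrow2$ symmetry. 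Finally, letting $d$ denote the common value on $\{i\neq j,\ i,j\ge3\}$ (a class of $(n-2)(n-3)$ cells) and summing all cell probabilities to $1$,
\[
1=b_n+a_n+2\Bigl(\tfrac1{n-1}-a_n\Bigr)+(n-2)(n-3)\,d ,
\]
which rearranges to $d=\dfrac{1+a_n-b_n}{(n-2)(n-3)}-\dfrac{2}{(n-1)(n-2)(n-3)}$, as claimed.

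The one genuinely delicate step is the equivariance identity above: one must fix a construction of the tie--breaking that is jointly exchangeable with the sample so that permuting indices permutes $M$ without changing the law, after which everything reduces to the marginal identity $\pr(M(1)=k)=1/(n-1)$ and the normalisation $\sum_{i,j}\pr(M(1)=i,M(2)=j)=1$. For small $n$ (e.g.\ $n\in\{2,3\}$) some classes are empty and the corresponding formulas are vacuous, requiring no separate argument.
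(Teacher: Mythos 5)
Your proposal is correct and follows essentially the same route as the paper: both arguments use the exchangeability of the $\bz_i$'s (with exchangeable tie-breaking) to show the cell probabilities are constant within each class, then identify the constants from the marginal $\pr(M(1)=2)=1/(n-1)$ and the normalisation of the total probability. The only cosmetic difference is that the paper phrases the $i=2,\ j\ge 3$ case via conditional exchangeability of $\bz_3,\dots,\bz_n$ given $\{M(1)=2,\ M(2)\neq 1\}$, whereas you sum equal cells against the marginal, which amounts to the same computation.
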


\begin{proof}
The proof of the case $i = j$ and $i = 2, j = 1$ follows directly from the definition and the i.i.d. property of all $\bz_i$'s. We now focus on the third case. Without loss of generality, we just need to prove this holds for $i = 2, j \ge 3$, and the other case follows from an analogous argument. First,
\[
\pr(M(1)=2,M(2)=j) =\pr(M(1)=2, M(2)\neq 1)\pr(M(2)=j|M(1)=2, M(2)\neq 1).
\]
Apparently, using the i.i.d. property of all the $\bz_i$'s, by conditioning on the event $M(1)=2, M(2)\neq 1$, all the $\bz_3, \ldots, \bz_n$ are still exchangeable, so that they are equally likely to be selected as $M(2)$. Then
\[
\pr(M(2)=j|M(1)=2, M(2)\neq 1) = \frac{1}{n - 2} \sum_{j' = 3}^n \pr(M(2)=j'|M(1)=2, M(2)\neq 1)  = \frac{1}{n - 2}.
\]
Putting back, we have
\begin{align*}
    \pr(M(1)=2,M(2)=j) & = \pr(M(1)=2, M(2) \neq 1) \frac{1}{n - 2} \\
    & = (\pr(M(1)=2) - \pr(M(1)=2, M(2) = 1)) \frac{1}{n - 2} = \left(\frac{1}{n - 1} - a_n\right) \frac{1}{n - 2}.
\end{align*}

For the case $i \neq j \;\&\; i, j \ge 3$, using again the i.i.d. property, the result is a direct consequence of the following equation.
\begin{align*}
    & (n - 2) (n - 3) \pr(M(1) = i, M(2) = j) = 1 - \pr(M(1) = M(2)) - \pr(M(1) = 2, M(2) = 1) \\
    &\qquad - \pr(M(1) = 2, M(2) \ge 3) - \pr(M(1) \ge 3, M(2) = 1)).
\end{align*}
\end{proof}



Armed with these lemmas, we are now in a position to prove~\Cref{thm:tacnormal}. As also mentioned at the beginning of this section, the only remaining job is to understand the asymptotic properties of $\Delta_n^{(t)}, t \in \{1, 2, 3\}$ and $S_n$.

\paragraph{Analysis of $\Delta_n^{(1)}$} We first show that there exists some universal constant $C > 0$ such that
\begin{equation}
\label{eq:mibound}
\E|\tilde{F}_n(\by_1 \wedge \by_2) - \tilde{F}(\by_1 \wedge \by_2)|^2 \le C \dy^2 / n.    
\end{equation}

First, as a direct consequence of bounded difference inequality, there exists some universal constant $C' > 0$ such that almost surely,
\begin{equation*}
    \E\left[\left(\tilde{F}_n(\by_1 \wedge \by_2) - \E[\tilde{F}_n(\by_1 \wedge \by_2) \mid \by_1, \by_2]\right)^2 \;\Big|\; \by_1, \by_2\right] \le C' \frac{\dy^2}{n}.
\end{equation*}

Writing $\mathcal{S}$ as the set of indices $i$ such that $\pi_{d'}(i)$ is equal to $1$ or $2$ for some $1 \le d' \le \dy$, then
\begin{align*}
    \E[\tilde{F}_n(\by_1 \wedge \by_2) \mid \by_1, \by_2] & = \frac{1}{n} \sum_{i \in \mathcal{S}}\E[\one\{\tilde{\by}_i \le \by_1 \wedge \by_2\} \mid \by_1, \by_2] + \frac{1}{n} \sum_{i \not\in \mathcal{S}}\E[\one\{\tilde{\by}_i \le \by_1 \wedge \by_2\} \mid \by_1, \by_2] \\
    & = \frac{1}{n} \sum_{i \in \mathcal{S}}\E[\one\{\tilde{\by}_i \le \by_1 \wedge \by_2\} \mid \by_1, \by_2] + \frac{n - |\mathcal{S}|}{n} \tilde{F}(\by_1 \wedge \by_2).
\end{align*}
Putting together and using $|\mathcal{S}| \le 2 \dy$, we have almost surely,
\begin{align*}
& \E[|\tilde{F}_n(\by_1 \wedge \by_2) - \tilde{F}(\by_1 \wedge \by_2)|^2 \mid \by_1, \by_2] = \E[|\tilde{F}_n(\by_1 \wedge \by_2) - \E[\tilde{F}_n(\by_1 \wedge \by_2) \mid \by_1, \by_2]|^2 \mid \by_1, \by_2] \\
& \qquad + \E[|\E[\tilde{F}_n(\by_1 \wedge \by_2) \mid \by_1, \by_2] - \tilde{F}(\by_1 \wedge \by_2)|^2 \mid \by_1, \by_2] \le (C' + 4) \dy^2 / n,
\end{align*}
thereby proving the desired result. 

We next turn to the analysis of $\E(\Delta_n^{(1)})^2$. For simplicity, we define $A_{i,j}:=\tilde{F}_n(\by_i \wedge \by_{j})-\tilde{F}(\by_i \wedge \by_{j})$, then we can express $\Delta_n^{(1)}$ as
$$\Delta_n^{(1)}=\frac{1}{\sqrt{n}}\sum_{i = 1}^n A_{i,M(i)}-\frac{1}{\sqrt{n}(n-1)}\sum_{i\neq j} A_{i,j}.$$
We expand $(\Delta_n^{(1)})^2$ into three parts, 
  $$(\Delta_n^{(1)})^2=T_2-2T_1+T_0,$$
  where 
  $$\begin{aligned}
      &T_2:=\frac{1}{n}\left(\sum_{i=1}^n A_{i,M(i)}\right)^2,\,\,T_1:=\frac{1}{n(n-1)}\left(\sum_iA_{i,M(i)}\right)\left(\sum_{i\neq j}A_{i,j}\right),\,\,T_0:=\frac{1}{n(n-1)^2}\left(\sum_{i\neq j}A_{i,j}\right)^2.
  \end{aligned}$$
Using the i.i.d. property of the $\bz_i$'s and the independence between $\by_i$ and $\bz_i$, 
\begin{align*}
	\E[T_1 \mid \by_1, \ldots, \by_n] & = \frac{1}{n(n-1)}\left(\E\left[\sum_iA_{i,M(i)} \mid \by_1, \ldots, \by_n\right]\right)\left(\sum_{i\neq j}A_{i,j}\right) \\
	& = \frac{1}{n(n-1)}\left(\frac{1}{n - 1}\sum_{i \neq j}A_{i, j}\right)\left(\sum_{i\neq j}A_{i,j}\right) = T_0,
\end{align*}
which yields $\E[(\Delta_n^{(1)})^2] = \E[T_2] - \E[T_0]$. We first consider $\E T_0$:
$$\begin{aligned}
T_0
& =\frac{1}{n(n-1)^2}\left(\sum_{i\neq j}A_{i,j}\right)\left(\sum_{i'\neq j'}A_{i',j'}\right)\\
&= \frac{1}{n(n-1)^2} \sum_{i \neq j} A_{i,j} \Big( A_{i,j} + A_{j,i} + \sum_{j' \not\in \{i,j\}} A_{i,j'} + \sum_{j' \not\in \{i,j\}} A_{j,j'} \\ &\qquad\qquad\qquad\qquad\qquad\qquad\qquad+  \sum_{i' \not\in \{i,j\}} A_{i',i} + \sum_{i' \not\in \{i,j\}} A_{i',j} + \sum_{i',j' \not\in \{i,j\}; i' \neq j'} A_{i',j'} \Big) \\
&= \frac{1}{n(n-1)^2} \left( 2\sum_{i\neq j} A_{i,j}^2 + 4 \sum_{i\neq j} \left(\sum_{k \not\in \{i,j\}}A_{i,j} A_{i,k}\right) + \sum_{i\neq j} \left(\sum_{k, l \not\in\{i,j\}; k \neq l} A_{i,j} A_{k,l}\right) \right),\\
\end{aligned}$$
where for the last inequality we apply $A_{i,j} = A_{j, i}$. Now for convenience, we write 
  $$\begin{aligned}
      &S_0:= \frac{1}{n(n-1)(n-2)(n-3)}\E\sum_{i\neq j} \left(\sum_{k, l \not\in\{i,j\}; k \neq l} A_{i,j} A_{k,l}\right),\\
      &S_1:=\frac{1}{n(n-1)(n-2)}\E \sum_{i\neq j} \left(\sum_{k \not\in \{i,j\}}A_{i,j} A_{i,k}\right), \qquad S_2 :=\frac{1}{n(n-1)}\E\sum_{i\neq j} A^2_{i,j}.
  \end{aligned}$$
Then, 
$$\begin{aligned}
\E T_0
&= \frac{1}{n(n-1)^2} (2n(n-1)S_2 + 4n(n-1)(n-2)S_1 + n(n-1)(n-2)(n-3)S_0) \\
&= \frac{2}{n-1} S_2 + \frac{4(n-2)}{n-1} S_1 + \frac{(n-2)(n-3)}{n-1} S_0.\\
\end{aligned}$$
For $\E T_2$, by definition, 
$$
\begin{aligned}
T_2
&= \frac{1}{n}\left(\sum_{i=1}^n A_{i, M(i)}\right)\left(\sum_{i'=1}^n A_{i',M(i')}\right) = \frac{1}{n} \left( \sum_{i=1}^n A_{i, M(i)}^2 + \sum_{i \neq j} A_{i, M(i)} A_{j, M(j)} \right).
\end{aligned}
$$
Then applying~\Cref{lem:jointdis}, we get that
$$
\begin{aligned}
    \E[T_2 \mid \;&\;\by_1, \ldots, \by_n] \\
    &= \frac{1}{n (n-1)} \sum_{i \neq j} A_{i, j}^2 + \frac{1}{n} \sum_{i \neq j} \Bigg(  a_n A_{i, j} A_{j, i} +\frac{b_n}{n-2} \sum_{k \neq i, j} A_{i,k} A_{j,k} \\
    &\qquad\qquad + \left(\frac{1}{n-1}-a_n\right)\frac{1}{n-2} \sum_{k \neq i, j} A_{i,j} A_{j,k} + \left(\frac{1}{n-1}-a_n\right)\frac{1}{n-2} \sum_{k \neq i, j} A_{i,k} A_{j,i}  \\
&\qquad\qquad + \left(\frac{1 + a_n - b_n}{(n - 2)(n - 3)} - \frac{2}{(n - 1)(n - 2)(n - 3)}\right) \sum_{k, l \not\in \{i, j\}; k \neq l} A_{i,k} A_{j,l}\Bigg) \\
&= \frac{1}{n} \left( \left(\frac{1}{n-1}+a_n\right) \sum_{i \neq j} A_{i,j}^2 + \left(b_n+\frac{2}{n-1}-2a_n\right) \frac{1}{n-2} \sum_{i \neq j} \left(\sum_{k \neq i, j} A_{i,j} A_{i,k} \right)\right.\\
&\qquad\qquad + \left.\left(1-\frac{2}{n-1}+a_n-b_n\right)\frac{1}{(n-2)(n-3)} \sum_{i \neq j} \left(\sum_{k, l \not\in \{i, j\}; k \neq l} A_{i,j} A_{k,l}\right)\right),\\
\end{aligned}
$$
where the second equality holds because $A_{i,j}=A_{j,i}$. Then, we take expectation on both side to get
$$
\begin{aligned}
\E T_2
&= (n-1) \left( \left(\frac{1}{n-1}+a_n\right)S_2 + \left(b_n+\frac{2}{n-1}-2a_n\right)S_1
+ \left(1-\frac{2}{n-1}+a_n-b_n\right)S_0 \right) \\
&= (1+(n-1)a_n)S_2 + (2+(n-1)b_n-2(n-1)a_n)S_1 + (n-3+(n-1)a_n-(n-1)b_n)S_0.\\
\end{aligned}
$$
Combining our analysis of $\E T_0$ and $\E T_2$, we obtain
\[
	\begin{aligned}
	\E (\Delta_n^{(1)})^2
	&= \E T_2 - \E T_0 = \left(1+(n-1)a_n-\frac{2}{n-1}\right)S_2 + \left(\frac{4}{n-1}-2+(n-1)b_n-2(n-1)a_n\right)S_1 \\
	&\qquad + \left(1-\frac{2}{n-1}+(n-1)a_n-(n-1)b_n\right)S_0.
\end{aligned}
\]
From the definitions of $A_{i,j}, S_0, S_1, S_2$,~\eqref{eq:mibound}, and the basic inequality that $|A_{i,j} A_{k, l}| \le (A_{i,j}^2 + A_{k, l}^2) / 2$, we know $|S_0|, |S_1|, |S_2| \le C(\dy) / n$ for some constant $C(\dy)$ depending only on $\dy$. Combining this with the above equality and~\Cref{lem:probneigh}, we obtain $\E(\Delta_n^{(1)})^2 \to 0$, thus proving the desired result.


\paragraph{Analysis of $\Delta_n^{(2)}$ and $\Delta_n^{(3)}$} For $\Delta_n^{(2)}$, expanding $G_n(\tilde{\by}_i)$, we have
\begin{align*}
    \frac{1}{\sqrt{n}} \sum_{i=1}^n G_n(\tilde{\by}_i)^2 & = \frac{1}{\sqrt{n}} \sum_{i=1}^n \frac{1}{n^2} \sum_{j, k}\one\{\by_j \ge \tilde{\by}_i\}\one\{\by_k \ge \tilde{\by}_i\} = \frac{1}{\sqrt{n}} \sum_{i=1}^n \frac{1}{n^2} \sum_{j, k}\one\{\tilde{\by}_i \le \by_j \wedge \by_k\} \\
    & = \frac{1}{n^{3/2}} \sum_{j, k} \tilde{F}_n(\by_j \wedge \by_k).
\end{align*}
So
$$
\begin{aligned}
\Delta_n^{(2)}
&= \frac{1}{\sqrt{n}(n-1)} \sum_{i \neq j} \tilde{F}_n(\by_i \wedge \by_j) - \frac{1}{n^{3/2}} \sum_{i,j} \tilde{F}_n(\by_i \wedge \by_j) \\
&= \frac{1}{\sqrt{n}(n-1)} \sum_{i \neq j} \tilde{F}_n(\by_i \wedge \by_j) - \frac{1}{n^{3/2}} \sum_j \tilde{F}_n(\by_j) - \frac{1}{n^{3/2}} \sum_{i \neq j} \tilde{F}_n(\by_i \wedge \by_j) \\
&= \frac{1}{n^{3/2}(n-1)} \sum_{i \neq j} \tilde{F}_n(\by_i \wedge \by_j) - \frac{1}{n^{3/2}} \sum_j \tilde{F}_n(\by_j), \\
\end{aligned}
$$
which implies that almost surely,
$$
|\Delta_n^{(2)}| \leq \max\left(\frac{1}{n^{3/2}(n-1)} \cdot n(n-1) ,\; \frac{1}{n^{3/2}} \cdot n\right) = \frac{1}{\sqrt{n}}.
$$
This proves the desired result.

For $\Delta_n^{(3)}$, as a direct consequence of~\citet[Lemma~D.4]{deb2020measuring}, we have that for some universal constant $C > 0$, $\E|\Delta_n^{(3)}|^2 \le C (\E[\tilde{F}(\by_1 \wedge \by_2)^2] + \theta^2) / n$, which proves the desired result.

\paragraph{Analysis of $S_n$} We first show that almost surely, with $n \ge 5$,
\begin{equation}\label{eq:sns1}
    \var(S_n \mid M) \ge \Gamma_1 / 2, \quad\&\quad \var(S_n \mid M) - \tilde{\sigma}_n^2 \to 0.
\end{equation}

To show the first result, we write
\[
S_n' = S_n + \Delta_n^{(3)},
\]
then it suffices to show $\var[S_n' | M] \ge \Gamma_1 /2$ a.s., since we already have $\var[S_n | M] = \var[S_n' | M] + \var[\Delta_n^{(3)}]$ from~\citet[Equation~(C.21)]{deb2020measuring}. 
Just as our analysis of $\Delta_n^{(1)}$, we redefine $A_{i,j} := \tilde{F}(\by_i \wedge \by_j) - \theta$ and define $T_0, T_1, T_2, S_0, S_1, S_2$ as our analysis of $\Delta_n^{(1)}$, but with the new $A_{i,j}$. Then we have 
\begin{equation}\label{eq:snt}
    (S_n')^2=T_2-2T_1+T_0.
\end{equation}

For $S_0, S_1, S_2$, it is straightforward that 
\begin{equation}\label{eq:s012}
    S_0 = 0, S_1 = \var[h(\by_1)], S_2 = \var[\tilde{F}(\by_1 \wedge \by_2)].
\end{equation}
We next consider $\E[T_0 | M]$. Apparently we still have
\[
\E[T_0 | M] = \frac{2}{n-1}S_2+\frac{4(n-2)}{n-1}S_1+\frac{(n-2)(n-3)}{n-1}S_0,
\]
since $T_0 \independent M$. For $\E[T_1 | M]$, using again that $\by_i$'s and $M$ are independent, and all the $\by_i$'s are i.i.d.,
\[
\E[T_1 | M] = \E\left[\frac{1}{(n - 1)} A_{1, 2} \left( \sum_{i \neq j} A_{i,j}\right)\right] = \E[T_0 | M].
\]
Thus, the only challenge is to understand $\E[T_2 | M]$. Let $\rho := [n] \to [n]$ be a completely random permutation that is independent of other randomness, and let $M' := \rho \circ M \circ \rho^{-1}$. This means that if $M$ is a mapping that takes $\by_j$ as the nearest neighbor of $\by_i$, then $M'$ takes $\by_{\rho(j)}$ as the nearest neighbor of $\by_{\rho(i)}$. Also, we define $\tilde{T}_2$ in the same way as before, but with $M$ replaced by $M'$. Then using the exchangeability of $\by_i$'s, 
\[
\E[T_2 | M] = \E[\tilde{T}_2 | M].
\]

We now investigate the distribution of $M'$ given $M$. Using the completely at random property of $\rho$, we have for any $i \neq j \neq k \neq l$, by defining
\begin{equation}
\label{def:hatab}
    \begin{aligned}
\hat{b}_n & := \frac{1}{n(n-1)} \sum_{i \neq j} \one\{M(i) = M(j)\}; \\
\hat{a}_n & := \frac{1}{n (n - 1)} \sum_{i \neq j} \one\{M(i) = j, M(j) = i\},
\end{aligned}
\end{equation}
it follows from the proof of~\Cref{lem:jointdis} that
\begin{align*}
\pr(M'(i) = k, M'(j) = k \mid M) & = \frac{\hat{b}_n}{n - 2};\\
\pr(M'(i) = j, M'(j) = i \mid M) & = \hat{a}_n; \\
\pr(M'(i) = j, M'(j) = k \mid M) &= \pr(M'(i) = k, M'(j) = i \mid M) = \frac{1 - \hat{a}_n (n - 1)}{(n - 1) ( n - 2)};\\
\pr(M'(i) = k, M'(j) = l \mid M) &= \frac{1 + \hat{a}_n - \hat{b}_n}{(n - 2)(n - 3)} - \frac{2}{(n - 1)(n - 2)(n - 3)}.
\end{align*}

Then it follows from the same analysis as our calculation of the term ``$\E[T_2]$'' in the analysis of $\Delta_n^{(1)}$ that
\begin{align*}
\E[T_2 \mid M] = \E[\tilde{T}_2 \mid M] = &  (1+(n-1)\hat{a}_n)S_2 + (2+(n-1)\hat{b}_n-2(n-1)\hat{a}_n)S_1 \\
&\qquad + (n-3+(n-1)\hat{a}_n-(n-1)\hat{b}_n)S_0.
\end{align*}

Putting together, noticing $\E[S_n' \mid M] = 0$ and recalling~\eqref{eq:snt},~\eqref{eq:s012}, we obtain
\[
\begin{aligned}
    \var(S_n' \mid M) & = \left(1+(n-1)\hat{a}_n-\frac{2}{n-1}\right) \var(\tilde{F}(\by_1 \wedge \by_2)) \\
    &\qquad + \left(\frac{4}{n-1}-2+(n-1)\hat{b}_n-2(n-1)\hat{a}_n\right)\var(h(\by_1)) \\
    & = \left(1+(n-1)\hat{a}_n-\frac{2}{n-1}\right) \Gamma_1 + (n - 1) \hat{b}_n \Gamma_2,
\end{aligned}
\]
which is at least $\Gamma_1 /2$ for $n \ge 5$.

Now for the second result in~\eqref{eq:sns1}, since we have already shown $\var[\Delta_n^{(3)}] \to 0$, we only need to show $(n - 1)\hat{a}_n - (n - 1)\hat{b}_n$ and $(n - 1) a_n - (n - 1) b_n$ converge to zero almost surely.

We first consider an equivalent characterization of the generating process of $M$. For each $i$, we associate it with a $\rho_i$, which is a uniformly at random permutation of the sequence $(1, \ldots, n)$. By denoting $\mathcal{N}(i)$ as the set of nearest neighbors of $\bz_i$, i.e.,
\[
\mathcal{N}(i) := \{j \neq i: \|\bz_j - \bz_i\| = \min_{k \neq i} \|\bz_k - \bz_i\|\},
\]
we can express
\[
M(i) := \underset{j \in \mathcal{N}(i)}{\argmin} \rho_i(j).
\]
In other words, $\hat{a}_n$ and $\hat{b}_n$ can be expressed as deterministic functions of $\bz_i$'s and $\rho_i$'s. We now discuss the change of the two terms when there is a single $\bz_i$ or $\rho_i$ replaced by a $\bz_i'$ or a $\rho_i'$. Below with a slight abuse of notation, we redefine $M'$ as the $M$ after this replacement.

\begin{enumerate}
    \item Change of $n(n - 1) \hat{a}_n$ and $n(n - 1) \hat{b}_n$ if $\rho_i$ is replaced by a $\rho_i'$: Apparently, only $M(i)$ is affected through this replacement. For $n(n - 1) \hat{a}_n$, we apply the decomposition
    \begin{align*}
    n(n - 1) \hat{a}_n = & \sum_{j: j \neq i} \one\{M(i) = j, M(j) = i\} + \sum_{j: j \neq i} \one\{M(j) = i, M(i) = j\} \\
    & \quad + \sum_{j, \ell: \; j, \ell \neq i \;\&\; j \neq \ell} \one\{M(j) = \ell, M(\ell) = j\}.
    \end{align*}
    Apparently, the replacement of $\rho_i$ only affects the first two terms, whose magnitudes are not larger than $1$. So in total, the replacement of $\rho_i$ alters $n(n - 1) \hat{a}_n$ by at most $2$. For $n(n - 1) \hat{b}_n$, analogously,
    \[
    n(n - 1) \hat{b}_n = \sum_{j: j \neq i} \one\{M(i) = M(j)\} + \sum_{j: j \neq i} \one\{M(j) = M(i)\} + \sum_{j, \ell: \; j, \ell \neq i \;\&\; j \neq \ell} \one\{M(j) = M(\ell)\}.
    \]
    Again, the replacement of $\rho_i$ only affects the first two terms. Since now the magnitudes of the two terms are no greater than $d_{\max}(M)$, we easily have that this replacement alters $n(n - 1) \hat{b}_n$ by at most $2 (d_{\max}(M) + d_{\max}(M'))$.
    
    \item  Change of $n (n - 1) \hat{a}_n$ if $\bz_i$ is replaced by a $\bz_i'$: first, we prove the claim: 
    \begin{equation}\label{eq:anzchange}
        \forall k \neq i, M(k) \neq M'(k) \Rightarrow M(k) = i \;\mathrm{or}\; M'(k) = i.
    \end{equation}
    Suppose in contradiction they are both not equal to $i$. Recall that $\mathcal{N}(k)$ can be re-expressed as 
    \[
    \mathcal{N}(k) := \{j \neq k: \|\bz_j - \bz_k\| = D(k)\} \quad\mathrm{where}\quad D(k) := \min_{j \neq k} \|\bz_j - \bz_k\|.
    \]
    Write $\mathcal{N}', D'$ as the corresponding terms when $\bz_i$ is replaced by a $\bz_i'$. If $ \Vert \bz_i'-\bz_k \Vert < D(k) $, then $ \mathcal{N}'(k)=\{i\}$, which further implies $ M'(k)=i $, contradicting with our assumption. So $ \Vert \bz_i'-\bz_k \Vert \geq D(k) $. The definitions of $ D,D' $ give $ D'(k) \geq D(k) $. Analogously, we also get $ D(k) \geq D'(k) $. Hence, $ D(k) = D'(k) $. Now, for all $ j \neq i,k $, we know $ \Vert \bz_j-\bz_k \Vert = D(k) $ if and only if $ \Vert \bz_j-\bz_k \Vert = D'(k) $, i.e. $ j \in \mathcal{N}(k) $ if and only if $ j \in \mathcal{N}'(k) $. Hence we have that either $\mathcal{N}(k)$ and $\mathcal{N}'(k)$ are equal, or they differ only by an element $i$. Apparently they cannot be equal, as otherwise $M(k) = M'(k)$. For the other case, we assume with loss of generality $i \in \mathcal{N}(k)$ while $i \not\in \mathcal{N}'(k)$. Writing $\ell := M(k)$, then apparently $\ell \in \mathcal{N}'(k)$; moreover, $\rho_k(\ell)$ is the smallest among all the $\rho_k(\ell')$'s for $ \ell' \in \mathcal{N}'(k)$. This means $M'(k) = \ell$, which raises a contradiction. This proves~\eqref{eq:anzchange}. 
    
    Through~\eqref{eq:anzchange}, the number $k$'s with $M(k) \neq M'(k)$ is at most $d_{\max}(M) + d_{\max}(M') + 1$. Then by our analysis of $n (n - 1) \hat{a}_n$ in Point 1, the change of $\bz_i$ alters $ n(n-1)\hat{a}_n $ by at most $2 (d_{\max}(M) + d_{\max}(M') + 1) \le 4 (d_{\max}(M) + d_{\max}(M'))$.

    \item Change of $n (n - 1) \hat{b}_n$ if $\bz_i$ is replaced by a $\bz_i'$: as discussed in Point 2, this replacement causes at most $4 (d_{\max}(M) + d_{\max}(M'))$ switches of $M$'s output. By our analysis of $n (n - 1) \hat{b}_n$ in Point 1, each such change alters $n (n - 1) \hat{b}_n$ by at most $2 (d_{\max}(M) + d_{\max}(M'))$. Putting together, the total change is bounded by $8(d_{\max}(M) + d_{\max}(M'))^2$.
\end{enumerate}

Write $\bz_1', \ldots, \bz_n'$ as an i.i.d. copy of $\bz_1, \ldots, \bz_n$ and $\rho_1', \ldots, \rho_n'$ as an i.i.d. copy $\rho_1, \ldots, \rho_n$. Write $\hat{a}_{n, i}', \hat{b}_{n, i}'$ as the $\hat{a}_n, \hat{b}_n$ if we replace $\bz_i$ with $\bz_i'$ and define $\hat{a}_{n, i}'', \hat{b}_{n, i}''$ analogously when we replace $\rho_i$ with $\rho_i'$. Then using a Generalized Efron-Stein Inequality established by~\citet[Theorem~2]{boucheron2005moment}, we have for some universal constant $C$ that varies from line to line,
\begin{align*}
    & \E |(n - 1) \hat{a}_n - (n - 1) \E[\hat{a}_n]|^4 \\
    & \quad \le C \E\left( \sum_{i = 1}^n \E[((n - 1) \hat{a}_{n,i}' - (n - 1) \hat{a}_n)^2 + ((n - 1) \hat{a}_{n,i}'' - (n - 1) \hat{a}_n)^2 \mid \bz_1, \ldots, \bz_n, \rho_1, \ldots, \rho_n]\right)^2.
\end{align*}
Using Points 1-3 discussed above and a Jensen's inequality, we further have
\begin{align*}
    \E |(n - 1) \hat{a}_n - (n - 1) \E[\hat{a}_n]|^4 & \le C \E \left( \E\left[\frac{d_{\max}(M)^2 + d_{\max}(M')^2}{n} \mid \bz_1, \ldots, \bz_n, \rho_1, \ldots, \rho_n\right]\right)^2 \\
    & \le C \E \left(\frac{d_{\max}(M)^2 + d_{\max}(M')^2}{n}\right)^2.
\end{align*}
Applying~\Cref{lem:maxdegree}, and the fact that $M$ and $M'$ are equal in distribution, we have 
\[
n^{3/2} \E |(n - 1) \hat{a}_n - (n - 1) \E[\hat{a}_n]|^4 \to 0.
\]
With this, the Borel-Cantelli lemma, and that $(n - 1) \E[\hat{a}_n] = (n - 1) a_n$, we have 
\begin{equation}\label{eq:anconv}
 (n - 1) \hat{a}_n - (n - 1) a_n \asconv 0.
\end{equation}
Applying again above Points 1-3, using an analogous argument, we may prove that 
\begin{equation}\label{eq:bnconv}
(n - 1) \hat{b}_n - (n - 1) b_n \asconv 0.    
\end{equation} 
In light of our analysis of $(n - 1) \hat{a}_n$, $(n - 1) \hat{b}_n$, we finish our proof of the second result in~\eqref{eq:sns1}.

Armed with the first result of~\eqref{eq:sns1}, we are now in a position to prove:
\begin{equation}\label{eq:sns2}
    \E\sup_{z}\left|\pr\left(\frac{S_n}{\sqrt{\var(S_n \mid M)}} \le z \mid M\right) - \Phi(z)\right|\to 0
\end{equation}

To prove~\eqref{eq:sns2}, we first recall~\citet[Theorem~2.7]{chen2004normal}: Consider $n$ random variables $U_1, \ldots, U_n$ whose dependency structure can be described by a dependency graph $G:=([n],\mathcal{E})$. Namely, for any disjoints sets $\mathcal{V}_1, \mathcal{V}_2 \subseteq [n]$, if for any $i \in \mathcal{V}_1, j \in \mathcal{V}_2$, the undirected edge $(i,j) \notin \mathcal{E}$, then the two sets of random variables $\{U_i, i\in \mathcal{V}_1\}$ and $\{U_i,i\in \mathcal{V}_2\}$ are independent. Assuming further that $\E[U_i] = 0$, $\E|U_i|^p \le \theta^p$ for some $p \in (2, 3]$, and writing $W := \sum_{i=1}^n U_i$, $\E[W^2] = 1$, we have
$$\sup_z \vert \pr(W\leq z)-\Phi(z)\vert \le 75 d(\mathcal{E})^{5(p-1)}n \theta^p,$$
where $d(\mathcal{E})$ is the maximal degree of $\mathcal{E}$.

Back to the proof of~\eqref{eq:sns2}. We may set $p = 3$,
\[
U_i := \frac{\frac{1}{\sqrt{n}}(\tilde{F}(\by_i \wedge \by_{M(i)})-2h(\by_i)+\theta)}{\sqrt{\var(S_n|M)}},
\]
and the edge set
$$\mathcal{E} :=\{\{i,M(i)\}: i \in [n]\} \cup \{\{j,k\}: M(j)=M(k), j,k \in [n]\}.$$

Then one can verify that $\E[U_i \mid M] = 0$, $\E[W^2 \mid M]= 1$, and from the first result of~\eqref{eq:sns1}, almost surely, $\E[|U_i|^3 \mid M] \le (2 / \sqrt{\Gamma_1 / (2n)})^3$. In addition, $\mathcal{E}$ is consistent with the dependency structure of $U_i$'s conditional on $M$. This allows us to apply~\citet[Theorem~2.7]{chen2004normal} to get that for some universal constant $C > 0$ that varies from line to line, almost surely,
$$\sup_z \left| \pr\left[ \frac{S_n}{\sqrt{\var[S_n|M]}} \leq z \mid M\right] - \Phi(z) \right|
\le C d(\mathcal{E})^{10} n \left(\frac{2}{\sqrt{\frac{\Gamma_1}{2} n}}\right)^3
\le \frac{C d_{\max}(M)^{10}}{\Gamma_1^{3/2} \sqrt{n}}.$$
Here the last inequality follows from
\[
    d(\mathcal{E}) \leq \max_k \left\{\sum_{j \neq k}\one\{M(j)=k\}+\sum_{j \neq k} \one\{M(j)=M(k)\} + 1\right\} \le 2 d_{\max}(M) + 1 \le 3 d_{\max}(M).
\]

Taking expectation on both sides and using~\Cref{lem:maxdegree}, we obtain
$$\E\sup_z \left| \pr\left( \frac{S_n}{\sqrt{\var[S_n|M]}} \leq z \mid M\right) - \Phi(z) \right|
\le \frac{C}{\Gamma_1^{3/2} \sqrt{n}} \E d_{\max}(M)^{10} \to 0,$$
which finishes the proof of~\eqref{eq:sns2}. 

In light of~\eqref{eq:sns2}, we can show that
\begin{align*}
& \sup_{z} \left|\pr\left(\frac{S_n}{\sqrt{\var(S_n \mid M)}} \le z \right) - \Phi(z)\right| = 
\sup_{z} \left|\E\left[\pr\left(\frac{S_n}{\sqrt{\var(S_n \mid M)}} \le z \mid M \right)\right] - \Phi(z)\right| \\
& \qquad \le
\E \sup_{z} \left|\pr\left(\frac{S_n}{\sqrt{\var(S_n \mid M)}} \le z \mid M \right) - \Phi(z)\right| \to 0,
\end{align*}
which proves that $\frac{S_n}{\sqrt{\var(S_n \mid M)}}$ converges in distribution to a standard normal random variable. Then as a direct consequence of the second result of~\eqref{eq:sns1} (which, together with $\liminf \tilde{\sigma}_n > 0$, yields $\sqrt{\var(S_n \mid M)} / \tilde{\sigma}_n \to 1$ a.s.) and Slutsky's theorem, we obtain the desired result.

\subsection{Proof of~\Cref{prop:estvar}}\label{sec:varest}

In this subsection, we prove~\Cref{prop:estvar}. First, recall $\tilde{F}_n(\bsy)$ and $G_n(\bsy)$ defined in~\eqref{def:tildeF} and~\eqref{def:G_n}, and $\hat{a}_n$ and $\hat{b}_n$ defined in~\eqref{def:hatab}. We may use these terms to reformulate $\hat{\Gamma}_1, \hat{\Gamma}_2, \hat{\sigma}_n^2$ as
$$\begin{aligned}
        \hat{\Gamma}_1&:= \frac{1}{n-1}\sum_{i = 1}^{n - 1}\left(\tilde{F}_n(\by_i \wedge \by_{i + 1})\right)^2-\left(\frac{1}{n-1}\sum_{i = 1}^{n - 1}\tilde{F}_n(\by_i \wedge \by_{i + 1})\right)^2-2\hat{\Gamma}_2;\\
        \hat{\Gamma}_2&:=\frac{1}{n - 2}\sum_{i = 1}^{n - 2}\tilde{F}_n(\by_i \wedge \by_{i + 1})\tilde{F}_n(\by_i \wedge \by_{i + 2})- \left(\frac{1}{n - 1}\sum_{i = 1}^{n - 1}\tilde{F}_n(\by_i \wedge \by_{i + 1})\right)^2,
    \end{aligned}$$
and
$$\hat{\sigma}_n^2:=\frac{\hat{\Gamma}_1(1+(n - 1)\hat{a}_n)+\hat{\Gamma}_2(n - 1)\hat{b}_n}{\left( \frac{1}{n} \sum_{k=1}^n (G_n(\tilde{\by}_k))(1-G_n(\tilde{\by}_{k}))\right)^2}.$$
The other reformulations are straightforward. For terms concerning $\hat{b}_n$, this is because
\[
\begin{aligned}
(n - 1) n \hat{b}_n & = \sum_{k = 1}^n \sum_{i \neq j} \one\{M(i) = M(j) = k\} = \sum_{k = 1}^n |\{(i,j): i \neq j \;\&\; i, j \in M^{-1}(k)\}|\\
& = \sum_{k = 1}^n |M^{-1}(k)|(|M^{-1}(k)| - 1).
\end{aligned}
\]

Our proof is based on these reformulations:
\begin{proof}
According to the proof of~\Cref{thm:multiacest},~\eqref{eq:anconv} and~\eqref{eq:bnconv}, we only need to prove that $\hat{\Gamma}_1, \hat{\Gamma}_2$ converge to $\Gamma_1, \Gamma_2$ almost surely. To prove this, we only need to prove the following three convergence results:
\begin{align*}
    & \frac{1}{n-1}\sum_{i = 1}^{n - 1}\left(\tilde{F}_n(\by_i \wedge \by_{i + 1})\right)^2 \asconv \E[\tilde{F}(\by_1 \wedge \by_2)^2], \; \frac{1}{n-1}\sum_{i = 1}^{n - 1}\tilde{F}_n(\by_i \wedge \by_{i + 1}) \asconv \E[\tilde{F}(\by_1 \wedge \by_2)],\\
    & \frac{1}{n - 2}\sum_{i = 1}^{n - 2}\tilde{F}_n(\by_i \wedge \by_{i + 1})\tilde{F}_n(\by_i \wedge \by_{i + 2}) \asconv \E[\tilde{F}(\by_1 \wedge \by_2) \tilde{F}(\by_1 \wedge \by_3)].
\end{align*}
Without loss of generality, we just prove the first one, and the other two can be derived via analogous arguments. Applying \Cref{lem:permcdf}, we have
\[
\frac{1}{n-1}\sum_{i = 1}^{n - 1}\left(\tilde{F}_n(\by_i \wedge \by_{i + 1})\right)^2 - \frac{1}{n-1}\sum_{i = 1}^{n - 1} \left(\tilde{F}(\by_i \wedge \by_{i + 1})\right)^2 \asconv 0.
\]
Then as a direct consequence of bounded difference inequality and Borel-Cantelli Lemma, we have
\[
\frac{1}{n-1}\sum_{i = 1}^{n - 1} \left(\tilde{F}(\by_i \wedge \by_{i + 1})\right)^2 \asconv \E \tilde{F}(\by_1 \wedge \by_2)^2,
\]
which proves the desired result.
\end{proof}

\bibliographystyle{plainnat}
\bibliography{reference}

\newpage
\appendix

\begin{center}
    {\huge Supplementary material for ``A multivariate extension of Azadkia-Chatterjee's rank coefficient''}
\end{center}

\section{Proof of additional results in~\Cref{sec:normal}}

\subsection{Proof of~\Cref{thm:limit}}

\Cref{thm:limit} is a direct consequence of~\Cref{lem:limit}. In this proof, we write $C(\dz)$ as a constant depending only on $\dz$ that may vary from line to line, and write $M^{-1}(i) := \{j: M(j) = i\}$.

\begin{lemma}\label{lem:nngmm} 
Consider a $\bz$ with distribution $\mu_{\bz}$. Let $S_D := \{\bsz \in \R^{\dz}: \mu_{\bz}(\{\bsz\})>0\}, \eta := \mu_{\bz}(S_D)$ and $N := \sum_{j=2}^n \one\{\bz_j = \bz_1\}$. Then, we have the following results:
\begin{align}
& \pr(\bz_1 \in S_D, N=0) \to 0 \quad\&\quad \E\left[\frac{1}{N} \one\{\bz_1 \in S_D, N \geq 1\}\right] \to 0, \label{eq:nngmmn} \\
& \E[|M^{-1}(1)| \one\{ \bz_1 \in S_D\}] \to \eta \quad\&\quad \E(|M^{-1}(1)|^2 \one\{\bz_1 \in S_D\}) \to 2 \eta, \label{eq:nngmmminv} \\
& \pr(\bz_1 \in S_D, \bz_{M(1)} \in S_D^c) \to 0 \quad\&\quad \pr(\bz_1 \in S_D^c, \bz_{M(1)} \in S_D) \to 0. \label{eq:nngmmnncd}
\end{align}
\end{lemma}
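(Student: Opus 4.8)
The plan is to prove the three displayed lines in the order \eqref{eq:nngmmn} $\to$ \eqref{eq:nngmmminv} $\to$ \eqref{eq:nngmmnncd}, since the earlier ones feed into the later ones, and to treat the second half of \eqref{eq:nngmmnncd} separately as the only genuinely delicate statement. For \eqref{eq:nngmmn}, conditioning on $\bz_1=\bsz\in S_D$ with mass $p_{\bsz}:=\mu_{\bz}(\{\bsz\})$ makes $N\sim\mathrm{Binom}(n-1,p_{\bsz})$, so $\pr(\bz_1\in S_D,N=0)=\sum_{\bsz\in S_D}p_{\bsz}(1-p_{\bsz})^{n-1}$ and $\E[\tfrac1N\one\{\bz_1\in S_D,N\ge1\}]=\sum_{\bsz\in S_D}p_{\bsz}\,\E[\tfrac1N\one\{N\ge1\}\mid\bz_1=\bsz]$; in each series every summand tends to $0$ (for the second because $\tfrac1N\one\{N\ge1\}\le1$ and $N\to\infty$ a.s.) and is dominated by $p_{\bsz}$, which sums to $\eta<\infty$, so dominated convergence over the countable set $S_D$ finishes it. The first half of \eqref{eq:nngmmnncd} is then immediate: if $\bz_1\in S_D$ and $N\ge1$, then $\bz_1$ has a copy at distance $0$, so $\mathcal{N}(1)$ consists only of copies of $\bz_1$ and $\bz_{M(1)}$ lies at that same atom; hence $\{\bz_1\in S_D,\ \bz_{M(1)}\in S_D^c\}\subseteq\{\bz_1\in S_D,\ N=0\}$.

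For \eqref{eq:nngmmminv} I would split $|M^{-1}(1)|=A+B$ with $A:=\#\{j:\bz_j=\bz_1,\ M(j)=1\}$ and $B:=\#\{j:\bz_j\ne\bz_1,\ M(j)=1\}$. On $\{\bz_1=\bsz\in S_D,\ N\ge1\}$, each of the $N$ indices $j$ with $\bz_j=\bsz$ has $\mathcal{N}(j)$ equal to the $N$ other copies of $\bsz$ and picks $1$ with probability $1/N$, independently across such $j$ (independent tie-breaking permutations), so $A\mid(\bz_1,N)\sim\mathrm{Binom}(N,1/N)$; hence $\E[A\mid\bz_1,N]=\one\{N\ge1\}$ and $\E[A^2\mid\bz_1,N]=(2-1/N)\one\{N\ge1\}$. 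Integrating and using \eqref{eq:nngmmn} gives $\E[A\one\{\bz_1\in S_D\}]\to\eta$ and $\E[A^2\one\{\bz_1\in S_D\}]\to2\eta$. For $B$, by the cone-covering argument behind~\citet[Lemma~11.4]{azadkia2021simple} one has $B\le C(\dz)$ always, and every $j$ contributing to $B$ has $\bz_1\in\mathcal{N}(j)$, hence (all copies of $\bz_1$ are equidistant from $\bz_j$) $|\mathcal{N}(j)|\ge N+1$; thus $\E[B\mid\text{config}]\le C(\dz)/(N+1)$ when $N\ge1$ and $\le C(\dz)$ when $N=0$, and with \eqref{eq:nngmmn} this yields $\E[B\one\{\bz_1\in S_D\}]\to0$, whence also $\E[B^2\one\{\bz_1\in S_D\}]\le C(\dz)\E[B\one\{\bz_1\in S_D\}]\to0$. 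Combining, $\E[|M^{-1}(1)|\one\{\bz_1\in S_D\}]\to\eta$, and by Cauchy--Schwarz on the cross term $\E[|M^{-1}(1)|^2\one\{\bz_1\in S_D\}]=\E[A^2\one]+2\E[AB\one]+\E[B^2\one]\to2\eta$.

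The hard statement is $\pr(\bz_1\in S_D^c,\ \bz_{M(1)}\in S_D)\to0$. Assume $\eta<1$ (else the probability is $0$) and write $\mu_{\bz}=(1-\eta)\nu+\eta\,\mu_{\bz,d}$ with $\nu$ the normalized non-atomic part and $\mu_{\bz,d}$ the normalized atomic part, so $\mu_{\bz,d}\perp\nu$. Conditionally on $\bz_1\in S_D^c$ we have $\bz_1\sim\nu$, and among $\bz_2,\dots,\bz_n$ the non-atom points are i.i.d.\ $\nu$ ($\approx(1-\eta)n\to\infty$ of them), independent of the remaining $K\approx\eta n$ atom points. If $\bz_{M(1)}$ sits at an atom, that atom is sampled and lies within $D^c:=$ the nearest-neighbour distance from $\bz_1$ among the sampled non-atom points; since $D^c$ depends only on the non-atom points, conditioning on $\bz_1=\bsz$, on $D^c=r$, and on $K$ gives
\[ \pr(\bz_{M(1)}\in S_D\mid\bz_1=\bsz,\ D^c=r,\ K)\ \le\ 1-(1-\mu_{\bz, d}(\bar B(\bsz,r)))^{K}\ \le\ \min\big(1,\ K\,\mu_{\bz, d}(\bar B(\bsz,r))\big). \]
I would then factor $K\,\mu_{\bz, d}(\bar B(\bz_1,D^c))=\tfrac{K}{\eta n}\cdot\tfrac{\mu_{\bz, d}(\bar B(\bz_1,D^c))}{\nu(\bar B(\bz_1,D^c))}\cdot\eta\,n\,\nu(\bar B(\bz_1,D^c))$: the first factor $\to1$ a.s.; $D^c\to0$ a.s.\ (Borel--Cantelli, as $\bz_1\in\mathrm{supp}\,\nu$ a.s.), so by the Besicovitch differentiation theorem together with $\mu_{\bz, d}\perp\nu$ the middle ratio $\to0$ a.s.; and $n\,\nu(\bar B(\bz_1,D^c))$ is stochastically bounded by the standard rescaled nearest-neighbour-volume estimate $\pr(\nu(\bar B(\bz_1,D^c))>t/n)\le Ce^{-ct}$. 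Hence the displayed bound $\to0$ in probability, and being bounded by $1$ its expectation $\to0$ by the bounded convergence theorem, which gives the claim; a few measure-zero boundary issues ($\nu$-mass sitting exactly on nearest-neighbour spheres) are absorbed by noting that only countably many radii around a fixed point carry positive $\nu$-mass.

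The expected main obstacle is precisely this last estimate. A bound pointwise in $\bz_1$ is \emph{false} — when atoms accumulate near a point, its nearest neighbour can be an atom with probability bounded away from $0$ — so the $\bz_1$-expectation cannot be pulled inside, and the mechanism that rescues the statement is the mutual singularity of the atomic and non-atomic parts (exploited via Besicovitch differentiation) together with the tightness of rescaled nearest-neighbour volumes. Everything else reduces to binomial moment computations, the cone-covering bound already available from~\citet{azadkia2021simple}, and dominated convergence over the countable set of atoms.
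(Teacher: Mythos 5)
Your treatment of \eqref{eq:nngmmn}, \eqref{eq:nngmmminv}, and the first half of \eqref{eq:nngmmnncd} matches the paper's proof essentially step for step: dominated convergence over the countable atom set after conditioning on $\bz_1$, the split of $M^{-1}(1)$ into same-atom indices (binomial $(N,1/N)$, giving moments $1$ and $2-1/N$) and distinct-value indices (bounded by the cone constant $C(\dz)$ from \citet[Lemma~11.4]{azadkia2021simple} and by $C(\dz)/N$ in expectation), and the inclusion $\{\bz_1\in S_D,\ \bz_{M(1)}\in S_D^c\}\subseteq\{\bz_1\in S_D,\ N=0\}$. Those parts are correct and need no further comment.

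Where you genuinely diverge is the second half of \eqref{eq:nngmmnncd}. The paper proves $\pr(\bz_1\in S_D^c,\ \bz_{M(1)}\in S_D)\to 0$ with a two-line exchangeability identity: $\pr(\bz_{M(1)}\in S_D)=(n-1)\pr(M(1)=2,\bz_2\in S_D)=\E[|M^{-1}(2)|\one\{\bz_2\in S_D\}]\to\eta$ by \eqref{eq:nngmmminv}, while $\pr(\bz_1\in S_D,\bz_{M(1)}\in S_D)=\pr(\bz_1\in S_D)-\pr(\bz_1\in S_D,\bz_{M(1)}\in S_D^c)\to\eta$ by the first half; subtracting gives $0$. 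In other words, the "hard" direction is obtained for free from the easy direction plus the in/out-degree duality of the nearest-neighbour map, with no geometry of the non-atomic part entering at all. Your route instead attacks the event directly via thinning into atomic and non-atomic samples, Besicovitch differentiation of $\mu_{\bz,d}$ against the non-atomic part $\nu$ (using $\mu_{\bz,d}\perp\nu$), and tightness of the rescaled nearest-neighbour mass $n\,\nu(\bar B(\bz_1,D^c))$. I believe this can be made rigorous, and your diagnosis that the mutual singularity is the essential mechanism is insightful; but it is substantially heavier, and the one step that would need real work is the tightness claim with exponential tail $Ce^{-ct}$ for a completely general non-atomic $\nu$ — the map $r\mapsto\nu(\bar B(\bsz,r))$ can jump (spheres of positive mass) and be flat, so the standard inversion $\pr(\nu(\bar B(\bz_1,D^c))>t/n)\le(1-t/n)^{m-1}$ does not hold verbatim and the "countably many bad radii" remark does not by itself dispose of the case where the nearest non-atom point sits exactly on a heavy sphere (what one actually needs, and what suffices, is the one-sided bound with the \emph{open} ball, which costs an extra additive sphere-mass term that must then be shown negligible). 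Given that the paper's identity sidesteps all of this, I would recommend adopting it; your argument is a valid alternative but trades a counting identity for Besicovitch differentiation plus a nontrivial tightness lemma.
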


\begin{proof}
We first prove~\eqref{eq:nngmmn}. Fix any $\bsz \in S_D$, by definition $\mu_{\bz}(\{\bsz\}) > 0$. Conditioning on $\bz_1=\bsz$, the random variables $\one\{\bz_j=\bsz\}, j=2,\ldots,n$ follow i.i.d. $\mathrm{Bernoulli}(\mu_{\bz}(\{\bsz\}))$, so from the strong law of large numbers we know $\frac{1}{n-1} N = \frac{1}{n-1} \sum_{j=2}^n \one\{\bz_j=\bsz\} \asconv \mu_{\bz}(\{\bsz\}) > 0$. This implies $N \asconv \infty$ and hence gives $\one\{N = 0\} \asconv 0$, $\one\{N \geq 1\} / N \asconv 0$, conditioned on $\bz_1=\bsz$. Notice that we also have $\one\{N = 0\}, \one\{N \geq 1\} / N \leq 1$ conditioned on $\bz_1=\bsz$, so by the dominated convergence theorem,
\[
\begin{aligned}
& \pr(N = 0 \mid \bz_1 = \bsz) = \E(\one\{N = 0\} \mid \bz_1 = \bsz) \to 0, \\
& \E(\one\{ N \geq 1\} / N \mid \bz_1 = \bsz) = \E(\one\{N \geq 1\} / N \mid \bz_1 = \bsz) \to 0.
\end{aligned}
\]
Moreover, since $\pr(N=0 \mid \bz_1=\bsz), \E(\one\{ N \geq 1\} / N \mid \bz_1=\bsz) \leq 1$ for any $\bsz \in S_D$, applying again the dominated convergence theorem yields
\[
\begin{aligned}
& \pr(N=0, \bz_1 \in S_D) = \int_{\bsz \in S_D} \pr(N = 0 \mid \bz_1 = \bsz) d\mu_{\bz}(\bsz) \to 0, \\
& \E(\one\{N \geq 1, \bz_1 \in S_D\} / N) = \int_{\bsz \in S_D} \E(\one\{N \geq 1\} / N \mid \bz_1 = \bsz) d\mu_{\bz}(\bsz) \to 0,
\end{aligned}
\]
which proves the desired result.

    Next, we prove~\eqref{eq:nngmmminv}. We apply the decomposition
    \[
    M^{-1}(1) = \underset{=: \cI_1}{\underbrace{\{i: \bz_i \neq \bz_1 \;\&\; M(i)\ = 1\}}} \cup \underset{=: \cI_2}{\underbrace{\{i: \bz_i = \bz_1 \;\&\; M(i)\ = 1\}}},
    \]
    where the two sets $\cI_1$ and $\cI_2$ are disjoint.
    For $\cI_1$, we have
    \[
    \E[|\cI_1| \one\{\bz_1 \in S_D\}] = \E[|\cI_1| \one\{\bz_1 \in S_D, N = 0\}] + \E[|\cI_1| \one\{\bz_1 \in S_D, N \ge 1\}].
    \]
    From~\citet[Lemma~11.4]{azadkia2021simple}, $|\cI_1| \leq C(\dz)$. Then using the first part of~\eqref{eq:nngmmn}, the first term can be controlled by $C(\dz) \pr(\bz_1 \in S_D, N = 0) \to 0$, so we just need to focus on the second term. Following the random tie-breaking mechanism, we have that by conditioning on a $\{\bz_i\}_{i = 1}^n$ satisfying that $N \ge 1$, if there exists a $\bz_j \neq \bz_1$ while $1 \in \{i: \|\bz_i - \bz_j\| = \min_{i' \neq j} \|\bz_{i'} - \bz_j\|\}$, then the probability that $M(j) = 1$ is at most $1 / (N + 1)$. From~\citet[Lemma~11.4]{azadkia2021simple} we have that there are at most $C(\dz)$ such $\bz_j$. In light of both, and the second part of~\eqref{eq:nngmmn}, we have
    \[
    \E[|\cI_1| \one\{\bz_1 \in S_D, N \ge 1\}] \le \E\left[\frac{C(\dz)}{N} \one\{\bz_1 \in S_D, N \geq 1\}\right] \to 0.
    \]
    Putting together yields $\E[|\cI_1| \one\{\bz_1 \in S_D\}] \to 0$. Moreover, since $|\cI_1| \leq C(\dz)$, we also have
    \begin{equation}\label{eq:i1sq}
    \E[|\cI_1|^2 \one\{\bz_1 \in S_D\}] \leq C(\dz) \E[|\cI_1| \one\{\bz_1 \in S_D\}] \to 0.
    \end{equation}

    For $\cI_2$, conditioning on a $\{\bz_i\}_{i=1}^n$ satisfying that $\bz_1 \in S_D$ and $N \ge 1$, $|\cI_2|$ follows a Binomial distribution with parameters $(N, 1 / N)$. Then we have $\E[|\cI_2| \mid \{\bz_i\}_{i=1}^n] = 1$ and $\E[|\cI_2|^2 \mid \{\bz_i\}_{i=1}^n] = 2 - 1 / N$. If $N = 0$, then $\cI_2$ must be an empty set. Therefore, using~\eqref{eq:nngmmn} we have
    \begin{align*}
    \E[|\cI_2| \one\{\bz_1 \in S_D\}] & = \E[|\cI_2| \one\{\bz_1 \in S_D, N \ge 1\}] = \pr(\bz_1 \in S_D, N \ge 1) \to \pr(\bz_1 \in S_D) = \eta, \\
    \E[|\cI_2|^2 \one\{\bz_1 \in S_D\}] & = \E[|\cI_2|^2 \one\{\bz_1 \in S_D, N \ge 1\}] = 2 \pr(\bz_1 \in S_D, N \ge 1)  - \E\left[\frac{1}{N} \one\{\bz_1 \in S_D, N \ge 1\}\right] \\
    & \to 2 \pr(\bz_1 \in S_D) = 2 \eta.
    \end{align*}
    Moreover, using Cauchy-Schwartz inequality with~\eqref{eq:i1sq} and the above limit, we have
    \[
    \E[|\cI_1| \cdot |\cI_2| \one\{\bz_1 \in S_D\}] \leq \sqrt{\E[|\cI_1|^2 \one\{\bz_1 \in S_D\}]} \sqrt{\E[|\cI_2|^2 \one\{\bz_1 \in S_D\}]} \to 0.
    \]
    In light of our control of $|\cI_1|$ and $|\cI_2|$, we prove~\eqref{eq:nngmmminv}.

Finally, we prove~\eqref{eq:nngmmnncd}.
For the first part of~\eqref{eq:nngmmnncd}, notice that $(\bz_1 \in S_D, \bz_{M(1)} \in S_D^c)$ implies $\bz_{M(1)} \neq \bz_1$, which further implies $N=0$. So, the first part of~\eqref{eq:nngmmnncd} directly follows from the first part of~\eqref{eq:nngmmn}:
\[
\pr(\bz_1 \in S_D, \bz_{M(1)} \in S_D^c) \leq \pr(\bz_1 \in S_D, N=0) \to 0.
\]
For the second part of~\eqref{eq:nngmmnncd}, rewrite the probability:
\[
\pr(\bz_1 \in S_D^c, \bz_{M(1)} \in S_D) = \pr(\bz_{M(1)} \in S_D) - \pr(\bz_1 \in S_D, \bz_{M(1)} \in S_D)
\]
The first term can be analyzed using the i.i.d. property and the first part of~\eqref{eq:nngmmminv}:
\[
\begin{aligned}
\pr(\bz_{M(1)} \in S_D) &= \sum_{k \neq 1} \pr(M(1)=k, \bz_k \in S_D) = (n-1) \pr(M(1)=2, \bz_2 \in S_D) \\
&= \E\left[\sum_{j \neq 2} \one\{M(j)=2\} \one\{\bz_2 \in S_D\}\right] = \E\left[|M^{-1}(2)| \one\{ \bz_2 \in S_D\}\right] \to \eta.
\end{aligned}
\]
The second term can be analyzed using the first part of~\eqref{eq:nngmmnncd}, which we have just proved before:
\[
\pr(\bz_1 \in S_D, \bz_{M(1)} \in S_D) = \pr(\bz_1 \in S_D)- \pr(\bz_1 \in S_D, \bz_{M(1)} \in S_D^c) \to \pr(\bz_1 \in S_D) = \eta.
\]
Putting together proves the second part of~\eqref{eq:nngmmnncd}.
\end{proof}

\begin{lemma}\label{lem:limit}
	Consider the set up of~\Cref{thm:limit} and recall the $a_n, b_n$ defined in~\Cref{lem:probneigh}. We have
	\[
	\lim_{n \to \infty} (n - 1) a_n = (1 - \eta) A_{\dz} \quad \& \quad \lim_{n \to \infty} (n - 1) b_n = (1 - \eta) B_{\dz} + \eta.
	\]
\end{lemma}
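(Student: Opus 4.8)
The plan is to rewrite the two target quantities as expectations of nearest-neighbour graph functionals and then to split the analysis according to whether $\bz_1$ falls in the atom set of $\mu_{\bz}$. Write $M:=M_{\bz}$ and $S_D:=\{\bsz\in\R^{\dz}:\mu_{\bz}(\{\bsz\})>0\}$; since $\mu_{\bz,a}$ has no atoms and $\mu_{\bz,d}$ is supported on $S_D$, we have $\mu_{\bz}(S_D)=\eta$ and $\mu_{\bz}(\,\cdot\,\cap S_D^c)=(1-\eta)\mu_{\bz,a}(\,\cdot\,)$. Using the i.i.d.\ property of the $\bz_i$'s (exactly as in the proofs of~\Cref{lem:probneigh,lem:jointdis}), $(n-1)a_n=\pr(M(M(1))=1)$ and $(n-1)b_n=\E[|M^{-1}(M(1))|]-1$. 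I will compute the limit of each of these by decomposing it as the contribution of $\{\bz_1\in S_D\}$ plus that of $\{\bz_1\in S_D^c\}$.

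\textbf{Discrete part.} On $\{\bz_1\in S_D\}$, \Cref{lem:nngmm} gives that with probability tending to $1$ there are $N\to\infty$ other sample points equal to $\bz_1$; call this cluster $G$ (so $|G|=N+1$). On this event the nearest neighbour of each point of $G$ is chosen uniformly at random among the other $N$ points of $G$, independently across points, so a short computation gives, conditionally, $\pr(M(M(1))=1)=1/N$ and $\E[|M^{-1}(M(1))|]=2-1/N$ up to terms whose unconditional expectation vanishes — these error terms coming from points outside $G$ having a point of $G$ as nearest neighbour, a number bounded by a constant $C(\dz)$ by~\citet[Lemma~11.4]{azadkia2021simple}, each contributing at most $1/(N+1)$. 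Combining this with $\pr(\bz_1\in S_D,N=0)\to0$, $\E[N^{-1}\one\{\bz_1\in S_D,N\ge1\}]\to0$ from~\Cref{lem:nngmm}, and a routine uniform-integrability bound (conditionally $|M^{-1}(M(1))|^2$ has bounded expectation), the contribution of $\{\bz_1\in S_D\}$ to $(n-1)a_n$ tends to $0$, while its contribution to $(n-1)b_n=\E[|M^{-1}(M(1))|-1]$ tends to $\eta\cdot1=\eta$.

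\textbf{Continuous part.} Here I condition on $\bz_1=\bsz$ and recall $\mu_{\bz}|_{S_D^c}=(1-\eta)\mu_{\bz,a}$. The key point is that for $\mu_{\bz,a}$-a.e.\ $\bsz$: (i) the Lebesgue density $c(\bsz)$ of $\mu_{\bz,a}$ exists and lies in $(0,\infty)$, so $\mu_{\bz,a}(B(\bsz,r))\sim c(\bsz)\lambda(B(\bsz,r))$ as $r\downarrow0$; and (ii) since $\mu_{\bz,d}$ is singular with respect to Lebesgue measure, its Besicovitch derivative vanishes a.e., i.e.\ $\mu_{\bz,d}(B(\bsz,r))=o(r^{\dz})$. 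Hence $\mu_{\bz}(B(\bsz,r))\sim(1-\eta)c(\bsz)\lambda(B(\bsz,r))$, so with probability $\to1$ no discrete draw falls within an $O(n^{-1/\dz})$-neighbourhood of $\bsz$, and after rescaling by $n^{1/\dz}$ the configuration of the other $n-1$ points around $\bsz$ converges to a homogeneous Poisson process. As $\{M(M(1))=1\}$ and $|M^{-1}(M(1))|$ depend (with high probability) only on this local configuration, the conditional quantities $\pr(M(M(1))=1\mid\bz_1=\bsz)$ and $\E[|M^{-1}(M(1))|-1\mid\bz_1=\bsz]$ converge to the corresponding scale-invariant Poisson-process functionals, which are exactly $A_{\dz}$ and $B_{\dz}$ — the same limits as in the purely absolutely continuous case of~\citet[Theorems~3.6 and~3.7]{shi2024azadkia}; alternatively one couples the mixture sample near $\bsz$ with its $\approx(1-\eta)n$ absolutely continuous draws and invokes those theorems directly. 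Integrating over $S_D^c$ against $\mu_{\bz}=(1-\eta)\mu_{\bz,a}$ and passing to the limit — by dominated convergence for $a_n$ (integrand $\le1$), and by Fatou together with a matching reverse bound (using $\E[|M^{-1}(M(1))|]=1+(n-1)b_n\le1+C(\dz)$ from~\Cref{lem:probneigh} and the degree moment bounds of~\Cref{lem:maxdegree}) for $b_n$ — shows that the contribution of $\{\bz_1\in S_D^c\}$ to $(n-1)a_n$ tends to $(1-\eta)A_{\dz}$ and its contribution to $(n-1)b_n$ tends to $(1-\eta)B_{\dz}$.

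Adding the two parts yields $(n-1)a_n\to(1-\eta)A_{\dz}$ and $(n-1)b_n\to(1-\eta)B_{\dz}+\eta$. I expect the main obstacle to be the continuous part: making precise the local convergence of the conditioned point configuration to a Poisson process and the ``continuity'' of the nearest-neighbour functionals $M(M(1))$, $M^{-1}(M(1))$ under it, together with the uniform integrability needed to exchange limit and integral for $b_n$. Reducing the $S_D^c$ analysis to the absolutely continuous sub-problem already settled in~\citet{shi2024azadkia}, and using the density/Besicovitch facts (i)--(ii) to discard the discrete component at the local scale, is what makes this step tractable.
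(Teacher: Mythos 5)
Your overall decomposition (split by whether the relevant $\bz_i$'s land in the atom set $S_D$) and your treatment of the discrete part are essentially the paper's: the paper likewise uses \Cref{lem:nngmm} to kill the mixed/discrete contribution to $(n-1)a_n$ and to extract the limit $\eta$ for $(n-1)b_n$ from $\E[|M^{-1}(\cdot)|\one\{\cdot\in S_D\}]\to\eta$ and $\E[|M^{-1}(\cdot)|^2\one\{\cdot\in S_D\}]\to2\eta$. The divergence is in the continuous part. Your primary route — local rescaling around a Lebesgue density point, Besicovitch differentiation to discard the singular component, convergence to a homogeneous Poisson process, and continuity/stabilization of the functionals $\one\{M(M(1))=1\}$ and $|M^{-1}(M(1))|$ — is not what the paper does, and as written it has a genuine gap: the local weak convergence, the a.s.\ continuity of these nearest-neighbour functionals under it, and the identification of the resulting Poisson functionals with the specific integrals defining $A_{\dz}$ and $B_{\dz}$ are all asserted rather than proved, and carrying them out would amount to re-deriving \citet[Theorems~3.6 and~3.7]{shi2024azadkia} rather than invoking them. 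The uniform-integrability step for $b_n$ (``Fatou plus a matching reverse bound'') is also left vague.

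The paper's argument for the continuous part is an exact, global reduction that sidesteps all of this. Define $M_a(k)$ as the nearest neighbour of $\bz_k$ among the points in $S_D^c$ and $N_a:=\sum_j\one\{\bz_j\in S_D^c\}$; then $M_a(1)=M(1)$ iff $\bz_{M(1)}\in S_D^c$, and by \eqref{eq:nngmmnncd} replacing $M$ by $M_a$ on the event $\{\bz_1,\bz_2\in S_D^c\}$ costs $o(1)$. The key observation is that, conditionally on $N_a$ and on $\{\bz_1,\bz_2\in S_D^c\}$, the continuous draws are exactly an i.i.d.\ sample of size $N_a$ from $\mu_{\bz,a}$, so $\pr(M_a(1)=2,M_a(2)=1\mid\cdot)=a_{N_a}^{(a)}$ and likewise for the $b$-functional; this yields the exact identities $I_n'=\E[\tfrac{N_a(N_a-1)}{n}a_{N_a}^{(a)}]$ and $J_n'=\E[\tfrac{N_a(N_a-1)}{n}b_{N_a}^{(a)}]$. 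The limits $(1-\eta)A_{\dz}$ and $(1-\eta)B_{\dz}$ then follow from $N_a/n\asconv1-\eta$, the cited limits $(m-1)a_m^{(a)}\to A_{\dz}$, $(m-1)b_m^{(a)}\to B_{\dz}$, and dominated convergence with the uniform bounds of \Cref{lem:probneigh}. Your parenthetical ``alternatively one couples the mixture sample ... and invokes those theorems directly'' is pointing at this, but the right formulation is not a local coupling near $\bsz$: it is this exact conditional-law identity, which is what makes the step rigorous in a few lines. I would recommend replacing the Poisson-approximation plan with that reduction.
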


\begin{proof}
    In this proof, we adopt the notations introduced in Lemma~\ref{lem:nngmm}. In addition, we define
\[
N_a := \sum_j \one\{\bz_j \in S_D^c\}
\quad\&\quad
M_a(k) := \argmin_{j: j \neq k, \bz_j \in S_D^c} \Vert \bz_j - \bz_k \Vert
\]
Notice that in $M_a$ there are almost surely no ties, so $M_a(k)$ is well-defined as long as $\{j: j \neq k, \bz_j \in S_D^c\}$ is non-empty. If it is empty, define $M_a(k):=0$. The definition of $M_a$ immediately gives us a useful relation:
\begin{equation}\label{eq:maiff}
M_a(1)=M(1) \Leftrightarrow \bz_{M(1)} \in S_D^c \qquad\text{a.s.}
\end{equation}

\paragraph{Proof of the $a_n$-part} Write $(n - 1) a_n = I_n^{(1)} + I_n^{(2)}$, where
\[
\begin{aligned}
I_n^{(1)} &:= (n - 1) \pr(M(1)=2, M(2)=1; \bz_1  \in S_D \text{ or } \bz_2 \in S_D) \\
I_n^{(2)} &:= (n - 1) \pr(M(1)=2, M(2)=1; \bz_1,\bz_2 \in S_D^c).
\end{aligned}
\]
To control $I_n^{(1)}$, we easily have
\[
\begin{aligned}
I_n^{(1)} &\leq 2 (n - 1) \pr(M(1)=2, M(2)=1, \bz_1 \in S_D) = 2 (n - 1) \pr(M(1)=2, M(M(1))=1, \bz_1 \in S_D) \\
&= 2 \sum_{j \neq 1} \pr(M(1)=j, M(M(1))=1, \bz_1 \in S_D) = 2 \pr(M(M(1))=1, \bz_1 \in S_D).
\end{aligned}
\]
Notice that when $N \geq 1$, conditioning on $N$, the event $\{M(M(1))=1\}$ happens with probability $1/N$; with this, and~\eqref{eq:nngmmn}, we have
\[
\begin{aligned}
I_n^{(1)} \le &\; 2 \pr(M(M(1))=1, \bz_1 \in S_D, N=0) + 2 \pr(M(M(1))=1, \bz_1 \in S_D, N \geq 1) \\
\leq&\; 2 \pr(\bz_1 \in S_D, N=0) + 2 \E(\one\{\bz_1 \in S_D, N \geq 1\} / N) \to 0.
\end{aligned}
\]

For $I_n^{(2)}$, we first approximate it by
\[
I_n' := (n - 1) \pr(M_a(1)=2, M_a(2)=1; \bz_1,\bz_2 \in S_D^c).
\]
On one hand,~\eqref{eq:maiff} gives $I_n^{(2)} \leq I_n'$. On the other hand, the union bound, the i.i.d. property, the~\eqref{eq:maiff}, and the second part of~\eqref{eq:nngmmnncd}, give
\[
\begin{aligned}
I_n' - I_n^{(2)} &= (n - 1) \pr(M_a(1)=2, M_a(2)=1; M(1) \neq 2 \;\mathrm{ or }\; M(2) \neq 1; \bz_1,\bz_2 \in S_D^c) \\
&\leq 2 (n - 1) \pr(M_a(1)=2, M_a(2)=1; M(1) \neq 2; \bz_1,\bz_2 \in S_D^c) \\
&\leq 2 (n - 1) \pr(M_a(1)=2, M_a(1) \neq M(1), \bz_1 \in S_D^c) \\
& = 2 \sum_{j=2}^n \pr(M_a(1)=j, M_a(1) \neq M(1), \bz_1 \in S_D^c) \\
&\leq 2 \pr(M_a(1) \neq M(1), \bz_1 \in S_D^c) = 2 \pr(\bz_{M(1)} \in S_D, \bz_1 \in S_D^c) \to 0.
\end{aligned}
\]


Now we only need to focus on $I_n'$. Conditioning on an $N_a \ge 2$ and $\{\bz_1,\bz_2 \in S_D^c\}$, the probability of $\{M_a(1)=2,M_a(2)=1\}$ is exactly $a_{N_a}^{(a)}$: here $a_n^{(a)}$ stands for the $a_n$ with the underlying distribution replaced by $\mu_{\bz, a}$. And, conditioning on $N_a$, the probability of $\{\bz_1,\bz_2 \in S_D^c\}$ is $\frac{N_a(N_a-1)}{n(n-1)}$. So we have
\[
I_n' = (n-1) \E[\pr(M_a(1)=2, M_a(2)=1 ; \bz_1,\bz_2 \in S_D^c \mid N_a) \one\{N_a \geq 2\}] = \E\left[\frac{N_a}{n}  (N_a-1) a_{N_a}^{(a)}\right].
\]
Now~\Cref{lem:probneigh} gives $\frac{N_a}{n}  (N_a-1) a_{N_a}^{(a)} \le 1$; and the strong law of large number gives $N_a/n \asconv 1 - \eta$. Now if $\eta < 1$, then $N_a \asconv \infty$ , so \citet[Lemma~3.6]{shi2024azadkia} gives $(N_a-1) a_{N_a}^{(a)} \asconv A_{\dz}$, which means $\frac{N_a}{n}  (N_a-1) a_{N_a}^{(a)} \asconv (1 - \eta) A_{\dz}$; so by the dominated convergence theorem, $I_n' \to (1 - \eta) A_{\dz}$. Otherwise, we have $N_a = 0$ almost surely, so we still have $I_n' = 0 = (1 - \eta) A_{\dz}$. 

In light of our control of $I_n^{(1)}, (I_n'-I_n^{(2)})$ and $I_n'$, we prove the desired result.

\paragraph{Proof of the $b_n$-part} By the i.i.d. property,
\[
(n - 1) b_n = (n - 1) \sum_{k \neq 1,2} \pr(M(1)=M(2)=k) = (n - 1) (n-2) \pr(M(1)=M(2)=3).
\]
Write $(n - 1) b_n = J_n^{(1)} + J_n^{(2)} + J_n^{(3)}$, where
\[
\begin{aligned}
J_n^{(1)} &:= (n - 1) (n-2) \pr(M(1)=M(2)=3, \bz_3 \in S_D) \\
J_n^{(2)} &:= (n - 1) (n-2) \pr(M(1)=M(2)=3; \bz_1 \in S_D \;\mathrm{or}\; \bz_2 \in S_D; \bz_3 \in S_D^c) \\
J_n^{(3)} &:= (n - 1) (n-2) \pr(M(1)=M(2)=3; \bz_1,\bz_2,\bz_3 \in S_D^c).
\end{aligned}
\]
We analyze $J_n^{(1)}$, $J_n^{(2)}$ and $J_n^{(3)}$, respectively.

For $J_n^{(1)}$, \eqref{eq:nngmmminv} yields
\[
J_n^{(1)} = \E\left[\sum_{i \neq j; i, j \neq 3} \one\{M(i)=M(j)=3, \bz_3 \in S_D\}\right] = \E(|M^{-1}(3)|(|M^{-1}(3)| - 1) \one\{\bz_3 \in S_D\}) \to \eta.
\]

For $J_n^{(2)}$, we have
\[
\begin{aligned}
J_n^{(2)} & \leq 2 (n - 1)(n - 2) \pr(M(1)=M(2)=3; \bz_1 \in S_D, \bz_3 \in S_D^c) \\
&= 2 (n - 1)(n - 2) \pr(M(1)=M(2)=3; \bz_1 \in S_D, \bz_{M(1)} \in S_D^c) \\
&= 2 \E\left[\sum_{j \neq k; j, k \neq 1} \one\{M(1)=M(j)=k, \bz_1 \in S_D, \bz_{M(1)} \in S_D^c\}\right] \\
&\le 2 \E(|M^{-1}(M(1))| \one\{ \bz_1 \in S_D, \bz_{M(1)} \in S_D^c\}).
\end{aligned}
\]
Under the event $\bz_{M(1)} \in S_D^c$, almost surely, $\bz_{M(1)}$ is not equal to any other $\bz_i$, which, together with~\citet[Lemma~11.4]{azadkia2021simple}, implies that almost surely, $|M^{-1}(M(1))| \le C(\dz)$. This, together with the first part of~\eqref{eq:nngmmnncd}, allows us to further get
\[
J_n^{(2)} \le 2 C(\dz) \pr(\bz_1 \in S_D, \bz_{M(1)} \in S_D^c) \to 0.
\]

For $J_n^{(3)}$, we approximate it by
\[
J_n' := (n - 1) (n - 2) \pr(M_a(1)=M_a(2)=3; \bz_1,\bz_2,\bz_3 \in S_D^c).
\]
On one hand,~\eqref{eq:maiff} gives $J_n^{(3)} \leq J_n'$. On the other hand, the union bound, the~\eqref{eq:maiff}, the i.i.d. property, give
\[
\begin{aligned}
J_n' - J_n^{(3)} &= (n - 1) (n - 2) \pr(M_a(1)=M_a(2)=3; M(1) \neq 3 \text{ or } M(2) \neq 3; \bz_1,\bz_2,\bz_3 \in S_D^c) \\
&\leq 2 (n - 1) (n - 2) \pr(M_a(1)=M_a(2)=3; M(1) \neq 3; \bz_1,\bz_2,\bz_3 \in S_D^c) \\
&\leq 2 (n - 1) (n - 2) \pr(M_a(1)=M_a(2)=3; \bz_{M(1)} \in S_D; \bz_1, \bz_2 \in S_D^c) \\
&= 2 \E\left[ \sum_{j \neq 1} \sum_{k \not\in \{0, 1, j\}} \one\{M_a(1)=M_a(j)=k; \bz_{M(1)} \in S_D; \bz_1,\bz_j \in S_D^c \} \right] \\
&= 2 \E\left[ \sum_{j \neq 1} \one\{M_a(j)=M_a(1) \neq 0, \bz_j \in S_D^c\} \one\{\bz_{M(1)} \in S_D; \bz_1 \in S_D^c\} \right].
\end{aligned}
\]
Applying~\citet[Lemma~11.4]{azadkia2021simple}, we have on the event $\{\bz_1 \in S_D^c\}$, almost surely, $\sum_{j \neq 1} \one\{M_a(j)=M_a(1) \neq 0, \bz_j \in S_D^c\} \leq C(\dz)$. So we get
\[
J_n' - J_n^{(3)} \le 2C(\dz) \pr(\bz_{M(1)} \in S_D, \bz_1 \in S_D^c) \to 0,
\]
where the last step comes from the second part of~\eqref{eq:nngmmnncd}.

So now we only need to focus on $J_n'$. First, by the i.i.d. property,
\[
J_n' = (n - 1) \sum_{j=3}^n \pr(M_a(1)=M_a(2)=j; \bz_1,\bz_2 \in S_D^c) = (n - 1) \pr(M_a(1)=M_a(2) \neq 0; \bz_1,\bz_2 \in S_D^c).
\]
Now, observe that, conditioning on a $N_a \ge 2$ and $\{\bz_1,\bz_2 \in S_D^c\}$, the probability of $\{M_a(1)=M_a(2) \neq 0\}$ is exactly $b_{N_a}^{(a)}$ -- here $b_n^{(a)}$ stands for the $b_n$ with the underlying distribution replaced by $\mu_{\bz, a}$. And, conditioning on $N_a$, the probability of $\{\bz_1,\bz_2 \in S_D^c\}$ is $\frac{N_a(N_a-1)}{n(n-1)}$. So, we have
\[
J_n' = (n - 1) \E[\pr(M_a(1)=M_a(2); \bz_1,\bz_2 \in S_D^c \mid N_a)\one\{N_a \ge 2\}] = \E\left[\frac{N_a(N_a-1)}{n} b_{N_a}^{(a)}\right].
\]
Now~\Cref{lem:probneigh} gives $\frac{N_a(N_a-1)}{n} b_{N_a}^{(a)} \le C(\dz)$. Following exactly the same analysis as our control of $I_n'$, we have $ J_n' \to (1 - \eta) B_{\dz}$.

Combining our analysis of $J_n^{(1)}, J_n^{(2)}, (J_n'-J_n^{(3)})$ and $J_n'$, the desired result is proved.
\end{proof}

\subsection{Proof of~\Cref{thm:nolimit}}

\noindent{\bf Notations.} First introduce some notations that will be used only in this subsection. Here $\dz=1$, so we write $\bsz, \bz, \Vert\cdot\Vert$ as $z, Z, |\cdot|$, respectively. When $a$ is a complex number, $|a|$ represents the modulus of $a$. For $c \in \R$ and $r > 0$, define $B(c,r) := \{x \in \R: |x-c| < r\}$ as the open ball centered at $c$ with radius $r$. For any $z \in [0,1)$, write its ternary expansion as $(0.z^{(1)}z^{(2)}\cdots)_3$, where $z^{(j)} := \lfloor 3^j z \rfloor \bmod 3$. So $z$ can be written as $z = \sum_{j=1}^\infty 3^{-j} z^{(j)}$. Under the notation of the ternary expansion, the Cantor ternary set is given as $\cC := \{z \in [0,1): \forall j, z^{(j)} \in \{0,2\} \}$, and our random variable $Z$ that follows the Cantor distribution can be expressed as $Z = (0.Z^{(1)}Z^{(2)}\cdots)_3$ where $\{Z^{(j)}\}_{j=1}^{\infty}$ follow i.i.d. $\mathcal{U}(\{0,2\})$.

Let $F_Z$ denote the CDF of $Z$. Notice that $\forall z \in [0,1): \pr(Z=z) = \prod_{j=1}^{\infty} \pr(Z^{(j)}=z^{(j)}) \leq \prod_{j=1}^{\infty} \frac12 = 0$, so $F_Z$ is continuous. Also, given a positive sequence $a_1, a_2, \ldots$, we write $O(a_n)$ to represent a sequence $b_n$ such that $\limsup |b_n| / a_n < \infty$, and write $o(a_n)$ when instead $\lim |b_n| / a_n = 0$. Given a set $A \subseteq \R$, which can be either finite or with positive and finite Lebesgue measure, we write $V \sim \mathcal{U}(A)$ if $V$ is a random variable following uniform distribution on $A$.

\begin{lemma}\label{lem:expexp}
Consider a random variable $X \in [0,1]$, an event $\mathcal{Q}$, and a constant $c \in \R$. We have as $n \to \infty$,
\[
\E((1-X)^{n+c} \one\{ \mathcal{Q}\}) = (1 + O((\ln n)^4 / n)) \E(e^{- n X} \one\{ \mathcal{Q}\}) + O(e^{-(\ln n)^2}).
\]
\end{lemma}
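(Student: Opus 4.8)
The plan is to split the expectation according to whether $X$ lies below or above the threshold $t := (\ln n)^2 / n$, and to compare $(1-X)^{n+c}$ with $e^{-nX}$ separately on each piece. All estimates below are understood for $n$ large enough that $t < 1$ and $n+c \ge 1$.

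First, on the ``small $X$'' region $X \le t$, I would establish the uniform comparison $(1-x)^{n+c} = e^{-nx}(1 + \delta_n(x))$ with $\sup_{x \in [0,t]} |\delta_n(x)| = O((\ln n)^4/n)$. This follows from the Taylor expansion $\ln(1-x) = -\sum_{k \ge 1} x^k/k$: one computes
\[
(n+c)\ln(1-x) + nx = -cx - (n+c)\sum_{k \ge 2} \frac{x^k}{k},
\]
and bounds $|cx| \le |c|\,t = O((\ln n)^2/n)$ and $(n+c)\sum_{k\ge 2} x^k/k \le \tfrac{(n+c)x^2}{2(1-t)} = O((\ln n)^4/n)$ uniformly for $x \le t$. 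Exponentiating and using $|e^s - 1| \le 2|s|$ for $|s|$ small yields the claim. Consequently,
\[
\left|\E\bigl((1-X)^{n+c}\one\{\mathcal Q\}\one\{X \le t\}\bigr) - \E\bigl(e^{-nX}\one\{\mathcal Q\}\one\{X\le t\}\bigr)\right| \le \frac{C(\ln n)^4}{n}\,\E\bigl(e^{-nX}\one\{\mathcal Q\}\bigr),
\]
and replacing $\E(e^{-nX}\one\{\mathcal Q\}\one\{X\le t\})$ by the full $\E(e^{-nX}\one\{\mathcal Q\})$ costs at most $\E(e^{-nX}\one\{X>t\}) \le e^{-nt} = e^{-(\ln n)^2}$.

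Second, on the ``large $X$'' region $X > t$: since $n+c > 0$, the map $x \mapsto (1-x)^{n+c}$ is decreasing on $[0,1]$, so $(1-X)^{n+c}\one\{X > t\} \le (1-t)^{n+c}$, and $(1-t)^{n+c} = (1-t)^c (1-t)^n \le 2 e^{n\ln(1-t)} \le 2 e^{-nt} = 2 e^{-(\ln n)^2}$ using $(1-t)^c \to 1$ and $\ln(1-t) \le -t$. Hence $\E((1-X)^{n+c}\one\{\mathcal Q\}\one\{X>t\}) = O(e^{-(\ln n)^2})$. Adding the two regions and using $\E(e^{-nX}\one\{\mathcal Q\}) \le 1$ to absorb constants gives the stated identity.

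There is no deep obstacle here; the only point needing care is that the error in the first step be genuinely \emph{uniform} in $x \in [0,t]$, so that the multiplicative factor $1 + O((\ln n)^4/n)$ can be pulled outside the expectation regardless of the law of $X$ and of the event $\mathcal Q$. One must also track that the constant $c$ contributes only at the lower order $O((\ln n)^2/n)$ and does not interfere with the steps $(1-t)^c \to 1$ and $n+c > 0$, which is why the statement is phrased asymptotically in $n$.
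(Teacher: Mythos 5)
Your proof is correct and follows essentially the same route as the paper's: the same threshold $(\ln n)^2/n$, a uniform multiplicative comparison $(1-x)^{n+c}=e^{-nx}(1+O((\ln n)^4/n))$ on the small-$X$ region via the expansion of $\ln(1-x)$, and an $O(e^{-(\ln n)^2})$ bound on the large-$X$ region. The extra bookkeeping you do (tracking the $c$-contribution and restoring the full expectation at cost $e^{-nt}$) is sound but does not change the argument in any essential way.
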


\begin{proof}
We first study the function $f_n(x) := (1-x)^{n+c} - e^{- n x}, x \in [0,1]$. When $x \leq (\ln n)^2 / n$, we have
\[
(1-x)^{n+c}
= e^{(n+c) \ln(1-x)}
= e^{(n+c)(-x+O(x^2))}
= e^{- n x + O(\ln^4 n / n)}
= e^{O(\ln^4 n / n)} e^{- n x},
\]
where ``$\ln^4 n$'' means $(\ln n)^4$.
So $|f_n(x)| = |e^{O(\ln^4 n / n)} - 1| e^{- n x} = O(\ln^4 n / n) e^{- n x}$. When $x > \ln^2 n / n$, we have
\[
(1-x)^{n+c}
\leq \left(1-\frac{\ln^2 n}{n}\right)^{n+c}
= e^{(n+c)\ln\left(1-\frac{\ln^2 n}{n}\right)}
\leq e^{(n+c)\left(-\frac{\ln^2 n}{n}\right)}
= O(e^{- \ln^2 n}),
\]
so $|f_n(x)| \leq (1-x)^{n+c} + e^{- n x} = O(e^{- \ln^2 n})$. Combining two parts, we get
\[
\begin{aligned}
&\; |\E((1-X)^{n+c} \one\{\mathcal{Q}\}) - \E(e^{- n X} \one\{\mathcal{Q}\})| = |\E(f_n(X) \one\{\mathcal{Q}\})| \leq \E(|f_n(X)| \one\{\mathcal{Q}\}) \\
=&\; \E(|f_n(X)| \one\{ \mathcal{Q}, X \leq \ln^2 n / n\}) + \E(|f_n(X)| \one\{ \mathcal{Q}, X > \ln^2 n / n\})
\leq O(\ln^4 n / n) \E(e^{- n X} \one\{ \mathcal{Q}\}) + O(e^{- \ln^2 n}),
\end{aligned}
\]
which provides the desired result.
\end{proof}

\begin{lemma}\label{lem:limitpoint}
Given a sequence $\{x_n\}_{n \geq 1} \in \R$ and a point $x \in \R$, we call $x$ a subsequential limit of the sequence $\{x_n\}_{n \geq 1}$ if there exists a subsequence $x_{n_k} \to x$. We have the following results.
\begin{itemize}
    \item[(i)] Let $\theta$ be any irrational number. Then, $0$ is a subsequential limit of $\{(n \theta) - \lfloor n \theta \rfloor\}_{n \geq 1}$.
    \item[(ii)] Every point in $[1,2)$ is a subsequential limit of $\{n / 2^{\lfloor \log_2 n \rfloor}\}_{n \geq 1}$.
\end{itemize}
\end{lemma}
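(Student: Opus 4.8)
The plan is to prove the two parts independently, in each case producing an explicit subsequence.

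\emph{Part (i).} I would use the classical Dirichlet pigeonhole argument. Fix $N \geq 1$ and apply the pigeonhole principle to the $N+1$ fractional parts $\{0\cdot\theta\}, \{1\cdot\theta\}, \ldots, \{N\cdot\theta\}$, distributed among the $N$ subintervals $[\ell/N,(\ell+1)/N)$ of $[0,1)$: two of them, say those with indices $i < j$, fall in the same subinterval, so $q := j-i \in \{1,\ldots,N\}$ satisfies that $q\theta$ lies within $1/N$ of an integer. Since $\theta$ is irrational this distance is nonzero, so either $\{q\theta\} < 1/N$, in which case $m_N := q$ works; or $\{q\theta\} > 1 - 1/N$, in which case, writing $\beta := 1 - \{q\theta\} \in (0,1/N)$ and $k := \lfloor 1/\beta\rfloor \geq 1$, a short computation gives $\{kq\,\theta\} = \{-k\beta\} = 1 - k\beta \in (0,\beta) \subseteq (0,1/N)$, so $m_N := kq$ works. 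Thus for every $N$ there is a positive integer $m_N$ with $0 < \{m_N\theta\} < 1/N$. Finally, since the numbers $\{m\theta\}$, $m \geq 1$, are pairwise distinct (irrationality again), the indices $m_N$ cannot stay bounded as $N \to \infty$; passing to a subsequence yields $m_{N_t} \to \infty$ with $\{m_{N_t}\theta\} \to 0$, which is the claim. (Alternatively one may simply invoke Weyl's equidistribution theorem, which gives density of $\{m\theta\}$ in $[0,1)$ outright.)

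\emph{Part (ii).} Here the sequence is, block by block, an evenly spaced grid of $[1,2)$: if $2^k \leq n < 2^{k+1}$ then $\lfloor \log_2 n\rfloor = k$, so $n/2^{\lfloor \log_2 n\rfloor} = n/2^k$, which runs through $1, 1+2^{-k}, \ldots, 2-2^{-k}$ as $n$ runs through $2^k, \ldots, 2^{k+1}-1$. Given $x \in [1,2)$, I would set $n_k := \lfloor x\,2^k\rfloor$. The inequalities $1 \leq x < 2$ force $2^k \leq n_k \leq 2^{k+1}-1$, so $\lfloor \log_2 n_k\rfloor = k$ and $|n_k/2^{\lfloor \log_2 n_k\rfloor} - x| = |x\,2^k - \lfloor x\,2^k\rfloor|/2^k < 2^{-k}$. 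Since $n_k \geq 2^k \to \infty$ and $n_k/2^{\lfloor \log_2 n_k\rfloor} \to x$, the point $x$ is a subsequential limit of the sequence.

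Neither part presents a serious difficulty; the only place requiring care, and the one I would double-check, is the sign-and-index bookkeeping in part (i) — converting the two-sided bound ``$q\theta$ near an integer'' into a genuinely small \emph{fractional part} attained at a \emph{positive} index, and then confirming that the chosen indices tend to infinity (so that one really obtains a subsequential limit, not merely an infimum equal to $0$). Part (ii) needs nothing beyond the boundary checks $2^k \leq \lfloor x\,2^k\rfloor < 2^{k+1}$.
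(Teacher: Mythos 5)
Your proof is correct. Part (ii) is essentially identical to the paper's argument: the same choice $n_k := \lfloor 2^k x\rfloor$, the same boundary check $2^k \le n_k \le 2^{k+1}-1$ giving $\lfloor\log_2 n_k\rfloor = k$, and the same $2^{-k}$ error bound. Part (i), however, takes a genuinely different route: the paper simply cites Kronecker's theorem in the form of \citet[Theorem~438, Chapter~23]{hardy2008number} to get, for every $\epsilon$ and $N$, an $n>N$ with $\{n\theta\}\in(0,2\epsilon)$, and then extracts the subsequence recursively; you instead give a self-contained Dirichlet pigeonhole argument, and the delicate step — converting ``$q\theta$ is within $1/N$ of an integer from below'' into a positive index $kq$ with small \emph{positive} fractional part via $k=\lfloor 1/\beta\rfloor$ and $\{kq\theta\}=1-k\beta\in(0,\beta)$ — checks out (irrationality of $\beta$ rules out $k\beta=1$, and pairwise distinctness of the $\{m\theta\}$ forces the indices $m_N$ to be unbounded, so one really gets a subsequence). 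Your version buys independence from an external reference at the cost of a page of bookkeeping; the paper's buys brevity at the cost of invoking a named theorem. Both are valid.
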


\begin{proof}
First, we prove (i). By taking $\alpha=\epsilon<1/2$ in~\citet[Theorem~438, Chapter~23]{hardy2008number}, we get that $\forall \epsilon \in (0,1/2), N>0$, there exists an $n>N$ such that $n\theta \in \cup_{p \in \mathbb{Z}} (p,p+2\epsilon)$, so $x_n := (n\theta) - \lfloor n \theta \rfloor \in (0,2\epsilon)$. So, we can recursively find $n_k>n_{k-1}$ such that $x_{n_k} \in (0,2/k)$, hence providing a subsequence $x_{n_k} \to 0$. Therefore, $0$ is a subsequential limit of $\{(n \theta) - \lfloor n \theta \rfloor\}_{n \geq 1}$.

Second, we prove (ii). Fix arbitrary $x \in [1,2)$, and consider an $n_k := \lfloor 2^k x \rfloor$. We have
\[
2^k - 1 \leq 2^k x - 1 < n_k \leq 2^k x < 2^{k+1},
\]
so $\lfloor \log_2 n_k \rfloor = k$. Now, we have
\[
\frac{n_k}{2^{\lfloor \log_2 n_k \rfloor}} = \frac{\lfloor 2^k x \rfloor}{2^k} \in (x-2^{-k},x],
\]
hence $n_k / 2^{\lfloor \log_2 n_k \rfloor} \to x$. Therefore, every point $x \in [1,2)$ is a subsequential limit of $\{n / 2^{\lfloor \log_2 n \rfloor}\}_{n \geq 1}$.
\end{proof}

\begin{lemma}\label{lem:cantorrefl}
$F_Z(\cdot)$ satisfies the following properties:
    \begin{itemize}
        \item[(i)] for any $z \in [0, 1)$, $F_Z(z)$ can be expressed as
        \[
        F_Z(z) = \sum_{j=1}^r \frac{z^{(j)}/2}{2^j} + \frac{1}{2^{r+1}},
        \]
        where $r := \min\{j \geq 1: z^{(j)} = 1\} - 1$ if $z \not\in \cC$, and $r := \infty$ otherwise;
        \item[(ii)] for any $r \geq 0$ and $z \in \cC$ satisfying $3^r z \in \mathbb{Z}$, we have
        \[
        F_Z(z - 3^{-r}) = F_Z(z) \quad\&\quad F_Z(z + 3^{-r}) = F_Z(z + 2 \cdot 3^{-r}) = F_Z(z) + 2^{-r};
        \]
        \item[(iii)] for any $x,y \in \cC$ such that $x<y$,  
    \[
    F_Z(2x-y) = F_Z(z) \quad\&\quad F_Z(2y-x)=F_Z(z)+2^{-r},
    \]
    \end{itemize}
    where $r := \min\{j \geq 1: x^{(j)} \neq y^{(j)}\} - 1$ and $z := (0.x^{(1)} \cdots x^{(r)} 0 0 \cdots)_3$.
\end{lemma}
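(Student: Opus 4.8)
The plan is to prove (i) first, then obtain (ii) from (i) by a direct ternary-digit computation, and finally obtain (iii) from (ii) together with the monotonicity of $F_Z$. Throughout I use that $F_Z$ is continuous (as noted above), so $F_Z(z)=\pr(Z<z)$ for every $z\in\R$, that $F_Z$ is nondecreasing, and that $F_Z\equiv 0$ on $(-\infty,0)$ while $F_Z\equiv 1$ on $[1,\infty)$. For (i), decompose $\{Z<z\}$ according to the first index $k$ at which the canonical ternary digit of $Z$ differs from that of $z$: with $E_k:=\{Z^{(j)}=z^{(j)}\ \text{for}\ j<k\}\cap\{Z^{(k)}<z^{(k)}\}$ one verifies, using that the canonical expansion of a Cantor point never ends in all $2$'s (so the relevant tails lie in $[0,1)$), that $\{Z<z\}=\bigsqcup_{k\ge 1}E_k$. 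Since the $Z^{(j)}$ are i.i.d.\ $\mathcal{U}(\{0,2\})$, $\pr(E_k)=\prod_{j<k}\pr(Z^{(j)}=z^{(j)})\cdot\pr(Z^{(k)}<z^{(k)})$, which vanishes unless $z^{(1)},\dots,z^{(k-1)}\in\{0,2\}$, in which case the prefix factor is $2^{-(k-1)}$, whereas $\pr(Z^{(k)}<z^{(k)})=0$ if $z^{(k)}=0$ and $=\tfrac12$ if $z^{(k)}\in\{1,2\}$. By the definition of $r$ we have $z^{(j)}\in\{0,2\}$ for $j\le r$ and, when $r<\infty$, $z^{(r+1)}=1$; hence $\pr(E_k)=\tfrac{z^{(k)}/2}{2^{k}}$ for $k\le r$ (using $z^{(k)}/2=\one\{z^{(k)}=2\}$), $\pr(E_{r+1})=2^{-(r+1)}$, and $\pr(E_k)=0$ for $k>r+1$. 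Summing over $k$ gives the stated formula, with the last term read as $0$ when $r=\infty$.

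For (ii), the hypotheses force $z=(0.a_1\cdots a_r)_3$ with $a_i\in\{0,2\}$ and $z^{(j)}=0$ for $j>r$, so (i) gives $F_Z(z)=\sum_{j=1}^r\tfrac{a_j/2}{2^j}$. I would then compute the canonical ternary expansions of $z-3^{-r}$, $z+3^{-r}$, and $z+2\cdot 3^{-r}$ by a finite borrow/carry analysis: the carry (or borrow) triggered at position $r$ propagates leftwards through the maximal block of digits equal to $2$ (resp.\ $0$) ending at position $r$, replacing it by a block of $0$'s (resp.\ $2$'s) and leaving a single $1$ at the first position $p$ where it stops. Reading off the new value of ``$r$'' for each point and substituting into (i), every resulting identity reduces to the geometric-sum identity $\sum_{j=p+1}^{r}2^{-j}=2^{-p}-2^{-r}$, which yields $F_Z(z-3^{-r})=F_Z(z)$ and $F_Z(z+3^{-r})=F_Z(z+2\cdot 3^{-r})=F_Z(z)+2^{-r}$. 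The degenerate cases $z=0$ and $z=1-3^{-r}$, where one of the three points leaves $[0,1)$, are checked directly from the boundary behaviour of $F_Z$.

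For (iii), write $x=z+x'$ and $y=z+2\cdot 3^{-(r+1)}+y'$; because $x,y\in\cC$ share their first $r$ ternary digits and satisfy $x^{(r+1)}=0$, $y^{(r+1)}=2$, the tails obey $x',y'\in[0,3^{-(r+1)}]$. A one-line substitution then gives $2x-y=z+\bigl(2x'-y'-2\cdot 3^{-(r+1)}\bigr)\in[z-3^{-r},z]$ and $2y-x=z+\bigl(4\cdot 3^{-(r+1)}+2y'-x'\bigr)\in[z+3^{-r},z+2\cdot 3^{-r}]$. Since $z\in\cC$ and $3^{r}z\in\mathbb{Z}$, part (ii) shows that $F_Z$ is constant equal to $F_Z(z)$ on $[z-3^{-r},z]$ and constant equal to $F_Z(z)+2^{-r}$ on $[z+3^{-r},z+2\cdot 3^{-r}]$ (by monotonicity and equality at the endpoints), so evaluating at $2x-y$ and $2y-x$ gives exactly the two identities in (iii).

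The main obstacle I anticipate is the bookkeeping in (ii): the carry or borrow produced by $\pm 3^{-r}$ or $+2\cdot 3^{-r}$ can cascade through an arbitrarily long run of digits, so one must carefully track which digits change and where the surviving $1$-digit lands, and then confirm that the telescoping sum always collapses to precisely $2^{-r}$ (or $0$), boundary cases included. A possible shortcut for (ii) is the probabilistic route — $\pr(z\le Z<z+3^{-r})=2^{-r}$ by conditioning on $Z^{(1)},\dots,Z^{(r)}$, and $(z-3^{-r},z)$ and $(z+3^{-r},z+2\cdot 3^{-r})$ each meet $\cC$ in at most one point and hence carry no mass — but establishing this last gap-structure claim requires essentially the same digit analysis, so that step cannot be avoided.
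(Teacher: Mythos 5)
Your proof is correct, and for parts (ii) and (iii) it follows essentially the same route as the paper: an explicit ternary borrow/carry computation combined with formula (i) for part (ii), and for part (iii) the same sandwich $2x-y \in [z-3^{-r},\,z]$ and $2y-x \in [z+3^{-r},\,z+2\cdot 3^{-r}]$ followed by monotonicity of $F_Z$ together with part (ii). The only genuine difference is in (i): the paper first treats $z \in \cC$ via the map $\phi(z) = \sum_j 2^{-j} z^{(j)}/2$, using that $\phi(Z) \sim \mathcal{U}([0,1])$, and then handles $z \notin \cC$ by showing the gap between $z$ and a nearby Cantor point carries no mass; you instead decompose $\{Z<z\}$ by the first index at which the canonical ternary digits of $Z$ and $z$ differ and sum $\pr(E_k)$ directly. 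Your version treats both cases of (i) uniformly and avoids the separate approximation step, while the paper's version makes the uniform distribution of $\phi(Z)$ explicit. One point to make explicit if you write this up: the i.i.d.\ digits $Z^{(j)}$ coincide with the canonical digits $\lfloor 3^j Z\rfloor \bmod 3$ only almost surely (the exceptional event of a tail of $2$'s is null), and it is this identification that licenses the lexicographic comparison underlying your decomposition of $\{Z<z\}$.
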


\begin{proof}[Proof of~\Cref{lem:cantorrefl}(i)]
    We consider $z \in \cC$, and $z \in [0,1) \setminus \cC$, respectively. For $z \in \cC$, consider the mapping $\phi(z) := \sum_{j = 1}^\infty 2^{-j} z^{(j)} / 2$. Then apparently $\phi$ is strictly increasing. Also, the random variable $U := \phi(Z) \in [0,1]$ satisfies $U \sim \mathcal{U}([0,1])$. In light of this, we have
\begin{equation}\label{eq:cantorcdfin}
F_Z(z) = \pr(Z \leq z) = \pr(\phi(Z) \leq \phi(z)) = \pr(U \leq \phi(z)) = \phi(z) = \sum_{j=1}^{\infty} \frac{z^{(j)}/2}{2^j}.
\end{equation}

For $z \in [0,1) \setminus \cC$, apparently $z^{(r + 1)} = 1$ and all $z^{(1)}, \ldots, z^{(r)} \in \{0, 2\}$. Consider the number $z' \in \cC$ obtained from $z$ by replacing $z^{(r+1)}$ with $2$ and $z^{(r+2)},z^{(r+3)},\cdots$ with $0$ in its ternary expansion. This means that
\[
\begin{aligned}
z &= (0\;.\;z^{(1)}\;\cdots\;z^{(r)}\;1\;z^{(r+2)}\;z^{(r+3)}\;\cdots)_3, \\
z' &= (0\;.\;z^{(1)}\;\cdots\;z^{(r)}\;2\;0\;0\;\cdots)_3,
\end{aligned}
\]
where $z^{(1)},\cdots,z^{(r)} \in \{0,2\}$. We have that $z \leq z'$ and that $z < Z < z'$ implies $Z^{(r+1)} = 1$. So,
\[
0 \leq F_Z(z') - F_Z(z) = \pr(z < Z < z') \leq \pr(Z^{(r+1)}=1) \leq \pr(Z \not\in \cC) = 0.
\]
Therefore, it follows from~\eqref{eq:cantorcdfin} that
\[
F_Z(z) = F_Z(z') = \sum_{j=1}^r \frac{z^{(j)}/2}{2^j} + \frac{1}{2^{r+1}}
\]
where $r = \min\{j \geq 1: z^{(j)}=1\} - 1$.
\end{proof}

\begin{proof}[Proof of \Cref{lem:cantorrefl}(ii)]
    The ternary expansion of every number we will meet at this part has zeros starting from the $(r+1)$-th digit, so we only list the first $r$ digits. To prove the first equation, suppose the last $2$ in the ternary expansion of $z$ appears at the $(s+1)$-th digit, then
\[
\begin{aligned}
z &= (0\;.\;z^{(1)}\;\cdots\;z^{(s)}\;2\;0\;\cdots\;0)_3, \\
z - 3^{-r} &= (0\;.\;z^{(1)}\;\cdots\;z^{(s)}\;1\;2\;\cdots\;2)_3.
\end{aligned}
\]
Applying~\Cref{lem:cantorrefl}(i) to $F_Z(z)$ and $F_Z(z-3^{-r})$, we obtain
\[
\begin{aligned}
F_Z(z) &= \sum_{j=1}^s \frac{z^{(j)}/2}{2^j} + \frac{1}{2^{s+1}}, \\
F_Z(z-3^{-r}) &= \sum_{j=1}^s \frac{z^{(j)}/2}{2^j} + \frac{1}{2^{s+1}}.
\end{aligned}
\]
Hence $F_Z(z-3^{-r})=F_Z(z)$. If such $s$ does not exist, then all the first $r$ digits of the ternary expansion of $z$ are $0$, so $z=0$, then we also have $F_Z(z-3^{-r})=F_Z(z)$ because they are both equal to $0$. Putting together we prove the first equation.

To prove the second equation, suppose the last $0$ in the ternary expansion of $z$ appears at the $(s+1)$-th digit. Then,
\[
\begin{aligned}
z &= (0\;.\;z^{(1)}\;\cdots\;z^{(s)}\;0\;2\;\cdots\;2)_3, \\
z + 3^{-r} &= (0\;.\;z^{(1)}\;\cdots\;z^{(s)}\;1\;0\;\cdots\;0)_3, \\
z + 2 \cdot 3^{-r} &= (0\;.\;z^{(1)}\;\cdots\;z^{(s)}\;1\;0\;\cdots\;1)_3.
\end{aligned}
\]
Again, applying~\Cref{lem:cantorrefl}(i), we obtain
\[
\begin{aligned}
F_Z(z) &= \sum_{j=1}^s \frac{z^{(j)}/2}{2^j} + \sum_{j=s+2}^r \frac{1}{2^j}, \\
F_Z(z + 3^{-r}) &= \sum_{j=1}^s \frac{z^{(j)}/2}{2^j} + \frac{1}{2^{s+1}}, \\
F_Z(z + 2 \cdot 3^{-r}) &= \sum_{j=1}^s \frac{z^{(j)}/2}{2^j} + \frac{1}{2^{s+1}}.
\end{aligned}
\]
Hence $F_Z(z) + 2^{-r} = F_Z(z + 3^{-r}) = F_Z(z + 2 \cdot 3^{-r})$. If such $s$ does not exist, then all the first $r$ digits of the ternary expansion of $z$ are $2$, so $z=1-3^{-r}$ and~\Cref{lem:cantorrefl}(i) gives $F_Z(z)=1-2^{-r}$, then we also have $F_Z(z) + 2^{-r} = F_Z(z + 3^{-r}) = F_Z(z + 2 \cdot 3^{-r})$ because they are all equal to $1$. Putting together, we prove the second equation.
\end{proof}

\begin{proof}[Proof of~\Cref{lem:cantorrefl}(iii)]The definitions of $r,z$ and the condition $x<y$ immediately imply the ternary expansions
\[
\begin{aligned}
x &= (0\;.\;z^{(1)}\;\cdots\;z^{(r)}\;0\;x^{(r+1)}\;\cdots)_3, \\
y &= (0\;.\;z^{(1)}\;\cdots\;z^{(r)}\;2\;y^{(r+1)}\;\cdots)_3, \\
z &= (0\;.\;z^{(1)}\;\cdots\;z^{(r)}\;0\;0\;\cdots)_3.
\end{aligned}
\]
So we have
\[
z \leq x \leq z + 3^{-r-1} \leq z + 2 \cdot 3^{-r-1} \leq y \leq z + 3^{-r},
\]
which implies
\[
z - 3^{-r} \leq 2 x - y \leq z \quad\&\quad z + 3^{-r} \leq 2 y - x \leq z + 2 \cdot 3^{-r}.
\]
Therefore,
\[
F_Z(z - 3^{-r}) \leq F_Z(2 x - y) \leq F_Z(z) \quad\&\quad F_Z(z + 3^{-r}) \leq F_Z(2 y - x) \leq F_Z(z + 2 \cdot 3^{-r}).
\]
Since $3^r z \in \mathbb{Z}$, using~\Cref{lem:cantorrefl}(ii), we get
\[
F_Z(z) \leq F_Z(2 x - y) \leq F_Z(z) \quad\&\quad F_Z(z) + 2^{-r} \leq F_Z(2 y - x) \leq F_Z(z) + 2^{-r},
\]
which proves the desired result.
\end{proof}

Armed with the above lemmas, we are ready to prove~\Cref{thm:nolimit}. We first analyze the convergence behavior of $(n - 1)a_n$ and $(n - 1)b_n$ defined in~\Cref{lem:probneigh} when $Z$ follows the Cantor distribution.

\vspace*{0.1cm}

\noindent{\bf Analysis of $a_n$}
    Since $F_Z$ is continuous, there are almost surely no ties. Then, $a_n$ can be simplified:
\[
\begin{aligned}
a_n &= \pr[\forall j \in \{3,\ldots,n\}: Z_j \not\in B(Z_1,|Z_2-Z_1|) \cup B(Z_2,|Z_1-Z_2|)] \\
&= \E[\pr[\forall j \in \{3,\ldots,n\}: Z_j \not\in B(Z_1,|Z_2-Z_1|) \cup B(Z_2,|Z_1-Z_2|) \mid Z_1,Z_2]] \\
&= \E[(1-\mu_Z(B(Z_1,|Z_2-Z_1|) \cup B(Z_2,|Z_1-Z_2|)))^{n-2}].
\end{aligned}
\]
Using~\Cref{lem:expexp}, we have that
\[
\begin{aligned}
a_n &= (1+O(\ln^4 n / n)) I_n + O(e^{- \ln^2 n}) \\
\quad\text{where}\quad
I_n &:= \E \exp(- n \mu_Z(B(Z_1,|Z_2-Z_1|) \cup B(Z_2,|Z_1-Z_2|))).
\end{aligned}
\]
Now we use~\Cref{lem:cantorrefl}(iii) to analyze the term $\mu_Z(B(Z_1,|Z_2-Z_1|) \cup B(Z_2,|Z_1-Z_2|))$. Define $R := \min\{j \geq 1: Z_1^{(j)} \neq Z_2^{(j)}\} - 1$; when $Z_1<Z_2$, we have
\[
\begin{aligned}
&\; \mu_Z(B(Z_1,|Z_2-Z_1|) \cup B(Z_2,|Z_1-Z_2|)) = \mu_Z((2Z_1-Z_2,2Z_2-Z_1)) \\
=&\; F_Z(2Z_2-Z_1)-F_Z(2Z_1-Z_2) = 2^{-R}.
\end{aligned}
\]
When $Z_1>Z_2$, by the i.i.d. property, we also have
\[
\mu_Z(B(Z_1,|Z_2-Z_1|) \cup B(Z_2,|Z_1-Z_2|)) = 2^{-R}.
\]
Since $Z_1 \neq Z_2$ almost surely, the above equality almost surely holds. Substituting it back, we obtain
\[
I_n = \E \exp(- n \cdot 2^{-R}).
\]
By definition, $(R+1)$ follows the Geometry distribution with success probability $1/2$, which means that $\pr(R=k) = 2^{-k-1}, k \geq 0$. So we get
\[
I_n = \sum_{k \geq 0} 2^{-k-1} e^{- 2^{-k} n}.
\]
Notice also that 
\[
\sum_{k < 0} 2^{-k-1} e^{- 2^{-k} n} = \frac12 \sum_{k \geq 1} 2^k e^{- 2^k n} \leq \frac12 \sum_{k \geq 1} 2^k e^{- 2^k - n + 1} = \left(\frac{e}{2} \sum_{k \geq 1} 2^k e^{- 2^k}\right) e^{-n} = O(e^{-n}),
\]
where the second step uses the inequality $a b \geq a + b - 1$ for $a,b \geq 1$, which can be derived from the fact $(a-1)(b-1) \geq 0$.

In light of both formulas, we can write
\[
I_n = \sum_{k \in \mathbb{Z}} 2^{-k-1} e^{- 2^{-k} n} + O(e^{-n}).
\]
Now, putting together, we get
\[
a_n = (1 + O(\ln^4 n / n)) \left(\sum_{k \in \mathbb{Z}} 2^{-k-1} e^{- 2^{-k} n} + O(e^{-n})\right) + O(e^{- \ln^2 n}).
\]
In other words, 
\begin{align}
n a_n &= (1+O(\ln^4 n / n)) \cdot \frac12 A(n) + O(n e^{- \ln^2 n}) \label{eq:nantemp} \\
\text{where}\quad A(x) &:= \sum_{k \in \mathbb{Z}} 2^{-k} x e^{- 2^{-k} x} \quad,\quad x \in (0,+\infty). \label{eq:axdef}
\end{align}
We observe some basic properties of $A(x)$:
\begin{itemize}
\item[(A1)] For any fixed $x \in (0,+\infty)$, the summation $\sum_{k = -n}^n 2^{-k} x e^{- 2^{-k} x}$ converges as $n \to \infty$. So $A: (0,+\infty) \to \R$ is well-defined.
\item[(A2)] By replacing $k$ with $(k+1)$, we have that $A(2x)=A(x)$ for any $x \in (0,+\infty)$.
\item[(A3)] For any $x \in [1,2]$, we have $2^{-k} x e^{- 2^{-k} x} \leq 2^{1-k} e^{- 2^{-k}}$ and $\sum_{k \in \mathbb{Z}} 2^{1-k} e^{- 2^{-k}} < \infty$. So it follows from dominated convergence theorem that $A(x)$ is continuous on $x \in [1,2]$; moreover, $A(x)$ is uniformly bounded on $x \in [1,2]$.
\end{itemize}
From (A3) we know that $A$ is uniformly bounded on $x \in [1,2]$. Combining this with (A2), we know that $A$ is uniformly bounded on the full domain $x \in (0,+\infty)$. Therefore, our result~\eqref{eq:nantemp} can be further simplified as
\begin{equation}\label{eq:nanfinal}
(n - 1) a_n = \frac12 A(n) + o(1).
\end{equation}

\vspace*{0.1cm}

\noindent{\bf Analysis of $b_n$.} The analysis of $b_n$ is similar to that of $a_n$. Since $F_Z$ is continuous, there are almost surely no ties. Using this fact, together with the i.i.d. property, we have
\[
\begin{aligned}
\frac{b_n}{n-2} &= \frac{1}{n-2} \sum_{j=3}^n \pr(M_Z(1)=M_Z(2)=j) = \pr(M_Z(1)=M_Z(2)=3) \\
&= \pr[\forall j \in \{4,\ldots,n\}: Z_j \not\in B(Z_1,|Z_3-Z_1|) \cup B(Z_2,|Z_3-Z_2|); |Z_1-Z_3|,|Z_2-Z_3|<|Z_1-Z_2|] \\
&= \E[(1-\mu_Z(B(Z_1,|Z_3-Z_1|) \cup B(Z_2,|Z_3-Z_2|)))^{n-3} \one\{ |Z_1-Z_3|,|Z_2-Z_3|<|Z_1-Z_2|\}] \\
&= \E[(1-\mu_Z(B(Z_1,|Z_3-Z_1|) \cup B(Z_2,|Z_3-Z_2|)))^{n-3} \one\{ Z_1<Z_3<Z_2 \text{ or } Z_2<Z_3<Z_1\}] \\
&= 2 \E[(1-\mu_Z(B(Z_1,|Z_3-Z_1|) \cup B(Z_2,|Z_3-Z_2|)))^{n-3} \one\{ Z_1<Z_3<Z_2\}] \\
&= 2 \E[(1-\mu_Z((2Z_1-Z_3,2Z_2-Z_3)))^{n-3} \one\{ Z_1<Z_3<Z_2\}].
\end{aligned}
\]
Using~\Cref{lem:expexp}, we have that
\[
\begin{aligned}
\frac{b_n}{2(n-2)} &= (1 + O(\ln^4 n / n)) J_n + O(e^{- \ln^ 2 n}) \\
\text{where}\quad J_n &:= \E[e^{- n \mu_Z((2Z_1-Z_3,2Z_2-Z_3))} \one\{ Z_1<Z_3<Z_2\}].
\end{aligned}
\]
Now we use~\Cref{lem:cantorrefl}(iii) to analyze the term $\mu_Z((2Z_1-Z_3,2Z_2-Z_3))$. Define corresponding terms
\[
\begin{aligned}
R_1 := \min\{j \geq 1: Z_1^{(j)} \neq Z_3^{(j)}\} - 1 \quad&\&\quad \bar{Z}_1 := (0.Z_3^{(1)} \cdots Z_3^{(R_1)} 0 0 \cdots)_3, \\
R_2 := \min\{j \geq 1: Z_2^{(j)} \neq Z_3^{(j)}\} - 1 \quad&\&\quad \bar{Z}_2 := (0.Z_3^{(1)} \cdots Z_3^{(R_2)} 0 0 \cdots)_3.
\end{aligned}
\]
Different from the analysis of $a_n$ previously, here we first study the symmetry. Rewrite $R_1=r(Z_1,Z_3)$ and $R_2=r(Z_2,Z_3)$, where $r(x,y) := \min\{j \geq 1: x^{(j)} \neq y^{(j)}\} - 1$. Notice that almost surely, there does not exist an $n > 0$ satisfying that $\forall j > n, Z^{(j)} = 0$, which gives that the ternary expansion of $(1-Z)$ can be obtained from the ternary expansion of $Z$ by interchanging the digits $0$ and $2$. So, by the definition of $r(\cdot,\cdot)$, we have $r(Z_1,Z_3)=r(1-Z_1,1-Z_3),r(Z_2,Z_3)=r(1-Z_2,1-Z_3)$ almost surely. Now, according to the fact that $Z$ and $(1-Z)$ are identically distributed, we have
\[
\begin{aligned}
&\; \E[e^{- n \mu_Z((2Z_1-Z_3,2Z_2-Z_3))} \one\{ Z_1<Z_3<Z_2, r(Z_1,Z_3) < r(Z_2,Z_3)\}] \\
=&\; \E[e^{- n \mu_Z((1-2Z_1+Z_3,1-2Z_2+Z_3))} \one\{ 1-Z_1<1-Z_3<1-Z_2, r(1-Z_1,1-Z_3) < r(1-Z_2,1-Z_3)\}] \\
=&\; \E[e^{- n \mu_Z((2Z_2-Z_3,2Z_1-Z_3))} \one\{ Z_2<Z_3<Z_1, r(Z_1,Z_3) < r(Z_2,Z_3)\}] \\
=&\; \E[e^{- n \mu_Z((2Z_1-Z_3,2Z_2-Z_3))} \one\{ Z_1<Z_3<Z_2, r(Z_2,Z_3) < r(Z_1,Z_3)\}].
\end{aligned}
\]
Moreover, notice that under $Z_1<Z_3<Z_2$, the definitions of $R_1,R_2$ imply that $Z_3^{(R_1+1)}=2, Z_3^{(R_2+1)}=0$, so $R_1 \neq R_2$. Therefore, we have that
\[
\begin{aligned}
J_n &= \E[e^{- n \mu_Z((2Z_1-Z_3,2Z_2-Z_3))} \one\{ Z_1<Z_3<Z_2\} \one\{ R_1<R_2 \text{ or } R_2<R_1\}] \\
&= 2 \E[e^{- n \mu_Z((2Z_1-Z_3,2Z_2-Z_3))} \one\{ Z_1<Z_3<Z_2, R_1<R_2\}].
\end{aligned}
\]
Then, we use~\Cref{lem:cantorrefl}(iii) under the event $\{Z_1<Z_3<Z_2, R_1<R_2\}$:
\[
\begin{aligned}
\mu_Z((2Z_1-Z_3,2Z_2-Z_3)) &= F_Z(2Z_2-Z_3) - F_Z(2Z_1-Z_3)
= F_Z(\bar{Z}_2) + 2^{-R_2} - F_Z(\bar{Z}_1) \\
&= \sum_{j=1}^{R_2} \frac{Z_3^{(j)}/2}{2^j} + 2^{-R_2} - \sum_{j=1}^{R_1} \frac{Z_3^{(j)}/2}{2^j}
= 2^{-R_2}+ \sum_{j=R_1+1}^{R_2} 2^{-j-1} Z_3^{(j)}.
\end{aligned}
\]
Substituting it back, we obtain
\[
\frac12 J_n = \E\left[\exp\left(- n \left(2^{-R_2} + \sum_{j=R_1+1}^{R_2} 2^{-j-1} Z_3^{(j)}\right)\right) \one\{ Z_1<Z_3<Z_2, R_1<R_2\}\right].
\]

Now, we calculate the value of $J_n/2$. First, we may reexpress $J_n / 2$ as:
\[
\frac12 J_n = \sum_{0 \leq i < k} \E\left[\exp\left(- n \left(2^{-k} + \sum_{j=i+1}^k 2^{-j-1} Z_3^{(j)}\right)\right) \one\{ Z_1<Z_3<Z_2, R_1=i, R_2=k\}\right].
\]
For any $0 \leq i < k$, the event $\{Z_1<Z_3<Z_2, R_1=i, R_2=k\}$ is equivalent to the event
\[
\{
(Z_1^{(j)})_{1 \leq j \leq i} = (Z_3^{(j)})_{1 \leq j \leq i}, Z_1^{(i+1)}=0, Z_3^{(i+1)}=2,
(Z_3^{(j)})_{1 \leq j \leq k} = (Z_2^{(j)})_{1 \leq j \leq k}, Z_3^{(k+1)}=0, Z_2^{(k+1)}=2
\},
\]
so conditioning on $Z_3$, it happens with probability $2^{-i-k-2} \one\{Z_3^{(i+1)}=2, Z_3^{(k+1)}=0\}$. So, we get
\[
\begin{aligned}
\frac12 J_n &= \sum_{0 \leq i < k} \E\left[\exp\left(- n \left(2^{-k} + \sum_{j=i+1}^k 2^{-j-1} Z_3^{(j)}\right)\right) \pr[Z_1<Z_3<Z_2, R_1=i, R_2=k \mid Z_3]\right] \\
&= \sum_{0 \leq i < k} 2^{-i-k-2} \E\left[\exp\left(- n \left(2^{-k} + \sum_{j=i+1}^k 2^{-j-1} Z_3^{(j)}\right)\right) \one\{ Z_3^{(i+1)}=2, Z_3^{(k+1)}=0\}\right] \\
&= \sum_{0 \leq i < k} 2^{-i-k-4} \E\left[\exp\left(- n \left(2^{-k} + 2^{-i-1} + \sum_{j=i+2}^k 2^{-j-1} Z_3^{(j)}\right)\right)\right].
\end{aligned}
\]
We calculate the expectation:
\[
\begin{aligned}
&\; \E\left[\exp\left(- n \left(2^{-k} + 2^{-i-1} + \sum_{j=i+2}^k 2^{-j-1} Z_3^{(j)}\right)\right)\right]
= e^{- 2^{-k} n - 2^{-i-1} n} \E\left[\prod_{j=i+2}^k e^{- 2^{-j-1} Z_3^{(j)} n}\right] \\
=&\; e^{- 2^{-k} n - 2^{-i-1} n} \prod_{j=i+2}^k \frac12 (1 + e^{- 2^{-j} n})
= 2^{i+1-k} e^{- 2^{-k} n - 2^{-i-1} n} \prod_{j=i+2}^k (1 + e^{- 2^{-j} n}).
\end{aligned}
\]
Substituting it back, we get
\[
\frac12 J_n = \sum_{0 \leq i < k}2^{-2k-3} e^{- 2^{-k} n - 2^{-i-1} n} \prod_{j=i+2}^k (1 + e^{- 2^{-j} n})
= \sum_{k=1}^{\infty} 2^{-2k-3} e^{- 2^{-k} n} \sum_{i=0}^{k-1} e^{- 2^{-i-1} n} \prod_{j=i+2}^k (1 + e^{- 2^{-j} n}).
\]
We calculate the inner summation, denoted as $S_k$. Notice that we can do telescoping:
\[
S_k = \sum_{i=0}^{k-1} \left(\prod_{j=i+1}^k (1 + e^{- 2^{-j} n}) - \prod_{j=i+2}^k (1 + e^{- 2^{-j} n})\right) = \prod_{j=1}^k (1 + e^{- 2^{-j} n}) - 1.
\]
Then, since
\[
\prod_{j=0}^{k-1} (1 + x^{2^j}) = \frac{(1-x) (1+x) (1+x^2) \cdots (1+x^{2^{k-1}})}{1 - x} = \frac{1 - x^{2^k}}{1 - x}
\]
from repeated application of $a^2-b^2=(a-b)(a+b)$, we have by plugging $x = e^{- 2^{-k} n}$ that
\[
S_k = \prod_{j=0}^{k-1} (1 + e^{- 2^{-k} n \cdot 2^j}) - 1 = \frac{1 - e^{-n}}{1 - e^{- 2^{-k} n}} - 1 = \frac{e^{- 2^{-k} n} - e^{-n}}{1 - e^{- 2^{-k} n}}.
\]
Substituting back and using a similar extension of summation range as that in the analysis of $a_n$:
\[
\frac12 J_n
= \sum_{k \geq 1} 2^{-2k-3} \frac{e^{- 2^{-k} n} - e^{-n}}{e^{2^{-k} n} - 1}
= \sum_{k \in \mathbb{Z}} \frac{2^{-2k-3}}{e^{2^{-k} n} (e^{2^{-k} n} - 1)} - \sum_{k \leq 0} \frac{2^{-2k-3}}{e^{2^{-k} n} (e^{2^{-k} n} - 1)} - \sum_{k \geq 1} \frac{2^{-2k-3} e^{-n}}{e^{2^{-k} n} - 1}.
\]
The second summation and the third summation can be bounded:
\[
\begin{aligned}
\sum_{k \leq 0} \frac{2^{-2k-3}}{e^{2^{-k} n} (e^{2^{-k} n} - 1)} &= \sum_{k \geq 0} \frac{2^{2k-3}}{e^{2^k n} (e^{2^k n} - 1)} \leq \sum_{k \geq 0} \frac{2^{2k-3} e^{-2^k-n+1}}{e^n-1} = \left(\frac{e}{8} \sum_{k \geq 0} 2^{2k} e^{-2^k}\right) \frac{e^{-n}}{e^n-1} = O(e^{-2n}), \\
\sum_{k \geq 1} \frac{2^{-2k-3} e^{-n}}{e^{2^{-k} n} - 1} &\leq \sum_{k \geq 1} \frac{2^{-2k-3} e^{-n}}{2^{-k} n} = \frac{e^{-n}}{8n} = O(e^{-n}),
\end{aligned}
\]
where the second step in the first line again uses the inequality $a b \geq a + b - 1$ for $a,b \geq 1$. So, we obtain
\[
\frac12 J_n = \sum_{k \in \mathbb{Z}} \frac{2^{-2k-3}}{e^{2^{-k} n} (e^{2^{-k} n} - 1)} + O(e^{-n}).
\]

Now, putting together, we get
\[
\frac{b_n}{4(n-2)}
= (1 + O(\ln^4 n / n)) \left(\sum_{k \in \mathbb{Z}} \frac{2^{-2k-3}}{e^{2^{-k} n} (e^{2^{-k} n} - 1)} + O(e^{-n})\right) + O(e^{- \ln^2 n}).
\]
Using the fact that $(1-2/n)(1 + O(\ln^4 n / n)) = 1 + O(\ln^4 n / n)$, we can simplify the result as
\begin{align}
n b_n &= (1 + O(\ln^4 n / n)) \cdot \frac12 B(n) + O(n^2 e^{- \ln^2 n}) \label{eq:nbntemp} \\
\text{where}\quad B(x) &:= \sum_{k \in \mathbb{Z}} \frac{2^{-2k} x^2}{e^{2^{-k} x} (e^{2^{-k} x} - 1)} \quad,\quad x \in (0,+\infty). \label{eq:bxdef}
\end{align}
We observe some basic properties of $B(x)$:
\begin{itemize}
\item[(B1)] For any fixed $x \in (0,+\infty)$, the summation $\sum_{k = -m}^n \frac{2^{-2k} x^2}{e^{2^{-k} x} (e^{2^{-k} x} - 1)}$ converges as $m, n \to \infty$. So $B: (0,+\infty) \to \R$ is well-defined.
\item[(B2)] By replacing $k$ with $(k+1)$, we have that $B(2x)=B(x)$ for any $x \in (0,+\infty)$.
\item[(B3)] For any $x \in [1,2]$, we have $\frac{2^{-2k} x^2}{e^{2^{-k} x} (e^{2^{-k} x} - 1)} \leq \frac{2^{2-2k}}{e^{2^{-k}} (e^{2^{-k}} - 1)}$ and $\sum_{k \in \mathbb{Z}} \frac{2^{2-2k}}{e^{2^{-k}} (e^{2^{-k}} - 1)} < \infty$. So it follows from dominated convergence theorem that $B(x)$ is continuous on $x \in [1,2]$; moreover, $B(x)$ is uniformly bounded on $x \in [1,2]$.
\end{itemize}
From (B3) we know that $B$ is uniformly bounded on $x \in [1,2]$. Combining this with (B2), we know that $B$ is uniformly bounded on the full domain $(0,+\infty)$. Therefore, our result~\eqref{eq:nbntemp} can be further simplified as
\begin{equation}\label{eq:nbnfinal}
(n - 1) b_n = \frac12 B(n) + o(1).
\end{equation}

\paragraph{Proof of the main result} We prove by contradiction. Suppose there exist coefficients $(C_a,C_b) \neq (0,0)$ and a constant $C$ such that
\[
C_a (n - 1) a_n + C_b  (n - 1) b_n \to C, \quad\mathrm{as}\; n \to \infty.
\]
Then,~\eqref{eq:nanfinal} and~\eqref{eq:nbnfinal} give
\[
C_a A(n) + C_b B(n) \to 2C.
\]
Using the log-periodicity $A(2 x) = A(x), B(2 x) = B(x)$ established in (A2) and (B2) before, we have that
\[
C_a A(n / 2^{\lfloor \log_2 n \rfloor}) + C_b B(n / 2^{\lfloor \log_2 n \rfloor}) \to 2C.
\]
According to part (ii) of~\Cref{lem:limitpoint}, for any $x \in [1,2)$, there exists a subsequence $n_k / 2^{\lfloor \log_2 n_k \rfloor} \to x$. Substituting $n=n_k$ in the above formula, taking the limit $k \to \infty$, and using the continuity of $A,B$ on $[1,2]$ established in (A3) and (B3) before, we get
\[
C_a A(x) + C_b B(x) = 2C, \quad \forall x \in [1,2).
\]

Now we consider the ``Fourier transform'' of the above equation w.r.t. $\log x$. Formally speaking, for any integer $m \geq 1$, we multiply $x^{2 \pi i m / \ln 2} / x$ in the above equation and take integral over $x \in [1,2)$:
\begin{equation}\label{eq:abint}
C_a \int_1^2 A(x) \frac{x^{2 \pi i m / \ln 2}}{x} d x + C_b \int_1^2 B(x) \frac{x^{2 \pi i m / \ln 2}}{x} d x
= 2C \int_1^2 \frac{x^{2 \pi i m / \ln 2}}{x} d x.
\end{equation}
We calculate the three integrals respectively. The last one is easy:
\[
\int_1^2 \frac{x^{2 \pi i m / \ln 2}}{x} d x
= \left.\frac{x^{2 \pi i m / \ln 2}}{2 \pi i m / \ln 2}\right|_1^2
= 0.
\]
Denote the first two integrals as $\hat{A}_m,\hat{B}_m$, respectively. For $\hat{A}_m$, using the definition of $A(x)$ given in~\eqref{eq:axdef}, we have that
\[
\begin{aligned}
\hat{A}_m
&= \int_1^2 \sum_{k \in \mathbb{Z}} 2^{-k} e^{- 2^{-k} x} x^{\frac{2 \pi i m}{\ln 2}} d x
= \sum_{k \in \mathbb{Z}} \int_1^2 2^{-k} e^{- 2^{-k} x} x^{\frac{2 \pi i m}{\ln 2}} d x \\
&= \sum_{k \in \mathbb{Z}} \int_{2^{-k}}^{2^{1-k}} t^{\frac{2 \pi i m}{\ln 2}} e^{-t} d t
= \int_0^{+\infty} t^{\frac{2 \pi i m}{\ln 2}} e^{-t} d t
= \Gamma\left(1+\frac{2 \pi i m}{\ln 2}\right).
\end{aligned}
\]

Here, the $\Gamma$ represents Euler's Gamma function; the second step uses the Fubini theorem; and the third step uses a change of variable $t := 2^{-k} x$ together with the log-periodicity $(2x)^{2 \pi i m / \ln 2} = x^{2 \pi i m / \ln 2}$. The condition for the Fubini theorem applied in the second step holds because for any $m \ge 1$,
\[
\int_1^2 \sum_{k \in \mathbb{Z}} \left|2^{-k} e^{- 2^{-k} x} x^{\frac{2 \pi i m}{\ln 2}}\right| d x = \int_1^2 \sum_{k \in \mathbb{Z}} \left|2^{-k} e^{- 2^{-k} x} e^{\frac{2 \pi i m \ln x}{\ln 2}}\right| d x
= \int_1^2 \sum_{k \in \mathbb{Z}} 2^{-k} e^{- 2^{-k} x} d x
< \infty.
\]
For $\hat{B}_m$, using the definition of $B(x)$ given in~\eqref{eq:bxdef}, we calculate it with an analogous method:
\[
\begin{aligned}
\hat{B}_m
&= \int_1^2 \sum_k \frac{2^{-2k} x}{e^{2^{-k}x}(e^{2^{-k}x}-1)} x^{\frac{2 \pi i m}{\ln 2}} d x
= \sum_k \int_1^2 \frac{2^{-2k} x}{e^{2^{-k}x}(e^{2^{-k}x}-1)} x^{\frac{2 \pi i m}{\ln 2}} d x \\
&= \sum_k \int_{2^{-k}}^{2^{1-k}} \frac{t}{e^t(e^t-1)} t^{\frac{2 \pi i m}{\ln 2}} d t
= \int_0^{+\infty} \frac{t}{e^t(e^t-1)} t^{\frac{2 \pi i m}{\ln 2}} d t.
\end{aligned}
\]
Here similarly, the second step uses the Fubini theorem, the condition of which follows from an analogous argument; and the third step uses the change of variable $t := 2^{-k} x$ together with the log-periodicity $(2x)^{2 \pi i m / \ln 2} = x^{2 \pi i m / \ln 2}$. Now we further calculate the integral on the far right of the above equation:
\[
\begin{aligned}
\hat{B}_m
&= \int_0^{+\infty} \frac{t^{1 + 2 \pi i m / \ln 2} d t}{e^{2t} (1-e^{-t})}
= \int_0^{+\infty} e^{-2t} \left(\sum_{n \geq 0} e^{- n t}\right) t^{1 + \frac{2 \pi i m}{\ln 2}} d t
= \int_0^{+\infty} \sum_{n \geq 2} t^{1 + \frac{2 \pi i m}{\ln 2}} e^{- n t} d t \\
&= \sum_{n \geq 2} \int_0^{+\infty} t^{1 + \frac{2 \pi i m}{\ln 2}} e^{- n t} d t
= \sum_{n \geq 2} \int_0^{+\infty} n^{- 2 - \frac{2 \pi i m}{\ln 2}} u^{1 + \frac{2 \pi i m}{\ln 2}} e^{-u} d u
= \sum_{n \geq 2} n^{- 2 - \frac{2 \pi i m}{\ln 2}} \Gamma\left(2 + \frac{2 \pi i m}{\ln 2}\right) \\
&= \left(\zeta\left(2 + \frac{2 \pi i m}{\ln 2}\right) - 1\right) \Gamma\left(2 + \frac{2 \pi i m}{\ln 2}\right).
\end{aligned}
\]
Here, the $\zeta$ represents Riemann's zeta function; the fourth step uses the Fubini theorem, the condition of which follows from similar arguments; and the fifth step uses a change of variable $ u := n t$.

Substituting the three integrals into~\eqref{eq:abint}, we get
\[
C_a \Gamma\left(1 + \frac{2 \pi i m}{\ln 2}\right) + C_b \left(\zeta\left(2 + \frac{2 \pi i m}{\ln 2}\right) - 1\right) \Gamma\left(2 + \frac{2 \pi i m}{\ln 2}\right) = 0,\quad \forall m \geq 1.
\]
It is well known that $\Gamma(z+1) = z \Gamma(z)$ and $\Gamma$ has no zeros (see~\citet[Definition~1.1.1 and Theorem~1.1.2]{andrews1999gamma}). So we can eliminate a $\Gamma(1 + 2 \pi i m / \ln 2)$ in the above equation to get
\[
C_a + C_b \left(\zeta\left(2 + \frac{2 \pi i m}{\ln 2}\right) - 1\right) \left(1 + \frac{2\pi i m}{\ln 2}\right) = 0.
\]
From the equation we know $C_b \neq 0$, otherwise $C_a$ will also be $0$, contradicting with our requirement $(C_a,C_b) \neq (0,0)$. So divide $C_b$ and solve $\zeta(2 + 2 \pi i m / \ln 2)$:
\[
\zeta\left(2 + \frac{2 \pi i m}{\ln 2}\right) = 1 - \frac{C_a / C_b}{1 + \frac{2 \pi i m}{\ln 2}}.
\]
Using the definition $\zeta(s) := \sum_{n \geq 1} n^{-s}$, we have that
\[
\frac14 + \frac19 e^{- 2 \pi i m \log_2 3} = - \frac{C_a / C_b}{1 + \frac{2 \pi i m}{\ln 2}} - \sum_{n \geq 4} \frac{1}{n^{2 + 2 \pi i m / \ln 2}}.
\]
Using the periodicity $e^{2 \pi i m (x + 1)} = e^{2 \pi i m x}$ and the triangle inequality, we have that
\[
\left|\frac14 + \frac19 e^{- 2 \pi i ((m \log_2 3) - \lfloor m \log_2 3\rfloor)}\right| \leq \left|\frac{C_a / C_b}{1 + \frac{2 \pi i m}{\ln 2}}\right| + \sum_{n \geq 4} \frac{1}{n^2}
\quad,\quad \forall m \geq 0.
\]
Apparently $\log_2 3$ is an irrational number, so using part (i) of~\Cref{lem:limitpoint}, we obtain a strictly increasing sequence $\{m_{\ell}\}_{\ell=1}^{\infty} \in \mathbb{Z}^+$ such that $(m_{\ell} \log_2 3) - \lfloor m_{\ell} \log_2 3 \rfloor \to 0 ,\; \ell \to \infty$. Substituting $m = m_{\ell}$ in the above inequality and taking the limit $\ell \to \infty$, we get
\[
\frac14 + \frac19 \leq \sum_{n \geq 4} \frac{1}{n^2},
\]
which is a contradiction since $\sum_{n \geq 4} \frac{1}{n^2} \leq \sum_{n \geq 4} \frac{1}{(n-1)n} = \frac13 < \frac14 + \frac19$.

\section{Proof of results in~\Cref{sec:algorithm}}\label{sec:pfalg}

\subsection{Proof of~\Cref{thm:complexity}}

\begin{proof}[Proof of time complexity]
    We first discuss time complexity. For Algorithm~\ref{alg:2drank}, Step 1 has time complexity $O(n_a \log n_a)$. For Step 2, the computation of each $k_i$ has time complexity at most $O(\log n_a)$, so in total it is $O(n_b \log n_a)$. For Step 4, in the $j$-th round, the construction of each $\bs{e}_{j'}^j$ has time complexity $O(2^{j - 1} \log 2^{j - 1})$, so this step has time complexity $O(J_j' \cdot 2^{j - 1} \log 2^{j - 1}) = O(n_a \log n_a)$. Similarly, the time complexity for Steps 5 - 7 is $O(n_b \log 2^{j - 1}) = O(n_b \log n_a)$. Since Steps 4 - 7 are repeated $J$ times, its total time complexity is $O(n (\log n)^2)$. Putting together, we have that the time complexity of Algorithm~\ref{alg:2drank} is $O(n (\log n)^2)$.

For Algorithm~\ref{alg:drank}, we prove it by induction. When $d = 3$, following analogous analysis we can prove that Steps 1-2 and Steps 4-8 have time complexity at most $O(n \log n)$. Then the complexity depends only on Step 9. For each $(\mathcal{A}_{j'}, \cB_{j'})$, the time complexity to run Algorithm~\ref{alg:2drank} is $O((|\mathcal{A}_{j'}| + |\cB_{j'}|) (\log n)^2)$. So the total complexity for Step 9 is $O((n_a + n_b) (\log n)^2)$. This means that Algorithm~\ref{alg:drank} has complexity $O(n (\log n)^3)$.

For the higher-dimensional case, suppose in the $(d-1)$-dimensional regime the complexity is $O(n (\log n)^{d - 1})$, then applying an analogous argument, we have that for the $d$-dimensional regime, Step 9 has complexity $O((n_a + n_b) (\log n)^{d - 1})$, so that the entire algorithm has complexity $O(n (\log n)^d)$, thereby proving the desired result.
\end{proof}

We second discuss the correctness. We start by introducing the following lemma:

\begin{lemma}\label{lem:jl}
    Consider a $k \ge 2$. For any $1 \le k' < k$, there exists a unique even number $\ell \ge 2$ and a unique integer $j \ge 1$ such that
    \begin{equation}\label{eq:jl}
        2^{j - 1}(\ell - 2) < k' \le 2^{j - 1} \cdot (\ell - 1) < k \le 2^{j - 1} \cdot \ell.
    \end{equation}
\end{lemma}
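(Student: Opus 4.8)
The plan is to restate the inequality chain~\eqref{eq:jl} in terms of ceiling functions and then reduce the whole statement to a one–dimensional monotonicity-plus-parity argument on dyadic scales. Fix $1 \le k' < k$. For any positive integer $t$ and any $j \ge 1$ one has $\lceil t/2^{j-1}\rceil = \ell$ if and only if $2^{j-1}(\ell-1) < t \le 2^{j-1}\ell$. Applying this with $t = k$ and then with $t = k'$, the chain~\eqref{eq:jl} holds for a given pair $(j,\ell)$ if and only if
\[
\lceil k/2^{j-1}\rceil = \ell, \qquad \lceil k'/2^{j-1}\rceil = \ell - 1, \qquad \ell \text{ is even}.
\]
Since $k' \ge 1$ forces $\lceil k'/2^{j-1}\rceil \ge 1$, the middle condition automatically gives $\ell \ge 2$, so it remains to show that there is exactly one $j \ge 1$ for which $\lceil k/2^{j-1}\rceil - \lceil k'/2^{j-1}\rceil = 1$ and $\lceil k/2^{j-1}\rceil$ is even; the value of $\ell$ is then forced to be $\lceil k/2^{j-1}\rceil$.

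Next I would set $a_m := \lceil k/2^m\rceil$ and $a'_m := \lceil k'/2^m\rceil$ for $m \ge 0$, and use the standard nested-ceiling identity $\lceil \lceil k/2^m\rceil/2\rceil = \lceil k/2^{m+1}\rceil$ to write $a_{m+1} = \lceil a_m/2\rceil$ and $a'_{m+1} = \lceil a'_m/2\rceil$. Put $g(m) := a_m - a'_m \ge 0$. A short computation, splitting on the parity of $a'_m$, shows $g(m+1) \in \{\lfloor g(m)/2\rfloor, \lceil g(m)/2\rceil\}$ for all $m$; in particular $g$ is a nonincreasing sequence of nonnegative integers, $g(m) \ge 2$ implies $g(m+1) \ge 1$, and $g(m) = 0$ implies $g(m+1) = 0$. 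Since $g(0) = k - k' \ge 1$ while $g(m) = 0$ as soon as $2^m \ge k$, there is a largest index $m^\ast$ with $g(m^\ast) \ge 1$; because $g$ cannot jump from a value $\ge 2$ directly to $0$, we must have $g(m^\ast) = 1$ and $g(m^\ast+1) = 0$.

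The final ingredient is a parity dictionary: when $g(m) = 1$, i.e.\ $a'_m = a_m - 1$, the equality $\lceil a_m/2\rceil = \lceil (a_m-1)/2\rceil$ (equivalently $g(m+1) = 0$) holds precisely when $a_m$ is even. Applying this at $m^\ast$ shows $a_{m^\ast}$ is even, so $j := m^\ast + 1$ and $\ell := a_{m^\ast}$ satisfy all three conditions above and $\ell \ge 2$; this gives existence. For uniqueness, if some $j = m+1$ meets the conditions, then $g(m) = 1$ together with $a_m$ even forces $g(m+1) = 0$ by the same identity, and since $g$ is nonincreasing this pins $m$ down as the largest index with $g(m) \ge 1$, i.e.\ $m = m^\ast$; then $\ell = a_m = a_{m^\ast}$ is determined as well. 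Translating back through the ceiling-function equivalences recovers~\eqref{eq:jl}.

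The main obstacle is the bookkeeping around the behavior of $g$: proving $g(m+1) \in \{\lfloor g(m)/2\rfloor, \lceil g(m)/2\rceil\}$ (hence that $g$ never skips over the value $1$) and identifying the exact parity condition under which the gap collapses to $0$. These facts are elementary but must be handled carefully; once they are in place, both existence and uniqueness follow immediately by reading off the largest index $m^\ast$ with $g(m^\ast) \ge 1$.
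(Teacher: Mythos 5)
Your proof is correct. The reduction of~\eqref{eq:jl} to the two ceiling conditions $\lceil k/2^{j-1}\rceil=\ell$ (even) and $\lceil k'/2^{j-1}\rceil=\ell-1$ is exact, the recursion $a_{m+1}=\lceil a_m/2\rceil$ is the standard nested-ceiling identity, and the two facts you flag as the main obstacles — that $g(m+1)\in\{\lfloor g(m)/2\rfloor,\lceil g(m)/2\rceil\}$ (so $g$ is nonincreasing and cannot skip over the value $1$), and that when $g(m)=1$ the gap collapses at the next scale precisely when $a_m$ is even — both check out by the parity case split you describe. The paper instead works with the binary expansions of $k-1$ and $k'-1$: it locates the highest bit position $i^\star$ where they differ, reads off $j=i^\star$ and $\ell$ from the truncated expansions, and proves uniqueness by a case analysis on $j>i^\star$ versus $j<i^\star$ using the floor characterization $\ell-1=\lfloor(k-1)/2^{j-1}\rfloor$. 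The two arguments encode the same dyadic structure (your $a_m-1$ is exactly the paper's right-shifted binary expansion of $k-1$, and your $m^\star+1$ is its $i^\star$), but your gap-sequence formulation makes uniqueness essentially automatic — $m^\star$ is forced to be the last index where $g\ge 1$ — whereas the paper has to argue separately that larger and smaller $j$ both fail. The paper's version, in exchange, gives a more explicit closed-form description of $\ell$ in terms of the bits of $k-1$. Either proof is acceptable.
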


\begin{proof}
Intuitively,~\eqref{eq:jl} means that if we divide integers $\{1, 2, \ldots\}$ into blocks of size $2^{j - 1}$ (i.e., the first block contains quantities $\{1, \ldots, 2^{j - 1}\}$, the second block contains $\{2^{j - 1} + 1, \ldots, 2^j\}$, ...), then $k$ and $k'$ would belong to the $\ell$-th and $(\ell -1)$-th blocks, respectively, and $\ell$ must be an even number.

We first prove such $j$ and $\ell$ exist. Consider the binary representations of $k - 1$ and $k' - 1$:
\[
k - 1 = \sum_{i = 1}^m 2^{i - 1} a_i, \quad k' - 1 = \sum_{i = 1}^m 2^{i - 1} a_i', \quad\textrm{where}\; a_i, a_i' \in \{0, 1\}.
\]
Let $i^\star$ be the largest $i$ such that $a_i \neq a_i'$. Obviously, we have $a_{i^\star} = 1$ and $a_{i^\star}' = 0$ since $k > k'$. Then by setting $j = i^\star$, $\ell = \sum_{i = i^\star}^m 2^{i - i^\star} a_i + 1$ and $\ell' = \sum_{i = i^\star}^m 2^{i - i^\star} a_i' + 1$, it is easy to discover that by dividing $\{1,2, \ldots\}$ into blocks of size $2^{j - 1}$, $k$ and $k'$ would belong to the $\ell$-th and the $\ell'$-th block, respectively. Since $a_{i^\star} = 1$, $\ell$ is an even number. Since for all $i > i^\star, a_i = a_i'$, we have $\ell' = \ell - 1$. This finishes our construction.

We second show uniqueness. Notice that~\eqref{eq:jl} is equivalent to
\[
2^{j-1} (\ell-2) \leq k'-1 < 2^{j-1} (\ell-1) \leq k-1 < 2^{j-1} \ell,
\]
which is further equivalent to
\begin{equation}\label{eq:jltrans}
\ell-2 = \left\lfloor\frac{k'-1}{2^{j-1}}\right\rfloor = \sum_{i = j}^m 2^{i - j} a_i'
\quad\&\quad
\ell-1 = \left\lfloor\frac{k-1}{2^{j-1}}\right\rfloor = \sum_{i = j}^m 2^{i - j} a_i.
\end{equation}

Then in order to prove uniqueness, we only need to prove that for any $j \neq i^\star$, we cannot find an even $\ell$ satisfying~\eqref{eq:jltrans}. We prove it by contradiction.

Suppose there exists some $j>i^\star$ such that we can find an $\ell$ satisfying~\eqref{eq:jltrans}. The definition of $i^\star$ tells that $\forall i>i^\star: a_i'=a_i$. So, here we have
\[
\ell-2 = \sum_{i = j}^m 2^{i - j} a_i' = \sum_{i = j}^m 2^{i - j} a_i = \ell-1,
\]
which is a contradiction.

Suppose there exists some $j<i^\star$ such that we can find an even $\ell$ satisfying~\eqref{eq:jltrans}. Recall that $\forall i>i^\star, a_i'=a_i$ and the fact $a_{i^\star}'=0, a_{i^\star}=1$ deduced in the proof of the existence previously. Then in order to make $\sum_{i = j}^m 2^{i - j} a_i'$ and $\sum_{i = j}^m 2^{i - j} a_i$ to differ only by $1$, we must require $a_{i^\star-1}'=\cdots=a_j'=1$ and $a_{i^\star-1}=\cdots=a_j=0$. Then, $\ell-1$ must be an even number, which contradicts with our requirement that $\ell$ must be even.

Putting together, we prove uniqueness.
\end{proof}

\begin{proof}[Proof of correctness]
We first prove correctness of Algorithm~\ref{alg:2drank}. For each $i$, if $k_i \le 1$, $c_i$ is apparently correct. Otherwise, let $\mathcal{J}_i$ be the set of $j$ where $k_i$ satisfies the criterion in Step 6. We denote the corresponding even integer by $\ell_{j,i}$. Then we may express $c_i$ constructed by the algorithm as
    \[
    c_i = \sum_{j \in \mathcal{J}_i} \sum_{x \in \bs{e}_{\ell_{j,i} - 1}^j}\one\{x \le \bsb_{i,2}\} + \one\{\bsa_{k_i, 2} \le \bsb_{i,2}\}.
    \]
    Then in order to prove correctness, i.e., it is equal to $\sum_{k=1}^{k_i} \one\{\bsa_{k, 2} \le \bsb_{i, 2}\}$, it suffices to show: i) for any $k < k_i$, there exists a \emph{unique} $j \in \mathcal{J}_i$ such that $\bsa_{k, 2} \in \bs{e}_{\ell_{j,i} - 1}^j$; ii) for any $k \ge k_i$, such $j$ does not exist. The first point is a direct consequence of~\Cref{lem:jl}. The second point is based on the fact that, for all $\bs{e}_{\ell_{j,i} - 1}^j$, their corresponding $k$ must be strictly smaller than $k_i$.

    We second prove correctness of Algorithm~\ref{alg:drank}. If $k_i \le 1$, the result is straightforward. Otherwise, we first prove its correctness for $d = 3$. Using $\mathcal{J}_i, \ell_{j, i}$ defined before, we may express $c_i$ as
    \[
    c_i = \sum_{j \in \mathcal{J}_i} \sum_{\bs{x} \in \mathcal{A}_{\ell_{j,i} - 1}^j}\one\{\bs{x} \le \bsb_{i,\{2, \ldots, d\}}\} + \one\{\bsa_{k_i, \{2, \ldots, d\}} \le \bsb_{i,\{2, \ldots, d\}}\}.
    \]
    Then following exactly the same argument as for Algorithm~\ref{alg:2drank} and the fact that Algorithm~\ref{alg:2drank} is correct, we prove the correctness for $d = 3$. Using exactly the same argument, we may also prove that when the algorithm is correct for $d - 1$ ($d \ge 4$), the algorithm is also correct for $d$. Putting together, the desired result follows by induction.
\end{proof}

\subsection{Proof of~\Cref{prop:mixture}}

Consider two random variables $\by', \by''$ such that $(\by', \bz) \sim \pr_0$ and $(\by'', \bz) \sim \pr_1$. Let $\Lambda \sim \mathrm{Bernoulli}(\eta)$ and is independent from other randomness. Finally, let $\by = (1 - \Lambda)\by' + \Lambda \by''$. Then it is straightforward that $(\by, \bz) \sim (1 - \eta) \pr_0 + \eta \pr_1$. Moreover, the marginal distributions of $\by, \by', \by''$ are all the same.

We now have
\[
\taci = \frac{\int \var(\pr(\by \ge \bsy \mid \bz)) d \tilde{\mu}_{\by}(\bsy)}{\int \var(\one\{\by \ge \bsy\}) d \tilde{\mu}_{\by}(\bsy)} = \frac{\int \var(\E(\Lambda \one\{\by'' \ge \bsy\} + (1 - \Lambda) \one\{\by' \ge \bsy\} \mid \bz)) d \tilde{\mu}_{\by}(\bsy)}{\int \var(\one\{\by \ge \bsy\}) d \tilde{\mu}_{\by}(\bsy)}.
\]
Using the independence between $\by'$ and $\bz$, we further have
\[
\taci = \frac{\int \var(\eta \E(\one\{\by'' \ge \bsy\} \mid \bz)) d \tilde{\mu}_{\by}(\bsy)}{\int \var(\one\{\by \ge \bsy\}) d \tilde{\mu}_{\by}(\bsy)} = \frac{\eta^2 \int \var(\pr(\by'' \ge \bsy \mid \bz)) d \tilde{\mu}_{\by''}(\bsy)}{\int \var(\one\{\by'' \ge \bsy\}) d \tilde{\mu}_{\by''}(\bsy)}.
\]
where for the last equality we apply that the marginal distribution of $\by$ and $\by''$ are the same. Since $\by''$ is a function of $\bz$, we further have $\taci = \eta^2$, which proves the desired result.

\subsection{Proof of~\Cref{prop:additive}}

Consider $(\bz_1, \varepsilon_1), \ldots, (\bz_3, \varepsilon_3)$ as three i.i.d. copies of $(\bz, \varepsilon)$ and write $Y_i := \eta h(\bz_i) + \varepsilon_i$ for $i \in \{1,2,3\}$. Write $U := h(\bz_1) - h(\bz)$ and $V := \varepsilon_2 \wedge \varepsilon_3 - \varepsilon_1$. 

First, for any measurable set $S \subseteq (0, \infty)$, it is obvious that the event $\{\varepsilon_2 \wedge \varepsilon_3 - \varepsilon_1 \in S \;\&\; \varepsilon_2 \le \varepsilon_3\}$ must be a subset of the event $\{\varepsilon_2 - \varepsilon_1 \wedge \varepsilon_3 \in S \;\&\; \varepsilon_1 \le \varepsilon_3\}$. This means
\[
\pr(\varepsilon_2 \wedge \varepsilon_3 - \varepsilon_1 \in S \;\&\; \varepsilon_2 \le \varepsilon_3) \le \pr(\varepsilon_2 - \varepsilon_1 \wedge \varepsilon_3 \in S \;\&\; \varepsilon_1 \le \varepsilon_3).
\]
Using the i.i.d. property of the $\varepsilon_i$'s and their continuity, we have that the left hand side and right hand side are equal to $\frac{1}{2} \pr(\varepsilon_2 \wedge \varepsilon_3 - \varepsilon_1 \in S)$ and $\frac{1}{2}\pr(\varepsilon_2 - \varepsilon_1 \wedge \varepsilon_3 \in S)$, respectively, which further implies that
\begin{equation}\label{eq:vsym}
    \pr(V \in S) \le \pr(-V \in S).
\end{equation}

Second, define the function $p(\eta) := \pr(V \ge \eta U)$, then for any $0 \le \eta_1 < \eta_2$, 
\[
    p(\eta_2) - p(\eta_1) = \pr(- \eta_1 U < - V < -\eta_2 U \;\&\; U < 0) - \pr( \eta_1 U < V < \eta_2 U \;\&\; U > 0).
\]
Using further $V \independent U$ and that $U$ is symmetrically distributed around zero, we have
\begin{equation}\label{eq:umonotone}
p(\eta_2) - p(\eta_1) = \pr(\eta_1 U < - V < \eta_2 U \;\&\; U > 0) - \pr( \eta_1 U < V < \eta_2 U \;\&\; U > 0) \ge 0,
\end{equation}
where for the last inequality we apply~\eqref{eq:vsym}.

Armed with~\eqref{eq:vsym},~\eqref{eq:umonotone}, we are now ready to prove the desired result. First, write
\[
\taci = \frac{\int (\E[\pr(Y \ge y \mid \bz)^2] - \pr(Y \ge y)^2) d \mu_Y(y)}{\int (\pr(Y \ge y) - \pr(Y \ge y)^2) d \mu_Y(y)} = \frac{\int \E[\pr(Y \ge y \mid \bz)^2] d \mu_Y(y) - \frac{1}{3}}{\frac{1}{6}}.
\]
Expanding $Y$, we have
\[
\begin{aligned}
& \int \E[\pr(Y \ge y \mid \bz)^2] d \mu_Y(y) = \int \E[\pr(\eta h(\bz) + \varepsilon \ge y \mid \bz)^2] d \mu_Y(y) \\
& \qquad = \int \E[\pr(\varepsilon \ge y - \eta h(\bz) \mid \bz)^2] d \mu_Y(y).
\end{aligned}
\]
Writing $G(t) := \pr(\varepsilon \ge t)$ and using $\varepsilon \independent \bz$, we have
\begin{align*}
    & \int \E[\pr(Y \ge y \mid \bz)^2] d \mu_Y(y) = \int \E[G(y - \eta h(\bz))^2] d \mu_Y(y) = \E[G(Y_1 - \eta h(\bz))^2] \\
    & \qquad = \pr(\varepsilon_2, \varepsilon_3 \ge Y_1 - \eta h(\bz)) = \pr(\varepsilon_2 \wedge \varepsilon_3 - \varepsilon_1 \ge \eta (h(\bz_1) - h(\bz))) = p(\eta).
\end{align*}
The desired result is a direct consequence of the monotonicity property of $p(\eta)$ as displayed in~\eqref{eq:umonotone}.

\section{Additional simulation analysis}

This section examines the monotonicity of $\taci$ under the additive model $\by = \eta \bz + (1 - \eta) \bs{\varepsilon}$ as $\eta$ increases from $0$ to $1$. We consider $\dy = \dz = d$ for $d = 2, 5$. The variables $\bz$ and $\bs{\varepsilon}$ are generated as $\bz = B_{\bz} \bz'$ and $\bs{\varepsilon} = B_{\bs{\varepsilon}} \bs{\varepsilon}'$, respectively, using the same data-generating process described in \Cref{sec:normal}. The only modification is that for each $d$, we generate only 5 independent matrix pairs $(B_{\bz}, B_{\bs{\varepsilon}})$. Since the true $\taci$ lacks a closed-form expression, we approximate it via an empirical estimate using $n = 1000000$ samples. 

\Cref{fig:monotone} presents the approximated $\taci$ values for $\eta = 0.1, 0.2, \ldots, 0.9$ across various simulation settings. The results indicate that $\taci$ increases monotonically under all tested conditions, regardless of the choices for $(B_{\bz}, B_{\bs{\varepsilon}})$, or the distribution types of $\bz'$ and $\bs{\varepsilon}'$. In general, the rate of increase is slow for small $\eta$ and accelerates as $\eta$ grows. This suggests that the coefficient is more effective at distinguishing moderate and strong dependence than distinguishing independence and weak dependence, an effect that is magnified with increasing dimensionality.

\begin{figure}
    \centering
    \includegraphics[width=1\linewidth]{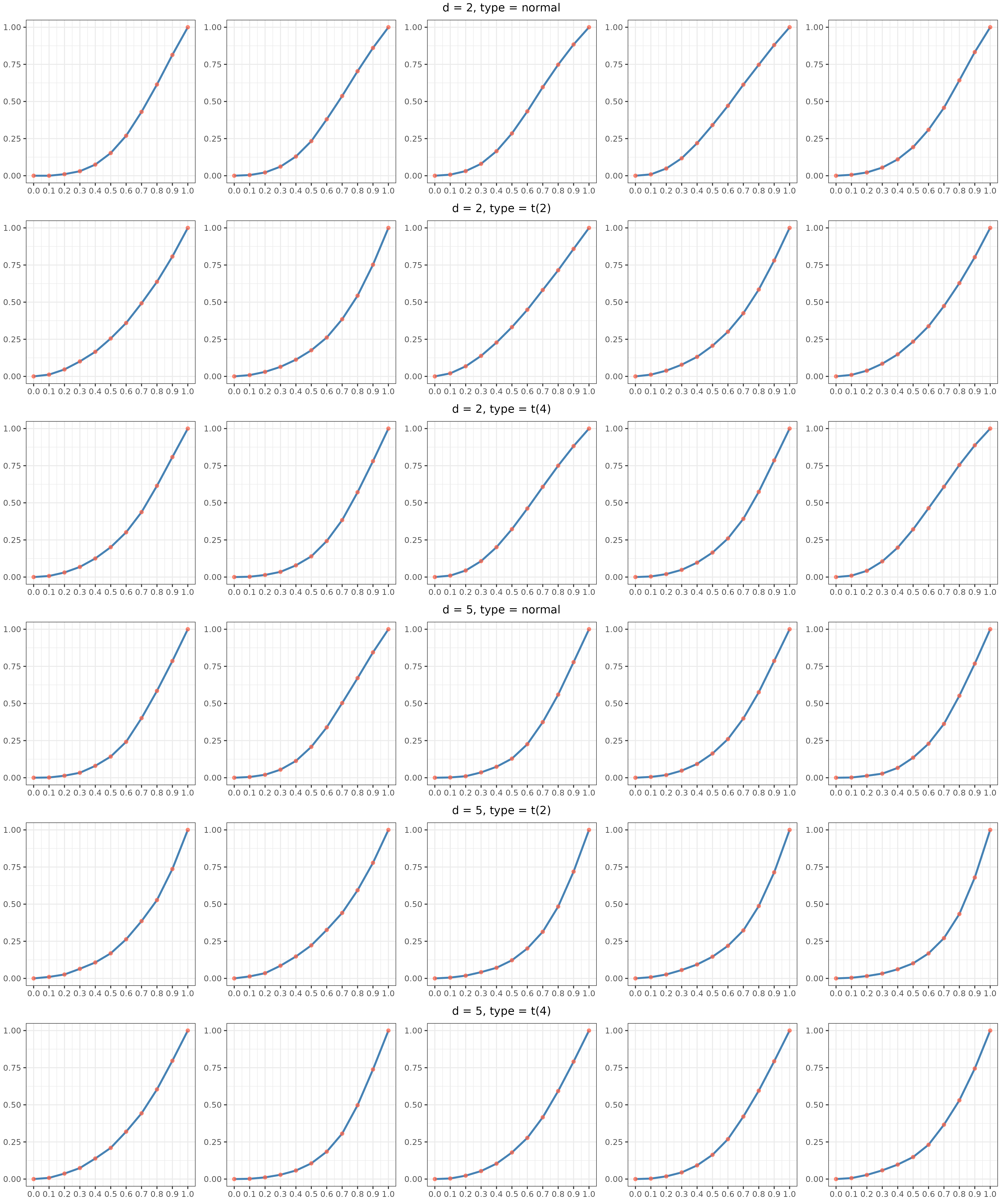}
    \caption{Values of $\taci$ for $\eta$ between $0$ and $1$. Here, $\bz, \bs{\varepsilon}$ are generated under various dimensions and distribution types prescribed at the top of each figure. The x-axis of each figure is the value of $\eta$; the y-axis is an approximation of $\taci$ constructed via $1000000$ samples. For each specific dimension and distribution type, we plot results for five different randomly generated matrix pairs $(B_{\bz}, B_{\bs{\varepsilon}})$.}
    \label{fig:monotone}
\end{figure}

\end{document}